\selectfont\symbol{62}\fontencoding{\encodingdefault}}
\newcommand{\assign}{:=}
\newcommand{\cdummy}{\cdot}
\newcommand{\mathd}{\mathrm{d}}
\newcommand{\nobracket}{}
\newcommand{\tmdummy}{$\mbox{}$}
\newcommand{\tmem}[1]{{\em #1\/}}
\newcommand{\tmop}[1]{\ensuremath{\operatorname{#1}}}
\newcommand{\tmtextit}[1]{{\itshape{#1}}}
\newenvironment{enumeratenumeric}{\begin{enumerate}[1.] }{\end{enumerate}}
\newtheorem{theorem}{Theorem}[section]
\newtheorem{corollary}[theorem]{Corollary}
\newtheorem{definition}[theorem]{Definition}
\newtheorem{lemma}[theorem]{Lemma}
\newtheorem{proposition}[theorem]{Proposition}
\newtheorem{remark}[theorem]{Remark}
\newcommand{\zzone}{\text{\resizebox{.7em}{!}{\includegraphics{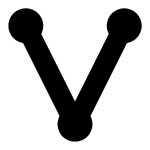}}}}
\newcommand{\zztwo}{\text{\resizebox{.7em}{!}{\includegraphics{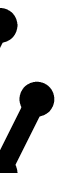}}}}
\newcommand{\zzthree}{\text{\resizebox{.7em}{!}{\includegraphics{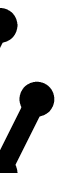}}}}
\newcommand{\zzfour}{\text{\resizebox{1em}{!}{\includegraphics{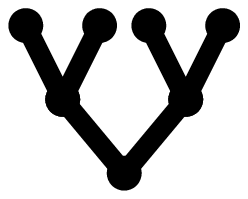}}}}
\newcommand{\CC}{\mathscr{C}}
\newcommand{\ED}{\mathscr{D} ( \sqrt{-H})}
\newcommand{\ssp}{\mathscr{H}}
\newcommand{\Addresses}{{
		\bigskip
		\footnotesize
		
	 \textsc{ Hausdorff Center for Mathematics \& Institut fur Angewandte 
	 	Mathematik\\ Universit\"at Bonn, Endenicher Allee 60, D-53115 Bonn, Germany \\
	 	E-mail addresses: }
		\textit{gubinelli@iam.uni-bonn.de; bugurcan@uni-bonn.de; zachhuber@iam.uni-bonn.de} }}
\begin{document}
\title{Semilinear evolution equations for the Anderson Hamiltonian in two and
	three dimensions}
\author{
	M. Gubinelli
	\and
	B. Ugurcan
	\and
	I. Zachhuber 
}

\maketitle
\begin{abstract}
	We analyze nonlinear Schr\"odinger and wave equations  whose linear
	part is given by the renormalized Anderson  Hamiltonian in two and three dimensional periodic domains.
\end{abstract}
{\tableofcontents}
\section{Introduction}

The basic aim of this paper is to study the following random Cauchy problems
\begin{align} 
i \partial_t u  &=H u-u|u|^2, \ \ 
u(0) =u_0 \label{equ:NLSwave1} \\ 
\partial^2_t u &=  H u - u^3, \ \  (u, \partial_t u) |_{t = 0}  =  (u_0, u_1) \label{equ:NLSwave2}
\end{align}
on the $d$-dimensional torus $\mathbb{T}^d$ with $d=2,3$. Here $H$ is formally the Anderson Hamiltonian $$H= \Delta + \xi,$$ where
$\xi$ is a space white noise and $\Delta$ the Laplacian with periodic boundary conditions. 

The presence of white noise makes this kind of problems not well posed in classical functional spaces. Indeed it is well known that white noise has sample paths which are only distributions of regularity $-d/2 - \varepsilon$ in H\"older-Besov spaces, where $\varepsilon$ is an arbitrary small but non-zero constant. A sign of this difficulty is the fact that the above equations have to be properly renormalized by subtracting a formally infinite constant to the operator $H$ in order to obtain well defined limits. 

In the parabolic setting there is a, by now, well developed theory of such \emph{singular SPDEs}, thanks to Hairer's invention of the theory of regularity structures~\cite{hairer_theory_2014} and the parallel development of the paracontrolled approach~\cite{gubinelli2015paracontrolled} by Gubinelli, Imkeller and Perkowski. The first results for non parabolic evolution equations have been obtained in~\cite{debussche2016schr} where the authors manage to solve  the linear and the cubic nonlinear (with a range of powers) Schr\"odinger equations with multiplicative noise on $\mathbb{T}^2$ by first applying a transform inspired by~\cite{HairerLabbe15} and then using mass and energy conservation along with certain interpolation arguments. The wave equations in $d=2$ with polynomial non-linearities and additive space-time white noise have been considered in~\cite{gubinelli_renormalization_2017}. The main difficulty is that the absence of parabolic regularization makes the control of the non-linear terms involving the singular noise contributions non-trivial.

Here we exploit the insights of~\cite{allez_continuous_2015} in order to identify an appropriately renormalized version of $H$ as a self-adjoint operator on $L^2(\mathbb{T}^d)$ and use the related spectral decomposition to give a meaning to the above equations as abstract evolution equations in Hilbert space. Our first contribution is then the study of the Anderson Hamiltonian on $\mathbb{T}^3$ and the derivation of some additional results when $d=2$, for example the characterization of the form domain of the operator and some related functional inequalities which are needed in the abstract treatment of the evolution equations. 

For the sake of the reader, and also to illustrate the proof strategy in the $d=3$ case, we pursue a complete treatment of the $d=2$ case showing the self-adjointness of the  Hamiltonian and  the convergence of suitable regularized operators in norm resolvent sense. Resolvent convergence is used in the second part to ``prepare" suitable initial conditions adapted to prove convergence of approximations. We mention also the proof of a version of the classical Brezis-Gallouet inequality~\cite{brga80} for the Anderson Hamiltonian in $d=2$. For $d=3$ we prove that the Anderson Hamiltonian satisfies an inequality   which is analogous to the classical Agmon's inequality, see Lemma \ref{lem:3dagmon}. These functional inequalities are instrumental then in the second part of this work in order to control the non-linear terms of the evolution equations.

An interesting byproduct of our approach is a estimate which expresses the fact that the paraproduct is ``almost" adjoint to the resonant product whose definitions we recall in the Appendix.  This implies in particular that the energy norm with respect to the Anderson Hamiltonian can be estimated from below in a precise way and allows us to characterize (see Proposition \ref{lem:formdom}) both the domain  and the form domain of $H$ by using  certain Sobolev norms. 

The second part of our paper is concerned with the solution of the above equations with different regularities of the initial conditions and with the proof of convergence of solutions of approximate equations where the noise has been regularized and which are then classically well--posed. While the general methodology is the same adopted in~\cite{debussche2016schr}, namely the use of conservation laws and functional inequalities to control the non-linear term, one of the main contributions of our work is to clarify the role of the spectral theory of the Anderson Hamiltonian and of relative functional spaces in the apriori control of the solutions and in the analysis of the non-linear terms. This simplifies and unifies the analysis of the $d=2$ and $d=3$ cases.

After these, having all the  necessary Sobolev and $L^p$-estimates at our disposal along with an analogue of Brezis-Gallouet inequality and proper approximation tools; in Section \ref{sec:section3} we move on to the study of the nonlinear Schr\"odinger and wave equations for the Anderson Hamiltonian (properly shifted for positivity) in dimensions $2$ and $3$.  One important point is that by having undertaken the stochastic analysis of the Anderson Hamiltonian in the paracontrolled setting; we are now in a position to address these PDE problems by using classical techniques which sorts out the stochastic and analytical components and their interaction in a coherent and transparent way.

To officially recap, we study the PDEs~\eqref{equ:NLSwave1} and~\eqref{equ:NLSwave2} (with a range of powers for the nonlinearity) with operator domain and finite energy data, depending on the cases and study their uniqueness.

We also work out the convergence of the solutions of  regularized equations, obtained by suitable approximations of the initial data and the Gaussian white noise, to the solutions of the above PDEs:
\begin{align} 
i\partial_t u_{\varepsilon} &=  H_{\varepsilon} u_{\varepsilon}-u_\varepsilon|u_\varepsilon|^2, u_\varepsilon(0)=u^\varepsilon_0. \label{equ:NLSwaveapp1}\\ 
\partial^2_t u_{\varepsilon}  &=  H_{\varepsilon} u_{\varepsilon} - u_{\varepsilon}^3 \ \tmop{on} \  \mathbbm{T}^d , \ \  (u_{\varepsilon}, \partial_t u_{\varepsilon}) |_{t = 0}  =  (u^{\varepsilon}_0, u^{\varepsilon}_1). \label{equ:NLSwaveapp2}
\end{align}

In Theorem \ref{thm:2dnlsfix}, we establish the well posedness of \eqref{equ:NLSwave1} with  operator domain data in $d=2$.  This is achieved, in part, using our version of the Brezis--Gallouet inequality for the Anderson Hamiltonian.  In Theorem \ref{thm:2ddomnls} we show that the solutions to the regularized equations, namely to \eqref{equ:NLSwaveapp1}, converge to that of equation \eqref{equ:NLSwave1}.  Observe that, in this context, establishing this convergence is important as  the domain of the Anderson Hamiltonian is contained in $\ssp^{1-}$ whereas the domain of the approximations lie in $\ssp^{2}$.  So, there is a drop in smoothness that needs to be addressed carefully. Extension of some of these results to $d=3$ and focusing case  is possible as we prove an analogue of Agmon's inequality in Lemma \ref{lem:3dagmon} to replace the Brezis-Gallouet inequality, please see Remark \ref{rem:nls3dagmon}.  Although we have given the proofs for cubic nonlinearity, one can easily modify this to get a general power nonlinearity, see Remark \ref{rem:nlsdiffpowerfocsmalldata}.

As we characterize the energy domain for the Anderson Hamiltonian in Lemma \ref{lem:formdom}, now we can also make sense of the energy solutions for the NLS, namely \eqref{equ:NLSwave1}.  Accordingly, in Theorem \ref{thm:2dnlsenergy}, we show the existence of solutions.  Observe that, in this case we were not able to show uniqueness; but our result could be considered optimal in view of \cite{BGT04}, as Strichartz estimates for the Anderson Hamiltonian is not known.  Furthermore, as in the domain case, we show in Corollary \ref{corr:energyregconv} the convergence of the regularized solutions. One can generalize the result given in the section to the other  powers of the nonlinearity similar to the domain data case. 

Being able to characterize the energy domain for the Anderson Hamiltonian both in dimensions 2 and 3  enables us to also treat nonlinear stochastic wave equations in both dimensions.  In subsection \ref{subsec:twothreewave}, we prove our results regarding the well posedness of \eqref{equ:NLSwave2} in 2 and 3 dimensions.  In Theorem \ref{thm:wavefix}, we obtain the well posedness with initial data in the domain and the energy domain.  Similar to the Schr\"odinger case we also show convergence of the regularized solutions in Theorem \ref{thm:waveapp}.  Then we conclude  by stating Theorem \ref{thm:waveenergy} which details the well posedness for initial data in the energy domain and the $L^2$ for \eqref{equ:NLSwave2} and  whose proof follows from our earlier considerations in the same section.  By our version of Agmon's inequality and similar methods certain extensions to the different power nonlinearities is possible, please see Remark \ref{rem:wave3ddiffpower} for an elaboration on this topic.

Although we solve the PDEs with an Anderson Hamiltonian which is properly shifted to result in a positive operator,  this does not cause any weaker results.  As known, this shift simply causes a phase shift (i.e. multiplication by $e^{iCt}$ for some constant $C$) in the NLS case, which one can simply rotate back to the solution of the original equation.  In the wave case it simply adds a lower order nonlinearity, in fact a linear term.

In the sequel, we use $\mathscr{H}$ for Sobolev spaces, $L$ for $L^p$-spaces and $\mathscr{C}$ for the Besov-H\"older spaces. As we work either on $\mathbbm{T}^2$ or $\mathbbm{T}^3$ and it is very clear in what setting we consider throughout the paper, we drop the domain parameter i.e. for $\mathscr{H}^2(\mathbbm{T}^3)$ we simply write $\mathscr{H}^2$.  We denote the Gaussian white noise by $\xi$ and enhanced noise by $\Xi$ (see Definition \ref{def:2dDXi} and Theorem \ref{thm:3dren}).

We reserve the letter $A$ for the Anderson Hamiltonian and we use the letter $H$ to denote the  operator shifted by a specific constant $K_\Xi$.  We denote by $C_\Xi$  the constants depending on certain norms (which will be clear from the context) of the (enhanced) noise.  This constant can change value from line to line.  We use the notation $\mathscr{X}$ for the enhanced noise space both in $d=2$ and $d=3$.

For the convenience of the reader we included an Appendix containing mostly  classical results and the results from other papers such as \cite{allez_continuous_2015,gubinelli2015paracontrolled,gubinelli2017kpz}.  In Appendix, we recall the definitions  of relevant function spaces and other harmonic analysis topics such as Littlewood-Paley theory and Bony paraproducts.  The result included in Proposition \ref{lem:circadj} is new and possibly of independent interest.

After the completion of the present work we became aware of recent, still unpublished, work of C. Labb\'e~\cite{labbe_2018} where he constructs the Anderson Hamiltonian in $d=3$ with Dirichlet boundary conditions using regularity structures and produces some results about the law of its eigenvalues. 

\bigskip

\textbf{Acknowledgments.} We gratefully acknowledge partial support  from the German Research Foundation (DFG) via CRC 1060.

\section{The Anderson Hamiltonian in two and three dimensions} \label{sec:section2}

We resume certain concepts and definitions that we will use throughout this section.  First we  recall the definition of Gaussian white noise on $\mathbb{T}^d$.

\begin{definition} \label{def:whiteNoise}
The Gaussian white noise $\xi$ is a family of centered Gaussian random variables $\{ \xi(\phi)~|~ \phi \in L^2(\mathbb{T}^d)\}$, covariance given by
\[
\mathbb{E} \left( \xi(\phi) \xi(\psi) \right) = \langle \phi, \psi \rangle_{L^2(\mathbb{T}^d)}.
\] 
\end{definition}

To get an intuitive description, let  $\hat{\xi}(k)$ be i.i.d centered complex Gaussian random variables with $\hat{\xi}(k)=\overline{\hat{\xi}}(-k)$ and covariance \[
\mathbb{E}(\hat{\xi}(k)\overline{\hat{\xi}}(l))=\delta(k-l).
\]

Formally, the Gaussian white noise, on the torus, can be thought as the following random series

\[
\xi(x)=\sum_{k\in \Lambda}\hat{\xi}(k)e^{2\pi i k\cdot x},
\]

where in this section we will respectively take $\Lambda$ to be  $\mathbb{Z}^2$ and $\mathbbm{Z}^3 \backslash \{ 0 \}$.  That is, in the 3d case, we simply take out the zero mode for ease of computations.

We also define the regularized spatial white
noise as
\begin{equation} \label{eqn:xieps}
\xi_{\varepsilon} (x) = \sum_{k \in \Lambda} m (\varepsilon 
k ) e^{2\pi i k \cdot x} \hat{\xi} (k), \end{equation}
where $m$ is a smooth radial function on $\mathbbm{R} \backslash \{ 0 \}$ with
compact support such that 
\[\underset{}{}_{\underset{}{}} \underset{x \rightarrow 0}{\lim} m (x) = 1.\]
We put
\[ c_{\varepsilon} \assign \underset{k \in \mathbbm{Z}^2}{\sum} \frac{|
	m (\varepsilon  k ) |^2}{1 + | k |^2} \sim \log \left(
\frac{1}{\varepsilon} \right). \]
which will act as one of the renormalization constants in 2d case.  Note that this constant depends on the choice of mollifier. 

We also recall  the Anderson Hamiltonian, which is formally the following operator
\begin{align}
A  = \Delta  + \xi  \end{align}
where $\xi$ is the Gaussian white noise. As we have articulated in the introduction, this operator can not be defined in $L^2(\mathbb{T}^{2,3})$ because of the low H\"older regularity of $\xi$.  The Besov-H\"older regularity of Gaussian white noise on $\mathbb{T}^d$ is $-\frac{d}{2} - \delta$, that is $\xi \in \mathscr{C}^{-\frac{d}{2} - \delta}$ almost surely, for any positive $\delta>0$ \cite{gubinelli2015paracontrolled}.

Therefore, we will consider a renormalization of this operator in the context of paracontrolled distributions which one can formally write as
\begin{align}
A  = \Delta  + \xi - \infty  \end{align}
and to which we will give meaning as a suitable  limit $\varepsilon \rightarrow 0$ of the regularized
Hamiltonians
\begin{align}
A_{\varepsilon} = \Delta + \xi_{\varepsilon} - c_{\varepsilon,} \label{eqn:2dHepsdef} \end{align}
for precise constants $c_{\varepsilon} $.

Accordingly, in this section, we define the Anderson Hamiltonian and introduce suitable regularizations in the setting of paracontrolled distributions in two and three dimensional torus, respectively in the following subsections. Namely, we construct a suitable (dense) domain for the operator and then show closedness, symmetry, self-adjointness and  norm resolvent convergence (of the regularized Hamiltonians).  At the end of both 2d and 3d cases, we prove certain functional inequalities which we will use in the PDE part of the paper, namely in Section \ref{sec:section3}.

\subsection{The two dimensional case}\label{subsec:2dham}

In this part, we work on the 2d torus.  We follow the same line of thought as in \cite{allez_continuous_2015} with important modifications.  In \cite{allez_continuous_2015} authors worked in the 2d case but  our modifications will enable us to use similar proofs in Section \ref{section:3dham}, namely for the 3d case, and also obtain certain functional inequalities such as the Brezis-Gallouet inequality for the Anderson Hamiltonian.  In this section, for paraproducts we use the notations ``$\prec$" and ``$\succ$" and for the resonant product we use ``$\circ$"; please see the Appendix for precise definitions of the function spaces and concepts from harmonic analysis that will be used throughout this section.

\subsubsection{Enhanced noise, the domain and the $\Gamma$-map}

In order to introduce the paracontrolled ansatz (see \cite[Section 3]{GP17}  for  motivation), which will enable us to define the domain of the operator, we need the following definition.

\begin{definition} \label{def:2dencNoise}
	For $\alpha \in \mathbbm{R},$ we define
	 $\mathscr{E}^{\alpha} \assign \CC^{\alpha} \times \CC^{2
		\alpha + 2}$ and 
	$ \mathscr{X}^{\alpha}$ as the closure of the set $\{
	(\eta, \eta \circ (1 - \Delta)^{- 1} \eta + c) : \eta \in \CC^{\infty}
	(\mathbbm{T}^2), c \in \mathbbm{R} \}$ w.r.t. the 
	$\mathscr{E}^{\alpha}$ topology,
	where $\CC^{\alpha} = B_{\infty \infty}^{\alpha}$ denotes the Besov-H\"older space.
\end{definition}

We point out that, $ \mathscr{X}^{\alpha}$ is the space of ``enhanced noise".  In some sense, one needs to lift the singular term white noise into a larger space which, in some sense, also encodes the regularization.  However, one needs to do this consistently; namely the lift should not depend on the mollifier.  This is the content of the following result, which was proved in \cite[Theorem 5.1]{allez_continuous_2015}.

\begin{theorem} 
	\label{thm:2dren}For any $\alpha < - 1$ we have
	\begin{equation} \label{equ:enhancedConv}
	\Xi^{\varepsilon} \assign (\xi_{\varepsilon}, \xi_{\varepsilon} \circ (1
	- \Delta)^{- 1} \xi_{\varepsilon} + c_{\varepsilon}) \rightarrow_{} \Xi=(\Xi_1,\Xi_2)
	\in \mathscr{X}^{\alpha}, 
	\end{equation}
where	the convergence holds as $\varepsilon \rightarrow 0 $  in  $L^p
	(\Omega ; \mathscr{E}^{\alpha})$ for all $p > 1$ and almost surely in $\mathscr{E}^{\alpha}.$ Moreover, the limit is
	independent of the mollifier and $\Xi_1 = \xi$.\end{theorem}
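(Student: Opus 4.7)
The plan is to treat the two components separately. The first component $\xi_\varepsilon \to \xi$ in $\mathscr{C}^\alpha$ for $\alpha < -1$ is classical: Gaussian white noise on $\mathbb{T}^2$ belongs to $\mathscr{C}^{-1-\delta}$ almost surely for any $\delta>0$, and the mollified versions converge via a Kolmogorov-type criterion for Besov spaces combined with hypercontractivity (equivalence of $L^p$ norms on each Wiener chaos). So the real content is the second component, the renormalized resonant product, which lives in $\mathscr{C}^{2\alpha+2}$ with $2\alpha+2 < 0$.

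For that term I would proceed by Wiener chaos decomposition. Writing
\[
\xi_\varepsilon \circ (1-\Delta)^{-1}\xi_\varepsilon = \sum_{|i-j|\le 1} \Delta_i \xi_\varepsilon \,\Delta_j (1-\Delta)^{-1}\xi_\varepsilon,
\]
one splits each product $\Delta_i \xi_\varepsilon \cdot \Delta_j (1-\Delta)^{-1}\xi_\varepsilon$ into a contribution in the second homogeneous Wiener chaos plus its expectation. A direct Fourier computation shows that, with the choice of $c_\varepsilon$ given in the paper, the sum of the expectations is exactly $-c_\varepsilon$ (up to an irrelevant smooth remainder produced by the Littlewood--Paley cut-offs near the diagonal), so that $\xi_\varepsilon \circ (1-\Delta)^{-1}\xi_\varepsilon + c_\varepsilon$ lies in the second chaos.

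Next, for the second chaos piece, I would estimate the $L^2(\Omega)$ norm of $\Delta_q$ of the difference $\xi_\varepsilon\circ(1-\Delta)^{-1}\xi_\varepsilon - \xi_{\varepsilon'}\circ(1-\Delta)^{-1}\xi_{\varepsilon'}$ after removing expectations. Using the isometry property of the chaos, this reduces to a Fourier-space bookkeeping of
\[
\sum_{k_1+k_2 \in \mathrm{supp}\,\rho_q}\,\sum_{|i-j|\le 1}\rho_i(k_1)\rho_j(k_2)\frac{(m(\varepsilon k_2)m(\varepsilon k_1)-m(\varepsilon' k_2)m(\varepsilon' k_1))^2}{(1+|k_2|^2)^2}.
\]
Standard estimates on such ``resonant diagrams'' (see e.g.\ Lemma~5.1 in the cited Allez--Chouk paper, or the analogous computation in Gubinelli--Imkeller--Perkowski) yield a bound of the form $C\,2^{-2q(2\alpha+2+\kappa)}\omega(\varepsilon,\varepsilon')$ for any small $\kappa>0$, where $\omega(\varepsilon,\varepsilon')\to 0$ as $\varepsilon,\varepsilon'\to 0$. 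Hypercontractivity upgrades this to any $L^p(\Omega)$, then a Besov-type Kolmogorov criterion promotes it to $L^p(\Omega;\mathscr{C}^{2\alpha+2})$ convergence. Almost-sure convergence follows by a Borel--Cantelli argument along the sequence $\varepsilon_n = 2^{-n}$ after taking $p$ large enough. Finally, mollifier independence is obtained by running the same $L^2$-of-chaos estimate comparing two families built from different mollifiers $m$ and $\tilde m$, with the corresponding constants $c_\varepsilon^{(m)}, c_\varepsilon^{(\tilde m)}$ absorbing the would-be divergences.

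The main obstacle is the explicit Fourier bookkeeping in the second step: one must carefully separate the part where the two frequencies almost cancel (this is where the renormalization constant arises and the $\log(1/\varepsilon)$ divergence is absorbed) from the genuinely convergent off-diagonal part, and then verify that the Littlewood--Paley supports of the resonant product produce the correct Hölder regularity $2\alpha+2$. Once that combinatorial/analytic estimate is in hand, hypercontractivity and the Besov Kolmogorov lemma do the rest almost mechanically.
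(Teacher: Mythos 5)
Your strategy is correct, but note that the paper itself does not prove this theorem at all: it simply cites Theorem 5.1 of Allez--Chouk, and your sketch is precisely the standard argument used there (Wiener chaos decomposition with $c_\varepsilon$ identified as the expectation of the resonant product, second-moment Fourier estimates on the second-chaos part, hypercontractivity, a Besov--Kolmogorov criterion plus Borel--Cantelli, and the same comparison argument for mollifier independence). One small simplification: since $\sum_{|i-j|\le 1}\rho_i\rho_j\equiv 1$, the expectation of $\xi_\varepsilon\circ(1-\Delta)^{-1}\xi_\varepsilon$ equals the renormalization constant exactly, with no smooth remainder from the Littlewood--Paley cut-offs, and you should double-check the sign convention relating that expectation to the $c_\varepsilon$ appearing in the statement.
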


By  this result, one can see that
\[ \| \xi \|_{\CC^{\alpha}}, \quad \|\Xi_2 \|_{\CC^{2 \alpha + 2}},\quad  \| (1 - \Delta)^{- 1} \xi
\|_{\CC^{\alpha + 2}} < \infty \text{ a.s.} \]
by Schauder estimates.

We also recall the following definition which describes the domain of the Anderson Hamiltonian.

\begin{definition}\label{def:2dDXi}
	Assume $- \frac{4}{3} < \alpha < - 1$ and $- \frac{\alpha}{2} < \gamma
	\le \alpha + 2$. Then we define the space of functions
	paracontrolled by the enhanced noise $\Xi$ as follows
	\begin{equation}\label{equ:ansatzmain}
	\mathscr{D}_{\Xi}^{\gamma} \assign \{u \in \ssp^{\gamma} ~\text{s.t.}~ u = u \prec X + B_{\Xi} (u) + u^{\sharp}, ~\text{for}~ u^{\sharp} \in \ssp^2  \}
	\end{equation}
	where $X = (1 - \Delta)^{- 1} \xi \in \CC^{\alpha + 2}$ and
	\[ B_{\Xi} (u) \assign (1 - \Delta)^{- 1} (\Delta u \prec X + 2 \nabla u
	\prec \nabla X + \xi \prec u - u \prec \Xi_2) \in \ssp^{2
		\gamma} . \]
	This space is equipped with the scalar product given by, $u, w \in
	\mathscr{D}_{\Xi}^{\gamma}$,
	\[ \langle u, w
	\rangle_{\mathscr{D}_{\Xi}^{\gamma}} \assign \langle u, w
	\rangle_{\ssp^{\gamma}} + \langle u^{\sharp}, w^{\sharp} \rangle_{\ssp^2}. \]

\end{definition}

We have several remarks now, that explains our modification of this definition.

\begin{remark}\upshape  \label{rem:domainNotationset2d}
	For the rest of the paper, we set \[ \mathscr{D}(A) := \mathscr{D}_{\Xi}^{\gamma}.\] Observe that at this point this is to uniformize notation and we are not stating equality as normed spaces, namely as a normed space we have  $ \mathscr{D}(A) = (\mathscr{D}_{\Xi}^{\gamma}, ||\cdot||_{\mathscr{D}_{\Xi}^{\gamma}})$.  But in the sequel, it will be clear after Proposition \ref{lem:formdom} that we also have $ (\mathscr{D}(A), ||\cdot||_{\mathscr{D}(A)}) = (\mathscr{D}_{\Xi}^{\gamma}, ||\cdot||_{\mathscr{D}_{\Xi}^{\gamma}})$ where $||\cdot||_{\mathscr{D}(A)}$ denotes the standard (functional analytic) domain norm.
\end{remark}

We work out the following  modification of the above ansatz \eqref{equ:ansatzmain} to fit our purposes. Assume $u$ is of the form

\begin{equation}
u = \Delta_{> N} (u \prec X + B_{\Xi} (u)) + u^{\sharp}, \label{eq:ansatz}
\end{equation}
for $2/3<\gamma < 1.$ and  $\Delta_{> N} $ denotes a frequency cut-off at $2^N,$ more precisely,
	\[ \Delta_{> N} f \assign \mathscr{F}^{- 1} \chi_{| \cdummy | > 2^N}
	\mathscr{F}f, \]
with $N \in
\mathbbm{N}$ which will be chosen depending on the (enhanced) noise $\Xi .$ We also
define
\begin{equation}
B_{\Xi} (u) \assign (1 - \Delta)^{- 1} (\Delta u \prec X + 2 \nabla u \prec
\nabla X + \xi \prec u - u \prec \Xi_2) . \label{eq:def-G}
\end{equation}
Note that by Schauder estimates we have the following bound for $B$, 
\[ \| B_{\Xi} (u) \|_{\ssp^s} \lesssim_s C_{\Xi} \| u \|_{\ssp^{s - \gamma}}, \qquad
s \in [0, 2 \gamma] . \]
Recall that, we denote by $C_\Xi$ a constant that depends explicitly on the norm of the realization of the enhanced noise $\Xi$  and can change from line to line.
\begin{remark}\upshape	\upshape \label{rem:equivalentCuttofSp}
	This modification changes the decomposition by a smooth function so it does not change
	the space. Strictly speaking, one obtains a different norm depending on $N,$ which  is equivalent to the
	$\mathscr{D}(A)$ norm above.
	In fact, assume that for a function $f$ and some $N \ge 1$ we have
	\begin{align*}
	f  = & f \prec X + B_{\Xi} (f) + f_1^{\sharp}\\
	\tmop{and} & \\
	f  = & \Delta_{> N} (f \prec X + B_{\Xi} (f)) + f_2^{\sharp} .
	\end{align*}
	Then we readily have the estimate
	\begin{align*}
	\| f^{\sharp}_1 \|_{\ssp^2}  = & \| f - f \prec X + B_{\Xi} (f) \|_{\ssp^2}\\
	= & \| f - \Delta_{> N} (f \prec X + B_{\Xi} (f)) - \Delta_{\le
		N} (f \prec X + B_{\Xi} (f)) \|_{\ssp^2}\\
	\le & \| f_2^{\sharp} \|_{\ssp^2} + C (N, \Xi) \| f \|_{\ssp^{\gamma}}
	\end{align*}
	and analogously $\| f^{\sharp}_2 \|_{\ssp^2} \le \| f_1^{\sharp} \|_{\ssp^2}
	+ C (N, \Xi) \| f \|_{\ssp^{\gamma}}$. This proves the norm equivalence.
\end{remark}

With this modification of the ansatz, we can
write $u$ as a function of $u^{\sharp}$. In order to do so, we define the following linear map $\Gamma$ 
\[ \Gamma f = \Delta_{> N} (\Gamma f \prec X + B_{\Xi} (\Gamma f)) + f, \]
so that $u = \Gamma u^{\sharp}$. For $N$ large enough, depending on the realization of $\Xi$, we can show that this map exists and has useful bounds. 

\begin{remark}\upshape
In the following, we will utilize this map $\Gamma$ to show density of the domain, symmetry and norm resolvent convergence.  The key point is the map $\Gamma$ can also be defined in the 3d case and be used there in a similar manner, which we will do in the 3d section.
\end{remark}

By these considerations, we can bound certain Sobolev norms of $u$ by that of $u^\sharp$, which is the content of the following result.
\begin{proposition}
	\label{lem:gamma}We can choose $N$ large enough depending only on $C_{\Xi}$ and
	$s$ so that
	\begin{equation}
	\| \Gamma f \|_{L^{\infty}} \le 2 \| f \|_{L^{\infty}},
	\label{eq:gamma-bound-Linfty}
	\end{equation}
	\begin{equation}
	\| \Gamma f \|_{\ssp^s} \le D_\Xi \| f \|_{\ssp^s}.
	\label{eq:gamma-bound-sobolev}
	\end{equation}
	for some constant $D_\Xi$ for $s \in [0, \gamma]$  and  $D_\Xi=3$ for $s \in [0, \gamma)$.
\end{proposition}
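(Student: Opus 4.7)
The plan is to construct $\Gamma f$ as the fixed point of the affine map
\begin{equation*}
T_f : g \mapsto \Delta_{>N}(g \prec X + B_{\Xi}(g)) + f,
\end{equation*}
and then read off both bounds from the Neumann series. The linear part $L g \assign \Delta_{>N}(g \prec X + B_{\Xi}(g))$ has operator norm controlled by a negative power of $2^N$ in each norm of interest, because the high-frequency projector $\Delta_{>N}$ trades smoothness for a factor $2^{-N\mu}$ with some $\mu > 0$. Choosing $N$ large depending only on $C_\Xi$ and $s$ makes $L$ a strict contraction with the desired contraction constant.

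For the $\ssp^s$ bound I would combine the standard paraproduct estimate $\| g \prec X \|_{\ssp^{s+\alpha+2}} \lesssim \|g\|_{\ssp^s}\|X\|_{\CC^{\alpha+2}}$ with the cutoff loss $\|\Delta_{>N} h\|_{\ssp^s} \leq 2^{-N(\alpha+2)}\|h\|_{\ssp^{s+\alpha+2}}$ to obtain $\|\Delta_{>N}(g \prec X)\|_{\ssp^s} \lesssim 2^{-N(\alpha+2)} C_\Xi \|g\|_{\ssp^s}$. The $B_\Xi$ piece uses the Schauder-type bound $\|B_\Xi(g)\|_{\ssp^{s+\gamma}} \lesssim C_\Xi \|g\|_{\ssp^s}$ already recorded in the paper (valid as long as $s+\gamma \in [0,2\gamma]$), so that $\|\Delta_{>N} B_\Xi(g)\|_{\ssp^s} \lesssim 2^{-N\gamma} C_\Xi \|g\|_{\ssp^s}$. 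For $s \in [0,\gamma)$ I am in the strict interior of the admissible range and can enforce operator norm of $L$ at most $2/3$, whence the Neumann series yields $\|\Gamma f\|_{\ssp^s} \leq 3 \|f\|_{\ssp^s}$. At the endpoint $s=\gamma$ the implicit constants in the $B_\Xi$ estimate worsen, so the contraction only closes with some $\Xi$-dependent constant $D_\Xi$.

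For the $L^\infty$ bound, the central step is establishing an $L^\infty \to L^\infty$ mapping property of $g \mapsto \Delta_{>N} B_\Xi(g)$ of the same shape. Unfolding the four terms defining $B_\Xi$ and applying the paraproduct bound $\|f \prec h\|_{\CC^{a+b}} \lesssim \|f\|_{\CC^a}\|h\|_{\CC^b}$ with $a \le 0$ (absorbing derivatives on $g$ into negative-regularity H\"older estimates such as $\|\nabla g\|_{\CC^{-1}} \lesssim \|g\|_{L^\infty}$ and $\|\Delta g\|_{\CC^{-2}} \lesssim \|g\|_{L^\infty}$), each of the four summands lands in $\CC^{\alpha+2}$, giving $\|B_\Xi(g)\|_{\CC^{\alpha+2}} \lesssim C_\Xi \|g\|_{L^\infty}$ and hence $\|\Delta_{>N} B_\Xi(g)\|_{L^\infty} \lesssim 2^{-N(\alpha+2)} C_\Xi \|g\|_{L^\infty}$. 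The companion estimate $\|\Delta_{>N}(g \prec X)\|_{L^\infty} \lesssim 2^{-N(\alpha+2)} C_\Xi \|g\|_{L^\infty}$ is immediate from $X \in \CC^{\alpha+2}$. Choosing $N$ so that the operator norm of $L$ on $L^\infty$ is at most $1/2$, the Neumann series delivers $\|\Gamma f\|_{L^\infty} \leq 2 \|f\|_{L^\infty}$.

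The main obstacle is precisely the $L^\infty$ estimate for $B_\Xi$: a naive use of the Sobolev embedding $\ssp^{2\gamma} \hookrightarrow L^\infty$ would couple the $L^\infty$ bound to an $\ssp^\gamma$ bound on $g$ and thus preclude a clean fixed-point argument in $L^\infty$ alone, which is needed if the constant $2$ is to be independent of any Sobolev norm of $f$. The resolution is to work directly in Besov-H\"older spaces, placing the derivatives that appear inside $B_\Xi$ in negative-regularity spaces and exploiting that $X \in \CC^{\alpha+2}$ has positive H\"older regularity, so that each of the four terms in $B_\Xi$ lands in a Besov-H\"older space of positive regularity and $\Delta_{>N}$ supplies the required smallness.
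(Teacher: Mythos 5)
Your $L^{\infty}$ argument is sound and is essentially the paper's: bound $g\prec X$ and $B_{\Xi}(g)$ in a H\"older space of positive regularity (about $\CC^{\gamma-\delta}$, with norms controlled by $C_{\Xi}\|g\|_{L^{\infty}}$), let $\Delta_{>N}$ supply a factor $2^{-N\mu}$, and absorb/sum the Neumann series to get the constant $2$. The paper phrases this as an a priori absorption on $g=\Gamma f$ rather than as a contraction, but the estimates are identical.

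The Sobolev part, however, rests on an estimate that is false in the range you need it. The inequality $\| g \prec X \|_{\ssp^{s+\alpha+2}} \lesssim \| g \|_{\ssp^{s}} \| X \|_{\CC^{\alpha+2}}$ holds only when the low-frequency factor has \emph{negative} regularity ($s<0$); for $s\ge 0$ a paraproduct never has more regularity than its high-frequency factor (take $g\equiv 1$: then $g\prec X$ is essentially $X$ itself, which lies in $\CC^{\alpha+2}$ and no better). So for $s\in(0,\gamma]$ the smallness of the linear map on $\ssp^{s}$ cannot be obtained the way you describe, and at the endpoint $s=\gamma$ your statement that "the contraction only closes with some $\Xi$-dependent constant $D_{\Xi}$" is not coherent: if the operator norm of the linear part is a large $\Xi$-dependent constant rather than $<1$, the Neumann series gives no bound at all. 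The paper's route avoids both issues: it bounds $\|g\prec X + B_{\Xi}(g)\|_{\ssp^{\gamma}}\lesssim C_{\Xi}\|g\|_{L^{2}}$ (the left factor placed in $L^{2}$, where the paraproduct estimate is legitimate), so that $\|g\|_{\ssp^{s}} \le \|f\|_{\ssp^{s}} + C\,C_{\Xi}\,2^{(s-\gamma)N}\|g\|_{L^{2}}$ for all $s\le\gamma$; it first closes the case $s=0$ (giving $\|g\|_{L^{2}}\le 2\|f\|_{L^{2}}$) and then feeds this back in, which yields the constant $3$ for $s<\gamma$ (taking $N$ larger, depending on $s$) and only a $\Xi$-dependent constant $D_{\Xi}$ at $s=\gamma$, where the factor $2^{(s-\gamma)N}$ provides no smallness. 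Your argument can be repaired by replacing your paraproduct step with this $L^{2}$-based bound and bootstrap; as written, the case $s\in(0,\gamma]$ is a genuine gap.
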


\begin{proof}
	Let us start with proving the $L^{\infty}$ bound. Choose $\delta > 0$ and
	let $g = \Gamma f$, we have
	\begin{align*}
	\| B_{\Xi} (g) \|_{\CC^{\gamma - \delta}} & \le \| \Delta g \prec X
	\|_{\CC^{\gamma - \delta - 2}} + 2 \| \nabla g \prec \nabla X \|_{\CC^{\gamma
			- \delta - 2}}\\ 
		&+ \| \xi \prec g \|_{\CC^{\gamma - \delta - 2}} + \| g \prec
	\Xi_2 \|_{\CC^{\gamma - \delta - 2}} \\
	& \le 3 \| g \|_{\CC^{- \delta}} \| X \|_{\CC^{\gamma}} + \| \xi
	\|_{\CC^{\gamma - 2}} \| g \|_{\CC^{- \delta}}\\
	 &+ \| g \|_{ \CC^{- \delta}} \| \Xi_2 \|_{\CC^{\gamma - 2}} \lesssim C_{\Xi} \| g \|_{ \CC^{- \delta}}
	\lesssim_{\delta} C_{\Xi} \| g \|_{L^{\infty}} 
	\end{align*}
	by paraproduct estimates and the fact that $\| g \|_{ \CC^{- \delta}}
	\lesssim_{\delta} \| g \|_{L^{\infty}}$ for any small $\delta > 0$. Now we can write,
	\[ \| g \|_{L^{\infty}} \le \| \Delta_{> N} (\Gamma f \prec X +
	B_{\Xi} (\Gamma f)) \|_{L^{\infty}} + \| f \|_{L^{\infty}} \le
	2^{(\delta - \gamma) N} \| g \prec X + B_{\Xi} (g) \|_{\CC^{\gamma -
			\delta}} + \| f \|_{L^{\infty}} \]
	\[ \le 2^{(\delta - \gamma) N} (\| X \|_{\CC^{\gamma - \delta}} +
	C_{\delta} C_{\Xi}) \| g \|_{L^{\infty}} + \| f \|_{L^{\infty}} \le
	C_{\delta} C_{\Xi} 2^{(\delta - \gamma) N} \| g \|_{L^{\infty}} + \| f
	\|_{L^{\infty}} \]
	 and choose $N$ large enough so that $2 C_{\delta} C_{\Xi} 2^{(\delta - \gamma)
		N} \le 1$ which implies $\| g \|_{L^{\infty}} \le 2 \| f
	\|_{L^{\infty}}$.
	
	For the $\ssp^s$ bound we can proceed more simply by noting that
	\[ \| B_{\Xi} (g) \|_{\ssp^{\gamma}} \le (3 \| X \|_{\CC^{\gamma}} + \| \xi
	\|_{\CC^{\gamma - 2}} + \| \Xi_2\|_{\CC^{\gamma - 2}}) \| g
	\|_{L^2} \]
	and if $s \le \gamma$ we have
	\[ \| g \|_{\ssp^s} \le C C_{\Xi} 2^{(s - \gamma) N} \| g \|_{L^2} + \| f
	\|_{\ssp^s} . \]
	If $s = 0$ we can choose $N$ large enough so that $\| g \|_{L^2} \le 2
	\| f \|_{L^2}$ and as a consequence we have also
	\[ \| g \|_{\ssp^s} \le 2 C C_{\Xi} 2^{(s - \gamma) N}  \| f \|_{L^2} +
	\| f \|_{\ssp^s} \]
	for all $s \le \gamma$. If $s < \gamma$ we can have $N$ large enough
	(depending on $s$) so that $\| g \|_{\ssp^s} \le 3 \| f \|_{\ssp^s}$.
\end{proof}

\begin{remark}\upshape\upshape
	Note that $\mathscr{D}^\gamma_\Xi$ is actually independent of $\gamma$, since for $\gamma,\gamma'\in(2/3,1)$ we can compute
	\[
	||u||_{\ssp^\gamma}\lesssim ||u^\sharp||_{\ssp^2}\lesssim||u||_{\mathscr{D}^{\gamma'}_{\Xi}},
	\]
	and vice versa, so the $\mathscr{D}_\Xi^\gamma$ and $\mathscr{D}_\Xi^{\gamma'}$norms are equivalent and we will from now on drop the $\gamma$ and write simply $\mathscr{D}_\Xi$.  That is, we have $\mathscr{D}(A) = \mathscr{D}_\Xi$.
\end{remark}

As a first step we prove that the domain of $A$, now defined to be $\mathscr{D}(A),$ is dense in $L^2.$  Before that we note the following remark and then a lemma.

\begin{remark}\upshape\label{rem:smoothNotation}
In the sequel, we put
\[
 X_\varepsilon = (1 - \Delta)^{- 1} \xi_\varepsilon.
\]
and similar to the operator $\Gamma$ in Lemma \ref{lem:gamma} we define  $\Gamma_\varepsilon$ as follows
\[ \Gamma_\varepsilon u := \Delta_{> N} (\Gamma_\varepsilon u \prec X_\varepsilon + B_{\Xi_\varepsilon} (\Gamma_\varepsilon u)) + u^\sharp, \]
where $\Xi_\varepsilon \rightarrow  \Xi$ in $\mathscr{X}^\alpha$. 
\end{remark}

For the above introduced $\Gamma_\varepsilon$ we prove the following lemma, which will be useful in the sequel.

\begin{lemma} \label{lem:identityGammaconv}
	We have that $|| \text{id} - \Gamma  \Gamma_\varepsilon^{-1}||_{\mathscr{H}^\gamma \rightarrow \mathscr{H}^\gamma} \rightarrow 0$.
\end{lemma}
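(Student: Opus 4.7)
The plan is to reduce the claim to norm convergence of the underlying fixed-point operators. Setting
\[
T f := \Delta_{> N}(f \prec X + B_{\Xi}(f)), \qquad T_\varepsilon f := \Delta_{> N}(f \prec X_\varepsilon + B_{\Xi_\varepsilon}(f)),
\]
the defining equations of $\Gamma$ and $\Gamma_\varepsilon$ just say $\Gamma = (\text{id} - T)^{-1}$ and $\Gamma_\varepsilon = (\text{id} - T_\varepsilon)^{-1}$ on $\ssp^\gamma$, the contraction-mapping argument of Proposition \ref{lem:gamma} ensuring well-definedness uniformly in small $\varepsilon$ after possibly enlarging $N$. Hence $\Gamma^{-1} = \text{id} - T$ and $\Gamma_\varepsilon^{-1} = \text{id} - T_\varepsilon$, and a one-line algebraic manipulation yields the key identity
\[
\text{id} - \Gamma \Gamma_\varepsilon^{-1} \;=\; \Gamma\bigl(\Gamma^{-1} - \Gamma_\varepsilon^{-1}\bigr) \;=\; \Gamma(T_\varepsilon - T).
\]
By the $\ssp^\gamma$-boundedness of $\Gamma$ from Proposition \ref{lem:gamma}, the lemma reduces to showing that $\|T_\varepsilon - T\|_{\ssp^\gamma \to \ssp^\gamma} \to 0$.

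Next I would decompose
\[
(T_\varepsilon - T) f \;=\; \Delta_{> N}\bigl(f \prec (X_\varepsilon - X)\bigr) \;+\; \Delta_{> N}\bigl((B_{\Xi_\varepsilon} - B_\Xi)(f)\bigr)
\]
and estimate the two pieces separately by revisiting the paraproduct computations in the proof of Proposition \ref{lem:gamma}, this time with $X_\varepsilon - X$ and $\Xi_\varepsilon - \Xi$ playing the roles of $X$ and $\Xi$. Using the linearity of $B_\Xi$ in the enhanced noise together with the Schauder-type bound stated just after \eqref{eq:def-G}, the second piece is controlled by $\|\Xi_\varepsilon - \Xi\|_{\mathscr{X}^\alpha}\,\|f\|_{L^2}$, while the paraproduct piece is controlled by $\|X_\varepsilon - X\|_{\hsp^{\alpha+2}}\,\|f\|_{\ssp^\gamma}$. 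Theorem \ref{thm:2dren} then guarantees that both of these prefactors tend to $0$ almost surely as $\varepsilon \to 0$, delivering the desired operator-norm convergence.

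The main technical point will be the paraproduct bound for $f \prec (X_\varepsilon - X)$ in $\ssp^\gamma$: since $\ssp^\gamma \not\hookrightarrow L^\infty$ for $\gamma < 1$ on $\mathbb{T}^2$, one cannot simply absorb $f$ into an $L^\infty$ norm and must argue at the dyadic Littlewood--Paley level, trading the $L^2$ regularity of $f$ against the (close to $\hsp^1$) regularity of $X_\varepsilon - X$. This works as long as $\gamma < \alpha + 2$, which is compatible with the admissible range $\alpha < -1$ chosen close to $-1$ and $\gamma \in (2/3, 1)$ chosen not too close to $1$; the bookkeeping is identical to the analogous step in the proof of Proposition \ref{lem:gamma}.
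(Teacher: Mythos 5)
Your proposal is correct and follows essentially the same route as the paper: writing $\Gamma=(\mathrm{id}-T)^{-1}$, $\Gamma_\varepsilon=(\mathrm{id}-T_\varepsilon)^{-1}$ and using $\mathrm{id}-\Gamma\Gamma_\varepsilon^{-1}=\Gamma(\Gamma^{-1}-\Gamma_\varepsilon^{-1})=\Gamma(T_\varepsilon-T)$, then invoking the $\ssp^\gamma$-bound on $\Gamma$ from Proposition \ref{lem:gamma}, the linearity of the ansatz in the noise to bound $T_\varepsilon-T$ by $\|\Xi_\varepsilon-\Xi\|_{\mathscr{X}^\alpha}$, and Theorem \ref{thm:2dren}. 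The only difference is presentational (the paper does this in one displayed chain of estimates), and your concern about $f\prec(X_\varepsilon-X)$ is already handled by the Bony estimate of Proposition \ref{lem:paraest}, which trades the $L^2$ norm of $f$ against the $\CC^{\alpha+2}$ regularity of $X_\varepsilon-X$ exactly as you describe.
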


\begin{proof}
	For $f \in \mathscr{H}^\gamma$, we can write, by using Proposition \ref{lem:gamma}
	\begin{align*}
	|| f -  \Gamma \Gamma_\varepsilon^{-1}(f)||_{ \mathscr{H}^\gamma} &= || \Gamma(f - f\prec X + B_{\Xi} (f))  -   \Gamma(f - f\prec X_\varepsilon + B_{\Xi^\varepsilon} (f)) ||_{ \mathscr{H}^\gamma}\\
	&= || \Gamma(f\prec ( X_\varepsilon - X) + B_{(\Xi^\varepsilon - \Xi)} (f))||_{ \mathscr{H}^\gamma}\\
	& \leq D_\Xi ||f||_{\mathscr{H}^\gamma} || \Xi^\varepsilon - \Xi||_{ \mathscr{X^\alpha}}
	\end{align*}
	which shows that $\text{id} = \Gamma  \Gamma^{-1}$ converges to $\Gamma  \Gamma_\varepsilon^{-1}$ in operator norm.
\end{proof}

\begin{corollary}
	The space $\mathscr{D}(A),$ as defined in Definition \ref{def:2dDXi},  is dense in $\mathscr{H}^\gamma$, therefore dense  in $L^2$.
\end{corollary}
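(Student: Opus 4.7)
The plan is to approximate any $v\in\mathscr{H}^\gamma$ by elements of $\mathscr{D}(A)=\Gamma(\mathscr{H}^2)$ in two stages: first replace $v$ by a smooth function, and then push the smooth function into the domain through the composition $\Gamma\circ\Gamma_\varepsilon^{-1}$, which is close to the identity when the mollification parameter $\varepsilon$ is small.

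Concretely, given $v\in\mathscr{H}^\gamma$ and $\eta>0$, standard mollification yields a smooth $v_\delta\in C^\infty(\mathbb{T}^2)$ with $\|v-v_\delta\|_{\mathscr{H}^\gamma}<\eta/2$. The convergence $\Xi_\varepsilon\to\Xi$ in $\mathscr{X}^\alpha$ from Theorem \ref{thm:2dren} bounds $\|\Xi_\varepsilon\|_{\mathscr{X}^\alpha}$ uniformly for small $\varepsilon$, so by Proposition \ref{lem:gamma} one can fix a single $N$ making $\Gamma$ and every $\Gamma_\varepsilon$ well defined and satisfying the stated bounds. Because $\xi_\varepsilon$ is smooth for each fixed $\varepsilon>0$, so are $X_\varepsilon=(1-\Delta)^{-1}\xi_\varepsilon$ and the full enhancement $\Xi_\varepsilon$, hence the explicit formula
\[
\Gamma_\varepsilon^{-1}(v_\delta)=v_\delta-\Delta_{>N}\bigl(v_\delta\prec X_\varepsilon+B_{\Xi_\varepsilon}(v_\delta)\bigr)
\]
produces a smooth $w_\varepsilon:=\Gamma_\varepsilon^{-1}(v_\delta)$, in particular $w_\varepsilon\in\mathscr{H}^2$. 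Setting $u_\varepsilon:=\Gamma(w_\varepsilon)=\Gamma\Gamma_\varepsilon^{-1}(v_\delta)$ then gives a bona fide element of $\mathscr{D}(A)$.

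By Lemma \ref{lem:identityGammaconv} we have $\|v_\delta-u_\varepsilon\|_{\mathscr{H}^\gamma}\le\|\mathrm{id}-\Gamma\Gamma_\varepsilon^{-1}\|_{\mathscr{H}^\gamma\to\mathscr{H}^\gamma}\|v_\delta\|_{\mathscr{H}^\gamma}\to 0$ as $\varepsilon\to 0$, so taking $\varepsilon$ sufficiently small gives $\|v_\delta-u_\varepsilon\|_{\mathscr{H}^\gamma}<\eta/2$ and therefore $\|v-u_\varepsilon\|_{\mathscr{H}^\gamma}<\eta$. This proves density of $\mathscr{D}(A)$ in $\mathscr{H}^\gamma$, and density in $L^2$ then follows at once from the continuous dense embedding $\mathscr{H}^\gamma\hookrightarrow L^2$. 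The only delicate point, and the step that requires the regularization device, is verifying that $\Gamma_\varepsilon^{-1}(v_\delta)$ genuinely lies in $\mathscr{H}^2$ rather than merely in $\mathscr{H}^\gamma$, so that applying $\Gamma$ actually lands in $\mathscr{D}(A)$; this is exactly where the smoothness of $X_\varepsilon$ and $\Xi_\varepsilon$ at positive $\varepsilon$ is essential, since the analogous computation with the singular $X$ and $\Xi_2$ would only close at the $\mathscr{H}^\gamma$ level.
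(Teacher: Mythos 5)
Your argument is correct and follows the same core strategy as the paper: approximate the identity on $\mathscr{H}^\gamma$ by $\Gamma\Gamma_\varepsilon^{-1}$ and invoke Lemma \ref{lem:identityGammaconv}. The one genuine difference is your preliminary mollification step, and it is a sensible addition: the paper's one-line proof applies $\Gamma\Gamma_\varepsilon^{-1}$ directly to an arbitrary $f\in\mathscr{H}^\gamma$, but for $\Gamma\Gamma_\varepsilon^{-1}f$ to lie in $\mathscr{D}(A)$ one needs the sharp part $\Gamma_\varepsilon^{-1}f=f-\Delta_{>N}\bigl(f\prec X_\varepsilon+B_{\Xi_\varepsilon}(f)\bigr)$ to be in $\mathscr{H}^2$, which fails for general $f\in\mathscr{H}^\gamma$ since the correction term is smooth but $f$ itself is not. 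By first replacing $f$ with a smooth $v_\delta$ (so that $\Gamma_\varepsilon^{-1}(v_\delta)\in\mathscr{H}^2$ thanks to the smoothness of $X_\varepsilon$ and $\Xi_\varepsilon$) and then letting $\varepsilon\to 0$, you obtain genuine elements of $\mathscr{D}(A)$ approximating $f$ in $\mathscr{H}^\gamma$; this makes your write-up slightly longer but closes a point the paper leaves implicit, at no real extra cost.
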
  
\begin{proof}
For $f \in \mathscr{H}^\gamma$, by Lemma \ref{lem:identityGammaconv}, simply observe that
\[
|| f -   \Gamma \Gamma_\varepsilon^{-1} f||_{\mathscr{H}^\gamma} \rightarrow 0.
\]
Hence, the result.
\end{proof}

We are in a position to define the operator $A$ in $L^2$ on  its domain $\mathscr{D}(A)$.

\begin{definition}
We define the  operator $A:\mathscr{D}(A) \to L^2$ as
	\begin{align} \label{equ:defandersonHam}
	A u : = \Delta u^{\sharp} + u^{\sharp} \circ \xi + G (u),\end{align}
	 where in $u \xi= u \prec \xi +  \xi \prec u+ u\circ \xi$
	 we have defined
\begin{align*} u \circ \xi&:= (\Delta_{> N}(u\prec X +B_{\Xi}(u))+u^\sharp) \circ \xi\\
&= C_N (u, \xi, X) + u\Xi_2  + (\Delta_{> N}B_{\Xi}) \circ \xi + u^\sharp \circ \xi \ \ \text{and we put} \  \  \\
G (u) &: = \Delta_{\le N} (u \prec \xi + u \succ \xi + u \prec \Xi_{2})\\ &+ \Delta_{>
	N} (- B_{\Xi} (u) - u \prec X + u \succcurlyeq \Xi_2 + C_N (u,
X, \xi) + B_{\Xi} (u) \circ \xi). \end{align*}

Here, $C_N (u, \xi, X) := \left(\Delta_{> N} (u\prec X)\right))\circ \xi- u(X\circ \xi)$ is the modified commutator which satisfies bounds (depending on the fixed $N$) as shown in Proposition \ref{prop:commu2}.

\end{definition}

\begin{remark}\upshape
	By using the regularities in Definition \ref{def:2dDXi}, one can easily check, through Proposition \ref{lem:paraest}, that $Au$ is in fact in $L^2$.  Later, respectively in Proposition \ref{prop:opApp} and Theorem \ref{thm:normResolventMain}  we obtain this operator as a norm and norm resolvent limit of $A_\varepsilon$ which perfectly motivates the informal identity
	\[
	A = \Delta + \xi - \infty.
	\]
\end{remark}

In the following result, we show that the $\ssp^2$-norm of $u^\sharp$ can be bounded above by the (standard) domain norm of $A$.

\begin{proposition} \label{lem:h2bound}
	There exists a constant $C_{\Xi} > 0$ depending on the enhanced
	noise such that
	\begin{equation}\label{equ:h2bound}
	\| u^{\sharp} \|_{\ssp^2} \le 2 \| A u \|_{L^2} + C_{\Xi} \| u
	\|_{L^2}. \end{equation}
\end{proposition}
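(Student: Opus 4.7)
The natural starting point is the defining identity
\[
Au \;=\; \Delta u^{\sharp} + u^{\sharp}\circ\xi + G(u),
\]
which rearranges to $\Delta u^{\sharp} = Au - u^{\sharp}\circ\xi - G(u)$. Since on the torus one has the standard equivalence $\|u^{\sharp}\|_{\ssp^{2}} \sim \|\Delta u^{\sharp}\|_{L^{2}} + \|u^{\sharp}\|_{L^{2}}$, the problem reduces to controlling the three quantities $\|u^{\sharp}\circ\xi\|_{L^{2}}$, $\|G(u)\|_{L^{2}}$, and $\|u^{\sharp}\|_{L^{2}}$ by $\|Au\|_{L^{2}}$ and $\|u\|_{L^{2}}$. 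The factor $2$ in the stated inequality is the fingerprint of an absorption argument: I expect to bound the two "error" terms by $\tfrac{1}{2}\|u^{\sharp}\|_{\ssp^{2}} + C_{\Xi}\|u\|_{L^{2}}$ and move the first piece to the left-hand side.

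\textbf{The resonant piece $u^{\sharp}\circ\xi$.} Since $u^{\sharp}\in\ssp^{2}$ and $\xi\in\CC^{\alpha}$ with $\alpha$ slightly below $-1$, the paraproduct estimates from the Appendix give
\[
\|u^{\sharp}\circ\xi\|_{L^{2}} \;\lesssim\; \|u^{\sharp}\|_{\ssp^{s}}\,\|\xi\|_{\CC^{\alpha}}
\]
for any $s>-\alpha$ with $s<2$. Interpolating $\|u^{\sharp}\|_{\ssp^{s}}\leq \|u^{\sharp}\|_{\ssp^{2}}^{s/2}\|u^{\sharp}\|_{L^{2}}^{1-s/2}$ and applying Young's inequality absorbs the $\ssp^{2}$ factor, yielding
\[
\|u^{\sharp}\circ\xi\|_{L^{2}} \;\leq\; \tfrac{1}{4}\|u^{\sharp}\|_{\ssp^{2}} + C_{\Xi}\|u^{\sharp}\|_{L^{2}}.
\]

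\textbf{The remainder $G(u)$.} I would inspect term by term. The low-frequency pieces $\Delta_{\le N}(u\prec\xi + u\succ\xi + u\prec\Xi_{2})$ are smoothed by the frequency cut-off at $2^{N}$, so they are bounded in $L^{2}$ by $C(N,\Xi)\|u\|_{L^{2}}$. The high-frequency pieces $\Delta_{>N}(-B_{\Xi}(u)-u\prec X)$ lie in $\ssp^{2\gamma}$ respectively $\ssp^{\gamma}$ with norm $\lesssim C_{\Xi}\|u\|_{\ssp^{\gamma}}$ via Schauder and paraproduct estimates; $\Delta_{>N}(u\succcurlyeq\Xi_{2})$ is well-defined since $\gamma+(2\alpha+2)>0$ for the admissible parameter range; the modified commutator $C_{N}(u,X,\xi)$ obeys the bound of Proposition~\ref{prop:commu2}; and $B_{\Xi}(u)\circ\xi \in \ssp^{2\gamma+\alpha}\hookrightarrow L^{2}$. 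Putting these together one obtains $\|G(u)\|_{L^{2}}\leq C_{\Xi}\|u\|_{\ssp^{\gamma}}$.

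\textbf{Conclusion.} The ansatz $u = \Delta_{>N}(u\prec X + B_{\Xi}(u)) + u^{\sharp}$ combined with Schauder estimates gives both $\|u^{\sharp}\|_{L^{2}} \leq (1+C_{\Xi})\|u\|_{L^{2}}$ and $\|u\|_{\ssp^{\gamma}} \leq \|u^{\sharp}\|_{\ssp^{\gamma}} + C_{\Xi}\|u\|_{L^{2}}$. Interpolating $\|u^{\sharp}\|_{\ssp^{\gamma}}$ between $\ssp^{2}$ and $L^{2}$ and applying Young once more,
\[
\|G(u)\|_{L^{2}} \;\leq\; \tfrac{1}{4}\|u^{\sharp}\|_{\ssp^{2}} + C_{\Xi}\|u\|_{L^{2}}.
\]
Summing the two estimates and plugging into $\|u^{\sharp}\|_{\ssp^{2}} \leq \|Au\|_{L^{2}} + \|u^{\sharp}\circ\xi\|_{L^{2}} + \|G(u)\|_{L^{2}} + \|u^{\sharp}\|_{L^{2}}$ allows one to move $\tfrac{1}{2}\|u^{\sharp}\|_{\ssp^{2}}$ to the left, giving the claim. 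The main subtlety I anticipate is the resonant product $u^{\sharp}\circ\xi$: the regularities lie right at the edge of what paraproduct estimates allow, and the choice of $N$ from Proposition~\ref{lem:gamma} is what makes the bookkeeping close; the rest is routine paraproduct calculus.
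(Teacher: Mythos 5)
Your proposal is correct and follows essentially the same route as the paper: rearrange $Au=\Delta u^\sharp+u^\sharp\circ\xi+G(u)$, bound $\|u^\sharp\circ\xi\|_{L^2}$ and $\|G(u)\|_{L^2}\le C_\Xi\|u\|_{\ssp^\gamma}$ via the paraproduct estimates together with $\|u^\sharp\|_{L^2}\lesssim_\Xi\|u\|_{L^2}$, and then absorb the $\ssp^2$ contributions into the left-hand side. The only cosmetic difference is that you implement the absorption by Sobolev interpolation plus Young's inequality, whereas the paper splits $u^\sharp$ at a frequency $2^M$ and uses Bernstein's inequality with $M$ chosen large depending on $\Xi$ -- two equivalent mechanisms.
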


\begin{proof}

	First, we note that $\Delta u^{\sharp} \in L^2$ by
		assumption. For the resonant term we compute
		\begin{align}
		\| u^{\sharp} \circ \xi \|_{L^2} &\le \| (\Delta_{\le M}
		u^{\sharp}) \circ \xi \|_{L^2} + \| (\Delta_{> M} u^{\sharp}) \circ \xi
		\|_{L^2} \nonumber\\&\le C_{\Xi} 2^{2 M} \| u^{\sharp} \|_{L^2} + \| \Delta_{>
			M} u^{\sharp} \|_{\ssp^{1 + \delta}} \| \xi \|_{\CC^{- 1 - 2\delta}}
		\label{eqn:usharpxi}
		\end{align}
		for $\delta$ sufficiently small, giving, for any $M \ge 0$,
		\[ \| u^{\sharp} \circ \xi \|_{L^2} \lesssim_{\Xi} (2^{2 M} \| u^{\sharp}
		\|_{L^2} + 2^{(\delta - 1) M} \| u^{\sharp} \|_{\ssp^2}), \]
		where we have used Bernstein's inequality (Lemma \ref{lem:bernstein}) and
		Theorem \ref{thm:2dren} for the noise. Using again Bernstein's inequality
		for the low-frequency terms and the paraproduct estimates for the
		high-frequency terms, we obtain the bound
		\[ \| G (u) \|_{L^2} \le C_{\Xi} \| u \|_{\ssp^{\gamma}}, \]
		for $\gamma < 1,$ where the constant can be chosen as
		\[ C_{\Xi} = C 2^{2 N} (\| \xi \|_{\CC^{\alpha}} + \| \Xi_2
		\|_{\CC^{2 \alpha + 2}} ) \]
		with $\alpha < - 1$ as before.
		
		By using these, for the $\ssp^2$ bound, we compute
		\[ \| \Delta u^{\sharp} \|_{L^2} \le \| A u \|_{L^2} + \| u^{\sharp}
		\circ \xi \|_{L^2} + \| G (u) \|_{L^2}. \]
		Now, as above we have
		\[ \| u^{\sharp} \circ \xi \|_{L^2} \lesssim_{\Xi} (2^{2 M} \| u^{\sharp}
		\|_{L^2} + 2^{\gamma M} \| u^{\sharp} \|_{\ssp^2}) \]
		and
		\begin{equation}
		\| G (u) \|_{L^2} \lesssim_{\Xi} \| u \|_{\ssp^{\gamma}} \lesssim_{\Xi}
		\| u^{\sharp} \|_{\ssp^{\gamma}}  \lesssim_\Xi\| \Delta_{> M} u^{\sharp} \|_{\ssp^{\gamma}} + \| \Delta_{\le M} u^{\sharp} \|_{\ssp^{\gamma}}
		\label{eqn:Gubound}
		\end{equation}
		and using again Bernstein's inequality for the low-frequency part we get
		\[ \| G (u) \|_{L^2} \lesssim C_{\Xi} (2^{2 M} \| u \|_{L^2} + 2^{-\gamma M} \| u^{\sharp} \|_{\ssp^2}) \]
		where we have used the straightforward bound
		$\| u^{\sharp} \|_{L^2} \le C_{\Xi} \| u \|_{L^2}$.
		Finally, choosing $M$ large enough (depending on $\Xi$), we obtain
		\[ \| u^{\sharp} \|_{\ssp^2} \le 2 \| A u \|_{L^2} + C_{\Xi} \| u
		\|_{L^2} . \]

	Hence, the result.
\end{proof}

\subsubsection{Density, symmetry, self-adjointness and convergence}

In the following, we show that $A$ is a closed and symmetric operator on $\mathscr{D}(A)$.  We first establish closedness.

\begin{proposition} \label{prop:opclosed}
	We have that $A$ is a closed  operator on its dense domain $\mathscr{D}(A)$.
\end{proposition}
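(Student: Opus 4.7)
The plan is to use the a priori estimate of Proposition \ref{lem:h2bound} as the key ingredient in a standard closed-graph-style argument. Suppose $(u_n) \subset \mathscr{D}(A)$ with $u_n \to u$ in $L^2$ and $A u_n \to v$ in $L^2$, and denote by $u_n^{\sharp}$ the paracontrolled remainders, so that $u_n = \Gamma u_n^{\sharp}$. Applying Proposition \ref{lem:h2bound} to the difference $u_n - u_m$ (which again lies in $\mathscr{D}(A)$ by linearity) yields
\[
\|u_n^{\sharp} - u_m^{\sharp}\|_{\ssp^2} \le 2\|A u_n - A u_m\|_{L^2} + C_\Xi \|u_n - u_m\|_{L^2},
\]
and both sequences on the right are Cauchy in $L^2$. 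Hence $(u_n^{\sharp})$ is Cauchy in $\ssp^2$ and converges to some $u^{\sharp} \in \ssp^2$.

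Next I would identify $u$ with $\Gamma u^{\sharp}$. Since $\Gamma$ is bounded on $L^2$ by Proposition \ref{lem:gamma} (with $s=0$), and $u_n^{\sharp} \to u^{\sharp}$ in $\ssp^2 \hookrightarrow L^2$, we obtain $u_n = \Gamma u_n^{\sharp} \to \Gamma u^{\sharp}$ in $L^2$. Uniqueness of the $L^2$ limit gives $u = \Gamma u^{\sharp}$. In particular $u$ admits the paracontrolled ansatz with remainder $u^{\sharp} \in \ssp^2$, and so $u \in \mathscr{D}(A)$.

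It remains to check that $A u = v$, which reduces to $A u_n \to A u$ in $L^2$. Using Proposition \ref{lem:gamma} with some $s < \gamma$, $\Gamma$ is also bounded on $\ssp^s$, and interpolating with the $\ssp^2$-convergence of $u_n^{\sharp}$ we get $u_n \to u$ in $\ssp^{\gamma}$. Each of the three pieces of $A u = \Delta u^{\sharp} + u^{\sharp} \circ \xi + G(u)$ is then continuous under these convergences: $\Delta u_n^{\sharp} \to \Delta u^{\sharp}$ in $L^2$ is immediate; the linear map $G$ satisfies $\|G(w)\|_{L^2} \lesssim_\Xi \|w\|_{\ssp^{\gamma}}$ as established in the proof of Proposition \ref{lem:h2bound}, giving $G(u_n) \to G(u)$ in $L^2$; and the same proof provides the linear bound $\|w \circ \xi\|_{L^2} \lesssim_\Xi \|w\|_{\ssp^2}$ needed to pass to the limit in the resonant term. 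Summing, $A u_n \to A u$ in $L^2$, and uniqueness of the limit forces $v = A u$.

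The only delicate point is the continuity of the resonant product $u^{\sharp} \circ \xi$, because $\xi$ lives only in $\CC^{-1-\delta}$; however, the mollifier-free Littlewood--Paley decomposition used in \eqref{eqn:usharpxi} shows that this product depends linearly and continuously on $u^{\sharp} \in \ssp^2$ once the enhanced noise $\Xi$ is fixed, so no obstruction arises. The rest is linear algebra plus the basic boundedness properties of $\Gamma$.
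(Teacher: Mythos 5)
Your proof is correct and follows essentially the same route as the paper: Proposition \ref{lem:h2bound} applied to differences gives that $u_n^{\sharp}$ is Cauchy in $\ssp^2$, the boundedness of $\Gamma$ identifies $u=\Gamma u^{\sharp}\in\mathscr{D}(A)$, and the limit $Au=v$ is obtained from the continuity of $A\Gamma:\ssp^2\to L^2$. The only difference is presentational: where the paper invokes the estimate $\|Au-Au_n\|_{L^2}\lesssim_\Xi\|u^{\sharp}-u_n^{\sharp}\|_{\ssp^2}$ from the proof of Proposition \ref{lem:h2bound} in one line, you verify it term by term (and your passing remark about "interpolating" to get $u_n\to u$ in $\ssp^{\gamma}$ is unnecessary, since Proposition \ref{lem:gamma} already gives the bound at $s=\gamma$).
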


\begin{proof}
	Assume $(u_n) \subset \mathscr{D} (A)$ is
	a sequence s.t.
	\begin{eqnarray*}
		u_n \rightarrow u & \tmop{in} & L^2\\
		& \tmop{and} & \\
		A u_n \rightarrow g & \tmop{in} & L^2
	\end{eqnarray*}
	for some $g \in L^2 .$ Then $u_n^{\sharp}$ forms a Cauchy
	sequence in $\ssp^2$ and thus converges to a limit that we call
	$w^{\sharp}$. Moreover $\Gamma w^{\sharp} = u,$ so
	$u \in \mathscr{D} (A)
	.$ Thus
	\begin{eqnarray*}
		\| A u - g \|_{L^2} & \le & \| A u - A u_n \|_{L^2} + \| A u_n - g
		\|_{L^2}\\
		& \le & C_\Xi \| w^{\sharp}- u_n^{\sharp} \|_{\mathscr{H}^2} + \| A u_n - g
		\|_{L^2}
	\end{eqnarray*}
	where the second step comes from the proof of Proposition \ref{lem:h2bound} . Since
	both terms on the right-hand side tend to zero as $n \rightarrow \infty$ we
	get $A f = g$, namely that $A$ is closed.
	
	\end{proof}

Before we show the symmetry we need the following approximation result.

\begin{proposition} \label{prop:opclosed}
For every $u \in \mathscr{D}(A)$ there exists a sequence $u_\varepsilon \in \mathscr{H}^2$ such that 
\begin{equation} \label{equ:h2app}
|| u - u_\varepsilon  ||_{\mathscr{H}^\gamma} + || u^\sharp - u_\varepsilon^\sharp ||_{\mathscr{H}^2} \rightarrow 0
\end{equation}
as $\varepsilon \rightarrow 0$. 
\end{proposition}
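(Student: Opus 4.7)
The natural candidate is $u_\varepsilon := \Gamma_\varepsilon u^\sharp$, where $u^\sharp \in \ssp^2$ is the sharp component of $u = \Gamma u^\sharp$ and $\Gamma_\varepsilon$ is the regularized operator from Remark \ref{rem:smoothNotation} built with the smoothed enhanced noise $\Xi_\varepsilon$. First I check that $u_\varepsilon$ actually lies in $\ssp^2$: by the defining relation
\[
u_\varepsilon = \Delta_{>N}\bigl(u_\varepsilon \prec X_\varepsilon + B_{\Xi_\varepsilon}(u_\varepsilon)\bigr) + u^\sharp,
\]
the first summand involves only smooth noise data ($X_\varepsilon, \xi_\varepsilon$, and the second component of $\Xi_\varepsilon$) paraproduct-paired with the $\ssp^\gamma$-function $u_\varepsilon$, and by paraproduct estimates this piece lies in $\ssp^s$ for every $s$; the remaining term $u^\sharp$ is in $\ssp^2$ by assumption.

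The sharp-component term is then immediate: by the very definition of $\Gamma_\varepsilon$, the sharp part of $u_\varepsilon$ in its $\Xi_\varepsilon$-paracontrolled decomposition equals $u^\sharp$. Hence $u_\varepsilon^\sharp = u^\sharp$, so $\|u^\sharp - u_\varepsilon^\sharp\|_{\ssp^2} = 0$ identically.

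For the $\ssp^\gamma$ convergence I rewrite
\[
u - u_\varepsilon = \Gamma u^\sharp - \Gamma_\varepsilon u^\sharp = (\Gamma \Gamma_\varepsilon^{-1} - \mathrm{id})\, u_\varepsilon,
\]
and invoke Lemma \ref{lem:identityGammaconv}, which gives $\|\mathrm{id} - \Gamma \Gamma_\varepsilon^{-1}\|_{\ssp^\gamma \to \ssp^\gamma} \to 0$. To conclude I only need $u_\varepsilon$ uniformly bounded in $\ssp^\gamma$: Proposition \ref{lem:gamma} applied to $\Gamma_\varepsilon$ yields $\|u_\varepsilon\|_{\ssp^\gamma} \leq D_{\Xi_\varepsilon}\|u^\sharp\|_{\ssp^\gamma}$, and $D_{\Xi_\varepsilon}$ stays bounded in $\varepsilon$ because $\Xi_\varepsilon \to \Xi$ in $\mathscr{X}^\alpha$.

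The only delicate point is ensuring that the cut-off level $N$ entering the definition of $\Gamma_\varepsilon$ can be chosen independently of $\varepsilon$; this is handled by the convergence $\Xi_\varepsilon \to \Xi$ in $\mathscr{X}^\alpha$, which provides uniform control on the enhanced-noise norms that drive the contraction argument in the proof of Proposition \ref{lem:gamma}. Beyond this, no further analytic estimate is required.
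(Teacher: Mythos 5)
Your proposal is correct and follows essentially the paper's own route: the paper's proof takes an arbitrary approximation $u^\sharp_\varepsilon\to u^\sharp$ in $\mathscr{H}^2$ and sets $u_\varepsilon=\Gamma_\varepsilon(u^\sharp_\varepsilon)$, and the remark immediately after the proposition states explicitly that one may take $u^\sharp_\varepsilon=u^\sharp$, which is exactly your choice, with the $\mathscr{H}^\gamma$-convergence obtained, as you do, from Lemma \ref{lem:identityGammaconv} (equivalently the bilinear estimate $\lesssim_\Xi\|\Xi_\varepsilon-\Xi\|_{\mathscr{X}^\alpha}\|u^\sharp\|$ in its proof) together with the uniform bounds of Proposition \ref{lem:gamma}. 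One minor imprecision: the term $(1-\Delta)^{-1}(\xi_\varepsilon\prec u_\varepsilon)$ inside $B_{\Xi_\varepsilon}(u_\varepsilon)$ lies only in $\mathscr{H}^{\gamma+2}$ rather than in every $\mathscr{H}^s$ (the paraproduct cannot exceed the regularity of its high-frequency factor $u_\varepsilon$), but since $\gamma+2>2$ this still yields $u_\varepsilon\in\mathscr{H}^2$, so the conclusion is unaffected.
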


\begin{proof}

	 We take an arbitrary approximation $u_\varepsilon^\sharp \rightarrow u^\sharp$ in $\mathscr{H}^2$ and set $q_\varepsilon := \Gamma_\varepsilon (u_\varepsilon^\sharp), g_\varepsilon := \Gamma(u_\varepsilon^\sharp), $   that is
	\begin{align*}
	q_\varepsilon &=\Delta_{> N}(q_\varepsilon\prec X_\varepsilon +B_{\Xi_\varepsilon}(q_\varepsilon))+u_\varepsilon^\sharp\\
	g_\varepsilon &=\Delta_{> N}(g_\varepsilon\prec X +B_{\Xi}(g_\varepsilon))+u_\varepsilon^\sharp.
	\end{align*}
	We have $|| u^\sharp - u_\varepsilon^\sharp ||_{\mathscr{H}^2} \rightarrow 0$ by construction.  We can write
	\[
	|| u - q_\varepsilon  ||_{\mathscr{H}^\gamma} \leq || u - g_\varepsilon  ||_{\mathscr{H}^\gamma} + || g_\varepsilon  - q_\varepsilon  ||_{\mathscr{H}^\gamma} 
	\]
	We have, by Proposition \ref{lem:gamma} that
	\[
	|| u - g_\varepsilon  ||_{\mathscr{H}^\gamma} = || \Gamma(u^\sharp) - \Gamma(u_\varepsilon^\sharp)  ||_{\mathscr{H}^\gamma}\leq || u^\sharp - u_\varepsilon^\sharp ||_{\mathscr{H}^2} \rightarrow 0
	\]
	Similar to Proposition \ref{lem:gamma}, we also have that
	\[|| g_\varepsilon  - q_\varepsilon  ||_{\mathscr{H}^\gamma}  = || \Gamma_\varepsilon(u^\sharp) - \Gamma_\varepsilon(u_\varepsilon^\sharp)  ||_{\mathscr{H}^\gamma} = || \Gamma_\varepsilon(u^\sharp- u_\varepsilon^\sharp)  ||_{\mathscr{H}^\gamma} \lesssim || u^\sharp- u_\varepsilon^\sharp  ||_{\mathscr{H}^\gamma} \rightarrow 0. \].  This settles \eqref{equ:h2app}.
	
\end{proof}

\begin{remark}\upshape
Observe that, though stated generally, in Proposition \ref{prop:opclosed} one might as well take $u_\varepsilon^\sharp = u^\sharp$ and obtain the approximations $u_\varepsilon = \Gamma_\varepsilon(u^\sharp)$ with the stated properties.
\end{remark}

Now, we are also ready to show the norm convergence of the approximating operators.

\begin{proposition} \label{prop:opApp}
	Let  $u^\sharp \in \mathscr{H}^2$, $u  = \Gamma(u^\sharp )$ and  $u_\varepsilon  = \Gamma_\varepsilon(u^\sharp )$. We have that
	\begin{equation}\label{equ:normOpConv2}
	|| Au - A_\varepsilon u_\varepsilon ||_{L^2} \lesssim_{\Xi}   || \Xi_\varepsilon - \Xi||_{\mathscr{X}^\alpha} ||u^\sharp||_{\mathscr{H}^2}.
	\end{equation}
	Consequently, this implies that
	\begin{equation} \label{equ:normOpConv3}
	|| A\Gamma - A_\varepsilon \Gamma_\varepsilon   ||_{\mathscr{H}^2 \rightarrow L^2}  \rightarrow 0.
	\end{equation}
	That is to say, $A_\varepsilon \Gamma_\varepsilon \rightarrow A\Gamma$ in norm.
	\end{proposition}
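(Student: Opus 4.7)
The plan is to compute $Au$ and $A_\varepsilon u_\varepsilon$ using the same paracontrolled formula used to define $A$, namely
\[
Av = \Delta v^\sharp + v^\sharp \circ \xi + G(v),
\]
together with its analogue $A_\varepsilon v = \Delta v^\sharp + v^\sharp \circ \xi_\varepsilon + G_\varepsilon(v)$ obtained by replacing every instance of $\Xi = (\xi, \Xi_2, X)$ by $\Xi_\varepsilon = (\xi_\varepsilon, \Xi_2^\varepsilon, X_\varepsilon)$; for smooth $\xi_\varepsilon$ this identity is verified by direct computation. Since $u_\varepsilon^\sharp = u^\sharp$ by construction, the Laplacian term cancels in the difference and we are reduced to
\[
Au - A_\varepsilon u_\varepsilon = u^\sharp \circ (\xi - \xi_\varepsilon) + G(u) - G_\varepsilon(u_\varepsilon).
\]

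Every term on the right-hand side is bilinear in (function, noise-component), apart from the modified commutator $C_N(u, X, \xi)$, which is trilinear. I would use the elementary telescoping $M(u, \Xi) - M(u_\varepsilon, \Xi_\varepsilon) = M(u - u_\varepsilon, \Xi) + M(u_\varepsilon, \Xi - \Xi_\varepsilon)$ (with the analogous three-term variant for $C_N$) to split the difference into pieces of the form ``full noise $\Xi$ acting on the function difference $u - u_\varepsilon$'' and ``function $u_\varepsilon$ paired with the noise difference $\Xi - \Xi_\varepsilon$''. The paraproduct, Schauder and Bernstein estimates used in the proof of Proposition \ref{lem:h2bound}, together with the commutator bound of Proposition \ref{prop:commu2}, then produce
\[
\|G(u) - G_\varepsilon(u_\varepsilon)\|_{L^2} \lesssim_\Xi \|u - u_\varepsilon\|_{\ssp^\gamma} + \|u_\varepsilon\|_{\ssp^\gamma}\|\Xi - \Xi_\varepsilon\|_{\mathscr{X}^\alpha},
\]
while the resonant piece $u^\sharp \circ (\xi - \xi_\varepsilon)$ is dominated, via the dyadic splitting of \eqref{eqn:usharpxi} with $\xi$ replaced by $\xi-\xi_\varepsilon$, by $\|u^\sharp\|_{\ssp^2}\|\xi - \xi_\varepsilon\|_{\CC^\alpha}$.

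It remains to control $\|u - u_\varepsilon\|_{\ssp^\gamma}$ quantitatively. Subtracting the fixed-point identities
\[
u = \Delta_{>N}(u\prec X + B_\Xi(u)) + u^\sharp, \qquad u_\varepsilon = \Delta_{>N}(u_\varepsilon\prec X_\varepsilon + B_{\Xi_\varepsilon}(u_\varepsilon)) + u^\sharp,
\]
and applying the bilinear telescoping gives $u - u_\varepsilon = \Delta_{>N}\bigl((u-u_\varepsilon)\prec X + B_\Xi(u-u_\varepsilon)\bigr) + f$ with source $f = \Delta_{>N}\bigl(u_\varepsilon\prec(X - X_\varepsilon) + B_{\Xi - \Xi_\varepsilon}(u_\varepsilon)\bigr)$. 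This is precisely $u - u_\varepsilon = \Gamma(f)$, so the $\ssp^\gamma$ bound of Proposition \ref{lem:gamma} together with the paraproduct bound $\|f\|_{\ssp^\gamma} \lesssim_\Xi \|u_\varepsilon\|_{L^2}\|\Xi - \Xi_\varepsilon\|_{\mathscr{X}^\alpha}$ yields
\[
\|u - u_\varepsilon\|_{\ssp^\gamma} \lesssim_\Xi \|u_\varepsilon\|_{\ssp^\gamma}\|\Xi - \Xi_\varepsilon\|_{\mathscr{X}^\alpha} \lesssim_\Xi \|u^\sharp\|_{\ssp^2}\|\Xi - \Xi_\varepsilon\|_{\mathscr{X}^\alpha},
\]
where the last step uses $\|u_\varepsilon\|_{\ssp^\gamma} \leq 3\|u^\sharp\|_{\ssp^\gamma} \leq 3\|u^\sharp\|_{\ssp^2}$, again from Proposition \ref{lem:gamma}. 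Plugging this into the bound of the previous paragraph proves \eqref{equ:normOpConv2}. The operator-norm convergence \eqref{equ:normOpConv3} then follows by taking the supremum over $\|u^\sharp\|_{\ssp^2} \leq 1$ and invoking $\|\Xi - \Xi_\varepsilon\|_{\mathscr{X}^\alpha} \to 0$ from Theorem \ref{thm:2dren}.

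The primary technical nuisance, rather than a genuine obstacle, will be the careful bookkeeping needed to execute the multilinear telescoping across every piece of $G$ and $u^\sharp \circ \xi$ and to verify that the low-frequency cut-off $\Delta_{\leq N}$, combined with Bernstein's inequality, absorbs the regularity losses in the resonant contributions; the trilinear commutator piece is under similar control thanks to Proposition \ref{prop:commu2}.
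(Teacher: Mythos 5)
Your proposal is correct and follows essentially the same route as the paper: expand $Au-A_\varepsilon u_\varepsilon$ via \eqref{equ:defandersonHam}, telescope the bilinear (and, for $C_N$, trilinear) terms into a piece controlled by $\|\Xi-\Xi_\varepsilon\|_{\mathscr{X}^\alpha}\|u^\sharp\|_{\ssp^2}$ and a piece controlled by $\|u-u_\varepsilon\|_{\ssp^\gamma}\|\Xi\|_{\mathscr{X}^\alpha}$, and then bound $\|u-u_\varepsilon\|_{\ssp^\gamma}$ through the $\Gamma$-map. The only cosmetic difference is that you re-derive the bound $\|(\Gamma-\Gamma_\varepsilon)u^\sharp\|_{\ssp^\gamma}\lesssim_\Xi\|\Xi-\Xi_\varepsilon\|_{\mathscr{X}^\alpha}\|u^\sharp\|_{\ssp^2}$ by subtracting the two fixed-point identities, which is exactly the content of Lemma \ref{lem:identityGammaconv} that the paper cites at this point.
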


\begin{proof}
By using the formula \ref{equ:defandersonHam}, we observe that all terms in $Au  - A_\varepsilon u_\varepsilon$ are bilinear.  For the upper bound, by addition and subtraction of cross terms, one obtains terms of the form
\begin{equation} \label{equ:normOpconv4}
||\Xi_\varepsilon - \Xi||_{\mathscr{X}^\alpha} ||u^\sharp||_{\mathscr{H}^2} + ||u_\varepsilon - u||_{\mathscr{H}^\gamma} ||\Xi||_{\mathscr{X}^\alpha}.
\end{equation}
Now, recall that $u = \Gamma (u^\sharp), u_\varepsilon = \Gamma_\varepsilon (u^\sharp)$.  Then, we obtain terms of the form
\begin{equation} \label{equ:normOpconv4}
||\Xi_\varepsilon - \Xi||_{\mathscr{X}^\alpha} ||u^\sharp||_{\mathscr{H}^2} + ||\Gamma_\varepsilon - \Gamma||_{\mathscr{H}^2 \rightarrow\mathscr{H}^\gamma} ||u^\sharp||_{\mathscr{H}^2} ||\Xi||_{\mathscr{X}^\alpha} 
\end{equation}

By using Lemma \ref{lem:identityGammaconv} and the estimate in its proof, the result  \eqref{equ:normOpConv2} is now immediate.

\end{proof}

After this we immediately obtain the symmetry of the operator.

\begin{corollary}
	Let $u \in \mathscr{D}(A)$ and  $u_\varepsilon \in \mathscr{H}^2$ be as in Proposition \ref{prop:opclosed}. Then, we obtain
	\begin{equation}\label{equ:formapp}
	\langle A_\varepsilon u_\varepsilon, v_\varepsilon \rangle \rightarrow \langle A u, v \rangle.
	\end{equation}
	Consequently, we have that $A$ is a symmetric operator on its dense domain $\mathscr{D}(A)$.
\end{corollary}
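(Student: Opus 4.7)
The plan is to deduce the convergence \eqref{equ:formapp} directly from the norm operator convergence established in Proposition \ref{prop:opApp} together with the approximation of Proposition \ref{prop:opclosed}, and then to exploit the manifest symmetry of the regularised operators $A_\varepsilon = \Delta + \xi_\varepsilon - c_\varepsilon$ on $\mathscr{H}^2$ to pass symmetry to the limit.

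First, given $u,v\in\mathscr{D}(A)$ I would pick $u_\varepsilon = \Gamma_\varepsilon(u^\sharp)$ and $v_\varepsilon = \Gamma_\varepsilon(v^\sharp)$ as supplied by Proposition \ref{prop:opclosed} (using the remark following that proposition to take the same sharp component). By that proposition, $u_\varepsilon\to u$ and $v_\varepsilon\to v$ in $\mathscr{H}^\gamma$, hence in particular in $L^2$, and by Proposition \ref{prop:opApp} we have $A_\varepsilon u_\varepsilon \to A u$ in $L^2$. Writing
\[
\langle A_\varepsilon u_\varepsilon, v_\varepsilon\rangle - \langle Au, v\rangle = \langle A_\varepsilon u_\varepsilon - Au, v_\varepsilon\rangle + \langle Au, v_\varepsilon - v\rangle,
\]
each term tends to zero by Cauchy--Schwarz together with the uniform $L^2$-boundedness of $\{v_\varepsilon\}$. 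This gives \eqref{equ:formapp}.

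Next, since $\xi_\varepsilon$ is a real smooth function and $c_\varepsilon\in\mathbb{R}$, the operator $A_\varepsilon$ is symmetric on $\mathscr{H}^2$: for $u_\varepsilon,v_\varepsilon\in\mathscr{H}^2$, integration by parts gives
\[
\langle A_\varepsilon u_\varepsilon, v_\varepsilon\rangle = \langle \Delta u_\varepsilon, v_\varepsilon\rangle + \langle (\xi_\varepsilon - c_\varepsilon) u_\varepsilon, v_\varepsilon\rangle = \langle u_\varepsilon, A_\varepsilon v_\varepsilon\rangle.
\]
Applying \eqref{equ:formapp} to both sides (once with $(u,v)$ and once with $(v,u)$ after exchanging roles), we conclude $\langle Au,v\rangle = \langle u, Av\rangle$ for every $u,v\in\mathscr{D}(A)$, which is the desired symmetry.

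The only delicate point is ensuring that the approximating pair $(u_\varepsilon, v_\varepsilon)$ can indeed be chosen in the common regular space $\mathscr{H}^2$ and that \emph{both} sides of the identity converge to the expected limits; this is precisely guaranteed by Proposition \ref{prop:opclosed} together with the quantitative bound \eqref{equ:normOpConv2}, so no further obstacle arises.
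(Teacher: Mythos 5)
Your proposal is correct and follows essentially the same route as the paper: choose the approximations $u_\varepsilon=\Gamma_\varepsilon(u^\sharp)$, $v_\varepsilon=\Gamma_\varepsilon(v^\sharp)$, use the operator convergence of Proposition \ref{prop:opApp} (plus $L^2$ convergence of $v_\varepsilon$) to pass the bilinear form to the limit, and transfer the manifest symmetry of the smooth operators $A_\varepsilon$ to $A$. The only difference is that you spell out the Cauchy--Schwarz decomposition and the integration-by-parts symmetry of $A_\varepsilon$, which the paper leaves implicit.
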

\begin{proof}
This directly follows from Proposition \ref{prop:opApp}.  Using the  symmetry of $A_\varepsilon$ implies the symmetry of $A$ through the equalities
\begin{align*}
\langle u,Av\rangle=\lim\limits_{\varepsilon\to0}\langle u_\varepsilon,A_\varepsilon v_\varepsilon\rangle=\lim\limits_{\varepsilon\to0}\langle A_\varepsilon u_\varepsilon, v_\varepsilon\rangle=\langle Au,v\rangle.
\end{align*}
Hence, the result.
\end{proof}

The next result shows that the quadratic form given by $- A$ is, through addition of a constant, bounded from
below by the $\ssp^1$ norm of $u^{\sharp}$.   We  will later use this estimate to bound certain norms by a (conserved)
energy, when we deal with the NLS and the nonlinear wave equations.

\begin{proposition}
	\label{lem:2dh1bound}There exists a constant $C_{\Xi}>0$ such that
	\begin{equation} \label{equ:2dconstant}
	\frac{1}{2} \langle \nabla u^{\sharp}, \nabla u^{\sharp} \rangle
	\le - \langle u, A u \rangle + C_{\Xi} \| u \|_{L^2}^2 .
	 \end{equation}
\end{proposition}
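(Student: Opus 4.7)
The strategy is to pass to the limit from the regularized operators, where the inequality reduces to a classical integration by parts combined with the renormalization. Using the smooth approximation $u_\varepsilon := \Gamma_\varepsilon(u^\sharp) \in \ssp^2$ of Proposition \ref{prop:opclosed} and the convergence $A_\varepsilon u_\varepsilon \to Au$ in $L^2$, $u_\varepsilon \to u$ in $L^2$ from Proposition \ref{prop:opApp}, one has $\langle u_\varepsilon, A_\varepsilon u_\varepsilon\rangle \to \langle u, Au\rangle$, so it suffices to prove the bound uniformly at the regularized level. Since $A_\varepsilon = \Delta + \xi_\varepsilon - c_\varepsilon$ is classical, integration by parts gives
\[
-\langle u_\varepsilon, A_\varepsilon u_\varepsilon\rangle = \|\nabla u_\varepsilon\|_{L^2}^2 + c_\varepsilon\|u_\varepsilon\|_{L^2}^2 - \int u_\varepsilon^2\xi_\varepsilon\,\mathrm{d}x.
\]
Decomposing $u_\varepsilon = u^\sharp + v_\varepsilon$ with $v_\varepsilon := \Delta_{> N}(u_\varepsilon\prec X_\varepsilon + B_{\Xi_\varepsilon}(u_\varepsilon))$, one expands $\|\nabla u_\varepsilon\|^2 = \|\nabla u^\sharp\|^2 + 2\langle\nabla v_\varepsilon,\nabla u^\sharp\rangle + \|\nabla v_\varepsilon\|^2$ and discards the nonnegative last term. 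It remains to control the cross term and the renormalized quantity $R_\varepsilon := c_\varepsilon\|u_\varepsilon\|^2 - \int u_\varepsilon^2\xi_\varepsilon$ each by $\tfrac14\|\nabla u^\sharp\|_{L^2}^2 + C_\Xi\|u\|_{L^2}^2$, uniformly in $\varepsilon$.

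The cross term is straightforward: the Schauder estimate for $B_{\Xi_\varepsilon}$ together with the paraproduct bound $\|u_\varepsilon\prec X_\varepsilon\|_{\ssp^{2\gamma}}\lesssim\|u_\varepsilon\|_{\ssp^\gamma}\|X_\varepsilon\|_{\hsp^\gamma}$ (valid since $2\gamma > 1$) and the frequency gain of $\Delta_{> N}$ yield $\|v_\varepsilon\|_{\ssp^1}\lesssim 2^{(1-2\gamma)N}C_\Xi\|u_\varepsilon\|_{\ssp^\gamma}$. For $N$ large, iterating with $\|u_\varepsilon\|_{\ssp^\gamma}\le\|v_\varepsilon\|_{\ssp^1}+\|u^\sharp\|_{\ssp^1}$ gives $\|v_\varepsilon\|_{\ssp^1}\le\delta\|u^\sharp\|_{\ssp^1}$ for any prescribed $\delta>0$, and Young's inequality together with $\|u^\sharp\|_{\ssp^1}^2\le 2\|\nabla u^\sharp\|_{L^2}^2 + C_\Xi\|u\|_{L^2}^2$ closes the bound. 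For $R_\varepsilon$ one decomposes $u_\varepsilon\xi_\varepsilon$ into its two paraproducts and the resonant piece, then uses the paracontrolled structure to expand
\[
u_\varepsilon\circ\xi_\varepsilon = u_\varepsilon(X_\varepsilon\circ\xi_\varepsilon) + C_N(u_\varepsilon,\xi_\varepsilon,X_\varepsilon) + \Delta_{> N}B_{\Xi_\varepsilon}(u_\varepsilon)\circ\xi_\varepsilon + u^\sharp\circ\xi_\varepsilon.
\]
The counter-term $c_\varepsilon u_\varepsilon$ exactly absorbs the singular contribution $u_\varepsilon(X_\varepsilon\circ\xi_\varepsilon)$ through the renormalization identity $X_\varepsilon\circ\xi_\varepsilon + c_\varepsilon\to\Xi_2$ from Theorem \ref{thm:2dren}.

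The main obstacle is the resulting pairing $\langle u_\varepsilon, u^\sharp\circ\xi_\varepsilon\rangle$: a direct estimate via the resonant product bound would demand $\|u^\sharp\|_{\ssp^{1+\delta}}$, which cannot be absorbed into $\|\nabla u^\sharp\|_{L^2}^2 + C_\Xi\|u\|_{L^2}^2$ alone. This is exactly where the almost-adjointness of paraproduct and resonant product (Proposition \ref{lem:circadj}) enters: it allows one to rewrite, up to a controllable remainder, $\langle u_\varepsilon, u^\sharp\circ\xi_\varepsilon\rangle = \langle u_\varepsilon\prec\xi_\varepsilon, u^\sharp\rangle$, putting the distributional burden on the left factor so that $u^\sharp$ enters only through $\|u^\sharp\|_{\ssp^1}$. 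The other surviving pieces --- the commutator $C_N(u_\varepsilon,\xi_\varepsilon,X_\varepsilon)$ controlled by Proposition \ref{prop:commu2}, the resonant remainder $\Delta_{> N}B_{\Xi_\varepsilon}(u_\varepsilon)\circ\xi_\varepsilon$ which gains regularity through $\Delta_{> N}$ and the Schauder estimate on $B_{\Xi_\varepsilon}$, the two paraproducts $u_\varepsilon\prec\xi_\varepsilon$ and $\xi_\varepsilon\prec u_\varepsilon$, and the convergent product $u_\varepsilon(X_\varepsilon\circ\xi_\varepsilon + c_\varepsilon)$ --- are all estimated by standard paraproduct and commutator bounds to produce $|R_\varepsilon|\le\tfrac14\|\nabla u^\sharp\|_{L^2}^2 + C_\Xi\|u\|_{L^2}^2$. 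Summing the two contributions and letting $\varepsilon\to 0$ concludes.
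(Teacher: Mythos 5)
Your overall skeleton---prove the bound for the classical operators $A_\varepsilon$ uniformly in $\varepsilon$ with $u_\varepsilon=\Gamma_\varepsilon(u^\sharp)$ and pass to the limit via Proposition \ref{prop:opApp}---is legitimate and consistent with Remark \ref{rem:2deps} (the paper instead argues directly on $A$), and the renormalization of $\langle u_\varepsilon, u_\varepsilon(X_\varepsilon\circ\xi_\varepsilon)\rangle$ against $c_\varepsilon\|u_\varepsilon\|_{L^2}^2$ via Theorem \ref{thm:2dren} is in the right spirit. However, the two estimates at the core of your uniform bound are not correct. First, the claimed paraproduct bound $\|u_\varepsilon\prec X_\varepsilon\|_{\ssp^{2\gamma}}\lesssim\|u_\varepsilon\|_{\ssp^\gamma}\|X_\varepsilon\|_{\CC^\gamma}$ is false: a paraproduct never gains from the regularity of its low-frequency factor when that regularity is positive (see Proposition \ref{lem:paraest}); $u_\varepsilon\prec X_\varepsilon$ only has the regularity of $X_\varepsilon$, i.e.\ $\ssp^{(\alpha+2)-\delta}$ with $\alpha+2<1$, uniformly in $\varepsilon$. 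Hence $\Delta_{>N}$ produces no decay in the $\ssp^1$ norm, $v_\varepsilon$ is not uniformly small (nor even uniformly bounded) in $\ssp^1$, and the cross term is not ``straightforward'': pairing $\nabla v_\varepsilon$ with $\nabla u^\sharp$ amounts to pairing $\Delta\Delta_{>N}(u_\varepsilon\prec X_\varepsilon+B_{\Xi_\varepsilon}(u_\varepsilon))$ with $u^\sharp$, and since $\Delta X_\varepsilon=X_\varepsilon-\xi_\varepsilon$ this regenerates the singular term $\langle\Delta_{>N}(u_\varepsilon\prec\xi_\varepsilon),u^\sharp\rangle$, which is exactly the term the paper's proof has to keep and treat.

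Second, Proposition \ref{lem:circadj} does not do what you ask of it. Rewriting $\langle u_\varepsilon,u^\sharp\circ\xi_\varepsilon\rangle$ as $\langle u_\varepsilon\prec\xi_\varepsilon,u^\sharp\rangle$ plus a remainder does not reduce the burden on $u^\sharp$ to $\|u^\sharp\|_{\ssp^1}$: since $\xi_\varepsilon$ is only bounded in $\CC^{-1-\delta}$ uniformly in $\varepsilon$, the paraproduct $u_\varepsilon\prec\xi_\varepsilon$ lies in $\ssp^{-1-2\delta}$, so the pairing costs $\|u^\sharp\|_{\ssp^{1+2\delta}}$, which cannot be absorbed into $\tfrac14\|\nabla u^\sharp\|_{L^2}^2+C_\Xi\|u\|_{L^2}^2$. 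The point of Proposition \ref{lem:circadj} is that the \emph{difference} $D(u,\xi,\Delta_{>N}u^\sharp)=\langle u,(\Delta_{>N}u^\sharp)\circ\xi\rangle-\langle u\prec\xi,\Delta_{>N}u^\sharp\rangle$ is better than either pairing alone: it is bounded by $\|\xi\|_{\CC^{-1-\delta}}\|u\|_{\ssp^{(1+\delta)/2}}\|\Delta_{>N}u^\sharp\|_{\ssp^{(1+\delta)/2}}$, with both Sobolev indices strictly below $1$, which can then be absorbed by a frequency splitting at a large level $M$. A correct argument must therefore combine the resonant term with the $u\prec\xi$ term produced by the kinetic part (this is the structure of the paper's proof); your write-up destroys precisely this cancellation, discarding one half of it through the invalid paraproduct gain and declaring the other half harmless on its own. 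Without repairing these two points the proof does not close.
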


\begin{proof}
	Expanding the Ansatz and integrating by parts we get
	\begin{align*}
    \langle u, A u \rangle =& \langle u, \Delta u^{\sharp} \rangle + \langle
	u, u^{\sharp} \circ \xi \rangle + \langle u, G (u) \rangle\\ 
	 =& \langle \Delta_{> N} (u \prec X), \Delta u^{\sharp} \rangle + \langle
	u^{\sharp}, \Delta u^{\sharp} \rangle + \langle u, u^{\sharp} \circ \xi
	\rangle + \langle u, G (u) \rangle + \langle \Delta_{> N} B_{\Xi} (u),
	\Delta u^{\sharp} \rangle \\
	=& - \langle \Delta_{> N} (u \prec \xi), u^{\sharp} \rangle+\langle\Delta_{>N}(u\prec X),u^\sharp\rangle - \langle
	\nabla u^{\sharp}, \nabla u^{\sharp} \rangle + \langle u, u^{\sharp}
	\circ \xi \rangle \\
	& + \langle u, G (u) \rangle + \langle \Delta_{>N}(\Delta u \prec X), u^{\sharp} \rangle
	+ 2 \langle \Delta_{>N}(\nabla u \prec \nabla X), u^{\sharp} \rangle + \langle
	\Delta_{> N} B_{\Xi} (u), \Delta u^{\sharp} \rangle \\
	 =& D (u, \xi, \Delta_{> N} u^{\sharp}) - \langle \nabla u^{\sharp}, \nabla
	u^{\sharp} \rangle + \langle u, (\Delta_{\le N} u^{\sharp}) \circ
	\xi \rangle+\langle\Delta_{>N}(u\prec X),u^\sharp\rangle \\
	& + \langle u, G (u) \rangle + \langle \Delta_{>N}(\Delta u \prec X), u^{\sharp} \rangle
	+ 2 \langle \Delta_{>N}(\nabla u \prec \nabla X), u^{\sharp} \rangle + \langle
	\Delta_{> N} B_{\Xi} (u), \Delta u^{\sharp} \rangle 
	\end{align*}
	where
	\[ D (u, \xi, \Delta_{> N} u^{\sharp}) \assign \langle u, (\Delta_{> N}
	u^{\sharp}) \circ \xi \rangle - \langle u \prec \xi, \Delta_{> N}
	u^{\sharp} \rangle . \]
	Now fix a sufficiently small $\delta > 0$, then
	\[ | \langle u, (\Delta_{\le N} u^{\sharp}) \circ \xi \rangle |
	\lesssim 2^{2 (1 + 2 \delta) N} \| \xi \|_{\CC^{- 1 - \delta}} \|
	u^{\sharp} \|_{L^2} \| u \|_{L^2} \lesssim C_{\Xi} 2^{2 (1 + 2 \delta) N}
	\| u \|_{L^2}^2 \]
	since from the Ansatz we have easily $\| u^{\sharp} \|_{L^2} \le
	C_{\Xi} \| u \|_{L^2}$. Moreover
	\begin{align*}
	 | \langle u, G (u) \rangle | &\lesssim_{\Xi} \| u \|_{L^2}^2 + \|
	u^{\sharp} \|_{\ssp^{1 - \delta}}^2 \\
	 | \langle \Delta u \prec X, u^{\sharp} \rangle | + | \langle \nabla u
	\prec \nabla X, u^{\sharp} \rangle | &\lesssim \| u \|_{\ssp^{1 - \delta}} \|
	X \|_{\CC^{1 - \delta}} \| u^{\sharp} \|_{\ssp^{2 \delta}} \le C_{\Xi} 
	\| u^{\sharp} \|_{\ssp^{1 - \delta}}^2 \\
	 | \langle \Delta_{> N} B_{\Xi} (u), \Delta u^{\sharp} \rangle | = |
	\langle \Delta_{> N} \Delta B_{\Xi} (u), u^{\sharp} \rangle | &\le
	\| B_{\Xi} (u) \|_{\ssp^{2 - 2 \delta}} \| u^{\sharp} \|_{\ssp^{2 \delta}}
	\le C_{\Xi} \| u^{\sharp} \|_{\ssp^{1 - \delta}}^2 \end{align*}
	and similarly we bound the term $\langle\Delta_{>N}(u\prec X),u^\sharp\rangle$.
	 By proof of Proposition~\ref{lem:circadj}, we have also
	\[ | D (u, \xi, \Delta_{> N} u^{\sharp}) | \lesssim \| \xi \|_{\CC^{- 1 -
			\delta}} \| u \|_{\ssp^{(1 + \delta) / 2}} \| \Delta_{> N} u^{\sharp}
	\|_{\ssp^{(1 + \delta) / 2}} \lesssim C_{\Xi} \| u^{\sharp} \|_{\ssp^{1 -
			\delta}}^2 \]
	Using that
	\[ \| u^{\sharp} \|_{\ssp^{1 - \delta}}^2 \lesssim \| \Delta_{> M} u^{\sharp}
	\|_{\ssp^{1 - \delta}}^2 + \| \Delta_{\le M} u^{\sharp} \|_{\ssp^{1 -
			\delta}}^2 \lesssim 2^{2 M (1 - \delta)} \| u \|_{L^2} + 2^{- 2 \delta M}
	\| u^{\sharp} \|_{\ssp^1}^2 \]
	and choosing $M$ large enough we can obtain that
	\[ \frac{1}{2} \langle \nabla u^{\sharp}, \nabla u^{\sharp} \rangle
	\le - \langle u, A u \rangle + C_{\Xi} \| u \|_{L^2}^2. \]
\end{proof}

Now, we are in a position to define the form domain of the operator.  We first shift the operators $A$ and $A_\varepsilon$ by a constant to obtain a positive operator.

\begin{remark}\upshape\label{rem:2deps} \upshape
	One can readily check that the preceding analysis is valid as well for the approximate Hamiltonians $A_\varepsilon,$ given by \eqref{eqn:2dHepsdef}, simply by replacing the noise $\Xi$ by its regularization $\Xi_\varepsilon.$ Moreover, since all the constants we obtain are polynomials in the $\mathscr{X}^\alpha$ norm of the noise, one sees that they can be chosen to hold uniformly in $\varepsilon,$ since $||\Xi_\varepsilon||_{\mathscr{X}^\alpha}\le||\Xi||_{\mathscr{X}^\alpha}.$  In particular, the result in Proposition \ref{lem:2dh1bound} is true for $A_\varepsilon$ and $\Xi_\varepsilon$ for the same constant $C_\Xi$.
\end{remark}

	\begin{proposition}\label{lem:laxmil}
	There exists a constant $K_\Xi$ which is independent of $\varepsilon$ s.t. 
	\begin{align}
	(K_{\Xi} - A)^{- 1} : L^2 & \rightarrow  \mathscr{D} (H) \label{shift1} \\	
	(K_{\Xi} - A_{\varepsilon})^{- 1} : L^2 & \rightarrow  \ssp^2 \label{shift2}
	\end{align}
	are bounded.
\end{proposition}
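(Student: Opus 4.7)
The strategy is to invoke the Lax--Milgram theorem, as the name of the proposition suggests, using the coercivity estimate already secured in Proposition \ref{lem:2dh1bound} together with the $\ssp^2$-regularity estimate in Proposition \ref{lem:h2bound}. Throughout I will work simultaneously with $A$ and $A_\varepsilon$; by Remark \ref{rem:2deps} all constants that appear below can be arranged to be independent of $\varepsilon$, which is the only reason the final $K_\Xi$ is independent of $\varepsilon$.

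First I would fix $K_\Xi > C_\Xi + 1$, where $C_\Xi$ is the constant from Proposition \ref{lem:2dh1bound} (taken as a common majorant for $A$ and all $A_\varepsilon$, which is possible because $\|\Xi_\varepsilon\|_{\mathscr{X}^\alpha} \le \|\Xi\|_{\mathscr{X}^\alpha}$). This yields the two-sided lower bound
\[
\langle u, (K_\Xi - A)u\rangle \ge \|u\|_{L^2}^2 + \tfrac{1}{2}\|\nabla u^\sharp\|_{L^2}^2, \qquad u \in \mathscr{D}(A),
\]
and the same estimate for $A_\varepsilon$ on $\ssp^2$. Define the form domain $\mathscr{F}$ as the completion of $\mathscr{D}(A)$ under the norm $\|u\|_{\mathscr{F}}^2 := \|u\|_{L^2}^2 + \|\nabla u^\sharp\|_{L^2}^2$, and define $\mathscr{F}_\varepsilon$ analogously (which is just $\ssp^1$ for the classical operator $A_\varepsilon$). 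On these spaces extend the symmetric bilinear form $B(u,v) := K_\Xi \langle u,v\rangle - \langle u, Av\rangle$ by density; boundedness of $B$ on $\mathscr{F} \times \mathscr{F}$ is a routine consequence of the paracontrolled ansatz and the paraproduct estimates, exactly as in the estimation of the individual terms in the proof of Proposition \ref{lem:2dh1bound}.

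With coercivity and continuity of $B$ in hand, Lax--Milgram provides, for every $f \in L^2 \subset \mathscr{F}^*$, a unique $u \in \mathscr{F}$ with $B(u,v) = \langle f,v\rangle$ for all $v \in \mathscr{F}$, together with the bound $\|u\|_{\mathscr{F}} \le \|f\|_{L^2}$. The same argument on $\mathscr{F}_\varepsilon$ provides a unique $u_\varepsilon \in \ssp^1$ solving $(K_\Xi - A_\varepsilon) u_\varepsilon = f$. To upgrade $u$ from the form domain to $\mathscr{D}(A)$, I would now use Proposition \ref{lem:h2bound}: testing the weak equation against elements of $\mathscr{D}(A)$ identifies $Au = K_\Xi u - f \in L^2$, and the ansatz structure inherited from $\mathscr{F}$ together with Proposition \ref{lem:h2bound} then gives $u^\sharp \in \ssp^2$ with
\[
\|u^\sharp\|_{\ssp^2} \le 2\|Au\|_{L^2} + C_\Xi \|u\|_{L^2} \le C(K_\Xi + 1)\|f\|_{L^2}.
\]
Combined with $\|u\|_{\ssp^\gamma} \lesssim_\Xi \|u^\sharp\|_{\ssp^2}$ from Proposition \ref{lem:gamma}, this yields the desired boundedness of $(K_\Xi - A)^{-1}: L^2 \to \mathscr{D}(A) = \mathscr{D}(H)$. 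The analogous argument for $A_\varepsilon$, where the paracontrolled ansatz becomes trivial (so the $\mathscr{H}^2$ bound is classical elliptic regularity applied to $\Delta u_\varepsilon = (K_\Xi + c_\varepsilon - \xi_\varepsilon)u_\varepsilon - f$ and the $L^\infty$-multiplication by the smooth potential), gives the second claim with the same constant $K_\Xi$.

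The main obstacle I anticipate is the bootstrap from the weak Lax--Milgram solution to the paracontrolled ansatz with $u^\sharp \in \ssp^2$: elements of $\mathscr{F}$ a priori have only $u^\sharp \in \ssp^1$, so one has to argue carefully that the weak identity $B(u,v) = \langle f,v\rangle$ actually forces $u$ to lie in $\mathscr{D}(A)$ rather than merely in $\mathscr{F}$. The cleanest way to handle this is probably to test against $v = \Gamma \varphi$ with $\varphi \in \ssp^2$ ranging over a dense subset and to rewrite $B(u, \Gamma \varphi)$ using the explicit form \eqref{equ:defandersonHam} of $A$, which makes the map $\varphi \mapsto B(u, \Gamma\varphi) - K_\Xi \langle u, \Gamma\varphi\rangle$ continuous in $\varphi \in L^2$, so that one can identify $Au$ as an $L^2$ function and then appeal to Proposition \ref{lem:h2bound}.
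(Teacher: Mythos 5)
Your route differs from the paper's: you run the classical (symmetric, coercive) Lax--Milgram argument on the form domain $\mathscr{F}$ and then try to upgrade the weak solution to $\mathscr{D}(A)$, whereas the paper applies the Babu\v{s}ka--Lax--Milgram theorem \cite{bab71} to the bilinear form $B(u,v)=\langle -(A-K_\Xi)u,v\rangle$ on the asymmetric pair $\mathscr{D}(A)\times L^2$, checking continuity, weak coercivity (via the graph norm and Proposition \ref{lem:h2bound}) and non-degeneracy (via density of $\mathscr{D}(A)$ in $L^2$). The whole point of that asymmetric set-up is that the solution is produced \emph{directly} in $\mathscr{D}(A)$ with $\|u_f\|_{\mathscr{D}(A)}\lesssim\|f\|_{L^2}$, so no regularity bootstrap is needed.

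The bootstrap is exactly where your argument has a genuine gap, and your proposed fix does not close it. Testing the weak identity against $v=\Gamma\varphi$ and observing that $\varphi\mapsto B(u,\Gamma\varphi)=\langle f,\Gamma\varphi\rangle$ is $L^2$-continuous only shows that $u$ lies in the domain of the adjoint, i.e.\ $u\in\mathscr{D}(A^\ast)$ with $(K_\Xi-A^\ast)u=f$; it does not produce the paracontrolled decomposition $u=\Gamma u^\sharp$ with $u^\sharp\in\ssp^2$. You cannot then ``appeal to Proposition \ref{lem:h2bound}'', because that estimate is proved \emph{for} $u\in\mathscr{D}(A)$: its derivation manipulates $Au=\Delta u^\sharp+u^\sharp\circ\xi+G(u)$ termwise, and for $u$ merely in the form domain ($u^\sharp\in\ssp^1$) the resonant product $u^\sharp\circ\xi$ is not classically defined, so the identity you would need is not available. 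Concluding $\mathscr{D}(A^\ast)=\mathscr{D}(A)$ at this stage would amount to self-adjointness, which in the paper is deduced (Lemma \ref{thm:2dselfadj} via Proposition \ref{prop:selfadj}) \emph{from} the present proposition, so that shortcut is circular. To make your approach work you would have to prove a separate elliptic-regularity statement for weak paracontrolled solutions --- e.g.\ an iteration that uses the ansatz, the weak definition of $u^\sharp\circ\xi$ through the almost-adjoint Lemma \ref{lem:circadj}, and the equation $\Delta u^\sharp=K_\Xi u-f-u^\sharp\circ\xi-G(u)$ to lift $u^\sharp$ from $\ssp^1$ to $\ssp^2$ in finitely many steps --- and no such lemma is in the paper or in your proposal. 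The remaining ingredients (choice of $K_\Xi>C_\Xi$ via Proposition \ref{lem:2dh1bound}, boundedness of the form on $\mathscr{F}\times\mathscr{F}$, the $\varepsilon$-uniformity via Remark \ref{rem:2deps}, and the classical treatment of $A_\varepsilon$) are fine.
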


\begin{proof}
	We will prove the statement for $A$ using a generalization of Lax-Milgram, see~\cite{bab71}.  The proof for $A_\varepsilon$ follows the same lines with the same constant $K_\Xi$, in virtue of Remark \ref{rem:2deps}.

	{{Fix the constant $K_{\Xi} > C_{\Xi}  > 0$ ($C_{\Xi}$ as in \eqref{equ:2dconstant}) such that }}
	\[ \| u \|^2_{L^2} < \langle - (A - K_{\Xi}) u, u \rangle \ \forall u \in \mathscr{D}
	(A). \]
	{ which is possible by Proposition \ref{lem:2dh1bound}.}
	
	{{Define the bilinear map }}
	\begin{eqnarray*}
		B : \mathscr{D} (A) \times L^2 & \rightarrow & \mathbb{R}\\
		B (u, v) & \assign &  \langle - (A - K_{\Xi}) u, v \rangle,
	\end{eqnarray*}
	
	{{then $B$ is continuous, namely}}
	\[ | B (u, v) | \lesssim \| u \|_{\mathscr{D} (A)} \| v \|_{L^2}, \ \forall u
	\in \mathscr{D} (A), \  v \in L^2, \]
	{{and it is {{weakly coercive}} i.e.}}
	\[ \| u \|_{\mathscr{D} (A)} = \| - (A - K_{\Xi}) u \|_{L^2} = \underset{\| v
		\|_{L^2} = 1}{\sup}  \langle - (A - K_{\Xi}) u, v \rangle \ \tmop{for} \tmop{any} u \in
	\mathscr{D} (A) . \]
	{{The last property to check is that for any $0 \neq v \in L^2$,}}
	\[ \underset{\| u \|_{\mathscr{D} (A)} = 1}{\sup} | B (u, v) | > 0. \]
	{{Assume for the sake of contradiction that there is a $0 \neq v \in
			L^2$ s.t.}.}
	\[ | B (u, v) | = 0,  \quad \forall u \in \mathscr{D} (A), \]
	{{This means that}}
	\[ \langle u, v \rangle_{\mathscr{D} (A), \mathscr{D} (A)^{\ast}} = 0
	\ \tmop{for} \tmop{all} \ u \in \mathscr{D} (A), \]
	{{i.e. $v = 0 \ \tmop{in} \  \mathscr{D} (A)^{\ast} .$ But since
			$\mathscr{D} (A)$ is dense in $L^2$, this implies $v = 0$ in $L^2 $ which is a
			contradiction.}}
	{{Then the Babuska-Lax-Milgram Theorem says that for any $f \in
			(L^2)^{\ast} = L^2$ there exists a unique $u_f \in \mathscr{D} (A)$ with }}
	\[ B (u_f, v) = \langle f, v \rangle \ \tmop{for} \tmop{all} \  v \in L^2 \]
	{{with the bound $\| u_f \|_{\mathscr{D} (A)} \lesssim \| f \|_{L^2} .$
			In other words}}
	\[ (- A + K_{\Xi})^{- 1} : L^2 \rightarrow \mathscr{D}
	(A) \] is bounded.
\end{proof}

\begin{definition} \label{def:setconstant}
	We define the following shifted operators
	\begin{align*}
	H_\varepsilon & :=   A_\varepsilon - K_\Xi \\
	H & := A -  K_\Xi.  \\
	\end{align*}
	\end{definition}

\begin{remark}\upshape
We would like to point out that in the sequel the constant $K_\Xi$ can be updated to be larger, as needed, without notice.
\end{remark}

Also, we use the above  estimates to give a characterization of the domain and the form domain in terms of standard Sobolev norms of $u^\sharp$.  First, we define the form domain.

\begin{definition} \label{def:energySp2d}
	We set $u= \Gamma u^{\sharp}$, as in Lemma \eqref{lem:gamma}. The form domain of $H$, that we denote as $\ED$, is defined as the closure of the domain under the following norm
		\[ \| \Gamma u^{\sharp} \|_{\ED} \assign
	\sqrt{ \langle \Gamma u^{\sharp}, -{H} \Gamma u^{\sharp} \rangle} = \sqrt{ \langle u, -{H} u \rangle}
	. \]
\end{definition}

\begin{proposition}
	\label{lem:formdom}{\tmdummy}
	
	\begin{enumeratenumeric}
		\item $\Gamma u^{\sharp} \in \mathscr{D} ({H})
		\Leftrightarrow u^{\sharp} \in \ssp^2$,
		where $\Gamma$ is the map from Proposition \ref{lem:gamma}.
		More precisely, on $\mathscr{D}(H)$ we have the following norm equivalence
		\begin{equation} \label{equ:normequ1}
		\| u^{\sharp} \|_{\ssp^2} \lesssim \| {H} \Gamma u^{\sharp} \|_{L^2}
		\lesssim \| u^{\sharp} \|_{\ssp^2 .} \end{equation}
		\item $\Gamma u^{\sharp} \in \mathscr{D} ( \sqrt{-
			{H}}) \Leftrightarrow u^{\sharp} \in \ssp^1$,
		where the form domain of $- {H}$ is given by the closure of
		$\mathscr{D} ({H})$ under the norm
		\begin{equation} \label{equ:normequ2}
		\| \Gamma u^{\sharp} \|_{\ED} \assign
		\sqrt{ \langle \Gamma u^{\sharp}, -{H} \Gamma u^{\sharp} \rangle}
		. \end{equation}
		We will see in the following pages that the operator $-H$ is self-adjoint and positive, so this is in fact a norm.
		Then the precise statement is that on $\mathscr{D} ({H})$ the
		following norm equivalence holds
		\[ \| u^{\sharp} \|_{\ssp^1} \lesssim \| \Gamma u^{\sharp}
		\|_{\ED} \lesssim \| u^{\sharp} \|_{\ssp^1}, \]
		and hence the closures with respect to the two norms coincide.
	\end{enumeratenumeric}
\end{proposition}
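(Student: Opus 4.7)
The proof reduces to establishing the two norm equivalences \eqref{equ:normequ1} and \eqref{equ:normequ2}. Indeed, $\mathscr{D}(H)$ is by construction parametrized by $u^\sharp\in\ssp^2$ (Definition~\ref{def:2dDXi} together with Remark~\ref{rem:equivalentCuttofSp}), and $\ED$ is the abstract completion of $\mathscr{D}(H)$ under the form norm; once that norm is shown equivalent to $\|u^\sharp\|_{\ssp^1}$, the density of $\ssp^2$ in $\ssp^1$ together with continuous extension of $\Gamma$ via \eqref{eq:gamma-bound-sobolev} identifies the completion with $\{\Gamma u^\sharp:u^\sharp\in\ssp^1\}$ and yields both ``iff'' statements.

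For item (1), the lower bound $\|u^\sharp\|_{\ssp^2}\lesssim\|H\Gamma u^\sharp\|_{L^2}$ is exactly Proposition~\ref{lem:h2bound}, after writing $A=H+K_\Xi$ and using the resolvent bound of Proposition~\ref{lem:laxmil} to absorb $\|u\|_{L^2}\lesssim\|Hu\|_{L^2}$. For the upper bound I expand $Au=\Delta u^\sharp+u^\sharp\circ\xi+G(u)$: the first piece is trivially $\lesssim\|u^\sharp\|_{\ssp^2}$; the resonant piece, via the paraproduct estimate and Theorem~\ref{thm:2dren}, satisfies $\|u^\sharp\circ\xi\|_{L^2}\lesssim\|u^\sharp\|_{\ssp^{1+\delta}}\|\xi\|_{\CC^{-1-\delta}}\lesssim C_\Xi\|u^\sharp\|_{\ssp^2}$; and $\|G(u)\|_{L^2}\lesssim C_\Xi\|u\|_{\ssp^\gamma}\lesssim C_\Xi\|u^\sharp\|_{\ssp^\gamma}$ by Proposition~\ref{lem:gamma}. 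Adding the shift gives $\|Hu\|_{L^2}\lesssim\|u^\sharp\|_{\ssp^2}$.

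For item (2), the lower bound follows from Proposition~\ref{lem:2dh1bound}: choosing $K_\Xi$ larger than the constant $C_\Xi$ appearing there gives $\frac{1}{2}\|\nabla u^\sharp\|_{L^2}^2\le\langle u,-Hu\rangle$, while $\|u^\sharp\|_{L^2}\lesssim\|u\|_{L^2}$ combined with the coercivity $\|u\|_{L^2}^2\lesssim\langle u,-Hu\rangle$ (again from Proposition~\ref{lem:laxmil}) controls the $L^2$ part of $\|u^\sharp\|_{\ssp^1}^2$. For the upper bound I reuse the expansion carried out in the proof of Proposition~\ref{lem:2dh1bound}: every term on the right-hand side other than $\langle\nabla u^\sharp,\nabla u^\sharp\rangle$ was bounded there by $C_\Xi\|u^\sharp\|_{\ssp^{1-\delta}}^2\lesssim\|u^\sharp\|_{\ssp^1}^2$, using paraproduct estimates, Proposition~\ref{lem:circadj} for the commutator $D(u,\xi,\Delta_{>N}u^\sharp)$, and Proposition~\ref{lem:gamma} to pass from $\|u\|_{\ssp^{1-\delta}}$ to $\|u^\sharp\|_{\ssp^{1-\delta}}$. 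Rearranging the identity in the opposite direction yields $\langle u,-Au\rangle\lesssim\|u^\sharp\|_{\ssp^1}^2$, and the shift $K_\Xi\|u\|_{L^2}^2\lesssim\|u^\sharp\|_{\ssp^1}^2$ is absorbed to give $\langle u,-Hu\rangle\lesssim\|u^\sharp\|_{\ssp^1}^2$.

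The main technical hurdle is the upper form bound: one must revisit each auxiliary term in the proof of Proposition~\ref{lem:2dh1bound} and verify that it is genuinely controlled by a $\ssp^s$ norm of $u^\sharp$ with $s<1$ (so that it is absorbed into $\|u^\sharp\|_{\ssp^1}^2$ rather than competing with $\|\nabla u^\sharp\|_{L^2}^2$). The delicate case is the commutator $D(u,\xi,\Delta_{>N}u^\sharp)$, where one must invoke the almost-adjointness identity of Proposition~\ref{lem:circadj} to distribute the derivatives symmetrically as a $\ssp^{(1+\delta)/2}\times\ssp^{(1+\delta)/2}$ pairing, rather than the insufficient $\ssp^1\times L^2$ pairing that a naive Cauchy--Schwarz would produce.
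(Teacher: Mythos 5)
Your proposal is correct and follows essentially the same route as the paper: item (1) from Proposition \ref{lem:h2bound} plus the term-by-term estimate of \eqref{equ:defandersonHam}, and item (2) from Proposition \ref{lem:2dh1bound} run in both directions, with Proposition \ref{lem:circadj} handling the commutator and Proposition \ref{lem:gamma} passing between $u$ and $u^{\sharp}$. You are merely more explicit than the paper about absorbing the $K_\Xi$-shift and the $L^2$ terms via the coercivity from Proposition \ref{lem:laxmil}, which is a fair way to fill in what the paper leaves implicit.
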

\begin{proof}
	\begin{enumerate}
		\item  The first inequality in \eqref{equ:normequ1} follows directly from \eqref{equ:h2bound} and the second by first expanding using \eqref{equ:defandersonHam} and then estimating as in the proof of Theorem  \ref{lem:h2bound}.
	\item In \eqref{equ:normequ2}, the first inequality follow directly from the Proposition \ref{lem:2dh1bound}.  For the second term, one plugs in the definition \eqref{equ:defandersonHam} and then the only non-trivial term is $\langle u^\sharp \circ \xi, u^\sharp \rangle$.  For this term, we also have
	\[
	|\langle u^\sharp \circ \xi, u^\sharp \rangle| \leq C_{\Xi} ||u^\sharp||_{\mathscr{H}^{1}}^2
	\]
	by similar arguments as in proof of Proposition \ref{lem:2dh1bound}.  
\end{enumerate}
\end{proof}

	In order to show self-adjointness we would like to use the following result.

\begin{proposition}\cite[X.1]{reedsimon2} \label{prop:selfadj}
	A closed symmetric operator on a Hilbert space $\mathcal{H}$ is self-adjoint if it has at least one real value in its resolvent set.
\end{proposition}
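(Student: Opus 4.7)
The plan is to prove $T=T^*$ for $T$ a closed symmetric operator with some $\lambda\in\rho(T)\cap\mathbb{R}$. Since symmetry already gives $T\subseteq T^*$, it suffices to establish the reverse inclusion of domains, i.e. $\mathcal{D}(T^*)\subseteq\mathcal{D}(T)$; once this is shown, the two operators agree on $\mathcal{D}(T)$ by symmetry, hence they coincide.

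The main ingredient will be the bijectivity of $T-\lambda:\mathcal{D}(T)\to\mathcal{H}$. Fix an arbitrary $\psi\in\mathcal{D}(T^*)$. Then $(T^*-\lambda)\psi\in\mathcal{H}$, so by surjectivity there is a unique $\phi\in\mathcal{D}(T)$ with $(T-\lambda)\phi=(T^*-\lambda)\psi$. Because $T\subseteq T^*$ and $\lambda$ is real, the left-hand side equals $(T^*-\lambda)\phi$, whence $(T^*-\lambda)(\phi-\psi)=0$. The goal will then be to show $T^*-\lambda$ is injective on $\mathcal{D}(T^*)$, which would force $\psi=\phi\in\mathcal{D}(T)$ and conclude the argument.

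For the injectivity of $T^*-\lambda$, I would argue as follows: if $\eta\in\mathcal{D}(T^*)$ satisfies $(T^*-\lambda)\eta=0$, then for every $\chi\in\mathcal{D}(T)$ one computes
\[
\langle (T-\lambda)\chi,\eta\rangle=\langle \chi,(T^*-\bar\lambda)\eta\rangle=\langle \chi,(T^*-\lambda)\eta\rangle=0,
\]
using the defining identity of $T^*$ together with $\bar\lambda=\lambda$. Since $\text{Ran}(T-\lambda)=\mathcal{H}$, this pairing vanishing against every vector of $\mathcal{H}$ forces $\eta=0$.

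The calculation is entirely routine; the only real subtlety, and the place where the hypothesis $\lambda\in\mathbb{R}$ is genuinely used, is the identification $(T-\lambda)^*=T^*-\lambda$ which powers the injectivity step. Without reality one would at best get that $\bar\lambda\in\rho(T^*)$ as an operator-theoretic statement, but not the vanishing of the kernel needed above. Closedness of $T$ is implicit in the existence of a bounded everywhere-defined inverse $(T-\lambda)^{-1}$, so it enters only through the assumption $\lambda\in\rho(T)$.
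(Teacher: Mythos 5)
The paper does not prove this statement at all — it is quoted from Reed--Simon \cite{reedsimon2}, Section X.1 — and your argument is precisely the standard proof given there: use surjectivity of $T-\lambda$ to produce $\phi\in\mathcal{D}(T)$ matching a given $\psi\in\mathcal{D}(T^*)$, then kill the difference via injectivity of $T^*-\lambda$, which follows from $\operatorname{Ran}(T-\lambda)=\mathcal{H}$ and the reality of $\lambda$. The argument is correct as written (indeed it only uses surjectivity of $T-\lambda$, with closedness and boundedness of the inverse riding along for free from $\lambda\in\rho(T)$), so there is nothing to fix.
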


Now, we can show self-adjointness.  

\begin{lemma}
	\label{thm:2dselfadj}The operators $H : \mathscr{D}(H)
	\rightarrow L^2$ and $H_\varepsilon : \ssp^2
	\rightarrow L^2$ as defined in Theorem \ref{lem:h2bound} are self-adjoint.
\end{lemma}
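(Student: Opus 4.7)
The plan is to invoke Proposition \ref{prop:selfadj} directly: it suffices to verify that $H$ (respectively $H_\varepsilon$) is closed, symmetric and densely defined, and has at least one real number in its resolvent set. All three ingredients are essentially already in place in the paper, so this should be a short assembly rather than a genuinely new argument.

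First I would note that $H = A - K_\Xi$ inherits its domain from $A$, hence is densely defined since $\mathscr{D}(A)$ is dense in $L^2$ (by the corollary following Lemma \ref{lem:identityGammaconv}). Closedness and symmetry are preserved by addition of a bounded self-adjoint (in particular, constant) operator, so from Proposition \ref{prop:opclosed} and the symmetry corollary we obtain that $H$ is closed and symmetric on $\mathscr{D}(H) = \mathscr{D}(A)$. Exactly the same remarks apply to $H_\varepsilon = A_\varepsilon - K_\Xi$, for which the analogous closedness and symmetry statements hold by Remark \ref{rem:2deps} (all constants and arguments carry over uniformly in $\varepsilon$), with domain $\mathscr{H}^2$.

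Next I would use Proposition \ref{lem:laxmil} to exhibit a real value in the resolvent set. That proposition produces a constant $K_\Xi$ such that $(K_\Xi - A)^{-1}: L^2 \to \mathscr{D}(H)$ is bounded, and likewise $(K_\Xi - A_\varepsilon)^{-1}: L^2 \to \mathscr{H}^2$ is bounded with the same $K_\Xi$. In particular these are bounded as operators from $L^2$ to $L^2$, so $-H^{-1}$ and $-H_\varepsilon^{-1}$ are everywhere defined and bounded on $L^2$, meaning $0 \in \rho(H) \cap \rho(H_\varepsilon)$. Combining with closedness, symmetry and density of the domain, Proposition \ref{prop:selfadj} immediately yields self-adjointness of both operators.

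I do not anticipate a genuine obstacle here; the only thing worth double-checking is that Proposition \ref{lem:laxmil} really delivers surjectivity of $K_\Xi - A$ onto all of $L^2$ (so that $H^{-1}$ is defined everywhere and not merely on a dense subspace). This is precisely what the Babu\v{s}ka--Lax--Milgram conclusion provides, so the reasoning is complete. If any subtlety arises, it would lie in checking that the same $K_\Xi$ can be chosen uniformly in $\varepsilon$, but this is ensured by Remark \ref{rem:2deps} together with the uniform bound $\|\Xi_\varepsilon\|_{\mathscr{X}^\alpha} \le \|\Xi\|_{\mathscr{X}^\alpha}$.
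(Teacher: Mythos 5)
Your proposal is correct and follows essentially the same route as the paper: the paper's proof likewise combines Proposition \ref{prop:selfadj} with the observation that Proposition \ref{lem:laxmil} places $K_\Xi$ in the resolvent set of $A$ and $A_\varepsilon$ (equivalently $0\in\rho(H)\cap\rho(H_\varepsilon)$), the closedness, symmetry and density having been established earlier. Your additional remarks on the shift by the constant $K_\Xi$ and the uniformity in $\varepsilon$ are just the implicit details of that same argument made explicit.
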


\begin{proof}
	This follows from Proposition \ref{prop:selfadj}. Observe that Proposition \ref{lem:laxmil} implies  $K_\Xi$ is in the resolvent of $A$ and $A_\varepsilon$. Therefore, the result.
\end{proof}

Next, we first show, in Theorem \ref{thm:resoventConv1},  the strong  revolvent convergence of $H_\varepsilon$ to $H$ which we will use in the PDE part.  Then in Theorem \ref{thm:normResolventMain} we prove the stronger result of norm resolvent convergence. In 2d case, this result was obtained in   \cite[Lemma 4.15]{allez_continuous_2015} but we give a proof  in our framework which can also be applied to the 3d case, namely that, generalizes the result in the cited article to the 3d case.

	\begin{theorem} \label{thm:resoventConv1}
		Recall the operators $H_\varepsilon $ and  $H$, as defined in Definition \eqref{def:setconstant}.  For any $f \in L^2$, we have
		\[
		|| H^{-1} f - H_\varepsilon^{-1} f||_{L^2} \rightarrow 0
		\]
		as $\varepsilon \rightarrow 0$. Namely, $H_\varepsilon $ converges to $H$ in the strong resolvent  sense.
	\end{theorem}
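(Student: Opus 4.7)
The natural strategy is the usual ``resolvent trick'': construct for each $f$ a good approximation of $u := H^{-1}f \in \mathscr{D}(H)$ that lies in $\ssp^2$ and on which we can control $H_\varepsilon$, then use a uniform bound on $H_\varepsilon^{-1}$ to pass this control to $u_\varepsilon := H_\varepsilon^{-1}f$.

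Concretely, fix $f \in L^2$, set $u = H^{-1}f \in \mathscr{D}(H)$ and write $u = \Gamma u^{\sharp}$ with $u^{\sharp} \in \ssp^2$ (via Proposition \ref{lem:formdom}). Define the comparison element $v_\varepsilon := \Gamma_\varepsilon u^{\sharp} \in \ssp^2$. By Proposition \ref{prop:opApp} we have
\[
\|A v_\varepsilon - A_\varepsilon v_\varepsilon\|_{L^2} \text{ replaced by } \|A u - A_\varepsilon v_\varepsilon\|_{L^2} \lesssim_\Xi \|\Xi_\varepsilon - \Xi\|_{\mathscr{X}^\alpha}\,\|u^\sharp\|_{\ssp^2} \longrightarrow 0,
\]
and hence, subtracting $K_\Xi$ from both operators, $H_\varepsilon v_\varepsilon = f + r_\varepsilon$ with $\|r_\varepsilon\|_{L^2} \to 0$.

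Next I would apply $H_\varepsilon^{-1}$, using that Proposition \ref{lem:laxmil} and Remark \ref{rem:2deps} supply a bound $\|H_\varepsilon^{-1}\|_{L^2 \to L^2} \le C$ uniform in $\varepsilon$ (the constant $K_\Xi$ being $\varepsilon$-independent). This gives
\[
\|u_\varepsilon - v_\varepsilon\|_{L^2} = \|H_\varepsilon^{-1} r_\varepsilon\|_{L^2} \le C \|r_\varepsilon\|_{L^2} \longrightarrow 0.
\]
It remains to estimate $\|u - v_\varepsilon\|_{L^2} = \|(\Gamma - \Gamma_\varepsilon) u^{\sharp}\|_{L^2}$. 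This is precisely the content of (a trivial variant of) Lemma \ref{lem:identityGammaconv}: the proof there shows $\|\Gamma - \Gamma_\varepsilon\|_{\ssp^2 \to \ssp^\gamma} \to 0$, and $\ssp^\gamma \hookrightarrow L^2$. Combining by the triangle inequality yields $\|u - u_\varepsilon\|_{L^2} \to 0$, which is the claim.

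The only delicate point is the uniform operator bound on $H_\varepsilon^{-1}$, but this is already built into Proposition \ref{lem:laxmil} (as noted in Remark \ref{rem:2deps}, the Babuska--Lax--Milgram constants depend polynomially on $\|\Xi\|_{\mathscr{X}^\alpha}$ and one has $\|\Xi_\varepsilon\|_{\mathscr{X}^\alpha} \le \|\Xi\|_{\mathscr{X}^\alpha}$). Everything else is an application of results already proved in this section, so the argument is short once the comparison element $v_\varepsilon = \Gamma_\varepsilon u^{\sharp}$ is introduced.
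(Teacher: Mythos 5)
Your argument is correct and is essentially the paper's own proof, up to relabelling: your comparison element $v_\varepsilon=\Gamma_\varepsilon u^\sharp$ is exactly the approximation the paper takes from Proposition \ref{prop:opclosed}, the estimate $\|H_\varepsilon^{-1}f-v_\varepsilon\|_{L^2}\lesssim\|Hu-H_\varepsilon v_\varepsilon\|_{L^2}\to0$ via Proposition \ref{prop:opApp} and the boundedness of $H_\varepsilon^{-1}$ is the same step, and the triangle inequality with $\|u-v_\varepsilon\|_{L^2}\to0$ closes the proof in both. Your explicit remark that the bound on $H_\varepsilon^{-1}$ is uniform in $\varepsilon$ (via Proposition \ref{lem:laxmil} and Remark \ref{rem:2deps}) is a point the paper leaves implicit, but it does not change the route.
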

	
	\begin{proof}
		First we take an approximation as in Proposition \ref{prop:opclosed}.  Since $H_\varepsilon^{-1} : L^2 \rightarrow L^2$ is a bounded operator we have
		\[
		||u_\varepsilon  - H_\varepsilon^{-1} Hu||_{L^2} = ||H_\varepsilon^{-1} (Hu - H_\varepsilon u_\varepsilon)||_{L^2} \lesssim ||Hu - H_\varepsilon u_\varepsilon||_{L^2}
		\]
		
		Then, by Proposition \ref{prop:opApp}, we readily obtain
		\[
		||u_\varepsilon  - H_\varepsilon^{-1} Hu||_{L^2} \rightarrow 0
		\]
		as $\varepsilon \rightarrow 0$.
		Now, for $u \in \mathscr{D}(H)$ we estimate
		\[
		||u - H_\varepsilon^{-1} Hu ||_{L^2} \leq||u - u_\varepsilon  ||_{L^2} + ||u_\varepsilon  - H_\varepsilon^{-1} Hu ||_{L^2}
		\]
		and obtain that $H_\varepsilon^{-1} H : \mathscr{D}(H) \subset L^2  \rightarrow L^2$ tend to the identity operator over $\mathscr{D}(H)$.  Then, for any $f = Hu \in L^2$ we can write 
		\[
		|| H_\varepsilon^{-1} Hu  - u||_{L^2} = || H_\varepsilon^{-1} Hu  - H^{-1}Hu||_{L^2}  = || H_\varepsilon^{-1} f  - H^{-1}f||_{L^2} \rightarrow 0
		\]
		for all $f \in L^2$.  By the existence of $H^{-1}$, indeed for any $f \in L^2$ we can find such $u$.   Hence, the result.
	\end{proof}

In the next theorem, we show that in fact the above convergence can be improved to the norm resolvent convergence in the $\mathscr{H}^\gamma$-norm.

	\begin{theorem} \label{thm:normResolventMain}
We have
	\[
	|| H^{-1}  - H_\varepsilon^{-1} ||_{L^2 \rightarrow \mathscr{H}^\gamma} \rightarrow 0
	\]
	as $\varepsilon \rightarrow 0$. Namely, $H_\varepsilon $ converges to $H$  in  the  norm resolvent sense.
\end{theorem}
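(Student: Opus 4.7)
The plan is to upgrade the strong resolvent convergence of Theorem \ref{thm:resoventConv1} to convergence in the operator norm $L^2 \to \mathscr{H}^\gamma$ by combining the two ingredients already in our hands: the quantitative bound on $A\Gamma - A_\varepsilon \Gamma_\varepsilon$ from Proposition \ref{prop:opApp}, and the quantitative bound on $\mathrm{id} - \Gamma \Gamma_\varepsilon^{-1}$ from Lemma \ref{lem:identityGammaconv}. Both bounds are linear in $\|\Xi - \Xi_\varepsilon\|_{\mathscr{X}^\alpha}$, which drives the overall rate.

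Concretely, fix $f \in L^2$ with $\|f\|_{L^2} \le 1$ and set $u := H^{-1} f \in \mathscr{D}(H)$, so that $u = \Gamma u^\sharp$ with $u^\sharp \in \mathscr{H}^2$ and, by the norm equivalence \eqref{equ:normequ1} together with Proposition \ref{lem:laxmil}, $\|u^\sharp\|_{\mathscr{H}^2} \lesssim \|H u\|_{L^2} = \|f\|_{L^2} \le 1$. Introduce the natural approximant $u_\varepsilon := \Gamma_\varepsilon u^\sharp \in \mathscr{H}^2$. Then write
\begin{equation*}
H_\varepsilon^{-1} f - H^{-1} f \;=\; H_\varepsilon^{-1}(H u) - u \;=\; H_\varepsilon^{-1}\bigl(H u - H_\varepsilon u_\varepsilon\bigr) \;+\; (u_\varepsilon - u).
\end{equation*}

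For the second piece, I use the identity $u - u_\varepsilon = (\Gamma - \Gamma_\varepsilon) u^\sharp = (\mathrm{id} - \Gamma\Gamma_\varepsilon^{-1}) u_\varepsilon$ together with Lemma \ref{lem:identityGammaconv} and the bound $\|u_\varepsilon\|_{\mathscr{H}^\gamma} \lesssim \|u^\sharp\|_{\mathscr{H}^\gamma}$ from Proposition \ref{lem:gamma}, obtaining $\|u - u_\varepsilon\|_{\mathscr{H}^\gamma} \lesssim_\Xi \|\Xi_\varepsilon - \Xi\|_{\mathscr{X}^\alpha} \|u^\sharp\|_{\mathscr{H}^2}$. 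For the first piece, since $H = A - K_\Xi$ and $H_\varepsilon = A_\varepsilon - K_\Xi$, Proposition \ref{prop:opApp} applied to $u^\sharp$ yields $\|Hu - H_\varepsilon u_\varepsilon\|_{L^2} \lesssim_\Xi \|\Xi_\varepsilon - \Xi\|_{\mathscr{X}^\alpha} \|u^\sharp\|_{\mathscr{H}^2}$. Then, using that $H_\varepsilon^{-1} : L^2 \to \mathscr{H}^2 \hookrightarrow \mathscr{H}^\gamma$ is bounded with operator norm independent of $\varepsilon$ (which is the content of Remark \ref{rem:2deps}, i.e.\ the uniform-in-$\varepsilon$ versions of Proposition \ref{lem:laxmil} and Proposition \ref{lem:h2bound}), we control the first piece in $\mathscr{H}^\gamma$ by the same quantity.

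Combining the two, and taking the supremum over $\|f\|_{L^2} \le 1$ using $\|u^\sharp\|_{\mathscr{H}^2} \lesssim \|f\|_{L^2}$, gives
\begin{equation*}
\bigl\|H^{-1} - H_\varepsilon^{-1}\bigr\|_{L^2 \to \mathscr{H}^\gamma} \;\lesssim_\Xi\; \|\Xi_\varepsilon - \Xi\|_{\mathscr{X}^\alpha},
\end{equation*}
and the right-hand side vanishes as $\varepsilon \to 0$ by Theorem \ref{thm:2dren}. The main subtlety I want to flag is the need for the uniform-in-$\varepsilon$ boundedness of $H_\varepsilon^{-1}$ as a map $L^2 \to \mathscr{H}^2$: without this one cannot pull $H_\varepsilon^{-1}$ out of the first piece with a constant depending only on $\Xi$, and the whole scheme collapses into a qualitative statement. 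That uniformity is secured by Remark \ref{rem:2deps}, which is why the earlier set-up was carefully arranged to express every constant polynomially in $\|\Xi\|_{\mathscr{X}^\alpha}$ and hence bounded along the approximation.
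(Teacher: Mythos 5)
Your overall scheme is sound and is, in substance, the paper's own argument in different packaging: the paper deduces from Proposition \ref{prop:opApp} that $\|\Gamma_\varepsilon^{-1}H_\varepsilon^{-1}-\Gamma^{-1}H^{-1}\|_{L^2\to\mathscr{H}^2}\to 0$ and then composes with $\Gamma$, invoking Proposition \ref{lem:gamma} and Lemma \ref{lem:identityGammaconv}; you instead test against $f$, write $H_\varepsilon^{-1}f-H^{-1}f=H_\varepsilon^{-1}(Hu-H_\varepsilon u_\varepsilon)+(u_\varepsilon-u)$ with $u=\Gamma u^\sharp$, $u_\varepsilon=\Gamma_\varepsilon u^\sharp$, and use exactly the same two inputs, which even gives the explicit rate $\lesssim_\Xi\|\Xi_\varepsilon-\Xi\|_{\mathscr{X}^\alpha}$. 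The treatment of the second piece and the bound $\|u^\sharp\|_{\mathscr{H}^2}\lesssim\|f\|_{L^2}$ via \eqref{equ:normequ1} are fine.

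The one genuine flaw is precisely the step you flag as the main subtlety: $H_\varepsilon^{-1}\colon L^2\to\mathscr{H}^2$ is \emph{not} bounded uniformly in $\varepsilon$, and Remark \ref{rem:2deps} together with Proposition \ref{lem:laxmil} does not give this. The Babuska--Lax--Milgram argument controls $H_\varepsilon^{-1}$ uniformly only in the graph norm $\|H_\varepsilon\,\cdot\,\|_{L^2}$; converting that into the $\mathscr{H}^2$ norm requires the equivalence $\|v\|_{\mathscr{H}^2}\simeq\|H_\varepsilon v\|_{L^2}+\|v\|_{L^2}$, whose constants blow up as $\varepsilon\to0$ because the regularized noise is only uniformly bounded in $\mathscr{C}^\alpha$, $\alpha<-1$. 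Indeed, a uniform bound $\|H_\varepsilon^{-1}\|_{L^2\to\mathscr{H}^2}\le C_\Xi$ would contradict the smoothness drop the paper emphasizes: combined with strong resolvent convergence and weak compactness it would force $H^{-1}f\in\mathscr{H}^2$ for every $f\in L^2$, whereas $\mathscr{D}(H)\subset\mathscr{H}^{1-}$ only. Fortunately you only need uniform boundedness $L^2\to\mathscr{H}^\gamma$, and that does follow from the cited ingredients once you route through the sharp part: writing $v=H_\varepsilon^{-1}g=\Gamma_\varepsilon v^\sharp_\varepsilon$, Proposition \ref{lem:gamma} (for $\Gamma_\varepsilon$, constants uniform by Remark \ref{rem:2deps}) gives $\|v\|_{\mathscr{H}^\gamma}\le D_\Xi\|v^\sharp_\varepsilon\|_{\mathscr{H}^\gamma}$, the $\varepsilon$-uniform version of Proposition \ref{lem:h2bound} gives $\|v^\sharp_\varepsilon\|_{\mathscr{H}^2}\lesssim\|g\|_{L^2}+C_\Xi\|v\|_{L^2}$, and the uniform coercivity of Proposition \ref{lem:2dh1bound} with the common shift $K_\Xi$ gives $\|v\|_{L^2}\le\|g\|_{L^2}$. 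Replacing your $L^2\to\mathscr{H}^2$ claim by this $L^2\to\mathscr{H}^\gamma$ bound, the proof closes and agrees with the paper's.
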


\begin{proof}
		Recall that $\Gamma : \mathscr{H}^2 \rightarrow \mathscr{D}(H)$ and $\Gamma_\varepsilon : \mathscr{H}^2 \rightarrow \mathscr{H}^2$  in which case we have $\Gamma^{-1} : \mathscr{D}(H) \rightarrow H^2$ and $\Gamma_\varepsilon : \mathscr{H}^2 \rightarrow \mathscr{H}^2$ .
Recall that in Proposition \ref{prop:opApp} we obtained 
	\[
	|| H_\varepsilon \Gamma_\varepsilon- H\Gamma ||_{\mathscr{H}^2 \rightarrow L^2} \rightarrow 0.
	\]
	 This implies the   norm resolvent convergence  
	\[
	|| \Gamma_\varepsilon^{-1} H_\varepsilon^{-1}  - \Gamma^{-1}  H^{-1} ||_{L^2 \rightarrow \mathscr{H}^2}  \rightarrow 0.
	\]
	
	To conclude, by using Proposition \ref{lem:gamma} we can write the estimate
	\[
	|| \Gamma \Gamma_\varepsilon^{-1} H_\varepsilon^{-1} - \Gamma \Gamma^{-1}  H^{-1} ||_{\mathscr{H}^\gamma} \leq 3|| \Gamma_\varepsilon^{-1} H_\varepsilon^{-1} - \Gamma^{-1}  H^{-1} ||_{\mathscr{H}^\gamma}
	\]
	
	where, as $\varepsilon \rightarrow 0$ by Lemma \ref{lem:identityGammaconv}, we get the convergence
	
	\[
	||  H_\varepsilon^{-1}  - H^{-1} ||_{L^2 \rightarrow \mathscr{H}^\gamma} \rightarrow 0.
	\]
	
Hence, the result.
\end{proof}


As a corollary of the norm resolvent convergence we note following observation.
	
		\begin{corollary}[cfr.~\cite{reedsimon1}, VIII.20]
			For any bounded continuous function $f:[-C_\Xi,\infty)\to\mathbb{C}$ we get  
			\[ f(H_{\varepsilon}) g \to f(H) g \text{ in }L^2  \]
			for any $g\in L^2$ i.e. strong operator convergence. 
		\end{corollary}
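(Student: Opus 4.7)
The strategy is to reduce the statement to the classical fact (Reed--Simon, Theorem VIII.20) that norm resolvent convergence of self-adjoint operators implies $f(H_\varepsilon)\to f(H)$ strongly for every $f\in C_b(\mathbb{R})$, and to supply a short self-contained justification tailored to our setting. First, I would observe that Theorem \ref{thm:normResolventMain} gives norm convergence $H_\varepsilon^{-1}\to H^{-1}$ from $L^2$ into $\mathscr{H}^\gamma$, which in particular implies norm convergence $L^2\to L^2$ since $\mathscr{H}^\gamma\hookrightarrow L^2$ continuously. Combined with the uniform semiboundedness $H_\varepsilon,H\ge -K_\Xi$ (Proposition \ref{lem:laxmil} together with Remark \ref{rem:2deps}), the first resolvent identity upgrades this to $(H_\varepsilon-z)^{-1}\to(H-z)^{-1}$ in operator norm for every $z\in\mathbb{C}\setminus[-K_\Xi,\infty)$.

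Next I would pass from resolvents to general $C_0$-functions. Let $\mathcal{A}\subset C_0(\mathbb{R})$ be the $*$-algebra generated by the resolvent functions $x\mapsto(x-z)^{-1}$ with $z\notin\mathbb{R}$. By the multiplicativity of the functional calculus for self-adjoint operators, the previous step gives operator-norm convergence $f(H_\varepsilon)\to f(H)$ for every $f\in\mathcal{A}$. Stone--Weierstrass on the one-point compactification of $\mathbb{R}$ shows that $\mathcal{A}$ is sup-norm dense in $C_0(\mathbb{R})$; together with the spectral-theorem bound $\|f(H_\varepsilon)\|_{L^2\to L^2}\le\|f\|_\infty$ (and likewise for $H$), a standard $3\varepsilon$-argument yields $f(H_\varepsilon)\to f(H)$ in operator norm for all $f\in C_0(\mathbb{R})$.

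The final and most delicate step is the extension from $C_0$ to $C_b$, where the convergence is only strong, not uniform. Fix $g\in L^2$ and $\delta>0$, and let $\mu_g,\mu_g^\varepsilon$ be the spectral measures of $g$ with respect to $H,H_\varepsilon$. The previous step applied to resolvents shows $\mu_g^\varepsilon\to\mu_g$ weakly against test functions in $C_0([-K_\Xi,\infty))$. Choose a cutoff $\chi_M\in C_c(\mathbb{R})$ with $0\le\chi_M\le 1$ and $\chi_M\equiv 1$ on $[-K_\Xi,M]$, and split
\[
f(H_\varepsilon)g-f(H)g=(f\chi_M)(H_\varepsilon)g-(f\chi_M)(H)g+(f(1-\chi_M))(H_\varepsilon)g-(f(1-\chi_M))(H)g.
\]
The first difference tends to zero in $L^2$ by the $C_0$-step applied to $f\chi_M$. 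For the remainder, $\|(f(1-\chi_M))(H)g\|^2=\int|f|^2(1-\chi_M)^2\,d\mu_g\le\|f\|_\infty^2\,\mu_g(\{x>M\})$, which is less than $\delta$ for $M$ large, and the analogous quantity for $H_\varepsilon$ can be bounded uniformly in small $\varepsilon$ by the same $\delta$ using the weak convergence of spectral measures together with the uniform lower bound of the spectra.

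The main obstacle is precisely the uniform tail estimate $\sup_{\varepsilon}\mu_g^\varepsilon(\{x>M\})\to 0$ as $M\to\infty$, since without the uniform semiboundedness $H_\varepsilon\ge -K_\Xi$ one could lose tightness of $\{\mu_g^\varepsilon\}$. Fortunately this is exactly what Remark \ref{rem:2deps} and Proposition \ref{lem:laxmil} give us, so the weak convergence on $C_0$ combined with the one-sided boundedness of the spectra suffices.
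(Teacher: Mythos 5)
The paper itself offers no argument here beyond the citation: the corollary is exactly Reed--Simon VIII.20 applied to the resolvent convergence of Theorem \ref{thm:normResolventMain}, so your plan of reproving that theorem is a legitimate, self-contained expansion of what the paper leaves implicit, and your first two steps (upgrading $H_\varepsilon^{-1}\to H^{-1}$ in $L^2\to L^2$ norm to norm convergence of $(H_\varepsilon-z)^{-1}$ at non-real $z$, then Stone--Weierstrass to pass to all of $C_0$) are sound.

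The last step, however, contains a genuine flaw. You invoke ``uniform semiboundedness $H_\varepsilon,H\ge -K_\Xi$'', but in this paper the shifted operators satisfy the opposite bound: by Proposition \ref{lem:2dh1bound} and Remark \ref{rem:2deps} one has $\langle u,-Hu\rangle\ge (K_\Xi-C_\Xi)\|u\|^2$ uniformly in $\varepsilon$, so $H$ and $H_\varepsilon$ are bounded \emph{above} and their spectra are unbounded \emph{below} (it is $-H$ that is positive; the interval $[-C_\Xi,\infty)$ in the statement is best read as referring to $-H$, or as a sign slip in the paper). Consequently your cutoff $\chi_M\equiv1$ on $[-K_\Xi,M]$ and the tail estimate for $\mu_g^\varepsilon(\{x>M\})$ control the wrong end of the real line: the non-compact part of the spectrum sits at $-\infty$ and is never touched, so the argument does not close as written. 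The repair is easy, and it also shows that what you call the ``main obstacle'' is not one: no spectral bound is needed, because tightness of the measures $\mu_g^\varepsilon$ comes for free from conservation of total mass. Indeed, writing $\|(1-\chi_M)(H_\varepsilon)g\|^2=\|g\|^2-\langle g,\,(2\chi_M-\chi_M^2)(H_\varepsilon)g\rangle$ and using the already established convergence for the compactly supported function $2\chi_M-\chi_M^2$, the $\varepsilon$-tails are bounded by the tail of the limiting measure plus $o(1)$; this is precisely the Reed--Simon VIII.20(b) argument, which needs only strong resolvent convergence and no semiboundedness at all. With this repair (or, alternatively, with your inequalities mirrored so that the cutoff sits on $[-M,0]$ and the uniform upper bound $H_\varepsilon\le0$ handles $+\infty$), your proof becomes correct.
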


\subsubsection{Functional inequalities}

In this section, we obtain certain inequalities for the Anderson Hamiltonian which will be crucial when we study the PDEs.

The first one is an $L^p$-embedding result.

\begin{lemma}[$L^p$ estimates]\label{lem:2dlpbounds}
	For $u\in \ED$ and $p\in[1,\infty)$ we have
	\begin{align}
	||u||_{L^p}\lesssim_\Xi ||u||_{\ED}.
	\end{align}
	Moreover, for $v\in \mathscr{D}(\sqrt{ - H_\varepsilon})=\ssp^1,$ we have
	\begin{align}
	||v||_{L^p}\lesssim_\Xi ||u||_{\mathscr{D}(\sqrt{ - H_\varepsilon})}.
	\end{align}
\end{lemma}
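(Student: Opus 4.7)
My plan is to reduce both $L^p$ estimates to the standard Sobolev embedding $\ssp^{1-\delta}(\mathbb{T}^2) \hookrightarrow L^p(\mathbb{T}^2)$ (valid for every $p<\infty$ provided $\delta$ is taken small enough, depending on $p$), by exploiting the characterization of the form domain from Proposition \ref{lem:formdom} together with the boundedness of the map $\Gamma$ given in Proposition \ref{lem:gamma}.

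For the first inequality, given $u\in\ED$ I would write $u=\Gamma u^\sharp$ with $u^\sharp\in\ssp^1$ and use Proposition \ref{lem:formdom} to get $\|u^\sharp\|_{\ssp^1}\lesssim_\Xi \|u\|_{\ED}$. Since $\mathscr{D}^\gamma_\Xi$ is $\gamma$-independent, for any desired $p<\infty$ I pick $\gamma\in(2/3,1)$ close enough to $1$ so that $\ssp^\gamma(\mathbb{T}^2)\hookrightarrow L^p(\mathbb{T}^2)$ via Sobolev. Then Proposition \ref{lem:gamma} yields $\|u\|_{\ssp^\gamma}=\|\Gamma u^\sharp\|_{\ssp^\gamma}\leq 3\|u^\sharp\|_{\ssp^\gamma}\leq 3\|u^\sharp\|_{\ssp^1}$. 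Chaining these estimates gives $\|u\|_{L^p}\lesssim \|u\|_{\ssp^\gamma}\lesssim_\Xi \|u^\sharp\|_{\ssp^1}\lesssim_\Xi \|u\|_{\ED}$.

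For the second inequality, I would invoke Remark \ref{rem:2deps} to note that the form domain characterization of Proposition \ref{lem:formdom} holds for $H_\varepsilon$ with constants uniform in $\varepsilon$; in particular $\|v\|_{\ssp^1}\lesssim_\Xi \|v\|_{\mathscr{D}(\sqrt{-H_\varepsilon})}$ (uniformly in $\varepsilon$). Then the classical two-dimensional Sobolev embedding $\ssp^1(\mathbb{T}^2)\hookrightarrow L^p(\mathbb{T}^2)$ for every $p<\infty$ closes the estimate.

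The only subtle point is arranging $\gamma$-dependence in the first bound: one must use that $\mathscr{D}^\gamma_\Xi$ does not actually depend on $\gamma$ (as observed in the remark after Proposition \ref{lem:gamma}) in order to move $\gamma$ arbitrarily close to $1$ while keeping the $\ED$-norm the same up to $\Xi$-dependent constants. Apart from this bookkeeping issue the proof is a direct chain of inequalities, and no new analytic work beyond what has already been established in the preceding propositions is required.
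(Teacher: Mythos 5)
Your proof of the first inequality is correct and is essentially the paper's own argument: Sobolev embedding $\ssp^{1-\delta}\hookrightarrow L^p$, then $\|u\|_{\ssp^{1-\delta}}\lesssim_\Xi\|u^\sharp\|_{\ssp^{1-\delta}}\le\|u^\sharp\|_{\ssp^1}\lesssim_\Xi\|u\|_{\ED}$ via Propositions \ref{lem:gamma} and \ref{lem:formdom}.

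The second inequality, however, contains a genuine gap. You claim that Remark \ref{rem:2deps} plus Proposition \ref{lem:formdom} yield $\|v\|_{\ssp^1}\lesssim_\Xi\|v\|_{\mathscr{D}(\sqrt{-H_\varepsilon})}$ with a constant uniform in $\varepsilon$. But the $\varepsilon$-version of Proposition \ref{lem:formdom} controls the $\ssp^1$ norm of $v^\sharp$ (the remainder in the paracontrolled ansatz with respect to $\Xi_\varepsilon$), not of $v$ itself, and Proposition \ref{lem:gamma} only transfers Sobolev norms from $v^\sharp$ to $v$ for exponents $s\le\gamma<1$ — precisely not for $s=1$. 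In fact the inequality you assert is false uniformly in $\varepsilon$: take $v_\varepsilon=\Gamma_\varepsilon u^\sharp$ for a fixed $u^\sharp\in\ssp^2$. Then $\|(-H_\varepsilon)^{1/2}v_\varepsilon\|_{L^2}\lesssim_\Xi\|u^\sharp\|_{\ssp^1}$ uniformly in $\varepsilon$, while $v_\varepsilon\to\Gamma u^\sharp$ in $\ssp^\gamma$ and $\Gamma u^\sharp$ is generically \emph{not} in $\ssp^1$ (the term $\Delta_{>N}(u\prec X)$ has only the regularity of $X\in\CC^{\alpha+2}$ with $\alpha+2<1$; indeed the whole point stressed in the paper is that $\mathscr{D}(H)$ and $\ED$ sit only in $\ssp^{1-}$). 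A uniform bound $\|v_\varepsilon\|_{\ssp^1}\lesssim_\Xi\|(-H_\varepsilon)^{1/2}v_\varepsilon\|_{L^2}$ would force the limit into $\ssp^1$ by weak compactness, a contradiction; so $\|v_\varepsilon\|_{\ssp^1}$ must blow up. The repair is simple and is what the paper does: repeat your first chain verbatim with $\Xi$ replaced by $\Xi_\varepsilon$, i.e.
\begin{equation*}
\|v\|_{L^p}\lesssim\|v\|_{\ssp^{1-\delta}}\lesssim_\Xi\|v^\sharp\|_{\ssp^{1-\delta}}\le\|v^\sharp\|_{\ssp^1}\lesssim_\Xi\|(-H_\varepsilon)^{1/2}v\|_{L^2},
\end{equation*}
where $v^\sharp$ is the remainder in the $\Xi_\varepsilon$-ansatz; all constants are uniform in $\varepsilon$ because $\|\Xi_\varepsilon\|_{\mathscr{X}^\alpha}\le\|\Xi\|_{\mathscr{X}^\alpha}$ (Remark \ref{rem:2deps}). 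The classical embedding $\ssp^1\hookrightarrow L^p$ applied directly to $v$ is not available with an $\varepsilon$-uniform constant, and uniformity is exactly what the statement $\lesssim_\Xi$ and its later applications require.
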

\begin{proof}
	For $p<\infty$ and $\delta(p)>0$ small enough we have by Sobolev embedding and Propositions \ref{lem:gamma} and \ref{lem:formdom}
	\begin{align*}
	||u||_{L^p}\lesssim||u||_{\ssp^{1-\delta}}\lesssim||u^\sharp||_{\ssp^{1-\delta}}\lesssim||u^\sharp||_{\ssp^{1}}\lesssim_\Xi||(-H)^{1/2}u||_{L^2}\lesssim_\Xi ||u||_{\ED}.
	\end{align*}
	and by Remark \ref{rem:2deps}, the same computation works for the second inequality with constants independent of $\varepsilon.$
\end{proof}

In light of the Proposition \ref{lem:formdom}, the following result is an analogue of the embedding $\mathscr{H}^2 \subset L^\infty$ in 2d.
\begin{lemma} \label{lem:embedinfty}
For $u \in \mathscr{D}(H)$ we have
\[
||u||_{L^\infty} \lesssim ||Hu||_{L^2}
\]
\end{lemma}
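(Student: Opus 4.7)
The plan is to chain together three ingredients already established: the $L^\infty$ bound on $\Gamma$ from Proposition \ref{lem:gamma}, the classical Sobolev embedding $\mathscr{H}^2(\mathbb{T}^2)\hookrightarrow L^\infty(\mathbb{T}^2)$ (which holds since $2>d/2=1$), and the norm equivalence $\|u^\sharp\|_{\mathscr{H}^2}\lesssim\|Hu\|_{L^2}$ from Proposition \ref{lem:formdom}.

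Concretely, I would first write $u=\Gamma u^\sharp$, so that by \eqref{eq:gamma-bound-Linfty}
\[
\|u\|_{L^\infty}=\|\Gamma u^\sharp\|_{L^\infty}\le 2\,\|u^\sharp\|_{L^\infty}.
\]
Next, using the two-dimensional Sobolev embedding $\mathscr{H}^2\hookrightarrow L^\infty$ on the torus, I would bound $\|u^\sharp\|_{L^\infty}\lesssim\|u^\sharp\|_{\mathscr{H}^2}$. Finally, I would invoke the first inequality in \eqref{equ:normequ1}, namely $\|u^\sharp\|_{\mathscr{H}^2}\lesssim_\Xi\|Hu\|_{L^2}$, to conclude
\[
\|u\|_{L^\infty}\lesssim_\Xi\|Hu\|_{L^2}.
\]

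There is really no main obstacle here, since each of the three steps is either a direct quote from a prior result or a standard embedding; the only subtlety worth flagging is that the implicit constant depends on the realization of the enhanced noise $\Xi$ through the constants appearing in Propositions \ref{lem:gamma} and \ref{lem:formdom}, which is consistent with the notation $\lesssim_\Xi$ used throughout the section (and matches the form stated in the lemma modulo this $\Xi$-dependence implicit in the symbol $\lesssim$).
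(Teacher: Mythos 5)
Your proof is correct and is essentially identical to the paper's: both chain $\|u\|_{L^\infty}\lesssim_\Xi\|u^\sharp\|_{L^\infty}\lesssim\|u^\sharp\|_{\mathscr{H}^2}\lesssim_\Xi\|Hu\|_{L^2}$ using Proposition \ref{lem:gamma}, the 2d Sobolev embedding $\mathscr{H}^2\hookrightarrow L^\infty$, and Proposition \ref{lem:formdom}. Your remark on the $\Xi$-dependence of the constants is consistent with the paper's conventions.
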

\begin{proof}
By using $\mathscr{H}^2 \subset L^\infty$ and  Propositions \ref{lem:gamma} and \ref{lem:formdom} we have the following chain of inequalities:
\[
||u||_{L^\infty} \lesssim_\Xi ||u^\sharp||_{L^\infty}  \lesssim_\Xi  ||u^\sharp||_{\mathscr{H}^2} \lesssim_\Xi  ||Hu||_{L^2}.
\]
Hence, the result.
\end{proof}
In addition to the above result,  we can also prove an inequality that, in some sense, interpolates the  $L^\infty$-norm between the energy norm and the logarithm of the domain norm.  Namely, we  prove a version of Brezis-Gallouet inequality for the Anderson Hamiltonian.  We first recall below the original version of the inequality.

\begin{theorem} \cite{brga80}
	Let $\Omega$ be a domain in $\mathbb{R}^2$ with compact smooth boundary.  Then, for $v \in \mathscr{H}^2(\Omega)$ we have
	\begin{align*}
	||v||_{L^\infty}\lesssim C \left( 1+ \sqrt{1+\log||v||_{\ssp^2}}\right).
	\end{align*}
	for every $v$ that satisfies $||v||_{\mathscr{H}^1(\Omega)} \leq 1$.
\end{theorem}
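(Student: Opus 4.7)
The plan is to give the standard proof of the classical Brezis--Gallouet inequality via a frequency (spectral) decomposition, together with an optimised cutoff in the dyadic parameter. I will carry it out on the torus for cleanness of notation (the general smooth bounded case follows by extension to a torus, or directly via the eigenfunction expansion of a suitable self-adjoint realisation of the Laplacian on $\Omega$ together with Weyl's law to control the analogous spectral sums).

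First, I would expand $v$ in Fourier series, $v(x)=\sum_{k\in\mathbb{Z}^2}\hat v(k)\,e^{2\pi i k\cdot x}$, and write the trivial pointwise bound
\[
|v(x)| \le \sum_{|k|\le N}|\hat v(k)| + \sum_{|k|>N}|\hat v(k)|,
\]
for a dyadic parameter $N\ge 1$ to be chosen. Applying Cauchy--Schwarz to each piece, with weights adapted to $\mathscr{H}^1$ and $\mathscr{H}^2$ respectively, gives
\[
\sum_{|k|\le N}|\hat v(k)| \le \Bigl(\sum_{|k|\le N}\tfrac{1}{1+|k|^2}\Bigr)^{1/2}\|v\|_{\ssp^1}, \qquad \sum_{|k|>N}|\hat v(k)| \le \Bigl(\sum_{|k|>N}\tfrac{1}{(1+|k|^2)^{2}}\Bigr)^{1/2}\|v\|_{\ssp^2}.
\]
The key two-dimensional inputs are the elementary estimates $\sum_{|k|\le N}(1+|k|^2)^{-1}\lesssim 1+\log N$ (coming from approximating the sum by $\int_1^N r^{-1}\,dr$) and $\sum_{|k|>N}(1+|k|^2)^{-2}\lesssim N^{-2}$. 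Combining,
\[
\|v\|_{L^\infty} \lesssim \sqrt{1+\log N}\,\|v\|_{\ssp^1} + N^{-1}\|v\|_{\ssp^2}.
\]

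Next I would optimise in $N$. Using the hypothesis $\|v\|_{\ssp^1}\le 1$, and choosing $N\sim 1+\|v\|_{\ssp^2}$, the second term becomes $\mathcal{O}(1)$, while the first becomes $\lesssim \sqrt{1+\log\|v\|_{\ssp^2}}$, leading to
\[
\|v\|_{L^\infty} \lesssim 1 + \sqrt{1+\log\|v\|_{\ssp^2}},
\]
which is the claim. I expect the only subtlety, and the main obstacle in adapting the argument to a general bounded domain $\Omega\subset\mathbb{R}^2$ with smooth boundary, to be replacing the Fourier basis by a spectral basis adapted to $\Omega$: one needs uniform $L^\infty$ bounds (or at least summability) for the normalised eigenfunctions $\phi_n$ together with the asymptotics $\lambda_n\sim n$ given by Weyl's law, so that the two dyadic sums above have the same logarithmic and polynomial behaviour. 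A cleaner workaround is to extend $v$ to a function $\tilde v$ on a large torus containing $\Omega$, using a bounded extension operator $\ssp^2(\Omega)\to\ssp^2(\mathbb{T}^2)$ (and similarly for $\ssp^1$), and then apply the torus statement to $\tilde v$. The cost is only an absorbable multiplicative constant in the final bound.
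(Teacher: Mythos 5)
Your proof is correct and follows essentially the same route as the paper: inside the proof of Theorem \ref{lem:brgaineq} the paper re-derives exactly this inequality by splitting $v$ into low and high frequencies ($\Delta_{\le M}v$ and $\Delta_{>M}v$), bounding the low part by $2^{M/2}\|v\|_{\ssp^1}$ via Bernstein and the high part by a negative power of the cutoff times $\|v\|_{\ssp^2}$, and then choosing the cutoff in terms of $1+\|v\|_{\ssp^2}$, which is precisely your frequency splitting with Cauchy--Schwarz on Fourier coefficients and the optimisation $N\sim 1+\|v\|_{\ssp^2}$. Your closing remark on passing from the torus to a smooth bounded domain via a bounded extension operator correctly covers the stated setting, which the paper itself delegates to the citation \cite{brga80}.
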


Our version for the Anderson Hamiltonian is as follows.

\begin{theorem}\label{lem:brgaineq}

		For $v \in \mathscr{D}(H)$ we have
		\[ \| v \|_{L^{\infty}} \lesssim_\Xi||v||_{\ED}  \sqrt{(1 + \log (1+ \|v \|_{\mathscr{D}(H)}))}    . \]
	
	As a corollary, we obtain, for $v \in \mathscr{D}(H_\varepsilon)=\ssp^2$, 
		\[ \| v \|_{L^{\infty}} \lesssim_\Xi||(-H_\varepsilon)^{1/2}v||_{L^2}  \sqrt{(1 + \log (1+  \|H_\varepsilon v \|_{L^2}))}    , \]
		where the constant depends on the limiting noise $\Xi$ and can be chosen independently of $\varepsilon$.

\end{theorem}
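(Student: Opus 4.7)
The plan is to reduce the inequality to the classical Brezis--Gallouet estimate applied to the component $v^\sharp$ appearing in the paracontrolled ansatz \eqref{eq:ansatz} for $v$. Writing $v = \Gamma v^\sharp$ with $v^\sharp \in \ssp^2$ as in Proposition \ref{lem:gamma}, the $L^\infty$ bound $\|\Gamma v^\sharp\|_{L^\infty} \le 2\|v^\sharp\|_{L^\infty}$ from \eqref{eq:gamma-bound-Linfty} immediately replaces $\|v\|_{L^\infty}$ on the left-hand side by $\|v^\sharp\|_{L^\infty}$, while Proposition \ref{lem:formdom} will allow us to rewrite the right-hand side in terms of the standard Sobolev norms $\|v^\sharp\|_{\ssp^1}$ and $\|v^\sharp\|_{\ssp^2}$, thanks to the equivalences $\|v^\sharp\|_{\ssp^1} \sim_\Xi \|v\|_{\ED}$ and $\|v^\sharp\|_{\ssp^2} \sim_\Xi \|Hv\|_{L^2}$. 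The core of the proof is therefore to show
\begin{equation*}
\|v^\sharp\|_{L^\infty} \lesssim \|v^\sharp\|_{\ssp^1}\sqrt{1+\log(1+\|v^\sharp\|_{\ssp^2})}
\end{equation*}
for $v^\sharp\in\ssp^2$, modulo the standard handling of the logarithmic normalisation.

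I would establish this by a Littlewood--Paley splitting. Starting from $\|v^\sharp\|_{L^\infty} \le \sum_{j\ge -1} \|\Delta_j v^\sharp\|_{L^\infty}$ and applying Bernstein's inequality (Lemma \ref{lem:bernstein}) in $d=2$, namely $\|\Delta_j v^\sharp\|_{L^\infty} \lesssim 2^{j} \|\Delta_j v^\sharp\|_{L^2}$, I split the sum at some level $M\ge 0$. Cauchy--Schwarz bounds the low-frequency part $\sum_{j\le M}2^j\|\Delta_j v^\sharp\|_{L^2}$ by $\sqrt{M+1}\, \|v^\sharp\|_{\ssp^1}$, while rewriting $2^j = 2^{-j}\cdot 2^{2j}$ and again using Cauchy--Schwarz bounds the high-frequency part by $2^{-M}\|v^\sharp\|_{\ssp^2}$. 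Optimising via $M\sim \log_2(1+\|v^\sharp\|_{\ssp^2})$ balances the two contributions and yields the Brezis--Gallouet bound displayed above.

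Feeding the norm equivalences of Proposition \ref{lem:formdom} back in produces the stated estimate for $H$. For the corollary concerning $H_\varepsilon$, no $\Gamma$-reduction is needed since $\mathscr{D}(H_\varepsilon)=\ssp^2$: one applies the classical Brezis--Gallouet inequality directly to $v$, and invokes the $H_\varepsilon$-analogues of Propositions \ref{lem:2dh1bound} and \ref{lem:h2bound} to identify $\|v\|_{\ssp^1}\sim\|(-H_\varepsilon)^{1/2} v\|_{L^2}$ and $\|v\|_{\ssp^2}\sim\|H_\varepsilon v\|_{L^2}$. By Remark \ref{rem:2deps} all the constants are polynomials in $\|\Xi_\varepsilon\|_{\mathscr{X}^\alpha}\le\|\Xi\|_{\mathscr{X}^\alpha}$, so they can be chosen uniformly in $\varepsilon$ and depending only on the limiting enhanced noise.

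The main (rather mild) technical nuisance is that the formulation in the statement uses $\log(1+\|v\|_{\mathscr{D}(H)})$ rather than the scale-invariant $\log(1+\|v\|_{\mathscr{D}(H)}/\|v\|_{\ED})$, so after optimisation one picks up a stray additive constant in the low-norm regime. I would dispose of this by a brief case split: when $\|v\|_{\mathscr{D}(H)}\lesssim 1$, Lemma \ref{lem:embedinfty} already gives $\|v\|_{L^\infty}\lesssim_\Xi\|v\|_{\mathscr{D}(H)}$, which combined with the Sobolev embedding $\ssp^{1+\delta}\hookrightarrow L^\infty$ and interpolation between $\ssp^1$ and $\ssp^2$ yields the desired right-hand side; in the complementary regime $\|v\|_{\mathscr{D}(H)}\gtrsim 1$ the log is bounded below by a constant and absorbs the stray term. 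I do not anticipate any genuine difficulty beyond this bookkeeping.
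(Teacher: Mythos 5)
Your main line is exactly the paper's proof: establish the classical Brezis--Gallouet bound by a Littlewood--Paley splitting at a level $M\sim\log_2(1+\|v^\sharp\|_{\ssp^2})$ (the paper splits $\|v\|_{L^\infty}^2$ into $\Delta_{\le M}$ and $\Delta_{>M}$ pieces and optimises in the same way), apply it to $v^\sharp$, and transfer to $v$ via the $L^\infty$ bound of Proposition \ref{lem:gamma} together with the norm equivalences of Propositions \ref{lem:2dh1bound}/\ref{lem:formdom}, with $\varepsilon$-uniformity from Remark \ref{rem:2deps}. So in substance you have reproduced the paper's argument.

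The one place you go beyond the paper is the concluding case split, and there your Case A does not close as written: interpolation gives $\|v\|_{L^\infty}\lesssim_\Xi\|v^\sharp\|_{\ssp^1}^{1-\delta}\|v^\sharp\|_{\ssp^2}^{\delta}\lesssim_\Xi\|v\|_{\ED}^{1-\delta}\|v\|_{\mathscr{D}(H)}^{\delta}$, which in the regime $\|v\|_{\mathscr{D}(H)}\lesssim 1$ is only $\|v\|_{\ED}^{1-\delta}$, not the required $\|v\|_{\ED}$ (and Lemma \ref{lem:embedinfty} compares with $\|Hv\|_{L^2}$, not with $\|v\|_{\ED}$). In fact the bound $\|v\|_{L^\infty}\lesssim\|v\|_{\ED}$ under $\|v\|_{\mathscr{D}(H)}\lesssim 1$ fails already in the flat model (stack logarithmically many Littlewood--Paley blocks so that $\|v\|_{\ssp^2}\sim 1$ while $\|v\|_{L^\infty}/\|v\|_{\ssp^1}\to\infty$), so no choice of $\delta$ repairs it; the correct remedy is the scale-invariant formulation with $\log\bigl(1+\|v\|_{\mathscr{D}(H)}/\|v\|_{\ED}\bigr)$ inside the square root, or a normalisation of $\|v\|_{\ED}$. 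You should be aware, however, that this is precisely the point the paper itself elides: its own optimisation leaves a residual term of order one, so your attempt is no weaker than the published proof, and the scale-invariant version is all that the subsequent fixed-point arguments actually use. In short: same approach, correct to the same standard as the paper, but do not claim the low-norm regime is dispatched by interpolation.
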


\begin{proof}

		We first observe
		\[ \| v \|_{L^{\infty}}^2 \le 2 \| \Delta_{\le M} v
		\|_{L^{\infty}}^2 + 2 \| \Delta_{> M} v \|_{L^{\infty}}^2. \]
		By Bernstein inequalities, Lemma \ref{lem:bernstein} (in $d = 2$), we can write
		\[ \| \Delta_{\le M} v \|_{L^{\infty}} \le \sum_{i = - 1}^{2^M} \|
		\Delta_i v \|_{L^{\infty}} \lesssim \sum_{i = - 1}^{2^M} \| \Delta_i v
		\|_{L^{\infty}} \lesssim \sum_{i = - 1}^{2^M} 2^i \| \Delta_i v
		\|_{L^2} \lesssim 2^{M/2} \| v \|_{\ssp^1}. \]
		Moreover one can show that for any $\delta > 0$ we have
		\[ \| \Delta_{> M} v \|_{L^{\infty}}  \lesssim \| \Delta_{> M} v \|_{\ssp^{1 +
				\delta}} \lesssim 2^{2^M (\delta - 1)} \| v \|_{\ssp^2} \]
		so
		\[ \| v \|_{L^{\infty}}^2 \le  2^{M+1} \| v \|^2_{\ssp^1} + 2^{2 M
			(\delta - 1)} \| v \|^2_{\ssp^2} \leq 2 M \| v \|^2_{\ssp^1} + 2^{2 M
			(\delta - 1)} (1+\| v \|)^2_{\ssp^2}. \]
		Choosing $2^{2^M (\delta - 1)} (1+ \| v \|_{\ssp^2}) = 1^{}$ we reobtain the usual
		Brezis--Gallouet inequality, in the following form
			\begin{align*}
		||v||_{L^\infty}\lesssim ||v||_{\ssp^1} \sqrt{1+\log(1+||v||_{\ssp^2})}.
		\end{align*}
		By using this and  Propositions \ref{lem:2dh1bound}, and \ref{lem:gamma}, we obtain 
		\begin{align*}
		||v||_{L^\infty}&\lesssim_\Xi||v^\sharp||_{L^\infty}
		\lesssim_\Xi ||v^\sharp||_{\ssp^1}\sqrt{1+\log(1+||v^\sharp||_{\ssp^2})} \\
		&\lesssim_\Xi||v||_{\ED}\sqrt{1+\log(1+||v||_{\mathscr{D}(H)})}
		\end{align*}
		By Remark \ref{rem:2deps}, the same estimates as for $\mathscr{D}(H)$ are also true for $\mathscr{D}(H_\varepsilon)$, in particular the estimates in Proposition \ref{lem:2dh1bound} hold with constants independent of $\varepsilon$. 
	Hence, the result.
\end{proof}

\subsection{The three-dimensional case}\label{section:3dham}

In this section, we study the Anderson Hamiltonian in 3d. As in the 2d
case we will perform a paracontrolled analysis of the Anderson Hamiltonian.
This case is more technical since the noise term  has  much lower H\"older
regularity of $\CC^{- 3 / 2 -}$. So, the paracontrolled ansatz as in the 2-d case turns out to be insufficient.  We follow a two step procedure for the defining the operator. As a first step, similar to
{\cite{debussche2016schr}}, we perform an exponential transformation depending
on the noise and as a second step we make an Ansatz for the transformed
operator using paracontrolled distributions.

\subsubsection{Enhanced noise in 3d}\label{sec:3dren}

Recall that in the 2d case we needed to define the space of enhanced noise (see Def. \ref{def:2dencNoise}), namely $\mathscr{X}^\alpha,$ for the renormalization.  In this section, we first define this space in the 3d case. 

In the results below, we prove that $X = (- \Delta)^{-1} \xi $ can be lifted to an
element $\Xi$ in the space $\mathscr{X}^{\alpha}$ of enhanced distributions
such that all the stochastic terms, we will need for the ansatz in the next section,  exist with  correct regularities.  In the following sequence of results, we construct the enhanced white noise space in 3d and prove the related approximation results.  In particular, we  show that the lifts $\Xi_{\varepsilon} $ (of the regularized noise $\xi_{\varepsilon}$) converges to an element, that we denote by $\Xi$, in $\mathscr{X}^{\alpha}.$ 

\begin{definition}\label{def:3dnoise}
	For $0 < \alpha < \frac{1}{2},$ we define the space
	$\mathscr{X}^{\alpha}$ to be the closure of the set
	\[ \left\{ \left( \phi, \phi_a^{\zzone}, \phi^{\zztwo}, \phi^{\zzthree},
	\phi_b^{\zzfour}, \nabla \phi \circ \nabla \phi^{\zzthree} \right) : (a,
	b) \in \mathbbm{R}^2, \phi \in \CC^2 (\mathbbm{T}^3) \right\} \]
	with respect to the $\CC^{\alpha} (\mathbbm{T}^3) \times
	\CC^{2 \alpha} (\mathbbm{T}^3) \times \CC^{\alpha + 1}
	(\mathbbm{T}^3) \times \CC^{\alpha + 1} (\mathbbm{T}^3) \times
	\CC^{4 \alpha} (\mathbbm{T}^3) \times \CC^{2 \alpha - 1}
	(\mathbbm{T}^3)$ norm. Here, we defined
	\begin{eqnarray*}
		\phi^{\zzone}_a & : = & (1 - \Delta)^{- 1} (| \nabla \phi |^2 - a)\\
		\phi^{\zztwo} & : = & 2 (1 - \Delta)^{- 1} ( \nabla \phi \cdummy
		\nabla \phi_a^{\zzone} )\\
		\phi^{\zzthree} & : = & (1 - \Delta)^{- 1} ( \nabla \phi \cdummy
		\nabla \phi^{\zztwo} )\\
		\phi_b^{\zzfour} & : = & (1 - \Delta)^{- 1} ( | \nabla
		\phi_a^{\zzone} |^2 - b ) .
	\end{eqnarray*}
\end{definition}

\begin{theorem}
	\label{thm:3dren}For $\xi_{\varepsilon}$ given by (\ref{eqn:xieps}) we
	define
	\begin{eqnarray*}
		X_{\varepsilon} & = & (- \Delta)^{- 1} \xi_{\varepsilon}\\
		X^{\zzone}_{\varepsilon} & = & (1 - \Delta)^{- 1} (| \nabla
		X_{\varepsilon} |^2 - c^1_{\varepsilon})\\
		X^{\zztwo}_{\varepsilon} & = & 2 (1 - \Delta)^{- 1} ( \nabla
		X_{\varepsilon} \cdummy \nabla X_{\varepsilon}^{\zzone} )\\
		X^{\zzthree}_{\varepsilon} & = & (1 - \Delta)^{- 1} ( \nabla
		X_{\varepsilon} \cdot \nabla X^{\zztwo}_{\varepsilon} )\\
		X^{\zzfour}_{\varepsilon} & = & (1 - \Delta)^{- 1} ( | \nabla
		X_{\varepsilon}^{\zzone} |^2 - c^2_{\varepsilon} ),
	\end{eqnarray*}
	where the $c_{\varepsilon} $ are diverging constants which can be chosen as
	\begin{align*} c^1_{\varepsilon} &= \underset{k \in \mathbbm{Z}^3 \backslash \{ 0
		\}}{\sum} \frac{| m (\varepsilon k) |^2}{| k |^2} \sim
	\frac{1}{\varepsilon} \\ c^2_{\varepsilon} &= \underset{k_1, k_2
		\neq 0}{\sum} | m (\varepsilon k_1) |^2 | m (\varepsilon k_2) |^2 \frac{|
		k_1 \cdummy k_2 |}{| k_1 - k_2 |^2 | k_1 |^4 | k_2 |^2} \sim \left( \log
	\frac{1}{\varepsilon} \right)^2 . \end{align*}
	Then the sequence $\Xi^{\varepsilon} \in \mathscr{X}^{\alpha},$ given by
	\[ \Xi_{}^{\varepsilon} : = ( X_{\varepsilon},
	X^{\zzone}_{\varepsilon}, X^{\zztwo}_{\varepsilon},
	X_{\varepsilon}^{\zzthree}, X^{\zzfour}_{\varepsilon}, \nabla
	X_{\varepsilon} \circ \nabla X_{\varepsilon}^{\zzthree} ) \]
	converges a.s. to a unique limit $\Xi \in \mathscr{X}^{\alpha} $ , given by
	\begin{equation} \label{equ:enhancednoise3d}
	\Xi_{}^{} : = ( X, X^{\zzone}, X^{\zztwo}, X^{\zzthree},
	X^{\zzfour}, \nabla X \circ \nabla X^{\zzthree} ), 
	\end{equation}
	where
	\begin{eqnarray*}
		X_{} & = & (- \Delta)^{- 1} \xi\\
		X^{\zzone}_{} & = & (1 - \Delta)^{- 1} (: | \nabla X | :^2)\\
		X^{\zztwo}_{} & = & 2 (1 - \Delta)^{- 1} ( \nabla X_{} \cdummy \nabla
		X_{}^{\zzone} )\\
		X^{\zzthree}_{} & = & (1 - \Delta)^{- 1} ( \nabla X \cdummy \nabla
		X^{\zztwo}_{} )\\
		X^{\zzfour} & = & (1 - \Delta)^{- 1} ( : | \nabla X^{\zzone}
		| :^2 ) .
	\end{eqnarray*}
\end{theorem}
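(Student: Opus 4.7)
The strategy is the standard Gaussian chaos computation which underpins all results of this type, carried out block by block in Littlewood--Paley decomposition.

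The plan is, for each of the six components $\mathcal{X} \in \{X_\varepsilon, X_\varepsilon^{\zzone}, X_\varepsilon^{\zztwo}, X_\varepsilon^{\zzthree}, X_\varepsilon^{\zzfour}, \nabla X_\varepsilon \circ \nabla X_\varepsilon^{\zzthree}\}$, to establish uniform-in-$\varepsilon$ bounds of the form
\[
\mathbb{E}\bigl[|\Delta_q \mathcal{X}_\varepsilon(x)|^2\bigr] \lesssim 2^{-2qr(\mathcal{X})}, \qquad \mathbb{E}\bigl[|\Delta_q(\mathcal{X}_\varepsilon - \mathcal{X}_{\varepsilon'})(x)|^2\bigr] \lesssim \delta(\varepsilon,\varepsilon')\,2^{-2qr(\mathcal{X})+\kappa q},
\]
where $r(\mathcal{X})$ is the target regularity exponent from Definition~\ref{def:3dnoise} (namely $\alpha$, $2\alpha$, $\alpha+1$, $\alpha+1$, $4\alpha$, $2\alpha-1$ respectively), $\kappa > 0$ is small, and $\delta(\varepsilon,\varepsilon') \to 0$. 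Since each $\mathcal{X}$ lives in a fixed, finite inhomogeneous Wiener chaos (up to order four), Nelson's hypercontractivity upgrades these $L^2(\Omega)$ bounds to $L^p(\Omega)$ bounds for every $p < \infty$. Combined with the Besov embedding $B^{r'}_{p,p} \hookrightarrow \CC^{r'-3/p}$ applied to $\mathcal{X}_\varepsilon$ as a function of $x$, and a Kolmogorov/Borel--Cantelli argument along a dyadic subsequence $\varepsilon_n = 2^{-n}$, one obtains almost sure convergence of $\mathcal{X}_{\varepsilon_n}$ to a limit in $\CC^{r(\mathcal{X}) - \kappa}$ for arbitrary $\kappa > 0$, upgraded to the full family by the Cauchy criterion. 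The independence of the limit from the mollifier $m$ follows from applying the same difference estimate across two different mollifiers.

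The concrete bookkeeping is standard: writing $X_\varepsilon(x) = \sum_k m(\varepsilon k) |k|^{-2} e^{2\pi i k\cdot x}\hat\xi(k)$, each $\mathcal{X}_\varepsilon$ becomes a multiple Wiener integral whose kernel is an explicit sum of products of factors $m(\varepsilon k_i)|k_i|^{-2}$ and smoothing symbols from $(1-\Delta)^{-1}$ and $\nabla$. The Wick renormalizations built into $X_\varepsilon^{\zzone}$ (subtracting $c^1_\varepsilon$) and $X_\varepsilon^{\zzfour}$ (subtracting $c^2_\varepsilon$) precisely remove the divergent self-contraction of $|\nabla X_\varepsilon|^2$ and $|\nabla X_\varepsilon^{\zzone}|^2$; the explicit asymptotics $c^1_\varepsilon \sim 1/\varepsilon$ and $c^2_\varepsilon \sim (\log 1/\varepsilon)^2$ follow by replacing the sums by integrals and scaling. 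For $X_\varepsilon^{\zztwo}$ and $X_\varepsilon^{\zzthree}$, no extra constant is needed because these live in the third chaos with a positive regularity exponent, and the Fourier kernels give absolutely convergent integrals directly.

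The main obstacle is the last component, the resonant product $\nabla X_\varepsilon \circ \nabla X_\varepsilon^{\zzthree}$, which has the lowest target regularity $2\alpha - 1 < 0$ and is the only object built by a \emph{resonant} rather than smoothing operation. Its chaos decomposition has contributions in chaoses of order $2$ and $4$. The fourth-chaos part is estimated directly by bounding the Fourier kernel and using the frequency-localisation from the resonant product $\circ$ to gain the required decay in $q$. The second-chaos part arises from the Wick contractions that pair $\nabla X_\varepsilon$ with one of the factors of $\nabla X_\varepsilon^{\zzthree}$; the key calculation is to check that, after the Wick renormalizations already included in $X_\varepsilon^{\zzone}$ and $X_\varepsilon^{\zztwo}$, the remaining sums define a distribution of regularity $2\alpha - 1$ uniformly in $\varepsilon$, so that \emph{no further} counterterm is needed here. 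This is the combinatorial heart of the theorem and the place where the precise choice of $c^1_\varepsilon, c^2_\varepsilon$ in the preceding objects is forced.

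Finally, one checks the uniform tail bound $\mathbb{E}\bigl[|\Delta_q(\mathcal{X}_\varepsilon - \mathcal{X}_{\varepsilon'})|^2\bigr]\lesssim (\varepsilon \vee \varepsilon')^{\theta} 2^{-2q r(\mathcal{X}) + \kappa q}$ for some $\theta > 0$, by exploiting the bound $|m(\varepsilon k) - m(\varepsilon' k)| \lesssim (|\varepsilon k| \wedge 1)^\theta$ in each Fourier factor. This rate, combined with Nelson hypercontractivity and Besov embedding, yields summable probabilities along $\varepsilon_n = 2^{-n}$, hence almost sure convergence to the claimed limit $\Xi \in \mathscr{X}^\alpha$ as in \eqref{equ:enhancednoise3d}.
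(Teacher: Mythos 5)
Your strategy is the standard self-contained one --- Littlewood--Paley block-by-block second-moment estimates in each Wiener chaos, hypercontractivity, Besov embedding and a Kolmogorov/Borel--Cantelli argument --- and it would prove the theorem, but it is not the route the paper takes. The paper does essentially no chaos computation of its own: it imports the stochastic estimates from the parabolic construction of Cannizzaro--Chouk (Theorem 7.11 there, see also Chapter 9 of \cite{gubinelli2017kpz}), converting heat-semigroup bounds into resolvent bounds through the identity $\int_0^\infty e^{-t}e^{t\Delta}\,\mathd t=(1-\Delta)^{-1}$, which in particular shows that the divergent constants are the same; the only genuinely new term, $\nabla X\circ\nabla X^{\zzthree}$, is then handled \emph{deterministically}, by a paraproduct/commutator decomposition (Lemma \ref{lem:comm2} and Proposition \ref{prop:commu}) that isolates its most singular part as $\nabla X\circ\nabla(1-\Delta)^{-1}\nabla X$, an object already constructed in the cited reference. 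So your approach buys self-containedness at the price of redoing all the kernel estimates, while the paper's buys brevity at the price of leaning on the parabolic literature plus one commutator argument.

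There is also a concrete slip in the part you yourself call the combinatorial heart: the chaos bookkeeping for $\nabla X_\varepsilon\circ\nabla X_\varepsilon^{\zzthree}$ is wrong. Since $X^{\zztwo}_\varepsilon$ lies in the odd chaoses (orders $3$ and $1$), the object $X^{\zzthree}_\varepsilon=(1-\Delta)^{-1}(\nabla X_\varepsilon\cdot\nabla X^{\zztwo}_\varepsilon)$ lies in the even chaoses $0,2,4$, and its resonant product with the first-chaos object $\nabla X_\varepsilon$ therefore has components in chaoses $5$, $3$ and $1$, not $2$ and $4$. Likewise, $X^{\zztwo}_\varepsilon$ carries no Wick counterterm; the only subtractions are $c^1_\varepsilon$ in $X^{\zzone}_\varepsilon$ and $c^2_\varepsilon$ in $X^{\zzfour}_\varepsilon$. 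The correct parity actually supports your conclusion --- there is no chaos-zero component, so no further constant can be needed for this term --- but the estimates you would have to carry out are for the fifth-, third- and first-chaos kernels (the first-chaos part being the delicate one, where the cancellation inherited from $c^1_\varepsilon$ upstream must be exploited), and your proposal only asserts these are standard rather than setting them up. If you want to avoid that computation altogether, the paper's reduction of this term to $\nabla X\circ\nabla(1-\Delta)^{-1}\nabla X$ via Lemma \ref{lem:comm2} and Proposition \ref{prop:commu} is the shortcut.
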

\begin{proof}
	We omit the parts of the proof which goes in a similar way to Theorem 7.11 in {\cite{cannizzaro2015}} (see also Chapter 9 of
	{\cite{gubinelli2017kpz}}).
	Note that their estimates are for the parabolic case, but by using the
	resolvent identity
	\[ \int_{0}^{\infty} e^{- t} e^{t \Delta}
	\mathd t = (1 - \Delta)^{- 1}, \]
	one can easily adapt their computations to our setting, essentially through
	multiplying by $e^{- t}$ and integrating over $t.$ This, in particular,
	implies that the diverging constants are the same.
	Note that the last term in our enhanced noise \eqref{equ:enhancednoise3d} is slightly different from
	the one in {\cite{cannizzaro2015}}.  However one can easily show that the
	most singular part of $\nabla X \circ \nabla X^{\zzthree}$ is given by
	$\nabla X \circ \nabla (1 - \Delta)^{- 1} \nabla X,$ which is the term
	from {\cite{cannizzaro2015}}. In fact, we have
	\begin{align*}
	\nabla X \circ \nabla X^{\zzthree}  = & \nabla X \circ \nabla (1 -
	\Delta)^{- 1} \left( \nabla X \prec \nabla X^{\zztwo} + \nabla
	X^{\zztwo} \prec \nabla X + \nabla X \circ \nabla X^{\zztwo} \right)\\
	= & \nabla X \circ (1 - \Delta)^{- 1} \left( \nabla \left( \nabla X
	\prec \nabla X^{\zztwo} + \nabla X \circ \nabla X^{\zztwo} \right)+ 
	\nabla^2 X^{\zztwo} \prec \nabla X \right) \\&+ \nabla X \circ (1 -
	\Delta)^{- 1} \left( \nabla X^{\zztwo} \prec \nabla^2 X \right),
	\end{align*}
	where first expression make sense assuming the correct regularity
	for the other stochastic terms. For the second term,  we apply the
	commutator  Lemma \ref{lem:comm2} (or more precisely the H\"older
	version) and Proposition \ref{prop:commu}.  We  compute
	\begin{align*}
	\nabla X \circ (1 - \Delta)^{- 1}  & \left(  \nabla X^{\zztwo} \prec
	\nabla^2 X \right)  =  \nabla X \circ \left( \nabla X^{\zztwo} \prec
	(1 - \Delta)^{- 1} \nabla^2 X + R \left( \nabla X^{\zztwo}, \nabla^2 X
	\right) \right)\\
	&  =  \nabla X^{\zztwo} (\nabla X \circ \nabla (1 - \Delta)^{- 1}
	\nabla X) + C \left( \nabla X^{\zztwo}, (1 - \Delta)^{- 1} \nabla^2 X,
	\nabla X \right)\\ &  + \nabla X \circ R \left( \nabla X^{\zztwo}, \nabla^2 X
	\right) .
	\end{align*}
	This proves that $\nabla X \circ \nabla (1 - \Delta)^{- 1} \nabla X \in
	\CC^{2 \alpha - 1}$ which in turn implies that $\nabla X \circ \nabla X^{\zzthree}
	\in \CC^{2 \alpha - 1} .$ Thus our result follows from Theorem
	7.11 in {\cite{cannizzaro2015}}.\\				
	See also Theorems 9.1 and 9.3 in {\cite{gubinelli2017kpz}} where a
	similar renormalization was performed with 1d space-time
	white noise with regularity that of the 3d spatial
	white noise.
\end{proof}
\begin{lemma}
	Let $\alpha, X, X^{\zzone}, X^{\zztwo}, X_{\varepsilon},
	X_{\varepsilon}^{\zzone}, X_{\varepsilon}^{\zztwo} $ be as above, then $e^X
	\in \CC^{\alpha}, e^{X^{\zzone}} \in \CC^{2 \alpha},
	e^{X^{\zztwo}} \in \CC^{\alpha + 1}$ and
	\begin{align*}
	e^{X_{\varepsilon}} & \rightarrow  e^X \text{ in }
	\CC^{\alpha}\\
	e^{X^{\zzone}_\varepsilon} & \rightarrow  e^{X^{\zzone}} \text{ in }
	\CC^{2 \alpha}\\
	e^{X_\varepsilon^{\zztwo}} & \rightarrow  e^{X^{\zztwo}} \text{ in }
	\CC^{\alpha + 1} .
	\end{align*}
	\begin{proof}
		{\tmem{We prove the result for $X,$ the others are proved in the 
				same way. Since $\alpha > 0$, we use the equivalent classical H{\"o}lder
				norms on $\CC^{\alpha} .$ One easily sees that the spaces
				$\CC^{\alpha}$ are Banach Algebras, so $e^X = \underset{n
					\ge 0}{\sum} \frac{1}{n!} X^n  \in  \CC^{\alpha}$ and since
				$X_{\varepsilon} \rightarrow X \ \tmop{in} \  \CC^{\alpha},$ we can
				estimate}}
		\begin{eqnarray*}
			\| e^X - e^{X_{\varepsilon}} \|_{\mathscr{C}^\alpha} & \le & \| e^X
			\|_{\mathscr{C}^\alpha} \| 1 - e^{X_{\varepsilon} - X} \|_{\mathscr{C}^\alpha}
			=  \| e^X \|_{\mathscr{C}^\alpha} \left\| \underset{n \ge 1}{\sum}
			\frac{1}{n!} (X_{\varepsilon} - X)^n \right\|_{\mathscr{C}^\alpha}\\
			& \le & \| e^X \|_{\mathscr{C}^\alpha} (e^{\| X_{\varepsilon} - X
				\|_{\mathscr{C}^\alpha}} - 1),
		\end{eqnarray*}
		{\tmem{and conclude that $e^{X_{\varepsilon}} \rightarrow e^X$ in
				$\CC^{\alpha} .$}}
	\end{proof}
\end{lemma}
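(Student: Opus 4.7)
My plan is to exploit the fact that for all three exponents $\alpha$, $2\alpha$, and $\alpha+1$ (all strictly positive, since $0<\alpha<1/2$), the Besov--H\"older spaces $\CC^s$ coincide with the classical H\"older spaces and, crucially, form Banach algebras under pointwise multiplication: $\|fg\|_{\CC^s}\lesssim\|f\|_{\CC^s}\|g\|_{\CC^s}$. This would instantly yield membership of $e^X$, $e^{X^{\zzone}}$, $e^{X^{\zztwo}}$ in their respective spaces by absolute convergence of the Taylor series $e^Y=\sum_{n\ge 0}Y^n/n!$, with the rough bound $\|e^Y\|_{\CC^s}\lesssim e^{C\|Y\|_{\CC^s}}$ for some constant $C$ coming from the Banach algebra inequality.

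For the convergence assertion I would use the identity $e^{Y}-e^{Z}=e^{Z}(e^{Y-Z}-1)$ and estimate the second factor through its power series: since $e^{Y-Z}-1=\sum_{n\ge 1}(Y-Z)^n/n!$, the Banach algebra property immediately gives
\[
\|e^{Y-Z}-1\|_{\CC^s}\le e^{C\|Y-Z\|_{\CC^s}}-1.
\]
Combined with the Banach algebra bound on $e^Z$, this produces
\[
\|e^{Y}-e^{Z}\|_{\CC^s}\lesssim e^{C\|Z\|_{\CC^s}}\bigl(e^{C\|Y-Z\|_{\CC^s}}-1\bigr),
\]
and now Theorem~\ref{thm:3dren}, which provides $X_\varepsilon\to X$ in $\CC^\alpha$, $X^{\zzone}_\varepsilon\to X^{\zzone}$ in $\CC^{2\alpha}$, and $X^{\zztwo}_\varepsilon\to X^{\zztwo}$ in $\CC^{\alpha+1}$, forces the right-hand side to tend to zero in each case. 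The three convergences then all follow from the same argument applied to each component.

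There is no substantial obstacle here: the result reduces to the classical Banach algebra property of H\"older spaces with positive exponent and an elementary manipulation of the exponential series. The only mild point to monitor is uniformity along the approximation, i.e.\ that the prefactor $e^{C\|X_\varepsilon\|_{\CC^\alpha}}$ stays bounded as $\varepsilon\to 0$; this is automatic because $\|X_\varepsilon\|_{\CC^\alpha}$ converges and is therefore bounded, and similarly for the other two components.
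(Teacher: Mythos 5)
Your proposal is correct and follows essentially the same route as the paper: membership via the Banach algebra property of H\"older spaces with positive exponent and the exponential series, and convergence via factoring $e^{X}-e^{X_\varepsilon}=e^{X}(1-e^{X_\varepsilon-X})$ and bounding the series by $e^{\|X_\varepsilon-X\|_{\CC^\alpha}}-1$. No gaps to report.
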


\begin{lemma}\label{lem:expprod}
	For $\alpha, X, X^{\zztwo}$ as above, $W  :=  X + X^{\zzone} + X^{\zztwo}$ and
	\begin{eqnarray*}
Z & = & (1 - \Delta)^{- 1} \left( \left| \nabla X^{\zztwo} \right|^2 + 2
		\nabla X^{\zzone} \cdummy \nabla X^{\zztwo} - X^{\zzone} - X^{\zztwo}
		\right) + X^{\zzfour} + 2 X^{\zzthree}.
	\end{eqnarray*}
	 we have
	\[ \nabla e^X \cdummy \nabla e^{X^{\zztwo}} \in \CC^{\alpha - 1},
	\]
	which implies that $e^{2 W} (1 - \Delta) Z \in \CC^{\alpha - 1} .$
	
	\begin{proof}
		{\tmem{We use paralinearisatio{\tmem{}}n, see Lemma \ref{lem:paralin}, to
				rewrite}}
		\begin{eqnarray*}
			e^X & = & e^X \prec X + g^{\sharp}\\
			e^{X^{\zztwo}} & = & e^{X^{\zztwo}} \prec X^{\zztwo} + f^{\sharp},
		\end{eqnarray*}
		{\tmem{where $g^{\sharp} \in \CC^{2 \alpha} \ \tmop{and} \  f^{\sharp} \in \CC^{2 \alpha + 1}
				.$ Thus,}}
		\begin{eqnarray*}
			\nabla e^X & = & (\nabla e^X) \prec X + e^X \prec \nabla X + \nabla
			g^{\sharp}\\
			& \tmop{and} & \\
			\nabla e^{X^{\zztwo}} & = & \left( \nabla e^{X^{\zztwo}} \right) \prec
			X^{\zztwo} + e^{X^{\zztwo}} \prec \nabla X^{\zztwo} + \nabla f^{\sharp}
			.
		\end{eqnarray*}
		{\tmem{Note that the only problematic term in the product is }}
		
		\begin{equation} \label{equ:enhancednoise3d:equ1}
	(e^X \prec
		\nabla X) ( e^{X^{\zztwo}} \prec \nabla X^{\zztwo} ).	
		\end{equation}
		{\tmem{More precisely, we only have to make sense of the resonant product in \eqref{equ:enhancednoise3d:equ1}.
				Since the paraproducts are always defined. We compute}}
		\begin{align*}
		(e^X \prec \nabla X) \circ \left( e^{X^{\zztwo}} \prec \nabla X^{\zztwo}
		\right)  = & e^{X^{\zztwo}} \left( \nabla X^{\zztwo} \circ (e^X \prec
		\nabla X) \right) + C \left( e^{X^{\zztwo}}, \nabla X^{\zztwo}, (e^X
		\prec \nabla X) \right)\\
		= & e^{X^{\zztwo} + X} \left( \nabla X^{\zztwo} \circ \nabla X
		\right) + e^{X^{\zztwo}} C \left( e^X, \nabla X, \nabla X^{\zztwo}
		\right) \\&+ C \left( e^{X^{\zztwo}}, \nabla X^{\zztwo}, (e^X \prec \nabla
		X) \right) .
		\end{align*}
		{\tmem{Now, since $\nabla X^{\zztwo} \circ \nabla X$ is assumed to be in
				$\CC^{2 \alpha - 1},$ the above resonant product is also in
				$\CC^{2 \alpha - 1} .$ This finishes the proof that }}$\nabla e^X
		\cdummy \nabla e^{X^{\zztwo}} \in \CC^{\alpha - 1} .${\tmem{
				Moreover, by reinserting the definitions we obtain}}
		\begin{align*}
		& e^{2 W} (1 - \Delta) Z \\  &=  e^{2 X + 2 X^{\zzone} + 2 X^{\zztwo}}
		\left( : \left| \nabla X^{\zzone} \right|^2 : + \left| \nabla X^{\zztwo}
		\right|^2 + 2 \nabla X \cdummy \nabla X^{\zztwo} + 2 \nabla X^{\zzone}
		\cdummy \nabla X^{\zztwo} - X^{\zzone} - X^{\zztwo} \right)\\
		&=  e^{2 X + 2 X^{\zzone} + 2 X^{\zztwo}} \left( : \left| \nabla
		X^{\zzone} \right|^2 : + \left| \nabla X^{\zztwo} \right|^2 + 2 \nabla
		X^{\zzone} \cdummy \nabla X^{\zztwo} - X^{\zzone} - X^{\zztwo} \right) \\&+
		\frac{1}{2} e^{2 X^{\zzone}} \nabla \left(e^{2 X}\right) \nabla \left( e^{2
			X^{\zztwo}} \right)
		\end{align*}
		{\tmem{by noting the previous computations and the fact that all the terms in the
				first bracket have regularity at least $2 \alpha - 1$.  We can finally conclude
				that
				\[ \| e^{2 W} (1 - \Delta) Z \|_{\CC^{\alpha - 1}} \lesssim \|
				e^{2 W} \|_{\CC^{\alpha}} \| \Xi \|_{\mathscr{X}^{\alpha}} . \]}}
	\end{proof}
\end{lemma}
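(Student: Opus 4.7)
The plan is to handle the product $\nabla e^X \cdot \nabla e^{X^{\zztwo}}$ via Bony paralinearisation and then reduce the single genuinely singular contribution to the enhanced-noise data supplied by $\mathscr{X}^\alpha$. Concretely, I first apply Lemma \ref{lem:paralin} to write $e^X = e^X \prec X + g^\sharp$ with $g^\sharp \in \CC^{2\alpha}$ and analogously $e^{X^{\zztwo}} = e^{X^{\zztwo}} \prec X^{\zztwo} + f^\sharp$ with $f^\sharp \in \CC^{2\alpha+1}$. Differentiating and applying the Leibniz rule for $\prec$ produces three-term decompositions for $\nabla e^X$ and $\nabla e^{X^{\zztwo}}$; a direct paraproduct bookkeeping shows that among the nine bilinear pieces of the product only $(e^X \prec \nabla X)(e^{X^{\zztwo}} \prec \nabla X^{\zztwo})$ is critical, the others already sitting in $\CC^{\alpha-1}$ or better because each pairs a factor of negative regularity with a factor of strictly positive regularity summing to at least $2\alpha-1$.

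For this critical piece, the $\prec$ and $\succ$ parts are controlled by the standard Bony estimates, so the work concentrates on the resonant part $(e^X \prec \nabla X) \circ (e^{X^{\zztwo}} \prec \nabla X^{\zztwo})$. Here I plan to invoke the commutator of Proposition \ref{prop:commu} twice, peeling off first $e^{X^{\zztwo}}$ and then $e^X$, to rewrite this resonant product as $e^{X + X^{\zztwo}}\bigl(\nabla X \circ \nabla X^{\zztwo}\bigr)$ plus two commutator remainders controlled in $\CC^{\alpha-1}$. The leftover resonant product $\nabla X \circ \nabla X^{\zztwo} \in \CC^{2\alpha-1}$ is available from the stochastic data: it is produced from the coordinate $\nabla X \circ \nabla X^{\zzthree}$ of $\mathscr{X}^\alpha$ by exactly the commutator reduction indicated after Theorem \ref{thm:3dren}. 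Assembling everything yields $\nabla e^X \cdot \nabla e^{X^{\zztwo}} \in \CC^{\alpha-1}$.

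For the second claim I compute $(1-\Delta)Z$ by expanding the definitions, using $(1-\Delta)X^{\zzfour} = {:}|\nabla X^{\zzone}|{:}^2$ and $(1-\Delta)X^{\zzthree} = \nabla X \cdot \nabla X^{\zztwo}$, and multiply by $e^{2W}$. All the resulting summands other than $2\, e^{2W}\, \nabla X \cdot \nabla X^{\zztwo}$ have nonnegative Hölder regularity thanks to the algebra property of $\CC^\alpha$ together with the regularities of $|\nabla X^{\zztwo}|^2$, $\nabla X^{\zzone} \cdot \nabla X^{\zztwo}$ and ${:}|\nabla X^{\zzone}|{:}^2$. The remaining singular summand I handle via the chain-rule identity
\[
2\, e^{2W}\, \nabla X \cdot \nabla X^{\zztwo} \;=\; \tfrac{1}{2}\, e^{2 X^{\zzone}}\, \nabla(e^{2X}) \cdot \nabla(e^{2 X^{\zztwo}}),
\]
which lies in $\CC^{\alpha-1}$ by the first part (applied to $2X$ and $2X^{\zztwo}$) together with multiplication by the Hölder factor $e^{2 X^{\zzone}}$, also yielding the quantitative bound $\|e^{2W}(1-\Delta)Z\|_{\CC^{\alpha-1}} \lesssim \|e^{2W}\|_{\CC^\alpha}\|\Xi\|_{\mathscr{X}^\alpha}$.

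The main obstacle is the resonant step in the first part: since $\nabla X \in \CC^{\alpha-1}$ and $\nabla X^{\zztwo} \in \CC^\alpha$ have regularities summing to $2\alpha - 1 < 0$, the resonant product $\nabla X \circ \nabla X^{\zztwo}$ is not classically defined, and the entire argument ultimately depends on its renormalised probabilistic construction inside $\mathscr{X}^\alpha$. Every other manipulation reduces, via paraproduct and commutator bookkeeping, to this single probabilistic input.
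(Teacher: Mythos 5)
Your proposal follows essentially the same route as the paper: paralinearisation of $e^X$ and $e^{X^{\zztwo}}$, isolation of the single critical term $(e^X\prec\nabla X)(e^{X^{\zztwo}}\prec\nabla X^{\zztwo})$, a double application of the commutator of Proposition \ref{prop:commu} to reduce its resonant part to $e^{X+X^{\zztwo}}\bigl(\nabla X\circ\nabla X^{\zztwo}\bigr)$ plus controlled remainders, and then the identity $2\,e^{2W}\nabla X\cdot\nabla X^{\zztwo}=\tfrac12 e^{2X^{\zzone}}\nabla(e^{2X})\cdot\nabla(e^{2X^{\zztwo}})$ to handle the only singular summand of $e^{2W}(1-\Delta)Z$; this is exactly the paper's argument. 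The one point to correct is your sourcing of the key stochastic input: $\nabla X\circ\nabla X^{\zztwo}\in\CC^{2\alpha-1}$ is \emph{not} produced from the coordinate $\nabla X\circ\nabla X^{\zzthree}$ of $\mathscr{X}^{\alpha}$ by the commutator reduction in the proof of Theorem \ref{thm:3dren} --- that reduction runs in the opposite direction, using $\nabla X\circ\nabla X^{\zztwo}$ (together with the other stochastic terms) as an input in order to control $\nabla X\circ\nabla X^{\zzthree}$. Deterministically, knowing $X^{\zzthree}\in\CC^{\alpha+1}$ only yields $\nabla X\circ\nabla X^{\zztwo}\in\CC^{\alpha-1}$, which is insufficient here: multiplying it by $e^{X}\in\CC^{\alpha}$ would require $2\alpha-1>0$, false for $\alpha<1/2$. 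The regularity $2\alpha-1$ is a genuinely probabilistic (renormalised) fact, which the paper simply assumes as part of the construction imported from the cited stochastic estimates; since you do acknowledge in your last paragraph that the argument rests on this probabilistic construction, this is an attribution slip rather than a structural gap, but the stated derivation would not deliver the needed regularity.
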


\subsubsection{The domain, the $\Gamma$-map and the definition of the 3-d Hamiltonian} \label{sec:domainDef3d}

In this section, building on our work in Section \ref{sec:3dren}, we perform the renormalization of the Anderson Hamiltonian in 3d.  Recall the  following quantities we introduced and justified in  Section \ref{sec:3dren}.
\begin{align*}
X &= (- \Delta)^{- 1} \xi (x)\in\CC^{1/2-} \\ X^{\zzone} &= (1 - \Delta)^{- 1} : | \nabla X
|^2 :\in\CC^{1-} && X^{\zztwo} = 2 (1 - \Delta)^{- 1} \left( \nabla X \cdot \nabla
X^{\zzone} \right)\in\CC^{3/2-}\\
X^{\zzthree} &= (1 - \Delta)^{- 1} \left( \nabla X \cdummy
\nabla X^{\zztwo} \right)\in\CC^{3/2-} && X^{\zzfour} = (1 - \Delta)^{- 1}_{} : \left|
\nabla X^{\zzone} \right|^2 :\in\CC^{2-}.
\end{align*}

In the following we first  motivate the ansatz, that we will use in 3d, through informal calculations and then conclude formally in Definition \ref{def:3dham}.   

Initially, we make the
following Ansatz for the domain of the Hamiltonian
\[ u = e^{X + X^{\zzone} + X^{\zztwo}} u^{\flat}, \]
where the form of $u^{\flat}$ will be specified later. We begin by computing
\begin{align*}
\Delta u + u \xi  = & e^{X + X^{\zzone} + X^{\zztwo}}\Big( \Delta \left(
X + X^{\zzone} + X^{\zztwo} \right) u^{\flat} + \left| \nabla \left( X +
X^{\zzone} + X^{\zztwo} \right)^{} \right|^2 u^{\flat} \\&+ \Delta u^{\flat} +
2 \nabla \left( X + X^{\zzone} + X^{\zztwo} \right) \nabla u^{\flat} +
u^{\flat} \xi \Big)\\
= & e^{X + X^{\zzone} + X^{\zztwo}} \Big( \Delta u^{\flat} + \left( |
\nabla X |^2 - : | \nabla X |^2 :^{} + \left| \nabla X^{\zzone} \right|^2 +
\left| \nabla X^{\zztwo} \right|^2 \right.  \\&+ \left. 2 \nabla X \cdummy \nabla X^{\zztwo} +
2 \nabla X^{\zzone} \cdummy \nabla X^{\zztwo} - X^{\zzone} - X^{\zztwo}
\right) u^{\flat} + 2 \nabla \left( X + X^{\zzone} + X^{\zztwo} \right)
\cdummy \nabla u^{\flat} \Big).
\end{align*}
Note that the regularity of $X^{\zzone}$ is too low for the term $\left|
\nabla X^{\zzone} \right|^2$ to be defined so we have to replace it by by its
Wick ordered version, also note the appearing difference $| \nabla X |^2 - : |
\nabla X |^2 : .$ Here one sees the two divergences that arise, since we formally have
\begin{eqnarray*}
	: | \nabla X |^2 : & = & | \nabla X |^2 - \infty,\qquad
	: \left| \nabla X^{\zzone} \right|^2 :  =  \left| \nabla X^{\zzone}
	\right|^2 - \infty.
\end{eqnarray*}
However this notation is misleading since the rate of divergence is different in both cases, as we calculated as constants $c_1^\varepsilon$ and $c_2^\varepsilon$ in Theorem \ref{thm:3dren}.
This again suggests that, as in 2d, the renormalized Hamiltonian
can be formally written in the suggestive form
\[ A = \Delta + \xi - \infty . \]
We set
\begin{equation}
A u = A (e^W u^{\flat}) = e^W (\Delta u^{\flat} + 2 (1 - \Delta) \widetilde{W}
\cdummy \nabla u^{\flat} + (1 - \Delta) Z u^{\flat}),
\label{eqn:3dHuflat}
\end{equation}
for functions $u^{\flat} $ that  this auto-consistently makes sense with the form of $u^{\flat} $.  For simplicity,  we put
\begin{eqnarray*}
	W & = & X + X^{\zzone} + X^{\zztwo}\\
	\widetilde{W} & = & (1 - \Delta)^{- 1} \nabla W\\
	Z & = & (1 - \Delta)^{- 1} \left( \left| \nabla X^{\zztwo} \right|^2 + 2
	\nabla X^{\zzone} \cdummy \nabla X^{\zztwo} - X^{\zzone} - X^{\zztwo}
	\right) + X^{\zzfour} + 2 X^{\zzthree}.
\end{eqnarray*}
As we have seen in section \ref{sec:3dren}, these stochastic terms have the
following regularities
\[ X, W \in \CC^{1 / 2 -}, X^{\zzone} \in \CC^{1 -},
X^{\zztwo}, X^{\zzthree}, \widetilde{W}, Z \in \CC^{3 / 2 -} \ \tmop{and} \ 
X^{\zzfour} \in \CC^{2 -} . \]
This suggests to make a paracontrolled ansatz for $u^{\flat}$ in terms of $Z$
and $\widetilde{W}$ since the products appearing are classically ill-defined. In
fact, we make the following ansatz
\begin{equation}\label{equ:ubetaansatz}
 u^{\flat} = u^{\flat} \prec Z + \nabla u^{\flat} \prec \widetilde{W} + B_{\Xi}
(u^{\flat}) + u^{\sharp}, 
\end{equation}
with $u^{\sharp} \in \ssp^2$ and for a correction term that we denote by $B_{\Xi} (u^{\flat})$. To the correction term, we will absorb the terms which has regularity not worse than $\ssp^{2 -}$. Similar to the 2d case, we will  introduce a frequency cut-off  that will allow us to write $u^{\flat}$ as a function of $u^{\sharp}$
but for notational brevity, we will omit this for the time being. 

For the remainder of this section, we define
\[ L \assign (1 - \Delta) \quad \text{and} \quad  L^{- 1} = (1 - \Delta)^{- 1} \text{} . \]
Note that the  ansatz \eqref{equ:ubetaansatz} directly implies $u^{\flat} \in \ssp^{3 / 2 -}$ by the
paraproduct estimates in Lemma \ref{lem:paraest}. 

We want to determine the form of the correction term $B_{\Xi}
(u^{\flat})$ in \eqref{equ:ubetaansatz}.  We first  compute
\begin{align*}
\Delta u^{\flat}  = & \Delta u^{\flat} \prec Z + 2 \nabla u^{\flat} \prec
\nabla Z + u^{\flat} \prec \Delta Z + \nabla \Delta u^{\flat} \prec
\widetilde{W} + 2 \nabla^2 u^{\flat} \prec \nabla \widetilde{W} \\&+ \nabla u^{\flat}
\prec \Delta \widetilde{W} + \Delta B_{\Xi} + \Delta u^{\sharp}\\
= & \Delta u^{\flat} \prec Z + 2 \nabla u^{\flat} \prec \nabla Z -
u^{\flat} \prec (  L Z-Z) + \nabla \Delta u^{\flat} \prec \widetilde{W} + 2
\nabla^2 u^{\flat} \prec \nabla \widetilde{W}\\& - \nabla u^{\flat} \prec
(  L \widetilde{W}-\widetilde{W}) - L B_{\Xi} (u^{\flat}) - B_{\Xi} (u^{\flat}) +
\Delta u^{\sharp}.
\end{align*}
By using the paraproduct decomposition we obtain
\begin{eqnarray}
\Delta u^{\flat} + 2 L \widetilde{W} \cdummy \nabla u^{\flat} + L
Zu^{\flat}  =  \Delta u^{\sharp} + \widetilde{G} (u^{\flat}) + 2 L
\widetilde{W} \circ \nabla u^{\flat} + L Z \circ u^{\flat}, \label{eqn:gtilde} 
\end{eqnarray}
where we have defined
\begin{align*}
\widetilde{G} (u^{\flat}) \assign& \Delta u^{\flat} \prec Z + 2 \nabla
u^{\flat} \prec \nabla Z + u^{\flat} \prec Z + \nabla \Delta u^{\flat}
\prec \widetilde{W} + 2 \nabla^2 u^{\flat} \prec \nabla \widetilde{W} + \nabla
u^{\flat} \prec \widetilde{W} \\&- L B_{\Xi} (u^{\flat}) - B_{\Xi} (u^{\flat}) + 2
L \widetilde{W} \prec \nabla u^{\flat} + L Z \prec u^{\flat}.
\end{align*}
These are the ``non-problematic'' terms that can also be absorbed into $B_{\Xi}$. We still have to take care of the resonant
product $L \widetilde{W} \circ \nabla u^{\flat}$, which is not a priori defined
and the other resonant product which is actually defined as is, but we shall
see at a later time that it is necessary to decompose it further. To be
precise, we insert the ansatz and use Proposition \ref{prop:commu}
\begin{align*}
L \widetilde{W} \circ \nabla u^{\flat}  = & L \widetilde{W} \circ (\nabla u^{\flat}
\prec Z + u^{\flat} \prec \nabla Z + \nabla^2 u^{\flat} \prec \widetilde{W} +
\nabla^{} u^{\flat} \prec \nabla \widetilde{W} + \nabla B_{\Xi} (u^{\flat}) +
\nabla u^{\sharp})\\
= & \nabla u^{\flat} (L \widetilde{W} \circ Z) + C (\nabla u^{\flat}, Z, L
\widetilde{W}) + u^{\flat} (L \widetilde{W} \circ \nabla Z) + C (u^{\flat}, \nabla
Z, L \widetilde{W})\\ &+ L \widetilde{W} \circ (\nabla^2 u^{\flat} \prec \widetilde{W}) +
\nabla u^{\flat} (L \widetilde{W} \circ \nabla \widetilde{W})\\ &+ C (\nabla u^{\flat},
\nabla \widetilde{W}, L \widetilde{W}) + L \widetilde{W} \circ (\nabla B_{\Xi}
(u^{\flat}) + \nabla u^{\sharp}) .
\end{align*}
In section \ref{sec:3dren}, we have seen that the following stochastic terms can be defined  and have regularity
\begin{eqnarray*}
	L \widetilde{W} \circ Z & \in & \CC^{1 -}\\
	L \widetilde{W} \circ \nabla Z & \in & \CC^{0 -}\\
	L \widetilde{W} \circ \nabla \widetilde{W} & \in & \CC^{0 -} .
\end{eqnarray*}
We furthermore expand the products appearing above as
\begin{align*}
& L \widetilde{W} \circ \nabla u^{\flat} \\  &=  \nabla u^{\flat} \prec (L \widetilde{W}
\circ Z) + \nabla u^{\flat} \succcurlyeq (L \widetilde{W} \circ Z) + C (\nabla
u^{\flat}, Z, L \widetilde{W}) + u^{\flat} \prec (L \widetilde{W} \circ \nabla Z)\\ &+
u^{\flat} \succcurlyeq (L \widetilde{W} \circ \nabla Z) + C (u^{\flat}, \nabla
Z, L \widetilde{W}) + L \widetilde{W} \circ (\nabla^2 u^{\flat} \prec \widetilde{W}) +
\nabla u^{\flat} \prec (L \widetilde{W} \circ \nabla \widetilde{W}) \\ &+ \nabla
u^{\flat} \succcurlyeq (L \widetilde{W} \circ \nabla \widetilde{W})+ C (\nabla
u^{\flat}, \nabla \widetilde{W}, L \widetilde{W}) + L \widetilde{W} \circ (\nabla
B_{\Xi} (u^{\flat}) + \nabla u^{\sharp}) .
\end{align*}
For the other resonant product in (\ref{eqn:gtilde}), we do the same and get
\begin{align*}
L Z \circ u^{\flat}  = & u^{\flat} \prec (L Z \circ Z) + u^{\flat}
\succcurlyeq (L Z \circ Z) + C (u^{\flat}, Z, L Z) + \nabla^{} u^{\flat}
\prec (L Z \circ \widetilde{W})\\& + \nabla u^{\flat} \succcurlyeq (L Z \circ \widetilde{W})
+ C (\nabla u^{\flat}, \widetilde{W}, L Z) + L Z \circ (B_{\Xi} (u^{\flat}) +
u^{\sharp}) .
\end{align*}
Now we are in a position to give the precise definition of the correction term, we put
\begin{equation}\label{eqn:3db}
\begin{aligned}
B_{\Xi} (u^{\flat}) & \assign L^{- 1} \left[ \Delta u^{\flat} \prec Z + 2\nabla
u^{\flat} \prec \nabla Z + u^{\flat} \prec Z + \nabla \Delta u^{\flat} \prec
\widetilde{W} + 2\nabla^2 u^{\flat} \prec \nabla \widetilde{W}\right.\\
 &- \nabla u^{\flat}
\prec \widetilde{W} + 2 L \widetilde{W} \prec \nabla u^{\flat} + L Z \prec u^{\flat}
\\ & + 2 \nabla u^{\flat} \prec (L \widetilde{W} \circ Z) + 2 \nabla u^{\flat} \succ
(L \widetilde{W} \circ Z) \\
 &+ 2 u^{\flat} \prec (L \widetilde{W} \circ \nabla Z) + 2
u^{\flat} \succ (L \widetilde{W} \circ \nabla Z) +  2 \nabla u^{\flat} \prec (L
\widetilde{W} \circ \nabla \widetilde{W}) \\
&+ 2 \nabla u^{\flat} \succ (L \widetilde{W}
\circ \nabla \widetilde{W}) + u^{\flat} \prec (L Z \circ Z)+u^{\flat} \succ (L
Z \circ Z)\\ 
&+ \left. \nabla^{} u^{\flat} \prec (L Z \circ \widetilde{W}) + \nabla
u^{\flat} \succ (L Z \circ \widetilde{W}) \right] . 
\end{aligned}
\end{equation}
Using again the paraproduct estimates from Lemma \ref{lem:paraest}, one sees
that the terms in the brackets are at least of regularity $\ssp^{0 -}$, which
implies $B_{\Xi} (u^{\flat}) \in \ssp^{2 -}.$ We make this precise in the following result.

\begin{lemma}
	Let $B_{\Xi}$ be defined as above, then we have the following bounds for
	$\sigma < 2$ and $\varepsilon > 0$
	\begin{enumeratenumeric}
		\item $\| B_{\Xi} (v) \|_{\ssp^{\sigma}} \le C_{\Xi} \| v \|_{\ssp^{\sigma
				- 1 / 2 + \varepsilon}} $
		
		\item $\| B_{\Xi} (v) \|_{\CC^{\sigma}} \le C_{\Xi} \| v
		\|_{\CC^{\sigma - 1 / 2}} $,
	\end{enumeratenumeric}
	where for the the constant we can choose $C_{\Xi} = C \| \Xi
	\|_{{\mathscr{X}}^{\sigma - 3 / 2}}$, see Definition \ref{def:3dnoise} for the precise definition of the norm, where $C > 0$ is an independent constant. 
\end{lemma}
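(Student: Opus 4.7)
The plan is to estimate every summand inside the bracket of (\ref{eqn:3db}) in the space $\ssp^{\sigma-2}$ (respectively $\CC^{\sigma-2}$) and then invoke Schauder's estimate, which gives $\|L^{-1} f\|_{\ssp^\sigma}\lesssim\|f\|_{\ssp^{\sigma-2}}$ and likewise on $\CC^\sigma$. With $L^{-1}$ absorbing two derivatives, the whole question reduces to a term-by-term application of the paraproduct and resonant product estimates recalled in Lemma \ref{lem:paraest}, combined with the regularities of the stochastic objects supplied by Theorem \ref{thm:3dren} and the informal computations preceding the lemma, namely $Z,\widetilde{W}\in\CC^{3/2-}$, $L\widetilde{W},LZ\in\CC^{-1/2-}$ and the resonant data $L\widetilde{W}\circ Z\in\CC^{1-}$, $L\widetilde{W}\circ\nabla Z,L\widetilde{W}\circ\nabla\widetilde{W},LZ\circ Z,LZ\circ\widetilde{W}\in\CC^{0-}$; each of these regularities is controlled polynomially by the norm $\|\Xi\|_{\mathscr{X}^{\sigma-3/2}}$, which is why the constant $C_\Xi$ can be taken of that form.

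The strategy in each case is to place the stochastic factor in the Hölder scale and the function-theoretic factor in the Sobolev scale, so that the bilinear paraproduct bound $\|f\prec g\|_{\ssp^{\alpha+\beta}}\lesssim\|f\|_{\CC^\alpha}\|g\|_{\ssp^\beta}$ (valid for $\alpha<0$) can be applied without additional loss. The tightest summands in the ``derivative budget'' are those that pair the highest derivative of $v$ with a stochastic object of lowest regularity: for instance $LZ\prec v$ and $L\widetilde{W}\prec\nabla v$, which, using $LZ,L\widetilde{W}\in\CC^{-1/2-}$, give $\|LZ\prec v\|_{\ssp^{\sigma-2}}\lesssim C_\Xi\|v\|_{\ssp^{\sigma-3/2+\varepsilon}}$ and $\|L\widetilde{W}\prec\nabla v\|_{\ssp^{\sigma-2}}\lesssim C_\Xi\|\nabla v\|_{\ssp^{\sigma-3/2+\varepsilon}}\lesssim C_\Xi\|v\|_{\ssp^{\sigma-1/2+\varepsilon}}$, and, symmetrically, the terms $\nabla\Delta v\prec\widetilde{W}$ and $2\nabla^2 v\prec\nabla\widetilde{W}$, which cost three and two derivatives on $v$ respectively and are paid for by the regularity $3/2-\varepsilon$ of $\widetilde{W},\nabla\widetilde{W}$, again producing the bound $C_\Xi\|v\|_{\ssp^{\sigma-1/2+\varepsilon}}$. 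For the intermediate summands that involve resonant products already of positive regularity, such as $v\prec(LZ\circ Z)$ or $\nabla v\prec(L\widetilde{W}\circ Z)$, one simply uses $L^\infty$-continuity of the paraproduct to absorb $v$ at the cost of the $L^\infty$ norm of the resonant data, and this costs even less regularity of $v$. The Sobolev case therefore closes with the target bound, the small $\varepsilon$ loss coming from the fact that the stochastic data only live in $\CC^{s-}$ for the relevant $s$.

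The Hölder version is an exact replica with Sobolev estimates replaced by their Hölder analogues, and now the bound $\|f\prec g\|_{\CC^{\alpha+\beta}}\lesssim\|f\|_{\CC^\alpha}\|g\|_{\CC^\beta}$ is in force with no summability loss, so the $\varepsilon$ can be removed and we arrive at $\|B_\Xi(v)\|_{\CC^\sigma}\lesssim C_\Xi\|v\|_{\CC^{\sigma-1/2}}$. The expected main obstacle is simply bookkeeping: the bracket of (\ref{eqn:3db}) has on the order of fifteen summands, and one must ensure for every single one that the budget closes in $\ssp^{\sigma-2}$; in particular the two ``dangerous'' groups identified above (maximal derivative on $v$ paired with $\widetilde{W},\nabla\widetilde{W}$, and $v,\nabla v$ paired with $LZ,L\widetilde{W}$) are precisely the ones that force the exponent $\sigma-1/2+\varepsilon$ on $v$ and determine the regularity loss stated in the lemma.
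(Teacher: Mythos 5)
Your proposal is correct and follows essentially the same route as the paper, whose proof is exactly this: reduce via the Schauder estimate for $L^{-1}$ and then bound each summand in \eqref{eqn:3db} with the paraproduct estimates of Lemma \ref{lem:paraest} (in the Sobolev scale for the first bound, in the Besov--H\"older scale for the second), with the stochastic factors measured in the norm of $\mathscr{X}^{\sigma-3/2}$. Your identification of the terms that saturate the regularity budget ($\nabla\Delta v\prec\widetilde{W}$, $\nabla^2 v\prec\nabla\widetilde{W}$, and the pairings of $v,\nabla v$ with $LZ,L\widetilde{W}$) and of the origin of the $\varepsilon$-loss is accurate, so nothing is missing.
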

\begin{proof}
	This follows from the paraproduct estimates, Lemma \ref{lem:paraest}, for
	the first case. The second case works precisely in the same way using the
	paraproduct estimates for Besov-H\"older spaces and Schauder estimates, see
	e.g. {\cite{gubinelli2015paracontrolled}}.
\end{proof}

Finally we collect everything in the following rigorous definition which describes the domain of the Anderson Hamiltonian.

\begin{definition}
	\label{def:3dham}Let $W, \widetilde{W}, Z$ be as above. Then, for $1 <
	\gamma < 3 / 2$, we define the space
	\[ \mathscr{W}_{\Xi}^{\gamma} \assign e^W \mathscr{U}_{\Xi}^{\gamma} \assign
	e^W \{ u^{\flat} \in \ssp^{\gamma} \ \text{s.t.} \  u^{\flat} =
	u^{\flat} \prec Z + \nabla u^{\flat} \prec \widetilde{W} + B_{\Xi}
	(u^{\flat}) + u^{\sharp}, ~\text{for}~ u^\sharp \in \ssp^2 \}, \]
	where $B_{\Xi} (u^{\flat}) $ is as in (\ref{eqn:3db}).We furthermore equip
	the space with the scalar product given by, for $f, g \in \mathscr{W}_{\Xi}^{\gamma}$,
	\[ \langle u,w
	\rangle_{\mathscr{W}_{\Xi}^{\gamma}} \assign \langle u^{\flat}, w^{\flat}
	\rangle_{\ssp^{\gamma}} + \langle u^{\sharp}, w^{\sharp} \rangle_{\ssp^2} . \]
	Given $u = e^W u^{\flat} \in \mathscr{W}_{\Xi}^{\gamma}$ we
	define the renormalized Anderson Hamiltonian acting on $u$ in the following
	way
	\begin{align}\label{equ:def3dAH}
	 A u = e^W (\Delta u^{\sharp} + L Z \circ u^{\sharp} + 2L \widetilde{W} \circ
	\nabla u^{\sharp} + G (u^{\flat})), 
	\end{align}
	where
	\begin{align*}
	 G (u^{\flat})   \assign &  B_{\Xi} (u^{\flat}) + 2\nabla u^{\flat} \circ (L
	\widetilde{W} \circ Z) + 2C (\nabla u^{\flat}, Z, L \widetilde{W}) + u^{\flat}
	\circ (L \widetilde{W} \circ \nabla Z)
	\\ & + C (u^{\flat}, \nabla Z, L \widetilde{W})
	+ 2L \widetilde{W} \circ (\nabla^2 u^{\flat} \prec \widetilde{W})
	\\& + 2\nabla
	u^{\flat} \circ (L \widetilde{W} \circ \nabla \widetilde{W}) +2 C (\nabla
	u^{\flat}, \nabla \widetilde{W}, L \widetilde{W}) + 2L \widetilde{W} \circ \nabla
	B_{\Xi} (u^{\flat}) 
	\end{align*}
	and $C$ denotes the commutator from Proposition \ref{prop:commu}. Note that
	this definition is equivalent to (\ref{eqn:3dHuflat}) by construction.
\end{definition}

After this definition, some remarks are in order.
\begin{remark} \label{def:3dregularizedA}
	In view of \eqref{eqn:3dHuflat}, for regularized white noise $\xi_{\varepsilon}$, we set
	\begin{eqnarray}\label{equ:HamiltonianApp3d}
	A_{\varepsilon} u & \assign & e^{W_{\varepsilon}} (\Delta u^{\flat} + 2 (1
	- \Delta) \widetilde{W_{\varepsilon}}_{} \cdummy \nabla u^{\flat} + (1 -
	\Delta) Z_{\varepsilon} u^{\flat}) \label{eqn:Hepsdef} \\
	& = & \Delta u + \xi_{\varepsilon} u - (c^1_{\varepsilon} +
	c^2_{\varepsilon}) u, \nonumber
	\end{eqnarray}
	where we have
	
	$\begin{array}{lll}
	W_{\varepsilon} & = & X_{\varepsilon} + X_{\varepsilon}^{\zzone} +
	X_{\varepsilon}^{\zztwo}\\
	X_{\varepsilon} & = & (- \Delta)^{- 1} \xi_{\varepsilon}\\
	X^{\zzone}_{\varepsilon} & = & (1 - \Delta)^{- 1} (| \nabla
	X_{\varepsilon} |^2 - c^1_{\varepsilon})\\
	X^{\zztwo}_{\varepsilon} & = & 2 (1 - \Delta)^{- 1} \left( \nabla
	X_{\varepsilon} \cdummy \nabla X_{\varepsilon}^{\zzone} \right)\\
	\widetilde{W_{\varepsilon}} & = & (1 - \Delta)^{- 1} \nabla
	W_{\varepsilon}\\
	Z_{\varepsilon} & = & (1 - \Delta)^{- 1} \left( | \nabla
	X_{\varepsilon}^{\zzone} |^2 - c^2_{\varepsilon} + | \nabla
	X_{\varepsilon}^{\zztwo} |^2 + 2 \nabla X_{\varepsilon} \cdummy
	\nabla X_{\varepsilon}^{\zztwo} + 2 \nabla X_{\varepsilon}^{\zzone}
	\cdummy \nabla X_{\varepsilon}^{\zztwo} - X_{\varepsilon}^{\zzone} -
	X_{\varepsilon}^{\zztwo} \right) \\
	& \tmop{and} & \\
	u^{\flat} & \assign & e^{- W_{\varepsilon}} u.
	\end{array}$
	Recall that the renormalization constants, from Section \ref{sec:3dren}, are
	\[ c^1_{\varepsilon} = O (\varepsilon^{- 1}) \ \tmop{and} \ c^2_{\varepsilon} =
	O (\log \varepsilon). \]
	Observe that, now this makes the constant $c_\varepsilon$ in \eqref{eqn:2dHepsdef} precise as $c_\varepsilon= c^1_{\varepsilon}+ c^2_{\varepsilon}$.
\end{remark}
\begin{remark}\upshape\upshape
	As in the 2d case, the space $\mathscr{W}^\gamma_\Xi$ is independent of $\gamma$ and we will denote it simply by $\mathscr{W}_\Xi.$ Moreover, one can introduce a Fourier cutoff $\Delta_{>N}$ at level $2^N$ and write
	\begin{equation}
	u^{\flat} =
	\Delta_{> N}(u^{\flat} \prec Z + \nabla u^{\flat} \prec \widetilde{W} + B_{\Xi}
	(u^{\flat}) )+ u^{\sharp}.
	\end{equation}
	In this case, the space can be shown to be the same in a similar way to 2d case, please see Remark \ref{rem:equivalentCuttofSp}.
\end{remark}

We set up the notation with the following remark.

\begin{remark}\upshape\label{rem:domainNotationset3d}
	Similar to Remark \ref{rem:domainNotationset2d}, we introduce the notation \[\mathscr{D}(A) := \mathscr{W}_{\Xi}.\]
\end{remark}

We can generalize our framework that uses the $\Gamma$-map to 3d. This time, we  define
the linear map  $\Gamma$ as
\begin{equation}\label{equ:ubetaCUTTansatz}
\Gamma f = \Delta_{> N} (\Gamma f \prec Z + \nabla (\Gamma f) \prec
\tilde{W} + B_{\Xi} (\Gamma f)) + f. 
\end{equation}
This  allows us to realize $u^\flat$ as a fixed point of this map, that is $u^\flat = \Gamma u^{\sharp}$. Similar to the 2d case, for $N$ large enough,  we
can show this map exists and has useful bounds, and obtain the following generalization of Proposition \ref{lem:gamma} to 3d.

\begin{proposition}
	\label{lem:gamma3}We can choose $N$ large enough depending only on $\Xi$ and
	$s$ so that
	\begin{equation}
	\| \Gamma f \|_{L^{\infty}} \le 2 \| f \|_{L^{\infty}},
	\label{eq:gamma3-bound-Linfty}
	\end{equation}
	\begin{equation}
	\| \Gamma f \|_{\ssp^s} \le 2 \| f \|_{\ssp^s},
	\label{eq:gamma3-bound-sobolev}
	\end{equation}
	for $s \in \left[ 0, \frac{3}{2} \right)$.
\end{proposition}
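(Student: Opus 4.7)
The proof follows the blueprint of Proposition \ref{lem:gamma}. Setting $g = \Gamma f$, so that
\[ g = \Delta_{> N}\bigl(g \prec Z + \nabla g \prec \widetilde{W} + B_{\Xi}(g)\bigr) + f, \]
the strategy is to estimate each of the three summands inside $\Delta_{>N}$ in a Besov--H\"older or Sobolev space of strictly higher regularity than the target norm, so that Bernstein's inequality applied to the cutoff $\Delta_{>N}$ contributes a factor $2^{-cN}$ for some $c>0$. Choosing $N$ large enough (depending on $\Xi$ and $s$) so that this factor multiplied by the appropriate $C_\Xi$ is at most $1/2$ absorbs the right-hand side and yields the desired bound with constant $2$.

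For the $L^\infty$ bound, fix $\delta>0$ small and work throughout in the Besov--H\"older scale. Combining the paraproduct estimates in Lemma \ref{lem:paraest}, the regularities $Z,\widetilde{W}\in\CC^{3/2-\delta}$ proved in Section \ref{sec:3dren}, and the H\"older estimate for $B_\Xi$, one obtains
\begin{align*}
\|g\prec Z\|_{\CC^{3/2-2\delta}} &\lesssim \|g\|_{\CC^{-\delta}}\|Z\|_{\CC^{3/2-\delta}}\lesssim_\delta C_\Xi \|g\|_{L^\infty},\\
\|\nabla g\prec\widetilde{W}\|_{\CC^{1/2-2\delta}} &\lesssim \|g\|_{\CC^{-\delta}}\|\widetilde{W}\|_{\CC^{3/2-\delta}}\lesssim_\delta C_\Xi \|g\|_{L^\infty},\\
\|B_\Xi(g)\|_{\CC^{1/2-\delta}} &\lesssim C_\Xi \|g\|_{\CC^{-\delta}}\lesssim_\delta C_\Xi \|g\|_{L^\infty}.
\end{align*}
The bottleneck is $\nabla g\prec\widetilde{W}$, which lies only in $\CC^{1/2-2\delta}$; Bernstein's inequality then yields $\|\Delta_{>N}(\,\cdot\,)\|_{L^\infty}\lesssim 2^{-(1/2-2\delta)N}C_\Xi\|g\|_{L^\infty}$, and for $N$ large enough this constant is at most $1/2$, giving $\|g\|_{L^\infty}\le 2\|f\|_{L^\infty}$.

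For the Sobolev bound with $s\in[0,3/2)$ the strategy is analogous but in the Sobolev scale. The only delicate piece is again $\nabla g\prec\widetilde{W}$, where the available gain in regularity depends on whether $\nabla g$ is a genuine distribution or already an $L^2$ function. For $s\in[0,1]$, from $\|S_{j-1}\nabla g\|_{L^2}\lesssim 2^{j(1-s)}\|g\|_{\ssp^s}$ (Bernstein on each dyadic block) together with $\|\Delta_j\widetilde{W}\|_{L^\infty}\lesssim 2^{-j(3/2-\delta)}$ one gets $\|\nabla g\prec\widetilde{W}\|_{\ssp^{s+1/2-2\delta}}\lesssim C_\Xi\|g\|_{\ssp^s}$, so that Bernstein on the cutoff produces the factor $2^{-(1/2-2\delta)N}$. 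For $s\in(1,3/2)$, $\nabla g\in L^2$ with $\|\nabla g\|_{L^2}\lesssim\|g\|_{\ssp^s}$, so $\|\nabla g\prec\widetilde{W}\|_{\ssp^{3/2-2\delta}}\lesssim C_\Xi\|g\|_{\ssp^s}$, with Bernstein now giving $2^{(s-3/2+2\delta)N}$, negative for $\delta$ small. The remaining terms $g\prec Z$ and $B_\Xi(g)$ are estimated even more easily via the paraproduct estimates and the $B_\Xi$ bound stated just before Definition \ref{def:3dham} (taking $\sigma=s+1/2-\epsilon$). Collecting everything, $\|g\|_{\ssp^s}\le C_\Xi 2^{-c(s)N}\|g\|_{\ssp^s}+\|f\|_{\ssp^s}$ with $c(s)>0$ once $\delta$ is chosen small enough that $s<3/2-2\delta$; taking $N$ large then absorbs the first term and yields $\|g\|_{\ssp^s}\le 2\|f\|_{\ssp^s}$.

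The principal obstacle is the bookkeeping for the term $\nabla g\prec\widetilde{W}$: unlike in the two-dimensional case where a single uniform regularity gain applies, here the behaviour of the paraproduct changes around $s=1$ as $\nabla g$ transitions from a negative-regularity distribution to an $L^2$ function. Splitting the analysis into the two regimes is what makes the argument work for the full range $s\in[0,3/2)$; the remaining calculations reduce to routine applications of paraproduct and Bernstein estimates.
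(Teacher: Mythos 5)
Your proof is correct and follows essentially the same route as the paper: write $\Gamma f$ via the cutoff ansatz, bound $g\prec Z$, $\nabla g\prec\widetilde{W}$ and $B_\Xi(g)$ by paraproduct/Schauder estimates in a space of positive excess regularity, and absorb the resulting $C_\Xi 2^{-cN}\|g\|$ term by taking $N$ large depending on $\Xi$ and $s$. The only difference is cosmetic: the paper measures all three terms in a single low-regularity space $\CC^{2\delta}$ and leaves the Sobolev case as ``similar,'' whereas you track each term at its natural regularity and spell out the $s\le 1$ versus $s\in(1,3/2)$ split for $\nabla g\prec\widetilde{W}$, which is a harmless (indeed slightly more detailed) version of the same argument.
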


\begin{proof}
	With slight modifications, the proof is basically the same as in the 2d case, namely Propostion \ref{lem:gamma}. For
	(\ref{eq:gamma3-bound-Linfty}) choose again a small $\delta > 0,$ then
	\begin{eqnarray*}
		\| \Gamma f \|_{L^{\infty}} & \le & \| f \|_{L^{\infty}} + \|
		\Delta_{> N} (\Gamma f \prec Z + \nabla (\Gamma f) \prec \widetilde{W} +
		B_{\Xi} (\Gamma f)) \|_{\CC^{\delta}}\\
		& \le & \| f \|_{L^{\infty}} + 2^{- \delta N} \| \nobracket \Gamma
		f \prec Z + \nabla (\Gamma f) \prec \widetilde{W} + B_{\Xi} (\Gamma f
		\nobracket) \|_{\CC^{2 \delta}}\\
		& \tmop{and} & \\
		\| \Gamma f \prec Z \|_{\CC^{2 \delta}} & \lesssim & \| \Gamma f
		\|_{\CC^{- \delta}} \| Z \|_{\CC^{3 \delta}} \lesssim
		C_{\Xi} \| \Gamma f \|_{L^{\infty}}\\
		\| \nabla \Gamma f \prec \widetilde{W} \|_{\CC^{2 \delta}} & \lesssim
		& \| \nabla (\Gamma f) \|_{\CC^{- 1 - \delta}} \| \widetilde{W}
		\|_{\CC^{1 + 3 \delta}} \lesssim C_{\Xi} \| \Gamma f
		\|_{L^{\infty}}\\
		\| B_{\Xi} (\Gamma f) \|_{\CC^{2 \delta}} & \lesssim & C_{\Xi} \| \Gamma f
		\|_{\CC^{2 \delta - 1 / 2}} \lesssim C_{\Xi} \| \Gamma f
		\|_{L^{\infty},}
	\end{eqnarray*}
	which allows us to conclude by choosing $N$ large enough depending on the norm of the 
	enhanced noise $\Xi$.
	The proof of the Sobolev case is similar.
\end{proof}

\begin{remark}\upshape\label{rem:smoothNotation2}
Also in this (3d) case, similar to the remark \ref{rem:smoothNotation}, we define  $\Gamma_\varepsilon$, using the approximations in Theorem \ref{thm:3dren}.  
\end{remark}

By using the map $\Gamma$ we can obtain an analysis very similar to the 2d case.  For starters, we state the following analogous result and conclude this section.

\begin{lemma} \label{lem:identityGammaconv3d}
	Let $\gamma$ be as  in Definition \ref{def:3dham}. We have that $|| \text{id} - \Gamma  \Gamma_\varepsilon^{-1}||_{\mathscr{H}^\gamma \rightarrow \mathscr{H}^\gamma} \rightarrow 0$.
\end{lemma}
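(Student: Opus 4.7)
The plan is to mimic verbatim the proof of the 2d analogue, Lemma \ref{lem:identityGammaconv}. The starting observation is the trivial identity $\mathrm{id} = \Gamma\Gamma^{-1}$ on $\mathscr{H}^\gamma$, which allows us to write
\[
f - \Gamma\Gamma_\varepsilon^{-1} f \;=\; \Gamma\bigl(\Gamma^{-1} f - \Gamma_\varepsilon^{-1} f\bigr),
\]
so by Proposition \ref{lem:gamma3} the claim reduces to showing $\|\Gamma^{-1} f - \Gamma_\varepsilon^{-1} f\|_{\ssp^\gamma} \lesssim_\Xi \|\Xi_\varepsilon - \Xi\|_{\mathscr{X}^\alpha}\|f\|_{\ssp^\gamma}$. (Note that Proposition \ref{lem:gamma3} is available uniformly in $\varepsilon$ because $\|\Xi_\varepsilon\|_{\mathscr{X}^\alpha}$ remains bounded by Theorem \ref{thm:3dren}, so a single choice of the Littlewood--Paley cutoff $N$ works for all small $\varepsilon$.)

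Next, reading off the fixed-point formula \eqref{equ:ubetaCUTTansatz} one gets the explicit inverse
\[
\Gamma^{-1} g \;=\; g - \Delta_{>N}\bigl(g \prec Z + \nabla g \prec \widetilde{W} + B_{\Xi}(g)\bigr),
\]
and the $\varepsilon$-analogue with $Z_\varepsilon,\widetilde{W}_\varepsilon,B_{\Xi_\varepsilon}$. Subtracting,
\[
\Gamma^{-1} f - \Gamma_\varepsilon^{-1} f \;=\; \Delta_{>N}\Bigl(f \prec (Z_\varepsilon - Z) + \nabla f \prec (\widetilde{W}_\varepsilon - \widetilde{W}) + \bigl(B_{\Xi_\varepsilon} - B_\Xi\bigr)(f)\Bigr).
\]
The first two paraproducts are estimated directly by Lemma \ref{lem:paraest} using $\|Z_\varepsilon - Z\|_{\CC^{1-}}$ and $\|\widetilde{W}_\varepsilon - \widetilde{W}\|_{\CC^{3/2-}}$, both of which are controlled by $\|\Xi_\varepsilon - \Xi\|_{\mathscr{X}^\alpha}$ via Schauder estimates. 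For the last term one goes through \eqref{eqn:3db} line by line: each contribution is either a paraproduct of $f$ (or a derivative of $f$) against one of the stochastic objects $Z,\widetilde{W}, L\widetilde{W}\circ Z, L\widetilde{W}\circ\nabla Z, L\widetilde{W}\circ\nabla\widetilde{W}, LZ\circ Z, LZ\circ\widetilde{W}$, all of which are encoded in (or reconstructible from) the components of $\mathscr{X}^\alpha$ by the commutator arguments already used in Section \ref{sec:3dren}. Consequently $B_{\Xi_\varepsilon}(f) - B_\Xi(f)$ is bounded linearly in $\|\Xi_\varepsilon - \Xi\|_{\mathscr{X}^\alpha}$, with a multiplicative prefactor that is polynomial in $\|\Xi\|_{\mathscr{X}^\alpha} + \|\Xi_\varepsilon\|_{\mathscr{X}^\alpha}$.

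The main obstacle compared with the 2d case is precisely this last bookkeeping step: the 3d operator $B_\Xi$ involves several composite resonant products that look quadratic in the noise, so one must verify that each of them enters as a component of $\mathscr{X}^\alpha$ (so that differences in $\mathscr{X}^\alpha$ norm control their differences linearly). This is purely structural and was the whole point of the enhanced noise construction in Section \ref{sec:3dren}. Assembling everything and applying $\Gamma$ with the uniform bound from Proposition \ref{lem:gamma3} gives
\[
\|f - \Gamma\Gamma_\varepsilon^{-1} f\|_{\ssp^\gamma} \;\lesssim_\Xi\; \|\Xi_\varepsilon - \Xi\|_{\mathscr{X}^\alpha}\,\|f\|_{\ssp^\gamma},
\]
and the right-hand side vanishes as $\varepsilon \to 0$ by Theorem \ref{thm:3dren}, yielding the claimed operator-norm convergence.
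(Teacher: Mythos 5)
Your proposal is correct and follows essentially the same route as the paper: the paper's proof of this lemma simply defers to the 2d argument of Lemma \ref{lem:identityGammaconv}, which is exactly the computation you carry out — write $f-\Gamma\Gamma_\varepsilon^{-1}f=\Gamma(\Gamma^{-1}f-\Gamma_\varepsilon^{-1}f)$, use the explicit form of the inverse to make the difference (locally Lipschitz) in $\Xi_\varepsilon-\Xi$, and apply the uniform bound of Proposition \ref{lem:gamma3}. Your extra bookkeeping for the composite stochastic terms inside $B_\Xi$ is precisely the content the paper leaves implicit via Theorem \ref{thm:3dren}, so no gap.
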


\begin{proof}
The proof is very similar to that of Lemma \ref{lem:identityGammaconv}.
\end{proof}

\subsubsection{Density, symmetry and self-adjointness}

First, we prove the density of the domain of $A$, as stated in Definition \ref{def:3dham}.

\begin{proposition} \label{prop:3ddensity}
	Let $\beta<1/2,$ then the space $\mathscr{W}_\Xi$, as introduced in Definition \ref{def:3dham} is dense in $\mathscr{H}^\beta$.  Therefore, dense in $L^2$.
\end{proposition}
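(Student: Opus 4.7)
The strategy is to identify $\mathscr{W}_\Xi$ as the image of the dense subspace $\mathscr{H}^2 \subset \mathscr{H}^\beta$ under a composition of two homeomorphisms of $\mathscr{H}^\beta$. By Definition \ref{def:3dham}, we have the identification $\mathscr{W}_\Xi = e^W\,\Gamma(\mathscr{H}^2)$, where $\Gamma$ is the fixed-point map of Proposition \ref{lem:gamma3}. Combined with the density of $\mathscr{H}^2$ in $\mathscr{H}^\beta$, this will give density of $\mathscr{W}_\Xi$ in $\mathscr{H}^\beta$, and density in $L^2$ will follow by taking $\beta = 0$ together with the continuous embedding $\mathscr{H}^\beta \hookrightarrow L^2$.

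First, I would show that $\Gamma$ extends to a bounded linear bijection on $\mathscr{H}^\beta$. The defining equation \eqref{equ:ubetaCUTTansatz} can be rewritten as $(I - L)(\Gamma f) = f$, with the linear operator $L(g) := \Delta_{>N}(g \prec Z + \nabla g \prec \widetilde{W} + B_\Xi(g))$. The paraproduct bounds underlying the proof of Proposition \ref{lem:gamma3} yield $\|L\|_{\mathscr{H}^\beta \to \mathscr{H}^\beta} < 1$ for $N$ chosen large enough depending only on $\Xi$, so the Neumann series produces $\Gamma = (I - L)^{-1}$ as a bounded bijection of $\mathscr{H}^\beta$ onto itself, for any $\beta < 3/2$. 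In particular $\Gamma(\mathscr{H}^2)$ is dense in $\Gamma(\mathscr{H}^\beta) = \mathscr{H}^\beta$.

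Next, for $\beta < 1/2$, multiplication by $e^{\pm W}$ is a homeomorphism of $\mathscr{H}^\beta$: this follows from the paraproduct estimates combined with $W \in \mathscr{C}^{1/2-}$ (so $\beta + (1/2 - \delta) > 0$ for $\delta$ small), together with the identity $e^W \cdot e^{-W} = 1$ which supplies two-sided boundedness. Composing the two homeomorphisms, $\mathscr{W}_\Xi = e^W \Gamma(\mathscr{H}^2)$ is the image of a dense subspace of $\mathscr{H}^\beta$ under a homeomorphism of $\mathscr{H}^\beta$, and is therefore dense in $\mathscr{H}^\beta$.

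The main point requiring care is the invertibility of $\Gamma$ on $\mathscr{H}^\beta$; this amounts to extracting a purely linear estimate on $L$ from the proof of Proposition \ref{lem:gamma3}, which only used the fixed-point iterate $\Gamma f$ as a placeholder. Rereading the bounds there at the level of an arbitrary $g$ instead of $\Gamma f$ gives the required contraction in the desired range of $\beta$. The multiplicative estimate for $e^{\pm W}$ on $\mathscr{H}^\beta$ for $\beta < 1/2$ is a routine paraproduct calculation and presents no real difficulty.
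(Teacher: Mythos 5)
Your argument is correct, but it takes a different route from the paper. The paper's proof is an $\varepsilon$-approximation argument: for $f\in\mathscr{H}^\gamma$ it approximates $e^{W}f$ by the elements $e^{W}\Gamma\Gamma_\varepsilon^{-1}f\in\mathscr{W}_\Xi$, using the operator-norm convergence $\Gamma\Gamma_\varepsilon^{-1}\to\mathrm{id}$ on $\mathscr{H}^\gamma$ (Lemma \ref{lem:identityGammaconv3d}) together with the multiplication bound $\|e^{W}g\|_{\mathscr{H}^\beta}\lesssim\|e^{W}\|_{\CC^{\beta}}\|g\|_{\mathscr{H}^\gamma}$, and then approximates a general element of $\mathscr{H}^\beta$ by elements of $e^{W}\mathscr{H}^\gamma$ of the form $e^{W}(e^{-W_\varepsilon}f_\varepsilon)$. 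You instead avoid the regularized objects entirely: writing $\Gamma=(I-L)^{-1}$ and checking that $\|L\|_{\mathscr{H}^\beta\to\mathscr{H}^\beta}<1$ for $N$ large (which is indeed just the linear estimate implicit in the proof of Proposition \ref{lem:gamma3}), you get that $\Gamma$ is a homeomorphism of $\mathscr{H}^\beta$, hence $\Gamma(\mathscr{H}^2)$ is dense there, and then that multiplication by $e^{\pm W}$ is a homeomorphism of $\mathscr{H}^\beta$ since $W\in\CC^{1/2-}$. This buys a cleaner, purely deterministic statement (in fact the stronger fact that $e^{W}\Gamma$ maps $\mathscr{H}^\beta$ homeomorphically onto itself and $\mathscr{H}^2$ onto $\mathscr{W}_\Xi$), whereas the paper's route reuses machinery it needs anyway for the resolvent convergence and never has to prove surjectivity of $\Gamma$ on $\mathscr{H}^\beta$. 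Three small points you should make explicit: the identification $\mathscr{W}_\Xi=e^{W}\Gamma(\mathscr{H}^2)$ uses the frequency-cutoff form of the ansatz, justified by the remark following Definition \ref{def:3dham} (cf.\ Remark \ref{rem:equivalentCuttofSp}); your two-sided multiplication estimate needs the resonant product $f\circ e^{\pm W}$, so it requires $\beta>-1/2$ (harmless, since for $\beta\le 0$ density follows from the $L^2$ case anyway); and since $N$ may depend on the Sobolev exponent, you should fix one $N$ large enough to give the contraction simultaneously for the finitely many exponents you use (e.g.\ $\beta$, $\gamma$ and $0$).
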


\begin{proof}
By using the multiplication estimates in Proposition \ref{lem:paraest}, for $f \in \mathscr{H}^\gamma$, we can write
\begin{align*}
|| e^{W} f  - e^{W} \Gamma \Gamma_\varepsilon^{-1} f||_{\mathscr{H}^\beta} \lesssim ||e^W||_{\mathscr{C}^{\beta}} ||f  -  \Gamma \Gamma_\varepsilon^{-1} f||_{\mathscr{H}^\gamma} 
\end{align*}
taking the limit as $\varepsilon \rightarrow 0$ shows that any element in the space $e^{W} H^{\gamma} \subset H^{\beta}$ can be approximated by elements in $\mathscr{W}_\Xi$.  For an arbitrary $f \in H^{\beta}$ one can further approximate it by elements $e^{W}(e^{-W_\varepsilon}f_\varepsilon)$ where we took $f_\varepsilon \in H^{\gamma}$ and   $||f - f_\varepsilon||_{H^{\beta}} \rightarrow 0$.
\end{proof}

The following is an analogue of Theorem \ref{lem:h2bound} for the 3d
Hamiltonian.
\begin{theorem} \label{thm:3dusharpbound}
	The renormalized Anderson Hamiltonian 
	$A : \mathscr{D}(A) \rightarrow L^2$
	is a bounded operator and we get the following $\ssp^2$ bound for $u^{\sharp}$
	\begin{equation} \label{equ:usharpBound}
	\| u^{\sharp} \|_{\ssp^2} \lesssim \| e^{- W} A u \|_{L^2} + C_{\Xi} \|
	u^{\flat} \|_{L^2} . 
	\end{equation}
\end{theorem}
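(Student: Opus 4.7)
The plan is to proceed as in the two-dimensional analogue (Proposition \ref{lem:h2bound}), exploiting the defining identity \eqref{equ:def3dAH}. Applying $e^{-W}$ to both sides I get
\[
e^{-W}Au = \Delta u^{\sharp} + LZ\circ u^\sharp + 2L\widetilde W\circ \nabla u^\sharp + G(u^\flat),
\]
so the estimate reduces to bounding each of the three non-Laplacian terms in $L^2$ by a small multiple of $\|u^\sharp\|_{\mathscr{H}^2}$ plus $C_\Xi\|u^\flat\|_{L^2}$, and then moving $\|\Delta u^\sharp\|_{L^2}$ to the left-hand side to recover the full $\mathscr{H}^2$ norm via elliptic regularity on the torus.

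The resonant products $LZ\circ u^\sharp$ and $L\widetilde W\circ\nabla u^\sharp$ involve noise factors in $\mathscr{C}^{-1/2-}$ (obtained from $Z,\widetilde W\in\mathscr{C}^{3/2-}$ after applying $L=1-\Delta$). For each one I would split $u^\sharp$ (respectively $\nabla u^\sharp$) using a Littlewood--Paley cut-off $\Delta_{\le M}+\Delta_{>M}$ exactly as in \eqref{eqn:usharpxi}: the low-frequency block is handled via Bernstein by $C_\Xi 2^{cM}\|u^\sharp\|_{L^2}$, while the high-frequency block is estimated using the resonant product bound $\|\eta\circ f\|_{L^2}\lesssim\|\eta\|_{\mathscr{C}^{-1/2-\delta}}\|f\|_{\mathscr{H}^{1/2+2\delta}}$ and then interpolated to give $C_\Xi 2^{-\varepsilon M}\|u^\sharp\|_{\mathscr{H}^2}$ for some $\varepsilon>0$.

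For $G(u^\flat)$, I would go term-by-term through the list in Definition \ref{def:3dham}. The paraproducts $\nabla u^\flat\circ(L\widetilde W\circ Z)$, $u^\flat\circ(L\widetilde W\circ\nabla Z)$, $\nabla u^\flat\circ(L\widetilde W\circ \nabla\widetilde W)$ are estimated directly by Proposition \ref{lem:paraest}, using that the bracketed stochastic objects belong to $\mathscr{C}^{1-}$ or $\mathscr{C}^{0-}$ as recalled in Section \ref{sec:3dren}. The commutator terms $C(\nabla u^\flat,Z,L\widetilde W)$ etc.\ are bounded via Proposition \ref{prop:commu}, which converts the product $\mathscr{C}^{-1/2-}\cdot\mathscr{C}^{3/2-}\cdot\mathscr{H}^{\gamma-1}$ into an $L^2$ estimate. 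The genuinely awkward term is $L\widetilde W\circ(\nabla^2 u^\flat\prec\widetilde W)$, which I would rewrite using a commutator to move one factor of $\widetilde W$ inside the paraproduct, thereby trading a derivative on $u^\flat$ for regularity coming from $\widetilde W\in\mathscr{C}^{3/2-}$. All the remaining contributions are dominated in this way by $C_\Xi\|u^\flat\|_{\mathscr{H}^\gamma}$ for any $\gamma<3/2$, and the $B_\Xi(u^\flat)$ term was already controlled in the preceding lemma.

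Finally, Proposition \ref{lem:gamma3} gives $\|u^\flat\|_{\mathscr{H}^\gamma}=\|\Gamma u^\sharp\|_{\mathscr{H}^\gamma}\lesssim\|u^\sharp\|_{\mathscr{H}^\gamma}$, and a second Littlewood--Paley splitting yields
\[
\|u^\sharp\|_{\mathscr{H}^\gamma}\lesssim 2^{\gamma M}\|u^\sharp\|_{L^2}+2^{(\gamma-2)M}\|u^\sharp\|_{\mathscr{H}^2},
\]
while $\|u^\sharp\|_{L^2}\lesssim C_\Xi\|u^\flat\|_{L^2}$ follows from the ansatz and the paraproduct bounds on $B_\Xi$. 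Collecting the three pieces and choosing $M$ large enough so that all coefficients in front of $\|u^\sharp\|_{\mathscr{H}^2}$ are strictly less than $\tfrac12$ allows me to absorb that term into the left-hand side and obtain \eqref{equ:usharpBound}. The main obstacle I anticipate is the careful accounting for the term $L\widetilde W\circ(\nabla^2 u^\flat\prec\widetilde W)$ and, more generally, ensuring that every contribution to $G(u^\flat)$ is in fact bounded by $\|u^\flat\|_{\mathscr{H}^\gamma}$ with $\gamma$ strictly less than $3/2$, so that the interpolation step produces a negative power of $2^M$ in front of $\|u^\sharp\|_{\mathscr{H}^2}$; this is the precise place where the paracontrolled correction $B_\Xi$ in Definition \ref{def:3dham} has been engineered to cancel the most singular resonances.
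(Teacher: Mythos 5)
Your proposal follows essentially the same route as the paper's proof: starting from the identity $e^{-W}Au=\Delta u^{\sharp}+LZ\circ u^{\sharp}+2L\widetilde W\circ\nabla u^{\sharp}+G(u^{\flat})$, the paper likewise bounds the two resonant terms by $\varepsilon\|u^{\sharp}\|_{\ssp^2}+C_{\Xi}\|u^{\flat}\|_{L^2}$ (via Sobolev interpolation and Young's inequality, which is the same mechanism as your Littlewood--Paley/Bernstein splitting), bounds $G(u^{\flat})$ by $C_{\Xi}\|u^{\flat}\|_{\ssp^{1+\delta}}$ using the paraproduct and commutator estimates together with $\|u^{\sharp}\|_{L^2}\lesssim C_{\Xi}\|u^{\flat}\|_{L^2}$, and then absorbs the small $\ssp^2$ contribution into the left-hand side, exactly as you outline. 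The only cosmetic difference is that the term $L\widetilde W\circ(\nabla^2 u^{\flat}\prec\widetilde W)$ needs no extra commutator manipulation: since $\nabla^2 u^{\flat}\prec\widetilde W\in\ssp^{\gamma-1/2-}$ with $\gamma>1$, its resonant product with $L\widetilde W\in\CC^{-1/2-}$ is already controlled directly by Proposition \ref{lem:paraest}.
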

\begin{proof}
	By the definition of $A$ we have
	\[ e^{- W} A u = \Delta u^{\sharp} + L Z \circ u^{\sharp} + 2L \widetilde{W}
	\circ \nabla u^{\sharp} + G (u^{\flat}), \]
	then we estimate\quad
	\begin{eqnarray*}
		\| L Z \circ u^{\sharp} \|_{L^2} & \lesssim & \| Z \|_{\CC^{3 / 2
				- \delta}} \| u^{\sharp} \|_{\ssp^{1 / 2 + 2 \delta}} 
		 \le  C_{\varepsilon, \delta} C_{\Xi} \| u^{\flat} \|_{L^2} +
		\varepsilon \| u^{\sharp} \|_{\ssp^2}\\
		& \text{and} & \\
		\| L \widetilde{W} \circ \nabla u^{\sharp} \|_{L^2} & \lesssim & \| \widetilde{W}
		\|_{\CC^{3 / 2 - \delta}} \| u^{\sharp} \|_{\ssp^{3 / 2 + 2 \delta}}
		 \le  C_{\varepsilon, \delta} C_{\Xi} \| u^{\flat} \|_{L^2} +
		\varepsilon \| u^{\sharp} \|_{\ssp^2}
	\end{eqnarray*}
	for any $\varepsilon > 0$ using Young's inequality, Sobolev interpolation,
	and the straightforward bound $\| u^{\sharp} \|_{L^2} \le C_{\Xi} \|
	u^{\flat} \|_{L^2 .}$ Moreover we bound $G (u^{\flat})$
	\begin{eqnarray*}
		\| G (u^{\flat}) \|_{L^2} & \le & C_{\Xi} \| u^{\flat} \|_{\ssp^{1 +
				\delta}}
		 \le  C_{\varepsilon, \delta} C_{\Xi} \| u^{\flat} \|_{L^2} +
		\varepsilon \| u^{\sharp} \|_{\ssp^2},
	\end{eqnarray*}
	where the first estimate follows from the paraproduct estimates, Proposition
	\ref{lem:paraest}, and the commutator bounds  (Proposition
	\ref{prop:commu}). This allows us to conclude
	\begin{equation} \label{equ:usharpbound2}
		\| A u \|_{L^2}  =  \| e^W e^{- W} A u \|_{L^2}
		 \le  \| e^W \|_{L^{\infty}} \| e^{- W} A u \|_{L^2}
		\le  C_{\Xi} (\| u^{\sharp} \|_{\ssp^2} + \| u^{\flat} \|_{L^2}),
	\end{equation}
	and, in a similar manner,
	\begin{eqnarray*}
		\| u^{\sharp} \|_{\ssp^2} & \le & \| e^{- W} A u \|_{L^2} + \| L Z
		\circ u^{\sharp} + 2L \widetilde{W} \circ \nabla u^{\sharp} \|_{L^2}\\
		& \le & \| e^{- W} A u \|_{L^2} + C_{\Xi} \| u^{\flat} \|_{L^2} +
		\frac{1}{2} \| u^{\sharp} \|_{\ssp^2},
	\end{eqnarray*}
	using the above bounds.
\end{proof}

\begin{proposition}
	We have that $A$ is a closed operator over its dense domain $\mathscr{D}(A)$.
\end{proposition}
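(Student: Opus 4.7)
The plan is to mimic the argument already carried out for the two-dimensional Hamiltonian (Proposition~\ref{prop:opclosed} in the 2d section), using now the $\mathscr{H}^2$-bound on $u^\sharp$ from Theorem~\ref{thm:3dusharpbound} as the replacement for Proposition~\ref{lem:h2bound}. Density was already established in Proposition~\ref{prop:3ddensity}, so only closedness needs to be shown.

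I would start by taking a sequence $(u_n)\subset\mathscr{D}(A)$ with $u_n\to u$ in $L^2$ and $Au_n\to g$ in $L^2$, and write the decompositions $u_n=e^W u_n^\flat$ with $u_n^\flat$ paracontrolled in the sense of Definition~\ref{def:3dham}, so that $u_n^\flat$ is associated to $u_n^\sharp\in\mathscr{H}^2$. Since $e^{-W}\in\mathscr{C}^{1/2-}$ acts by multiplication on $L^2$ (by the paraproduct estimates of Proposition~\ref{lem:paraest}), we get $u_n^\flat\to e^{-W}u\in L^2$. Similarly $e^{-W}Au_n\to e^{-W}g$ in $L^2$. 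Because the map $u\mapsto Au$ is linear and all operations in the definition of $A$ are linear in $u^\sharp$ and $u^\flat$, the estimate \eqref{equ:usharpBound} applies to differences, yielding
\[
\|u_n^\sharp-u_m^\sharp\|_{\mathscr{H}^2}\lesssim\|e^{-W}(Au_n-Au_m)\|_{L^2}+C_\Xi\|u_n^\flat-u_m^\flat\|_{L^2}.
\]
Both terms on the right tend to zero, so $(u_n^\sharp)$ is Cauchy in $\mathscr{H}^2$ and converges to some $w^\sharp\in\mathscr{H}^2$.

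Next I would identify the limit. Using the $\Gamma$-map of Proposition~\ref{lem:gamma3} I set $w^\flat\assign\Gamma w^\sharp\in\mathscr{H}^\gamma$, so that $w^\flat$ satisfies the paracontrolled ansatz \eqref{equ:ubetaCUTTansatz}. By continuity of $\Gamma$ on $\mathscr{H}^\gamma$ (Proposition~\ref{lem:gamma3}) and on the $\sharp$-component, $u_n^\flat=\Gamma u_n^\sharp\to w^\flat$ in $\mathscr{H}^\gamma$. On the other hand, $u_n^\flat\to e^{-W}u$ in $L^2$, which forces $e^{-W}u=w^\flat$ a.e., hence $u=e^W w^\flat\in\mathscr{W}_\Xi=\mathscr{D}(A)$.

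Finally, to check $Au=g$, I would use \eqref{equ:usharpbound2} (the linear bound $\|A v\|_{L^2}\le C_\Xi(\|v^\sharp\|_{\mathscr{H}^2}+\|v^\flat\|_{L^2})$) applied to $v=u-u_n$, so that $Au_n\to Au$ in $L^2$; combining with $Au_n\to g$ gives $Au=g$. The main technical point to be careful about is that the paracontrolled decomposition of the limit is genuinely the one associated to $w^\sharp$: this requires the uniqueness of the decomposition, which follows because the $\Gamma$-map is a bijection between $\mathscr{H}^2$ $\sharp$-remainders and the paracontrolled functions $u^\flat$ in its range, together with the Banach-algebra multiplication by $e^W$. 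Once this identification is done the remaining estimates are routine applications of Proposition~\ref{lem:paraest} and the regularities of the enhanced noise from Section~\ref{sec:3dren}.
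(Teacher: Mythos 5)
Your proposal is correct and follows essentially the same route as the paper: apply the bound \eqref{equ:usharpBound} to differences to get $u_n^\sharp$ Cauchy in $\ssp^2$, identify the limit as $u=e^{W}\Gamma(w^\sharp)\in\mathscr{D}(A)$, and conclude $Au=g$ via the linear estimate \eqref{equ:usharpbound2}. You merely spell out details the paper leaves implicit (the $L^2$-convergence of $u_n^\flat=e^{-W}u_n$ and the continuity and linearity of $\Gamma$ used to identify the paracontrolled structure of the limit), which is consistent with the paper's argument.
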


\begin{proof}
For  $u_n \in \mathscr{D}(A)$, suppose that
\begin{align*}
u_n & \rightarrow u \\
A u_n & \rightarrow g.
\end{align*} 
Then, by \eqref{equ:usharpBound}, we have that $u_n^\sharp$ is a Cauchy sequence and $||w  - u_n^\sharp||_{\mathscr{H}^2} \rightarrow 0$ for some $w$.  We observe that then $u = e^{W}\Gamma(w)$, that is $u \in \mathscr{D}(A)$.  After that, writing the same estimate in the end of the proof of Proposition \ref{prop:opclosed} concludes the proof, this time utilizing \eqref{equ:usharpbound2} instead.
\end{proof}

For the domain what we know is $\mathscr{D}(A) \subset e^{W} \mathscr{H}^\gamma$.  But in the sequel we will need a precise approximation by smooth elements in $\mathscr{H}^2$.  The following Proposition establishes that.

\begin{proposition}
	For every $u \in \mathscr{D}(A)$ there exists $u_\varepsilon \in \mathscr{H}^2$ such that 
	\[
	|| u^\flat - u_\varepsilon^\flat  ||_{\mathscr{H}^\gamma} + || u^\sharp - u_\varepsilon^\sharp ||_{\mathscr{H}^2} \rightarrow 0
	\]
	as $\varepsilon \rightarrow 0$. For $u, v \in \mathscr{D}(A)$, with this approximation, we obtain
	\[
	\langle A_\varepsilon u_\varepsilon, v_\varepsilon \rangle \rightarrow \langle A u, v \rangle.
	\]
	Consequently, $A$ is a closed symmetric operator.
\end{proposition}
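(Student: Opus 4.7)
Given $u = e^W u^\flat \in \mathscr{D}(A)$ with $u^\flat = \Gamma u^\sharp$, the natural candidate is
\[ u_\varepsilon := e^{W_\varepsilon}\, \Gamma_\varepsilon(u^\sharp), \]
which tautologically gives $u_\varepsilon^\sharp = u^\sharp$ (so $\|u^\sharp - u_\varepsilon^\sharp\|_{\ssp^2} = 0$) and $u_\varepsilon^\flat = \Gamma_\varepsilon(u^\sharp)$. Since $W_\varepsilon, Z_\varepsilon, \widetilde{W}_\varepsilon$ are smooth for each fixed $\varepsilon > 0$, a bootstrap in the defining fixed-point equation \eqref{equ:ubetaCUTTansatz} for $\Gamma_\varepsilon$ yields $\Gamma_\varepsilon(u^\sharp) \in \ssp^2$ and hence $u_\varepsilon \in \ssp^2$. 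The $\ssp^\gamma$-convergence of the flat parts then follows from the identity
\[ u^\flat - u_\varepsilon^\flat = (\Gamma - \Gamma_\varepsilon) u^\sharp = (\mathrm{id} - \Gamma \Gamma_\varepsilon^{-1})\, \Gamma_\varepsilon(u^\sharp), \]
combined with Lemma \ref{lem:identityGammaconv3d} and the uniform bound of Proposition \ref{lem:gamma3}.

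For the pairing convergence, I would expand $A_\varepsilon u_\varepsilon$ via the same paracontrolled identity \eqref{equ:def3dAH} but with all noise objects replaced by their regularizations and all resonant products interpreted classically, and then move the outer factor $e^{W_\varepsilon}$ to the test function. Since $v_\varepsilon = e^{W_\varepsilon} v_\varepsilon^\flat$, this produces
\[ \langle A_\varepsilon u_\varepsilon, v_\varepsilon \rangle = \bigl\langle \Delta u^\sharp + L Z_\varepsilon \circ u^\sharp + 2 L \widetilde{W}_\varepsilon \circ \nabla u^\sharp + G_\varepsilon(u_\varepsilon^\flat),\, e^{2 W_\varepsilon} v_\varepsilon^\flat \bigr\rangle, \]
and analogously for $\langle A u, v \rangle$. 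Every term is a jointly continuous multilinear expression in the triple $(\Xi_\varepsilon, u_\varepsilon^\flat, v_\varepsilon^\flat)$, thanks to the paraproduct, resonant-product and commutator estimates of Propositions \ref{lem:paraest} and \ref{prop:commu}, plus the Banach-algebra property of $\mathscr{C}^\alpha$ ($\alpha > 0$) used to handle $e^{2 W_\varepsilon}$. The convergences $\Xi_\varepsilon \to \Xi$ in $\mathscr{X}^\alpha$ (Theorem \ref{thm:3dren}) and $e^{W_\varepsilon} \to e^W$ in $\mathscr{C}^\alpha$, together with the first paragraph applied to both $u$ and $v$, then allow a term-by-term passage to the limit.

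Closedness was already established. Symmetry follows by the standard diagram
\[ \langle u, A v \rangle = \lim_{\varepsilon \to 0} \langle u_\varepsilon, A_\varepsilon v_\varepsilon \rangle = \lim_{\varepsilon \to 0} \langle A_\varepsilon u_\varepsilon, v_\varepsilon \rangle = \langle A u, v \rangle, \]
using that $A_\varepsilon = \Delta + \xi_\varepsilon - (c^1_\varepsilon + c^2_\varepsilon)$ is manifestly a symmetric operator on $\ssp^2$. The hardest step is the multilinear limit above: unlike in the 2d situation of Proposition \ref{prop:opApp}, the exponential factor $e^W$ appearing in $Au$ prevents a clean strong $L^2$ estimate $\|A u - A_\varepsilon u_\varepsilon\|_{L^2} \to 0$, so convergence must be read off in the dual pairing. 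The most delicate contributions are the resonant products $L Z \circ u^\sharp$ and $L \widetilde{W} \circ \nabla u^\sharp$, which only make sense in a space of negative regularity; this negative regularity has to be traded against the positive regularity of the smooth test object $e^{2 W_\varepsilon} v_\varepsilon^\flat$, uniformly in $\varepsilon$, leveraging the uniform control of the $\mathscr{X}^\alpha$-norm of the enhanced noise $\Xi_\varepsilon$.
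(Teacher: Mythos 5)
Your choice of approximation is exactly the paper's: the authors also take $u_\varepsilon^\sharp=u^\sharp$, $u_\varepsilon^\flat=\Gamma_\varepsilon(u^\sharp)$, $u_\varepsilon=e^{W_\varepsilon}\Gamma_\varepsilon(u^\sharp)$, and obtain the $\ssp^\gamma$-convergence of the flat parts from Proposition \ref{lem:gamma3} and Lemma \ref{lem:identityGammaconv3d}, just as you do. Where you diverge is the passage to the limit in the pairing: the paper (following its 2d template, cf.\ the corollary after Proposition \ref{prop:opApp} and its 3d counterpart Proposition \ref{prop:opApp3d}) proves the \emph{strong} $L^2$ estimate $\|Au-A_\varepsilon u_\varepsilon\|_{L^2}\lesssim_\Xi\|\Xi_\varepsilon-\Xi\|_{\mathscr{X}^\alpha}\|u^\sharp\|_{\ssp^2}$, after which $\langle A_\varepsilon u_\varepsilon,v_\varepsilon\rangle\to\langle Au,v\rangle$ and symmetry are immediate, whereas you argue term by term in the dual pairing after moving $e^{W_\varepsilon}$ onto the test function. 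Your route is legitimate and yields the same conclusion, but your stated reason for taking it is not accurate: the exponential factor does \emph{not} obstruct a strong $L^2$ estimate, since $e^{W_\varepsilon}\to e^{W}$ in $\CC^\alpha\hookrightarrow L^\infty$ and one simply splits $\|e^{W_\varepsilon}F_\varepsilon-e^{W}F\|_{L^2}\le\|e^{W_\varepsilon}-e^{W}\|_{L^\infty}\|F_\varepsilon\|_{L^2}+\|e^{W}\|_{L^\infty}\|F_\varepsilon-F\|_{L^2}$ with $F_\varepsilon,F$ the paracontrolled brackets; likewise your remark that $LZ\circ u^\sharp$ and $L\widetilde{W}\circ\nabla u^\sharp$ only make sense in negative-regularity spaces is off — by the estimates in Theorem \ref{thm:3dusharpbound} they lie in $L^2$ (indeed in $\ssp^{1/2-}$), so no duality trade-off is needed. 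In short: the paper's strong-convergence argument is shorter and is reused later for the resolvent convergence, while your weak/pairing argument is more laborious but self-contained; both are correct.
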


\begin{proof}
	The proof is similar to that of 2d case, this time using Proposition \ref{lem:gamma3}. In this case,  for $u^\sharp \in \mathscr{H}^2$, we take $u^\flat  = \Gamma_\varepsilon(u^\sharp)$ and $u_\varepsilon = e^{W_\varepsilon}\Gamma_\varepsilon(u^\sharp)$ for the approximations.  We omit the details.
\end{proof}

Before we introduce the resolvent and also the form domain we need the following result.

\begin{proposition}
	\label{lem:3dh1}Let $W$ be as above, then there exists a constant $C_{\Xi} >
	0$ such that
	\[ \| \nabla u^{\flat} \|^2_{L^2} \lesssim \| e^{- 2 W} \|_{L^{\infty}} (-
	\langle u, A u \rangle + C_{\Xi} \| u \|_{L^2}), \]
	where $u = e^W u^{\flat} \in \mathscr{D}(A) .$
\end{proposition}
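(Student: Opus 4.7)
The strategy rests on the algebraic identity $L\widetilde W = \nabla W$, which is immediate from $\widetilde W = L^{-1}\nabla W$, and allows a clean weighted integration by parts. Starting from the representation $Au = e^W(\Delta u^\flat + 2L\widetilde W\cdot\nabla u^\flat + LZ\, u^\flat)$ of \eqref{eqn:3dHuflat} and pairing with $u = e^W u^\flat$, I integrate $\int e^{2W}u^\flat\Delta u^\flat$ by parts. The cross term $-2\int e^{2W}u^\flat\nabla W\cdot\nabla u^\flat$ generated by the derivative on the weight exactly cancels $2\int e^{2W}u^\flat L\widetilde W\cdot\nabla u^\flat$, leaving the clean identity
\[
-\langle u, Au\rangle \;=\; \int e^{2W}|\nabla u^\flat|^2 \;-\; \int e^{2W}LZ\,(u^\flat)^2.
\]
Since the individual factors $\nabla W$ and $LZ$ are genuine distributions, I justify this identity through regularization: carry out the computation for the smooth approximations $u_\varepsilon, W_\varepsilon, \widetilde W_\varepsilon, Z_\varepsilon$ of Definition~\ref{def:3dregularizedA}, where classical calculus applies and $L\widetilde W_\varepsilon = \nabla W_\varepsilon$ pointwise, and then pass to the limit $\varepsilon\to 0$ using Theorem~\ref{thm:3dren} and the paracontrolled structure of $u^\flat$.

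To handle the remaining singular integral I invoke Lemma~\ref{lem:expprod}: the product $e^{2W}LZ$, which a priori looks worse than $\mathscr{C}^{-1/2-}$, actually belongs to $\mathscr{C}^{\alpha-1}$ for some $\alpha<1/2$ after cancellations between the renormalized stochastic terms encoded in $\Xi$. Duality between $\mathscr{C}^{\alpha-1}$ and a suitable $B^{1-\alpha}_{1,1}$-type space on the torus, combined with Bony paraproduct estimates and the Sobolev multiplication rule $\mathscr{H}^\gamma\cdot\mathscr{H}^\gamma\hookrightarrow \mathscr{H}^{2\gamma-3/2}$ applied to $(u^\flat)^2$, should give
\[
\left|\int e^{2W}LZ\,(u^\flat)^2\right| \;\le\; C_\Xi\,\|u^\flat\|_{\mathscr{H}^{1-\delta}}^{2}
\]
for some small $\delta>0$; a standard interpolation on the torus then bounds $\|u^\flat\|_{\mathscr{H}^{1-\delta}}^2$ by $\epsilon\|\nabla u^\flat\|_{L^2}^2 + C(\epsilon)\|u^\flat\|_{L^2}^2$ for arbitrarily small $\epsilon>0$.

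Choosing $\epsilon$ sufficiently small relative to the $\Xi$-dependent constants absorbs the gradient contribution into the left-hand side, and then using $\|\nabla u^\flat\|_{L^2}^2\le \|e^{-2W}\|_{L^\infty}\int e^{2W}|\nabla u^\flat|^2$ together with $\|u^\flat\|_{L^2}^2\le \|e^{-2W}\|_{L^\infty}\|u\|_{L^2}^2$ produces the claimed bound. The main obstacle will be the combination of step one and step two: justifying the formal cancellation $L\widetilde W = \nabla W$ within the weighted integration by parts, and controlling the quadratic form $\int e^{2W}LZ(u^\flat)^2$ strictly in terms of $\|\nabla u^\flat\|_{L^2}^2$ and $\|u^\flat\|_{L^2}^2$, both of which rely crucially on Lemma~\ref{lem:expprod} and on a paracontrolled treatment of $(u^\flat)^2$ that avoids the appearance of norms strictly above $\mathscr{H}^1$, since any such norm would not be controlled by the right-hand side of the proposition.
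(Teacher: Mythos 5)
Your proposal follows essentially the same route as the paper's proof: the same weighted integration by parts using $L\widetilde W=\nabla W$ to cancel the cross term and obtain $-\langle u,Au\rangle=\int e^{2W}|\nabla u^\flat|^2-\int e^{2W}LZ\,(u^\flat)^2$, then Lemma~\ref{lem:expprod} to place $e^{2W}LZ$ in $\CC^{\alpha-1}$, a duality/paraproduct bound of the singular term by $C_\Xi\|u^\flat\|^2_{\ssp^{s}}$ with $s<1$, and Sobolev interpolation plus Young to absorb the gradient contribution. The only differences are cosmetic (the paper pairs $\ssp^{1/2+\varepsilon}$ against $\ssp^{-1/2-\varepsilon}$ rather than your $B^{1-\alpha}_{1,1}$ duality, and does not spell out the regularization argument for the integration by parts), so your argument is correct and matches the paper's.
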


\begin{proof}
	Using (\ref{eqn:3dHuflat}), we write
	\begin{eqnarray*}
		\langle u, A u \rangle & = & \langle e^{2 W} u^{\flat}, \Delta u^{\flat} +
		2 \nabla u^{\flat} \nabla W + L Z u^{\flat} \rangle\\
		& = & - \langle e^{2 W} \nabla u^{\flat}, \nabla u^{\flat} \rangle +
		\langle e^{2 W} u^{\flat}, L Z u^{\flat} \rangle,
	\end{eqnarray*}
	where the gradient term disappeared because we integrated by parts. Thus
	\begin{eqnarray*}
		\| \nabla u^{\flat} \|^2_{L^2} & \le & \| e^{- 2 W} \|_{L^{\infty}}
		\| e^W \nabla u^{\flat} \|_{L^2}\\
		& = & \| e^{- 2 W} \|_{L^{\infty}} (\langle e^{2 W} u^{\flat}, L Z u^{\flat} \rangle - \langle u, A u \rangle)\\
		& \le & \| e^{- 2 W} \|_{L^{\infty}} (\| u^{\flat} \|_{\ssp^{1 / 2 +
				\varepsilon}} \| e^{2 W} L Z u^{\flat} \|_{\ssp^{- 1 / 2 -
				\varepsilon}} - \langle u, A u \rangle)\\
		& \le & \| e^{- 2 W} \|_{L^{\infty}} (\| u^{\flat} \|^2_{\ssp^{1 / 2 +
				\varepsilon}} \| e^{2 W} L Z \|_{\CC^{- 1 / 2 -
				\varepsilon}} - \langle u, A u \rangle)\\
		& \le & \| e^{- 2 W} \|_{L^{\infty}} (C_{\Xi} \| e^{2 W}
		\|_{\CC^{1 / 2 - \varepsilon}} \| u^{\flat} \|^2_{\ssp^{1 / 2 +
				\varepsilon}} - \langle u, A u \rangle),
	\end{eqnarray*}
	where we have used Lemma \ref{lem:expprod}. Using again Sobolev
	interpolation and Young's inequality we can conclude by choosing
	$\varepsilon > 0$ small enough and pick a proper constant $C_\Xi >0$ for the conclusion.
\end{proof}

After this, we are ready to conclude the self-adjointness of the operator.
\begin{theorem} \label{thm:3dselfadj}
	The operator $H$ with domain $\mathscr{D}
	(H)$ is self-adjoint.
\end{theorem}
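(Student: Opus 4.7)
The plan is to mirror the 2d strategy from Proposition \ref{lem:laxmil} and Lemma \ref{thm:2dselfadj}. Having already established that $A$ is closed and symmetric on its dense domain $\mathscr{D}(A)$, it suffices to produce a single real point in the resolvent set and then invoke Proposition \ref{prop:selfadj} (Reed-Simon X.1). Since $H = A - K_\Xi$, the self-adjointness of $H$ follows from that of $A$ once such a real point is found, and indeed the natural candidate is the very same shift constant $K_\Xi$.

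First I would use Proposition \ref{lem:3dh1} to obtain a lower bound on the quadratic form. Discarding the nonnegative $\|\nabla u^\flat\|_{L^2}^2$ term yields
\[
-\langle u, A u\rangle + C_\Xi \|u\|_{L^2}^2 \ge 0 \quad \text{for all } u \in \mathscr{D}(A),
\]
so choosing $K_\Xi$ strictly larger than $C_\Xi$ (and updating the shift constant from Definition \ref{def:setconstant}'s 3d analog if needed) guarantees the strict coercivity
\[
\|u\|_{L^2}^2 < \langle -(A - K_\Xi) u, u\rangle \quad \text{for all } u \in \mathscr{D}(A).
\]

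Next I would repeat the Babuska-Lax-Milgram argument of Proposition \ref{lem:laxmil}, with the bilinear form $B(u,v) := \langle -(A - K_\Xi) u, v\rangle$ on $\mathscr{D}(A) \times L^2$. Continuity of $B$ follows from the boundedness of $A : \mathscr{D}(A) \to L^2$ provided by Theorem \ref{thm:3dusharpbound} (combined with \eqref{equ:usharpbound2}). Weak coercivity follows from the strict coercivity just established together with the norm equivalence between the $\mathscr{D}(A)$ norm of Definition \ref{def:3dham} and the graph-type norm $\|u\|_{L^2} + \|(A - K_\Xi)u\|_{L^2}$; this equivalence is extracted by combining Theorem \ref{thm:3dusharpbound}, the bound \eqref{equ:usharpbound2}, and the estimate $\|u^\flat\|_{\ssp^\gamma} \lesssim \|u^\sharp\|_{\ssp^2}$ from Proposition \ref{lem:gamma3}. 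Non-degeneracy in the second slot uses that $\mathscr{D}(A)$ is dense in $L^2$ by Proposition \ref{prop:3ddensity}, so that $B(u,v) = 0$ for all $u \in \mathscr{D}(A)$ forces $v = 0$. Babuska-Lax-Milgram then produces a bounded inverse $(K_\Xi - A)^{-1} : L^2 \to \mathscr{D}(A)$, witnessing $K_\Xi \in \rho(A)$, i.e.\ $0 \in \rho(H)$. Proposition \ref{prop:selfadj} closes the argument.

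The main technical obstacle is the norm equivalence needed for weak coercivity: unlike the 2d case, where Proposition \ref{lem:formdom} explicitly reconciles the $\mathscr{D}(A)$ norm of Definition \ref{def:2dDXi} with the functional-analytic graph norm, here the Definition \ref{def:3dham} norm is phrased in terms of the pair $(u^\flat, u^\sharp)$, and $u^\flat$ itself is only controlled by $u$ through the $\Gamma$-map and the multiplication by $e^W$. Threading Theorem \ref{thm:3dusharpbound} with the $L^\infty$ bounds on $e^{\pm W}$ of Lemma \ref{lem:expprod} (and the $\Gamma$-bounds of Proposition \ref{lem:gamma3}) supplies the two-sided comparison; once that is in hand, the Lax-Milgram machinery runs exactly as in 2d.
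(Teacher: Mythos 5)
Your proposal is correct and follows essentially the same route as the paper: the paper's proof invokes Proposition \ref{lem:3dh1} to get the coercive form bound, runs the Babuska--Lax--Milgram argument exactly as in the 2d case (Proposition \ref{lem:laxmil}) to show $(C_\Xi - A)^{-1}: L^2 \to \mathscr{D}(A)$ is bounded, and concludes via Proposition \ref{prop:selfadj}. You merely spell out the continuity, weak-coercivity, and non-degeneracy checks (via Theorem \ref{thm:3dusharpbound}, Proposition \ref{lem:gamma3}, the $e^{\pm W}$ bounds, and Proposition \ref{prop:3ddensity}) that the paper leaves implicit with the phrase ``in precisely the same way as the 2d case.''
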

\begin{proof}
	Choosing $C_{\Xi} > 0$ (using Proposition \ref{lem:3dh1})
	large enough, we again want to prove that
	\[ (C_{\Xi} - A)^{- 1} : \mathscr{D} (A) \rightarrow L^2 \ \tmop{is} \
	\tmop{bounded} . \]
	This can be done in precisely the same way as the 2d case, similar to the proof of Proposition \ref{lem:laxmil},  by applying again the Babuska-Lax-Milgram theorem to the the bilinear map
	\begin{eqnarray*}
		B : \mathscr{D} (A) \times L^2 & \rightarrow & \mathbbm{R}\\
		B (u, v) & : = & \langle (C_{\Xi} - A) u, v \rangle .
	\end{eqnarray*}
	Afterwards, one concludes self-adjointness by using Proposition \ref{prop:selfadj}.
\end{proof}

Observe that the Proposition \ref{lem:3dh1} implies the positivity of the form for $
C_{\Xi} - A.$ Accordingly, we introduce the shifted operators.

\begin{definition} \label{def:setconstant3d}
	For a constant $K_\Xi > C_{\Xi}$, where $C_{\Xi}$ is as in the proof of Theorem \ref{thm:3dselfadj}, we define the following shifted operators
	\begin{align*}
	H_\varepsilon & :=   A_\varepsilon - K_\Xi \\
	H & := A -  K_\Xi  \\
	\end{align*}
	where, in the future, the constant $K_\Xi$ will be updated to be larger, as needed.
\end{definition}

Now, we  also define the form domain.

\begin{definition}\label{def:energySp3d}
	From Proposition \eqref{lem:gamma3} recall that $u= \Gamma u^{\sharp}$. We define the form domain of $H$, denoted by $\ED$, as the closure of the domain under the following norm
	\[ \| \Gamma u^{\sharp} \|_{\ED} \assign
	\sqrt{ \langle \Gamma u^{\sharp}, -{H} \Gamma u^{\sharp} \rangle}
	. \]
\end{definition}

We furthermore have the following classification for the domain and the form
domain of $H$, this is the 3d version of Proposition
\ref{lem:formdom}.
\begin{proposition} \label{lem:3dnormequiv}
	We have the following characterizations for the domain and the form domain:
	\begin{enumeratenumeric}
		\item $\Gamma u^{\sharp} \in e^{- W} \mathscr{D} ({H})
		\Leftrightarrow u^{\sharp} \in \ssp^2$.
		More precisely, on $\mathscr{D} ({H})  e^W \mathscr{U}_{\Xi}$ we have the
		following norm equivalence
		\[ \| u^{\sharp} \|_{\ssp^2} \lesssim_\Xi \| {H} \Gamma u^{\sharp} \|_{L^2}
		\lesssim_\Xi \| u^{\sharp} \|_{\ssp^2 .} \]
		\item $u \in \mathscr{D} ( \sqrt{- {H}} ) \Leftrightarrow
		e^{- W} u \in \ssp^1$.
		Then the precise statement is that on $\mathscr{D} ({H})$ the
		following norm equivalence holds
		\[ \| e^{- W} u \|_{\ssp^1} \lesssim_{\Xi} \| u \|_{\sqrt{- {H}}}
		\lesssim_{\Xi} \| e^{- W} u \|_{\ssp^1}, \]
		and hence the closures with respect to the two norms coincide.
	\end{enumeratenumeric}
\end{proposition}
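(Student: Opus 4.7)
The plan is to reduce each direction of both norm equivalences to previously established ingredients: Theorem \ref{thm:3dusharpbound} for the domain, Proposition \ref{lem:3dh1} for the form domain, Proposition \ref{lem:gamma3} for comparing $u^{\flat}$ with $u^{\sharp}$, and Lemma \ref{lem:expprod} for the only genuinely delicate pointwise product $e^{2W} L Z$. Throughout I will assume $K_\Xi$ has been enlarged (which is permitted by Definition \ref{def:setconstant3d}) so that $-H \ge 1$ as an operator; this lets me freely absorb any $\|u\|_{L^2}$ contribution into $\|Hu\|_{L^2}$ or $\langle u, -Hu\rangle$ via Cauchy--Schwarz.

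For Part (1), the bound $\|u^{\sharp}\|_{\ssp^2}\lesssim_\Xi \|Hu\|_{L^2}$ follows from Theorem \ref{thm:3dusharpbound} after rewriting $\|e^{-W}Au\|_{L^2}\le \|e^{-W}\|_{L^\infty}\bigl(\|Hu\|_{L^2} + K_\Xi\|u\|_{L^2}\bigr)$ and $\|u^{\flat}\|_{L^2} = \|e^{-W}u\|_{L^2}\lesssim_\Xi \|u\|_{L^2}$, and then absorbing the $\|u\|_{L^2}$ piece into $\|Hu\|_{L^2}$. The reverse inequality is read off from the second estimate in Theorem \ref{thm:3dusharpbound} combined with $\|u^{\flat}\|_{L^2}\lesssim \|u^{\sharp}\|_{L^2}$ and $\|u\|_{L^2}\lesssim_\Xi \|u^{\sharp}\|_{L^2}$ provided by Proposition \ref{lem:gamma3}.

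The forward inequality in Part (2), $\|e^{-W}u\|_{\ssp^1}\lesssim_\Xi \|u\|_{\sqrt{-H}}$, follows directly from Proposition \ref{lem:3dh1} upon substituting $-\langle u,Au\rangle = \langle u,-Hu\rangle - K_\Xi\|u\|_{L^2}^2$ and absorbing the $L^2$ remainder. For the reverse direction I would integrate by parts in (\ref{eqn:3dHuflat}) to obtain
\begin{equation*}
\langle u,-Hu\rangle \;=\; \int e^{2W}|\nabla u^{\flat}|^2 \;-\; \langle e^{2W} L Z\, u^{\flat},\, u^{\flat}\rangle \;+\; K_\Xi\|u\|_{L^2}^2.
\end{equation*}
The first and third terms are harmless: they are bounded by $\|e^{2W}\|_{L^\infty}\|\nabla u^{\flat}\|_{L^2}^2\lesssim_\Xi \|u^{\flat}\|_{\ssp^1}^2$ and $\lesssim_\Xi \|u^{\flat}\|_{L^2}^2$ respectively. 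For the resonant interaction, Lemma \ref{lem:expprod} gives $e^{2W}LZ\in \CC^{-1/2-\varepsilon}$, so by duality and Besov multiplication
\begin{equation*}
|\langle e^{2W}LZ\, u^{\flat}, u^{\flat}\rangle| \;\lesssim_\Xi\; \|u^{\flat}\|_{\ssp^{1/2 + 2\varepsilon}}^{2} \;\le\; \eta\|u^{\flat}\|_{\ssp^1}^2 + C_\eta\|u^{\flat}\|_{L^2}^2,
\end{equation*}
via Sobolev interpolation and Young's inequality with $\eta$ sufficiently small.

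The principal obstacle is precisely this last estimate. Lemma \ref{lem:expprod} is itself a non-trivial paracontrolled computation involving commutators and resonant products of stochastic terms, and the ensuing multiplication $\CC^{-1/2-\varepsilon}\cdot \ssp^{1/2+\varepsilon}\to \ssp^{-1/2-\varepsilon}$ sits at the borderline of Bony's range, forcing the small $\varepsilon$ loss that is ultimately recovered by interpolation at the price of enlarging constants. Once the three contributions are collected one concludes $\langle u,-Hu\rangle\lesssim_\Xi \|e^{-W}u\|_{\ssp^1}^2$ on $\mathscr{D}(H)$, so the two norms $\|\cdot\|_{\ED}$ and $\|e^{-W}(\cdot)\|_{\ssp^1}$ are equivalent there; taking closures identifies $\ED$ with $\{u\in L^2 : e^{-W}u\in\ssp^1\}$, completing Part (2).
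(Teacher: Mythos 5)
Your proof is correct and takes essentially the same route as the paper, whose proof simply invokes Theorem \ref{thm:3dusharpbound} and Proposition \ref{lem:3dh1} ``similar to the proof of Proposition \ref{lem:formdom}'': your integration-by-parts identity and the estimate of $\langle e^{2W}LZ\,u^{\flat},u^{\flat}\rangle$ via Lemma \ref{lem:expprod} are exactly the computations already carried out in the proof of Proposition \ref{lem:3dh1}. The only additions are explicit bookkeeping the paper leaves implicit (enlarging $K_\Xi$ so that $-H\ge 1$ to absorb the $\|u\|_{L^2}$ terms, and Proposition \ref{lem:gamma3} to pass between $u^{\flat}$ and $u^{\sharp}$), which is fine.
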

\begin{proof}
	This follows from Theorem \ref{thm:3dusharpbound} and Proposition \ref{lem:3dh1}; similar to the proof of Proposition \ref{lem:formdom}.
\end{proof}

\subsubsection{Norm resolvent convergence}

In this section, we address the resolvent convergence results for the regularized operators as introduced in remark \ref{def:3dregularizedA} and definition \ref{def:setconstant3d}.  We first address the norm convergence of approximating Hamiltonians composed with the $\Gamma$-maps.  

\begin{proposition} \label{prop:opApp3d}
	Let  $u^\sharp \in \mathscr{H}^2$, $u  =e^{W} \Gamma(u^\sharp )$,  $u_\varepsilon^\flat  = \Gamma_\varepsilon(u^\sharp )$ and $u_\varepsilon  =e^{W_\varepsilon}u_\varepsilon^\flat$. We have that
	\begin{equation}\label{equ:normOpConv23d}
	|| Hu - H_\varepsilon u_\varepsilon ||_{L^2} \lesssim_{\Xi}   || \Xi_\varepsilon - \Xi||_{\mathscr{X}^\alpha} ||u^\sharp||_{\mathscr{H}^2}.
	\end{equation}
	Consequently, this implies that
	\begin{equation} \label{equ:normOpConv33d}
	|| He^W \Gamma - H_\varepsilon e^{W_\varepsilon} \Gamma_\varepsilon   ||_{\mathscr{H}^2 \rightarrow L^2}  \rightarrow 0.
	\end{equation}
	That is to say, $He^W \Gamma \rightarrow H_\varepsilon e^{W_\varepsilon} \Gamma_\varepsilon$ in norm.
\end{proposition}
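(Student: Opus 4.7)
The plan is to mimic the proof of the two-dimensional analogue, Proposition \ref{prop:opApp}, using the 3d definition \eqref{equ:def3dAH} of the Hamiltonian together with the $\Gamma$-map bounds from Proposition \ref{lem:gamma3} and the operator-norm convergence from Lemma \ref{lem:identityGammaconv3d}. Since $H = A - K_\Xi$ with the same shift used for $H_\varepsilon$, we have $Hu - H_\varepsilon u_\varepsilon = (Au - A_\varepsilon u_\varepsilon) - K_\Xi(u - u_\varepsilon)$, so it suffices to control both summands separately.

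For the $u - u_\varepsilon$ piece, I would split
\[
u - u_\varepsilon = e^W\bigl(\Gamma(u^\sharp)-\Gamma_\varepsilon(u^\sharp)\bigr) + (e^W - e^{W_\varepsilon})\,\Gamma_\varepsilon(u^\sharp).
\]
The first term is estimated using Lemma \ref{lem:identityGammaconv3d}, which gives the operator-norm bound $\|\Gamma - \Gamma_\varepsilon\|_{\mathscr{H}^\gamma \to \mathscr{H}^\gamma} \lesssim_\Xi \|\Xi - \Xi_\varepsilon\|_{\mathscr{X}^\alpha}$, together with multiplication by $\|e^W\|_{\mathscr{C}^{1/2-}}$. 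The second uses the H\"older algebra continuity of the exponential (as in the lemma preceding Lemma \ref{lem:expprod}), which yields $\|e^W - e^{W_\varepsilon}\|_{L^\infty} \lesssim_\Xi \|W - W_\varepsilon\|_{\mathscr{C}^{1/2-}} \lesssim \|\Xi - \Xi_\varepsilon\|_{\mathscr{X}^\alpha}$, combined with the uniform $\Gamma_\varepsilon$-bound from Proposition \ref{lem:gamma3}.

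For $Au - A_\varepsilon u_\varepsilon$, I would apply \eqref{equ:def3dAH} to both expressions and write the difference as
\[
Au - A_\varepsilon u_\varepsilon = (e^W - e^{W_\varepsilon})\bigl(\Delta u^\sharp + LZ\circ u^\sharp + 2L\widetilde{W}\circ\nabla u^\sharp + G(u^\flat)\bigr) + e^{W_\varepsilon}\cdot (\text{residue}),
\]
where the residue collects all the differences of the inner terms, each of which is a bilinear or trilinear expression in components of the enhanced noise $\Xi$ and in $u^\sharp$ or $u^\flat$. For every such term (resonant products $Z\circ u^\sharp$, $\widetilde{W}\circ\nabla u^\sharp$, the paraproduct and commutator pieces inside $G$, and the paraproducts defining $B_\Xi$) I would add and subtract cross terms so that each difference is either a noise-component difference $\Xi - \Xi_\varepsilon$ multiplied by a fixed factor, or a factor $u^\flat - u_\varepsilon^\flat = (\Gamma - \Gamma_\varepsilon)u^\sharp$ multiplied by fixed noise. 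The paraproduct and commutator estimates from Propositions \ref{lem:paraest} and \ref{prop:commu} then convert each piece into $\|\Xi - \Xi_\varepsilon\|_{\mathscr{X}^\alpha}\,\|u^\sharp\|_{\mathscr{H}^2}$ times $\Xi$-dependent constants; the $u^\flat - u_\varepsilon^\flat$ factors come back to $\|\Xi - \Xi_\varepsilon\|_{\mathscr{X}^\alpha}\,\|u^\sharp\|_{\mathscr{H}^\gamma}$ via Lemma \ref{lem:identityGammaconv3d}. The operator-norm statement \eqref{equ:normOpConv33d} then follows immediately from \eqref{equ:normOpConv23d} since the map $u^\sharp \mapsto u_\varepsilon$ is $e^{W_\varepsilon}\Gamma_\varepsilon$ by construction.

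The main obstacle is the sheer size and delicacy of $G(u^\flat)$ and of the correction term $B_\Xi(u^\flat)$ in \eqref{eqn:3db}: each of the many paraproduct, resonant, and commutator pieces must be individually split by cross-term addition to isolate a $\Xi - \Xi_\varepsilon$ or $W - W_\varepsilon$ factor, and one must choose exponents in the paraproduct/commutator estimates consistently, using the tight regularities $\alpha < 1/2$ with the Sobolev interpolation $\|u^\sharp\|_{\mathscr{H}^{1+\delta}} \lesssim \|u^\sharp\|_{\mathscr{H}^2}$ already exploited in Theorem \ref{thm:3dusharpbound}, so that every term ends up bounded by $\|\Xi - \Xi_\varepsilon\|_{\mathscr{X}^\alpha}\,\|u^\sharp\|_{\mathscr{H}^2}$ with a constant depending only on $\|\Xi\|_{\mathscr{X}^\alpha}$ (which, by Theorem \ref{thm:3dren} and Remark \ref{rem:2deps} adapted to 3d, dominates $\|\Xi_\varepsilon\|_{\mathscr{X}^\alpha}$ uniformly).
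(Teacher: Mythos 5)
Your proposal is correct and follows essentially the same route as the paper, whose proof of Proposition \ref{prop:opApp3d} simply refers back to the 2d argument of Proposition \ref{prop:opApp}: apply formula \eqref{equ:def3dAH}, add and subtract cross terms so every difference carries either a factor $\Xi-\Xi_\varepsilon$ or $(\Gamma-\Gamma_\varepsilon)u^\sharp$, and conclude via Lemma \ref{lem:identityGammaconv3d}. Your explicit handling of the exponential factors $e^W-e^{W_\varepsilon}$ and of the constant shift $K_\Xi$ only spells out details the paper leaves implicit.
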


\begin{proof}
The proof is similar to that of Proposition \ref{equ:normOpConv2}.  This time one uses the formula \eqref{equ:def3dAH}.  Then, proceeds in the same way  by using  Lemma \ref{lem:identityGammaconv3d} instead. Hence, the result.
\end{proof}

In the following results, using the techniques we have used in the 2d part, we address the notions of strong  resolvent  and norm resolvent convergence.

	\begin{theorem} \label{thm:resoventConv13d}
	For any $f \in L^2$, we have
	\[
	|| H^{-1} f - H_\varepsilon^{-1} f||_{L^2} \rightarrow 0
	\]
	as $\varepsilon \rightarrow 0$. Namely, $H_\varepsilon $ converges to $H$ in the strong  resolvent sense.
\end{theorem}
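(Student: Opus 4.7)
The plan is to mirror the strategy used in the two-dimensional case (Theorem \ref{thm:resoventConv1}), relying on the norm convergence of the approximating Hamiltonians already established in Proposition \ref{prop:opApp3d} together with a uniform-in-$\varepsilon$ bound for the resolvent $H_\varepsilon^{-1}$. First I would observe that, by the same Babuska--Lax--Milgram argument used in the proof of Theorem \ref{thm:3dselfadj}, and since the norms appearing in Proposition \ref{lem:3dh1} depend polynomially on $\|\Xi_\varepsilon\|_{\mathscr{X}^\alpha}$ (which is uniformly bounded by $\|\Xi\|_{\mathscr{X}^\alpha}$ up to an arbitrarily small error), the constant $K_\Xi$ in Definition \ref{def:setconstant3d} can be chosen so that $H_\varepsilon^{-1}:L^2\to L^2$ is bounded with a bound independent of $\varepsilon$. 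This is the 3d analogue of Remark \ref{rem:2deps} and is the only ingredient that needs some care.

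Fix $f\in L^2$ and set $u\assign H^{-1}f\in\mathscr{D}(H)$, so that $u=e^W\Gamma(u^\sharp)$ for some $u^\sharp\in\ssp^2$ by Proposition \ref{lem:3dnormequiv}. I then build the canonical approximants $u_\varepsilon\assign e^{W_\varepsilon}\Gamma_\varepsilon(u^\sharp)\in\ssp^2=\mathscr{D}(H_\varepsilon)$ (as in the $\mathscr{H}^2$-approximation proposition preceding Proposition \ref{prop:opApp3d}). By Proposition \ref{prop:opApp3d} we have
\begin{equation*}
\|Hu-H_\varepsilon u_\varepsilon\|_{L^2}\lesssim_\Xi\|\Xi_\varepsilon-\Xi\|_{\mathscr{X}^\alpha}\|u^\sharp\|_{\ssp^2}\xrightarrow{\varepsilon\to 0}0.
\end{equation*}

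Using the uniform boundedness of $H_\varepsilon^{-1}$ on $L^2$, I write
\begin{equation*}
\|u_\varepsilon-H_\varepsilon^{-1}f\|_{L^2}=\|H_\varepsilon^{-1}(H_\varepsilon u_\varepsilon-Hu)\|_{L^2}\lesssim\|H_\varepsilon u_\varepsilon-Hu\|_{L^2}\xrightarrow{\varepsilon\to 0}0.
\end{equation*}
On the other hand, $\|u-u_\varepsilon\|_{L^2}\to 0$ follows directly from the approximation result together with the bound $\|e^W-e^{W_\varepsilon}\|_{\CC^\alpha}\to 0$ from the lemma following Theorem \ref{thm:3dren}, combined with Lemma \ref{lem:identityGammaconv3d}. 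The triangle inequality then yields
\begin{equation*}
\|H^{-1}f-H_\varepsilon^{-1}f\|_{L^2}\le\|u-u_\varepsilon\|_{L^2}+\|u_\varepsilon-H_\varepsilon^{-1}f\|_{L^2}\xrightarrow{\varepsilon\to 0}0,
\end{equation*}
which is the claimed strong resolvent convergence since $f\in L^2$ was arbitrary.

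The only nontrivial step is verifying the uniform-in-$\varepsilon$ bound on $\|H_\varepsilon^{-1}\|_{L^2\to L^2}$; once this is in place, the rest of the argument is a direct transcription of the two-dimensional proof, with Proposition \ref{prop:opApp3d} replacing Proposition \ref{prop:opApp}. I do not foresee any additional complications specific to the three-dimensional setting, since the paracontrolled structure and the conjugation by $e^{W_\varepsilon}$ have already been absorbed into the statement of Proposition \ref{prop:opApp3d}.
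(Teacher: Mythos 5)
Your proposal is correct and follows essentially the same route as the paper: the paper's proof simply transcribes the two-dimensional argument of Theorem \ref{thm:resoventConv1}, taking $u_\varepsilon=e^{W_\varepsilon}\Gamma_\varepsilon(u^\sharp)$ and invoking Proposition \ref{prop:opApp3d}, which is exactly what you do. Your explicit remark about the uniform-in-$\varepsilon$ bound on $\|H_\varepsilon^{-1}\|_{L^2\to L^2}$ (via the Babuska--Lax--Milgram argument with constants depending only on $\Xi$) makes precise a point the paper leaves implicit, but it is the same underlying argument.
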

\begin{proof}
The proof follows the lines of the 2d case, that of Theorem \ref{thm:resoventConv1}. This time we take $u_\varepsilon = e^{W_\varepsilon}\Gamma_\varepsilon (u^\sharp)$ and use the Proposition \ref{prop:opApp3d}.
\end{proof}

	\begin{theorem} \label{thm:normResolventMain3d}
	Let $\beta$ be as defined in Proposition \ref{prop:3ddensity}.  Then, we have
	\[
	|| H^{-1} - H_\varepsilon^{-1} ||_{L^2 \rightarrow \mathscr{H}^\beta} \rightarrow 0
	\]
	as $\varepsilon \rightarrow 0$. Namely, $H_\varepsilon $ converges to $H$ in the norm resolvent sense.
\end{theorem}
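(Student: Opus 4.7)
The approach is to mimic the structure of the proof of Theorem~\ref{thm:normResolventMain} in the two-dimensional case, but with the composite operator $e^W\Gamma$ playing the role that $\Gamma$ played there. The inputs I have at my disposal are: Proposition~\ref{prop:opApp3d} (norm convergence of $H_\varepsilon e^{W_\varepsilon}\Gamma_\varepsilon \to He^W\Gamma$ as operators $\mathscr{H}^2\to L^2$), Lemma~\ref{lem:identityGammaconv3d} (norm convergence $\Gamma\Gamma_\varepsilon^{-1} \to \mathrm{Id}$ on $\mathscr{H}^\gamma$), Proposition~\ref{lem:gamma3} (uniform Sobolev bounds on $\Gamma,\Gamma_\varepsilon$), together with Theorem~\ref{thm:3dren} which implies $e^{W_\varepsilon}\to e^W$ in $\mathscr{C}^{1/2-}$.

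First I would observe that, as operators from $\mathscr{H}^2$ to $L^2$, both $He^W\Gamma$ and $H_\varepsilon e^{W_\varepsilon}\Gamma_\varepsilon$ are bijective with inverses $\Gamma^{-1}e^{-W}H^{-1}$ and $\Gamma_\varepsilon^{-1}e^{-W_\varepsilon}H_\varepsilon^{-1}$ respectively, uniformly bounded in $\varepsilon$ thanks to Theorem~\ref{thm:3dusharpbound} (for the $u^\sharp$ bound), the self-adjoint bound $\|H_\varepsilon^{-1}\|_{L^2\to L^2}\lesssim 1$ (after suitable shift $K_\Xi$), and the uniform $L^\infty$ bound on $e^{-W_\varepsilon}$. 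Applying the identity $A^{-1}-B^{-1}=A^{-1}(B-A)B^{-1}$ to Proposition~\ref{prop:opApp3d} upgrades that statement to
\[
\|\Gamma_\varepsilon^{-1}e^{-W_\varepsilon}H_\varepsilon^{-1}-\Gamma^{-1}e^{-W}H^{-1}\|_{L^2\to\mathscr{H}^2}\to 0.
\]
Composing on the left with $\Gamma$ (bounded $\mathscr{H}^2\to\mathscr{H}^\gamma$ by Proposition~\ref{lem:gamma3}) and using $\Gamma\Gamma^{-1}=\mathrm{Id}$ on the range $\mathscr{U}_\Xi$ yields $\|\Gamma\Gamma_\varepsilon^{-1}e^{-W_\varepsilon}H_\varepsilon^{-1}-e^{-W}H^{-1}\|_{L^2\to\mathscr{H}^\gamma}\to 0$. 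Since $e^{-W_\varepsilon}H_\varepsilon^{-1}\colon L^2\to\mathscr{H}^\gamma$ is uniformly bounded, Lemma~\ref{lem:identityGammaconv3d} lets me replace $\Gamma\Gamma_\varepsilon^{-1}$ by $\mathrm{Id}$, producing
\[
\|e^{-W_\varepsilon}H_\varepsilon^{-1}-e^{-W}H^{-1}\|_{L^2\to\mathscr{H}^\gamma}\to 0.
\]

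To conclude I would multiply by $e^W$ on the left. Since $e^W\in\mathscr{C}^{1/2-}$, paraproduct/multiplier estimates show that multiplication by $e^W$ sends $\mathscr{H}^\gamma$ into $\mathscr{H}^\beta$ for any $\beta<1/2$, which gives $\|e^We^{-W_\varepsilon}H_\varepsilon^{-1}-H^{-1}\|_{L^2\to\mathscr{H}^\beta}\to 0$. Finally, $e^We^{-W_\varepsilon}-1=e^{W-W_\varepsilon}-1\to 0$ in $\mathscr{C}^{1/2-}$, so the multiplier operator $(e^{W-W_\varepsilon}-1)$ converges to zero in operator norm on $\mathscr{H}^\beta$. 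Combined with the uniform bound $\|H_\varepsilon^{-1}\|_{L^2\to\mathscr{H}^\beta}\lesssim_\Xi 1$ coming from the preceding stage, this forces $\|e^We^{-W_\varepsilon}H_\varepsilon^{-1}-H_\varepsilon^{-1}\|_{L^2\to\mathscr{H}^\beta}\to 0$, and the triangle inequality delivers $\|H_\varepsilon^{-1}-H^{-1}\|_{L^2\to\mathscr{H}^\beta}\to 0$, as desired.

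The main obstacle is the careful bookkeeping of Sobolev regularity through the $e^W$ factor: whereas in two dimensions one can stay in $\mathscr{H}^\gamma$ throughout, in three dimensions the multiplier $e^W\in\mathscr{C}^{1/2-}$ forces a drop from $\mathscr{H}^\gamma$ (with $\gamma\in(1,3/2)$) down to $\mathscr{H}^\beta$ with $\beta<1/2$, which is precisely why the target regularity in the statement is $\mathscr{H}^\beta$ rather than $\mathscr{H}^\gamma$. A secondary technical point is to justify the uniform-in-$\varepsilon$ bounds on $H_\varepsilon^{-1}$ across the various Sobolev scales used above; this is routine but essential, and follows by combining Theorem~\ref{thm:3dusharpbound}, the uniform-in-$\varepsilon$ enhanced-noise bounds of Theorem~\ref{thm:3dren}, and uniform $L^\infty$ control of $e^{\pm W_\varepsilon}$.
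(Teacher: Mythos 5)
Your proposal is correct and follows essentially the same route as the paper's proof: Proposition~\ref{prop:opApp3d} upgraded to convergence of the inverses $\Gamma_\varepsilon^{-1}e^{-W_\varepsilon}H_\varepsilon^{-1}\to\Gamma^{-1}e^{-W}H^{-1}$ in $L^2\to\mathscr{H}^2$, then Lemma~\ref{lem:identityGammaconv3d} and Proposition~\ref{lem:gamma3} to remove the $\Gamma$-factors and get $e^{-W_\varepsilon}H_\varepsilon^{-1}\to e^{-W}H^{-1}$ in $L^2\to\mathscr{H}^\gamma$, and finally multiplication by $e^W$ together with $e^{W-W_\varepsilon}\to\mathrm{id}$ to land in $\mathscr{H}^\beta$. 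You merely make explicit some steps the paper leaves implicit (the resolvent identity and the uniform-in-$\varepsilon$ bounds), which is fine.
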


\begin{proof}
	This proof is similar to that of Theorem \ref{thm:normResolventMain}. We only mention the different points.  
	
	By Proposition \ref{prop:opApp3d} we have that
	\[
	||H_\varepsilon e^{W_\varepsilon} \Gamma_\varepsilon u^\sharp - H e^{W} \Gamma u^\sharp||_{\mathscr{H}^2  \rightarrow L^2} \rightarrow  0
	\]

	This implies 
	\[
	||\Gamma_\varepsilon^{-1} e^{-W_\varepsilon} H_\varepsilon^{-1}   - \Gamma^{-1} e^{-W} H^{-1}|||_{L^2 \rightarrow \mathscr{H}^2 } \rightarrow 0.
	\]

	By using the same tricks as in the proof of Theorem \ref{thm:normResolventMain}, this time using Proposition \ref{lem:gamma3} and Lemma \ref{lem:identityGammaconv3d}, one obtains
	\[
	|| e^{-W_\varepsilon} H_\varepsilon^{-1}   - e^{-W} H^{-1}|||_{L^2 \rightarrow H^\gamma} \rightarrow 0.
	\]
	
	Observe also that $|| e^{(W-W_\varepsilon)} ||_{H^\gamma \rightarrow H^\beta} \rightarrow \text{id}$.  We can write the estimate
	\begin{align*}
	|| e^{(W-W_\varepsilon)} H_\varepsilon^{-1}   -  H^{-1}||_{L^2 \rightarrow H^\beta}&= || e^{W}(e^{-W_\varepsilon} H_\varepsilon^{-1}   - e^{-W} H^{-1})||_{L^2 \rightarrow H^{\beta}}\\ &\leq || e^{W}||_{ H^\beta \rightarrow H^\beta} ||e^{-W_\varepsilon} H_\varepsilon^{-1}   - e^{-W} H^{-1}||_{L^2 \rightarrow H^\beta}.
	\end{align*}
	which gives the result.
\end{proof}


Lastly, we give a version of Agmon's inequality which can be seen as a 3d analogue of Theorem \ref{lem:brgaineq}.
\begin{lemma}\label{lem:3dagmon}
	For $u\in \mathscr{D}(H)$ and $\mathscr{D}(H_\varepsilon)$ respectively,   we have the following $L^\infty$ bounds
	\begin{align*}
	\|u\|_{L^\infty}\lesssim_\Xi \|{H} u\|^{1/2}_{L^2}\|\sqrt{-{H}}u\|^{1/2}_{L^2}
	\end{align*}
		\begin{align*}
	\|u\|_{L^\infty}\lesssim_\Xi \|{H_\varepsilon} u\|^{1/2}_{L^2}\|\sqrt{-{H_\varepsilon}}u\|^{1/2}_{L^2}.
	\end{align*}
\end{lemma}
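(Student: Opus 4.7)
The plan is to reduce the inequality to the classical three-dimensional Agmon bound $\|v\|_{L^\infty(\mathbb{T}^3)} \lesssim \|v\|_{\mathscr{H}^1}^{1/2}\|v\|_{\mathscr{H}^2}^{1/2}$, applied to the smooth remainder $u^\sharp$ in the paracontrolled ansatz. This mirrors the structure of the proof of the two-dimensional Brezis--Gallouet estimate in Lemma \ref{lem:brgaineq}, with classical Agmon taking the role played there by classical Brezis--Gallouet.

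First I would decompose $u = e^W u^\flat = e^W \Gamma u^\sharp$ as in Definition \ref{def:3dham}, with $u^\sharp\in\mathscr{H}^2$. Since $W\in\mathscr{C}^{1/2-}$ implies $e^W\in L^\infty$ almost surely, and since Proposition \ref{lem:gamma3} gives $\|\Gamma u^\sharp\|_{L^\infty}\le 2\|u^\sharp\|_{L^\infty}$, I obtain $\|u\|_{L^\infty}\lesssim_\Xi \|u^\sharp\|_{L^\infty}$. A standard Littlewood--Paley/Bernstein splitting (analogous to the opening argument of the proof of Lemma \ref{lem:brgaineq}, but with exponents $1/2$ and $3/2$ appropriate to three dimensions) will then yield the classical bound $\|u^\sharp\|_{L^\infty}\lesssim \|u^\sharp\|_{\mathscr{H}^1}^{1/2}\|u^\sharp\|_{\mathscr{H}^2}^{1/2}$.

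It remains to translate the two right-hand factors into the required energy norms. The $\mathscr{H}^2$-estimate $\|u^\sharp\|_{\mathscr{H}^2}\lesssim_\Xi \|Hu\|_{L^2}$ is immediate from Proposition \ref{lem:3dnormequiv}(1) (equivalently Theorem \ref{thm:3dusharpbound}). For the $\mathscr{H}^1$-estimate I would use the ansatz
\[
u^\sharp \;=\; u^\flat \;-\; \Delta_{>N}\bigl(u^\flat\prec Z + \nabla u^\flat\prec \widetilde{W} + B_\Xi(u^\flat)\bigr),
\]
together with the paraproduct estimates of Lemma \ref{lem:paraest} and the Schauder-type bound $\|B_\Xi(v)\|_{\mathscr{H}^{3/2-\delta}}\lesssim_\Xi \|v\|_{\mathscr{H}^1}$, recalling that $Z,\widetilde{W}\in\mathscr{C}^{3/2-}$. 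Each of the three pieces then has $\mathscr{H}^{3/2-\delta}$-norm controlled by $C_\Xi\|u^\flat\|_{\mathscr{H}^1}$, so Bernstein applied to $\Delta_{>N}$ produces a factor $2^{-(1/2-\delta)N}$ that, for $N$ large enough (depending only on $\Xi$), absorbs the noise-dependent constant and gives $\|u^\sharp\|_{\mathscr{H}^1}\lesssim_\Xi \|u^\flat\|_{\mathscr{H}^1}$. Finally Proposition \ref{lem:3dnormequiv}(2) yields $\|u^\flat\|_{\mathscr{H}^1} = \|e^{-W}u\|_{\mathscr{H}^1}\lesssim_\Xi \|\sqrt{-H}u\|_{L^2}$, and combining everything produces the stated inequality.

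For $H_\varepsilon$ the identical argument applies verbatim with $\Xi_\varepsilon, W_\varepsilon, Z_\varepsilon, \widetilde{W}_\varepsilon$ in place of the limit objects; since all the constants in the paraproduct, Schauder, and $\Gamma$-estimates are polynomial in $\|\Xi_\varepsilon\|_{\mathscr{X}^\alpha}\le\|\Xi\|_{\mathscr{X}^\alpha}$ (the three-dimensional analogue of Remark \ref{rem:2deps}), the bound holds with an $\varepsilon$-uniform constant. The main obstacle is really the $\mathscr{H}^1$-step above: one has to check that each of the three paracontrolled pieces gains a full $1/2$ derivative over $\|u^\flat\|_{\mathscr{H}^1}$, which works precisely because $Z,\widetilde{W}\in\mathscr{C}^{3/2-}$ are at the borderline regularity needed to match the gain from the high-frequency cutoff $\Delta_{>N}$.
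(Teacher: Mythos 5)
Your proposal is correct and follows essentially the same route as the paper: apply the classical three-dimensional Agmon inequality to $u^\sharp$, control $\|u\|_{L^\infty}$ by $\|u^\sharp\|_{L^\infty}$ via $e^W\in L^\infty$ and Proposition \ref{lem:gamma3}, and convert $\|u^\sharp\|_{\ssp^2}$ and $\|u^\sharp\|_{\ssp^1}$ into $\|Hu\|_{L^2}$ and $\|\sqrt{-H}u\|_{L^2}$ using Proposition \ref{lem:3dnormequiv} together with the bound $\|u^\sharp\|_{\ssp^1}\lesssim_\Xi\|u^\flat\|_{\ssp^1}$, which the paper states as straightforward and you simply spell out; the $\varepsilon$-uniformity argument also matches.
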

\begin{proof}
	The classical version of Agmon's inequality~\cite{Ag65} gives the bound\[
	\|v\|_{L^\infty}\lesssim\|v\|^{1/2}_{\ssp^1}\|v\|^{1/2}_{\ssp^2}.
	\]
	Now we compute
	\begin{align*}
	\|u\|_{L^\infty}&\le \|e^W\|_{L^\infty}\|\Gamma(u^\sharp)\|_{L^\infty}
	\lesssim_\Xi \|u^\sharp\|_{L^\infty}
	\lesssim_\Xi \|u^\sharp\|^{1/2}_{\ssp^1}\|u^\sharp\|^{1/2}_{\ssp^2}\\
	&\lesssim_\Xi \|{H} u\|^{1/2}_{L^2}\|(-{H})^{1/2}u\|^{1/2}_{L^2},
	\end{align*}
	where we have used Propositions \ref{lem:gamma3} and \ref{lem:3dnormequiv} in addition to Agmon's inequality and the straightforward bound $\|u^\sharp\|_{\ssp^1}\lesssim_\Xi\|u^\flat\|_{\ssp^1}.$  The second inequality follows the same argument with the note that the inequality constant is independent of $\varepsilon$.
\end{proof}

\section{Semilinear evolution equations } \label{sec:section3}
To recap, in the previous section we have introduced the operators $H$ and $H_\varepsilon$ (Definitions \ref{def:setconstant} and \ref{def:setconstant3d}) along with their domains $\mathscr{D}(H), \mathscr{D}(H_\varepsilon) = \mathscr{H}^2 $ (Remarks \ref{rem:domainNotationset2d} and \ref{rem:domainNotationset3d}) and energy domains $\ED, \mathscr{D}(\sqrt{H_\varepsilon}) = \mathscr{H}^1$ (Definitions \ref{def:energySp2d} and \ref{def:energySp3d}).  We have also studied their resolvents and norm resolvent convergence of regularized operators (Theorems \ref{thm:normResolventMain} and \ref{thm:normResolventMain3d}).  At the end, we have  obtained certain functional inequalities to be used in the present section.

In this part, we utilize this preceding analysis in the study of some semilinear PDEs, more precisely nonlinear Schr\"odinger and wave-type equations with the linear part given by the 2-d and 3-d Anderson Hamiltonian. As preliminaries, we derive and record some simple results for the corresponding linear equations as well as for certain PDEs with sufficiently nice nonlinearities. 
\subsection{Linear equations and bounded nonlinearities}

In this section, as a transition to the more sophisticated nonlinear cases; we first demonstrate the  solutions to the linear evolutions and PDEs with bounded type nonlinearities. We also obtain the convergence of the solutions to the regularized equations.
		\subsubsection{Abstract Cauchy theory for the linear and bounded nonlinear
			equations}\label{sec:abslinear}
		We want to apply Theorem 3.3.1 from Cazenave~\cite{cazenave2003semilinear}. This proves global well-posedness of
		\begin{eqnarray*}
			i \partial_t u & = & Q u + g (u)\\
			u (0) & = & u_0 \in \mathscr{D} (Q)
		\end{eqnarray*}
		in the strong sense, meaning $u \in C (\mathbb{R}; \mathscr{D} (Q)) \cap C^1
		(\mathbbm{R}; X),$ for a sufficiently nice nonlinearity $g$ and self-adjoint
		$Q.$
		\begin{theorem}
			\label{thm:abscauchy}Consider the abstract Cauchy problem
			\begin{eqnarray}
			\left\{ \begin{array}{l}
			i \partial_t u = Q u + g (u)\\
			u (0) = u_0
			\end{array} \right. &  &  \label{eqn:abscauchyA}
			\end{eqnarray}
			where $Q$ is a self-adjoint operator on a Hilbert space $X.$ Then we have the
			following two results for Schr\"odinger and wave equations  respectively.
			\begin{enumeratenumeric}
				\item  Assume $(Q u, u) \le 0$ for $u \in \mathscr{D} (Q)$ and $g :
				X \rightarrow X$ is Lipschitz on bounded sets as well as $(g (x), i x)_X =
				0$ for all $x \in X$ and $g = G'$ where $G \in C^1 ( \mathscr{D}
				( \sqrt{- Q} ) )$. For the case $Q = H$, $X = L^2 (\mathbbm{T}^d) \ d = 2, 3$ $u_0
				\in \mathscr{D} (H)$and $g (u) \assign K_{\Xi} u + u \varphi' (| u |^2)$
				where $\varphi \in C_b^2$ we get a unique global strong solution of
				(\ref{eqn:abscauchyA})
				\[ u \in C ([0, \infty) ; \mathscr{D} (H)) \cap C^1 ([0, \infty) ; L^2) .
				\]
				We can also relax this slightly if we ask for $u_0 \in \mathscr{D} (
				\sqrt{- H} )$.  We get a unique global energy solution
				\[ u \in C ( [0, \infty) ; \mathscr{D} ( \sqrt{- H} )
				) \cap C^1 ( [0, \infty) ; \mathscr{D}^{\ast} ( \sqrt{-
					H} ) ) . \]
				In both cases conservation of mass and energy holds for all times.
				
				\item For the wave equation, with $d=2,3,$ we set 
				\[ Q = i \left(\begin{array}{cc}
				0 & \mathbbm{I}\\
				{H} & 0
				\end{array}\right), \quad \mathscr{D} (Q) =\mathscr{D} ({H}) \oplus
				\mathscr{D} ( \sqrt{- {H}} )\] \[X = (L^2
				(\mathbbm{T}^d))^2,\quad  g (u) = \left(\begin{array}{c}
				0\\
				- K_{\Xi} u
				\end{array}\right) . \]
				Then the abstract linear wave equation
				\begin{eqnarray*}
					i \frac{\mathd}{\tmop{dt}} \left(\begin{array}{c}
						u\\
						\partial_t u
					\end{array}\right) & = & i \left(\begin{array}{cc}
					0 & \mathbbm{I}\\
					- {H} & 0
				\end{array}\right) \left(\begin{array}{c}
				u\\
				\partial_t u
			\end{array}\right) + \left(\begin{array}{c}
			0\\
			- K_{\Xi} u
		\end{array}\right)\\
		\left(\begin{array}{c}
			u\\
			\partial_t u
		\end{array}\right)_{t = 0} & = & \left(\begin{array}{c}
		u_0\\
		u_1
	\end{array}\right)
\end{eqnarray*}
has a unique global strong solution $(u, \partial_t u) \in C ([0, \infty)
; \mathscr{D} (A)) \times C^1 ([0, \infty) ; L^2)$ i.e. $u \in C ([0, \infty)
; \mathscr{D} (H)) \cap C^1 ( [0, \infty) ; \mathscr{D} (
\sqrt{- {H}} ) ) \cap \CC^2 ([0, \infty) ; L^2)$ and
energy conservation holds. 
\end{enumeratenumeric}
\end{theorem}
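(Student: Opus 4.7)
The strategy is to apply Cazenave's abstract existence theorem (Theorem 3.3.1 of \cite{cazenave2003semilinear}) in each case, so the whole proof reduces to verifying its hypotheses: (i) self-adjointness and sign of the linear part, (ii) Lipschitz regularity of the nonlinearity at the appropriate level, (iii) gauge invariance, and (iv) the gradient structure ensuring the conservation law. All four ingredients have essentially been prepared in Section~2.

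For part~1 (Schrödinger), self-adjointness of $H$ is Lemma~\ref{thm:2dselfadj} (and Theorem~\ref{thm:3dselfadj} in $d=3$), while the non-positivity $\langle Hu,u\rangle\le 0$ on $\mathscr{D}(H)$ follows from Proposition~\ref{lem:2dh1bound}/Proposition~\ref{lem:3dh1} together with Definition~\ref{def:setconstant}/\ref{def:setconstant3d} (the constant $K_\Xi$ is, if necessary, further enlarged to absorb $C_\Xi$). For $g(u)=K_\Xi u+u\varphi'(|u|^2)$ with $\varphi\in C_b^2$, boundedness of $\varphi'$ and Lipschitzness of $\varphi''$ yield the bound
\begin{equation*}
\|g(u)-g(v)\|_{L^2}\lesssim (K_\Xi+\|\varphi'\|_\infty)\|u-v\|_{L^2}+\|\varphi''\|_\infty(\|u\|_{L^\infty}+\|v\|_{L^\infty})\|u-v\|_{L^2},
\end{equation*}
so $g:L^2\to L^2$ is Lipschitz on bounded sets of $L^\infty$; for the strong theory we use that $\mathscr{D}(H)\hookrightarrow L^\infty$ by Lemma~\ref{lem:embedinfty} (in $d=2$) and by Agmon's inequality Lemma~\ref{lem:3dagmon} composed with the $\mathscr{H}^2$ embedding (in $d=3$). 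Gauge invariance $\mathrm{Re}\langle g(u),iu\rangle=0$ is a direct pointwise computation since $\varphi'(|u|^2)$ is real, and the primitive is
\begin{equation*}
G(u):=\tfrac{K_\Xi}{2}\|u\|_{L^2}^2+\tfrac{1}{2}\int_{\mathbb{T}^d}\varphi(|u|^2)\,dx,
\end{equation*}
which lies in $C^1(\ED;\mathbb{R})$ thanks to the $L^p$-embedding Lemma~\ref{lem:2dlpbounds} (the analogous estimate works in $d=3$ using Proposition~\ref{lem:3dnormequiv}). Thus the hypotheses of Cazenave's theorem are met at the $L^2$-level with domain $\mathscr{D}(H)$, yielding the strong solution. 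Mass conservation follows from the gauge invariance of $g$ and self-adjointness of $H$, energy conservation from the fact that $G$ is a first integral along the Hamiltonian flow.

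For the relaxation to energy data $u_0\in\ED$, one applies the same theorem after reinterpreting the equation on the triple $\ED\subset L^2\subset\ED^*$: here $H$ extends to a bounded operator $\ED\to\ED^*$ (by Proposition~\ref{lem:formdom}/\ref{lem:3dnormequiv}), and the nonlinearity, by Lemma~\ref{lem:2dlpbounds} and Hölder, is locally Lipschitz $\ED\to\ED^*$, so the same machinery produces $u\in C([0,\infty);\ED)\cap C^1([0,\infty);\ED^*)$ with conserved mass and energy.

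For part~2 (wave equation), I would first verify that the block operator $Q$ with domain $\mathscr{D}(H)\oplus\ED$ is self-adjoint on the energy Hilbert space $X_E:=\ED\oplus L^2$ equipped with the inner product $\langle(u,v),(\tilde u,\tilde v)\rangle_{X_E}:=\langle\sqrt{-H}u,\sqrt{-H}\tilde u\rangle_{L^2}+\langle v,\tilde v\rangle_{L^2}$; this is a standard consequence of self-adjointness of $-H\ge 0$ and spectral calculus. The nonlinearity $g(u,v)=(0,-K_\Xi u)$ is a bounded linear perturbation on $X_E$, hence globally Lipschitz, so Cazenave's theorem applies and produces the unique strong solution with the claimed regularity. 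The shift $-K_\Xi u$ is simply the reverse of the constant we added to obtain positivity, so this last piece just undoes the shift. The main potential obstacle is the verification that $Q$ is genuinely self-adjoint (and not merely symmetric) on the product domain, but this is immediate from Stone's theorem once one notes that $e^{-itQ}$ is explicitly given in terms of the spectral calculus of $-H$ (the standard $\cos(t\sqrt{-H})\oplus\sin(t\sqrt{-H})$ decomposition), which is available because the functional calculus of $H$ was set up via self-adjointness in Section~2.
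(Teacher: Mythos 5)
Your overall strategy is the paper's: both statements are reduced to the abstract result of Cazenave \cite{cazenave2003semilinear} for the Schr\"odinger part, while the wave part is standard linear semigroup theory (the paper simply cites \cite[Ch.\ X.13]{reedsimon2}; your Stone's-theorem sketch on the energy space $\ED\oplus L^2$ with the bounded perturbation $(0,-K_\Xi u)$ is essentially that argument and is fine). The genuine gap is in your verification of the Lipschitz hypothesis in part 1. The hypothesis being invoked is that $g:X\to X$ is Lipschitz on bounded sets of $X=L^2$: the abstract theorem's local construction runs in $C([0,T];L^2)$ and its globalization uses precisely the $L^2$ a priori bound supplied by mass conservation. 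You only establish an estimate of the form $\|g(u)-g(v)\|_{L^2}\lesssim\bigl(1+\|u\|_{L^\infty}+\|v\|_{L^\infty}\bigr)\|u-v\|_{L^2}$, i.e.\ Lipschitz continuity on $L^\infty$-bounded sets, and then appeal to $\mathscr{D}(H)\hookrightarrow L^\infty$ (Lemma \ref{lem:embedinfty}, Lemma \ref{lem:3dagmon}). That does not verify the stated hypothesis: the $L^\infty$ norm is not a quantity the abstract theorem propagates or controls, so the citation cannot be applied as is; needing $L^\infty$ control of the solution is exactly what forces the bespoke fixed-point arguments with Brezis--Gallouet/Agmon input that the paper develops only later for the genuinely cubic equation (Theorem \ref{thm:2dnlsfix}, Remark \ref{rem:nls3dagmon}). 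The whole point of assuming $\varphi\in C_b^2$ here is that no $L^\infty$ information is required: the paper checks the Lipschitz property directly at the $L^2$ level, bounding $\|g(u)-g(v)\|_{L^2}$ by $\bigl(K_\Xi+\|\varphi'\|_\infty+\|v\|_{L^2}\|\varphi''\|_\infty\bigr)\|u-v\|_{L^2}$, and likewise obtains $G\in C^1$ on the form domain from $|G(u)-G(v)-G'(v)(u-v)|\le(K_\Xi+\|\varphi\|_{C_b^2})\|u-v\|_{L^2}^2$, so your use of the $L^p$ estimates of Lemma \ref{lem:2dlpbounds} is not needed there either.

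The same issue then undermines your treatment of the energy-space statement. For $u_0\in\ED$ there is no $L^\infty$ embedding at all, and your substitute --- rewriting the equation on the triple $\ED\subset L^2\subset\ED^{\ast}$ with a nonlinearity that is ``locally Lipschitz $\ED\to\ED^{\ast}$'' --- is not the hypothesis of the theorem you cite and is left unproved; moreover, local Lipschitzness into the dual space by itself does not deliver uniqueness or global solutions (compare the paper's genuine energy-space result for the cubic equation, Theorem \ref{thm:2dnlsenergy}, where only existence, via compactness, is obtained). In the paper, the energy-space conclusion for bounded $\varphi$ follows from the very same $L^2$-level hypotheses (Lipschitz on $L^2$-bounded sets, gauge invariance, $G\in C^1(\ED)$), so once the $L^2$ Lipschitz check is done there is nothing additional to verify. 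To repair your proof, replace the $L^\infty$-based Lipschitz estimate by the $L^2$-level one exploiting the boundedness of $\varphi'$ and $\varphi''$, and drop the Gelfand-triple reinterpretation.
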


\begin{proof}
	\begin{enumeratenumeric}
		\item The properties of the Hamiltonian have already been verified, it
		remains to check that the nonlinearity $g$ satisfies all the conditions. We claim that
		\[ g (v) = G' (v)\  \tmop{with} \  G (v) = \int \frac{K_{\Xi}}{2} | v |^2 +
		\frac{1}{2} \int \varphi (| v |^2) \in C^1 ( \mathscr{D} (
		\sqrt{{H}} ) ; \mathbbm{R} ) . \]
		Moreover $g : L^2 \rightarrow L^2$ is locally Lipschitz and $\langle g (u), i u\rangle
		= 0$ for $u \in L^2 .$
		
		By construction we have
		\[ \langle g (u), i u\rangle = \tmop{Re} i \int K_{\Xi} | u |^2 + | u |^2 \varphi' (|
		u |^2) = 0. \]
		Next we show the differentiability of $G.$ Let $u, v \in \mathscr{D}( \sqrt{{H}}),$ then
		\begin{align*}
			& G (u) - G (v) - G' (v) (u - v) \\ & =  \int \frac{K_{\Xi}}{2} | u |^2 +
			\frac{1}{2} \int \varphi (| u |^2) - \int \frac{K_{\Xi}}{2} | v |^2 -
			\frac{1}{2} \int \varphi (| v |^2) - (g (v), u - v)\\
			& =  \int \frac{K_{\Xi}}{2} | u - v |^2 + \frac{1}{2} \int f (u) - f
			(v) - f' (v) (\overline{u - v})\\
			| \ldots | & \le  (K_{\Xi} + \| \varphi \|_{C_b^2}) \| u - v
			\|^2_{L^2}\\
			& \le  (K_{\Xi} + \| \varphi \|_{C_b^2}) \| u - v
			\|^2_{\mathscr{D} ( \sqrt{{H}} )},
		\end{align*}
		with $f (u) \assign \varphi (| u |^2) .$ This proves the
		differentiability. Lastly we prove the $L^2 $ local Lipschitz property of
		$g.$ Fix $v \in L^2$ and $u \in B_M (v),$ for some $M > 0.$ Then
		\begin{eqnarray*}
			\| g (u) - g (v) \|_{L^2} & \le & K_{\Xi} \| u - v \|_{L^2} + \| u
			\varphi' (| u |^2) - v \varphi' (| v |^2) \|_{L^2}\\
			& \le & K_{\Xi} \| u - v \|_{L^2} + \| \varphi' \|_{\infty} \| u
			- v \|_{L^2} + \| v \|_{L^2} \| \varphi' (| u |^2) - \varphi' (| v |^2)
			\|_{L^2}\\
			& \le & K_{\Xi} \| u - v \|_{L^2} + \| \varphi' \|_{\infty} \| u
			- v \|_{L^2} + \| v \|_{L^2} \| \varphi'' \|_{\infty} \| u - v \|_{L^2}
		\end{eqnarray*}
		hence $g$ is locally Lipschitz as a map from $L^2$ to $L^2 .$
		
		\item  See  \cite[Chapter X.13]{reedsimon2}.
	\end{enumeratenumeric}
\end{proof}
\subsubsection{The linear multiplicative
	Schr\"odinger equation}\label{sec:linschr}
In this part, we demonstrate the solution to the linear Schr\"odinger equation  
\begin{equation}
i \partial_t u = H u \quad \tmop{on}  \ \mathbbm{T}^d,
\end{equation}
with domain initial data.
A simple but  important observation is that the Schr{\"o}dinger
equation conserves the $L^2$ norm. Also observe that $\partial_t u$ formally
satisfies
\[ i \partial_t \partial_t u = H (\partial_t u), \]
so it solves the same equation and in particular we
have that $\| \partial_t u (t) \|_{L^2}$ is conserved and that
\[ \| H u \|_{L^2} = \| \partial_t u (t) \|_{L^2} = \| \partial_t u (0)
\|_{L^2} = \| H u (0) \|_{L^2}, \]
which we will assume to be finite. Naturally, this gives the condition that
the  initial data have to satisfy.   Therefore, we will assume $u_0 \in \mathscr{D}(H)$  which implies  $\| H u_0
\|_{L^2} < \infty$,  by Theorem \ref{lem:h2bound} . 
To make this precise, we write
\begin{align*}
u (t)  &=  e^{{-i t H}} u_0\\
\frac{d}{d t} u (t)  &= -i e^{{-i t H}} H u_0\\
\left\| \frac{d}{d t} u (t) \right\|_{L^{^2}}  &=  \| H u_0 \|_{L^{^2}}
=  \| H u (t) \|_{L^{^2}} .
\end{align*}
So $\| \partial_t u (t) \|_{L^2}$ is conserved for for the \tmtextit{abstract}
solution $u$ as defined in Section \ref{sec:abslinear}. For the regularized
equation, the unique solution is given by
\[ u_{\varepsilon} (t) = e^{-i t H_{\varepsilon}} u^{\varepsilon}_0 \in \ssp^2, \]
where $u^{\varepsilon}_0 \in \ssp^2$ is the regularized initial datum. If we
choose the regularization
\begin{align*}
u^{\varepsilon}_0  \assign &  H_{\varepsilon}^{- 1}  H u_0 \in
\ssp^2, \end{align*}
then we readily have,
$
H_{\varepsilon} u^{\varepsilon}_0  =  H u_0
$
and we get $u^{\varepsilon}_0 \rightarrow u_0$ in $L^2$,  by norm resolvent convergence, namely Theorem \ref{thm:normResolventMain}.
By  \cite[Theorem VIII.21]{reedsimon1}, $e^{-i t H_{\varepsilon}} \rightarrow
e^{-i t H}$ strongly for any time $t$, which implies
\begin{align*}
e^{-i t H_{\varepsilon}} u^{\varepsilon}_0  \rightarrow & e^{-i t H} u_0,\\
\tmop{and} & \\
e^{-i t H_{\varepsilon}} H_{\varepsilon} u^{\varepsilon}_0  \rightarrow &
e^{-i t H} H u,
\end{align*}
in $L^2$ for any $t \in \mathbbm{R}.$

We summarize these results in the following theorem

\begin{theorem}
	Let $T > 0,$ $u_0 \in \mathscr{D} (H)$. Then there exists a unique solution
	$u \in \nobracket C ([0, T] ; \mathscr{D}(H)) \cap C^1 ([0,
	T] ; L^2) $ to the equation
	\[ \left\{ \begin{array}{l}
	i \partial_t u = H u \quad\\
	u (0, \cdummy) = u_0
	\end{array} \right. \ \tmop{on} \ [0, T] \times \mathbbm{T}^d . \]
	Moreover, this agrees with the $L^2 -$limit of the solutions
	$u_{\varepsilon} \in C ([0, T] ; \ssp^2) \cap C^1 ([0, T] ; L^2) $ to
	\[ \left\{ \begin{array}{l}
	i \partial_t u_{\varepsilon} = H_{\varepsilon} u_{\varepsilon} \quad\\
	u (0, \cdummy) = u^{\varepsilon}_0
	\end{array} \right. \ \tmop{on} \ [0, T] \times \mathbbm{T}^d, \]
	with the regularized data given as
	\[ u^{\varepsilon}_0 \assign  H_{\varepsilon}^{- 1}  H u_0 \in
	\ssp^2 . \]
	One also obtains the convergence of $\partial_t u_\varepsilon$ and $H_\varepsilon u_\varepsilon$ to $\partial_t u$ and $H_\varepsilon u$ in $L^2$.
\end{theorem}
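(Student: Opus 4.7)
The plan is to deduce both statements directly from the self-adjointness and norm resolvent convergence results already established in Section \ref{sec:section2}. First, I would invoke Stone's theorem applied to the self-adjoint operator $H$ (Lemma \ref{thm:2dselfadj} in $d=2$, Theorem \ref{thm:3dselfadj} in $d=3$): this produces a strongly continuous unitary group $\{e^{-itH}\}_{t\in\mathbb{R}}$ on $L^2$, and the map $u(t) := e^{-itH}u_0$ is the unique strong solution lying in $C([0,T];\mathscr{D}(H))\cap C^1([0,T];L^2)$ whenever $u_0 \in \mathscr{D}(H)$. Uniqueness in this class is immediate: if $v$ is another strong solution, then $t\mapsto\|v(t)-u(t)\|_{L^2}^2$ is differentiable with derivative $2\mathrm{Re}\langle \partial_t(v-u),v-u\rangle=-2\mathrm{Im}\langle H(v-u),v-u\rangle=0$ by symmetry of $H$, so it vanishes identically.

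For convergence of the regularized solutions, the key input is the norm resolvent convergence $H_\varepsilon \to H$ from Theorems \ref{thm:normResolventMain} and \ref{thm:normResolventMain3d}. By the standard consequence quoted in the corollary following Theorem \ref{thm:normResolventMain} (see also \cite{reedsimon1}, Theorem VIII.21), norm resolvent convergence implies strong convergence $e^{-itH_\varepsilon}f \to e^{-itH}f$ in $L^2$ for every fixed $f\in L^2$, uniformly for $t$ in compact intervals. The particular choice of regularized data $u_0^\varepsilon := H_\varepsilon^{-1}Hu_0$ has two features I would exploit: identically in $\varepsilon$ one has $H_\varepsilon u_0^\varepsilon = Hu_0$, while norm resolvent convergence applied to the vector $Hu_0\in L^2$ yields $u_0^\varepsilon\to H^{-1}(Hu_0)=u_0$ in $L^2$. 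Combining these, the triangle inequality
\begin{equation*}
\|e^{-itH_\varepsilon}u_0^\varepsilon - e^{-itH}u_0\|_{L^2} \le \|u_0^\varepsilon - u_0\|_{L^2} + \|e^{-itH_\varepsilon}u_0 - e^{-itH}u_0\|_{L^2}
\end{equation*}
shows $u_\varepsilon(t)\to u(t)$ in $L^2$, the first term vanishing by resolvent convergence and the second by strong convergence of the unitary groups. For the derivative and the Hamiltonian convergence, the trick is to write $\partial_t u_\varepsilon(t)=-iH_\varepsilon u_\varepsilon(t)=-ie^{-itH_\varepsilon}(H_\varepsilon u_0^\varepsilon)=-ie^{-itH_\varepsilon}(Hu_0)$; the strong convergence of $e^{-itH_\varepsilon}$ applied to the \emph{fixed} vector $Hu_0\in L^2$ then yields $\partial_t u_\varepsilon(t)\to -ie^{-itH}Hu_0=-iHu(t)=\partial_t u(t)$ in $L^2$, and identically for $H_\varepsilon u_\varepsilon(t)$.

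There is no serious analytic obstacle here once norm resolvent convergence is in hand; the only delicate point is the regularization of the initial datum. Since $\mathscr{D}(H)$ is not contained in $\mathscr{H}^2=\mathscr{D}(H_\varepsilon)$ (as emphasized in the introduction), a naive mollification of $u_0$ does not give usable initial conditions. Defining $u_0^\varepsilon$ through the resolvents forces the images $H_\varepsilon u_0^\varepsilon$ to coincide with the fixed element $Hu_0\in L^2$ for every $\varepsilon$, which is precisely what lets the strong convergence of the unitary groups be transported to $\partial_t u_\varepsilon$ and $H_\varepsilon u_\varepsilon$. This is the structural point of the proof, and the same regularization strategy should reappear in the nonlinear sections below.
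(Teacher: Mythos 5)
Your proposal is correct and follows essentially the same route as the paper: Stone's theorem (via the abstract Cauchy theory and self-adjointness of $H$) gives the unique strong solution $u(t)=e^{-itH}u_0$, and the paper likewise uses the choice $u_0^\varepsilon=H_\varepsilon^{-1}Hu_0$ so that $H_\varepsilon u_0^\varepsilon=Hu_0$ together with norm resolvent convergence and \cite[Theorem VIII.21]{reedsimon1} to transfer strong convergence of the unitary groups to $u_\varepsilon$, $\partial_t u_\varepsilon$ and $H_\varepsilon u_\varepsilon$. The only cosmetic difference is your explicit energy-identity uniqueness argument, which the paper subsumes in the abstract semigroup theory.
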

\begin{remark}\upshape\upshape
	One could even get global wellposedness for the equation with initial data in $\mathscr{D}(\sqrt{-H})$ or in $L^2.$ Moreover one could have a bounded nonlinearity as in section \ref{sec:abslinear}.
\end{remark}
\subsubsection{The linear multiplicative wave equation}

Similarly to the Schr{\"o}dinger case, now we consider  the linear wave equation
\[ \partial^2_t u = H u \]
with initial data $(u_0, u_1) \in \mathscr{D} (H) \times
\mathscr{D} ( \sqrt{- H} ) .$
For the regularized equation
\begin{align*}
\partial^2_t u_{\varepsilon} & =  H_{\varepsilon} u_{\varepsilon}\\
(u_{\varepsilon}, \partial_t u_{\varepsilon}) |_{t = 0} & = 
(u^{\varepsilon}_0, u^{\varepsilon}_1) \in \ssp^2 \times \ssp^1
\end{align*}
a solution is given by
\[ \left(\begin{array}{c}
u_{\varepsilon}\\
\partial_t u_{\varepsilon}
\end{array}\right) = e^{i t Q_{\varepsilon}} \left(\begin{array}{c}
u^{\varepsilon}_0\\
u_1^{\varepsilon}
\end{array}\right),\qquad\text{where}\qquad Q_{\varepsilon} = i \left(\begin{array}{cc}
0 & \mathbbm{I}\\
- H_{\varepsilon} & 0
\end{array}\right).
 \] We again choose the same approximation for $u_0$ as in
the Schr{\"o}dinger case
\begin{align*}
u^{\varepsilon}_0 & \assign (- H_{\varepsilon})^{- 1}(-H) u_0 \in
\ssp^2 \\
u^{\varepsilon}_1 & \assign (\sqrt{- H_{\varepsilon}})^{- 1}\sqrt{-H} u_1 \in
\ssp^1
\end{align*}
for some initial datum $u_0 \in \mathscr{D} (H) .$ Then we again have
\begin{eqnarray*}
	u^{\varepsilon}_0 & \rightarrow & u_0 \ \tmop{in} \ L^2\\
	H_{\varepsilon} u^{\varepsilon}_0 & \rightarrow & H u_0 \ \tmop{in} \ L^2.
\end{eqnarray*}
For the initial velocity, we also have 
\[ u^{\varepsilon}_1 \rightarrow u_1  \ \tmop{in} \  L^2 . \]
Then for any time $t$ we get as in the Schr{\"o}dinger case
\[ \left(\begin{array}{c}
u_{\varepsilon} (t)\\
\partial_t u_{\varepsilon} (t)
\end{array}\right) = e^{i \tmop{tQ}_{\varepsilon}} \left(\begin{array}{c}
u^{\varepsilon}_0\\
u^{\varepsilon}_1
\end{array}\right) \rightarrow e^{i t Q} \left(\begin{array}{c}
u_0\\
u_1
\end{array}\right) \ \tmop{in} \  L^2 \]
and
\[ \frac{\mathd}{\tmop{dt}} \left(\begin{array}{c}
u_{\varepsilon} (t)\\
\partial_t u_{\varepsilon} (t)
\end{array}\right) = i t e^{i \tmop{tQ}_{\varepsilon}} Q_{\varepsilon}
\left(\begin{array}{c}
u^{\varepsilon}_0\\
u^{\varepsilon}_1
\end{array}\right) \rightarrow i t e^{i t Q} Q \left(\begin{array}{c}
u_0\\
u_1
\end{array}\right) \  \tmop{in} \  L^2 . \]
Moreover, we have that the convergence of the energies, namely
\begin{align*} E_{\varepsilon} (t) & \assign \left\langle \left(\begin{array}{c}
u_{\varepsilon} (t)\\
\partial_t u_{\varepsilon} (t)
\end{array}\right), \left( \begin{array}{cc}
- H_{\varepsilon} & 0\\
0 & \mathbbm{I}
\end{array} \right) \left(\begin{array}{c}
u_{\varepsilon} (t)\\
\partial_t u_{\varepsilon} (t)
\end{array}\right) \right\rangle \\
&  \rightarrow \left\langle
\left(\begin{array}{c}
u (t)\\
\partial_t u (t)
\end{array}\right), \left( \begin{array}{cc}
- H & 0\\
0 & \mathbbm{I}
\end{array} \right) \left(\begin{array}{c}
u (t)\\
\partial_t u_{} (t)
\end{array}\right) \right\rangle = E (t) \end{align*}
for any time $t$ and thus in particular the energy conservation passes to the
limit. We record these observations in the following theorem.

\begin{theorem}
	Let $T > 0$ and  $(u_0, u_1) \in \mathscr{D} (H) \times \mathscr{D} (
	\sqrt{- {H}} )$. Then there exists a unique solution $(u,
	\partial_t u) \in C ( [0, T] ; \mathscr{D} (H) \times \mathscr{D}
	( \sqrt{- {H}} ) ) \cap C^1 ( [0, T] ;
	\mathscr{D} ( \sqrt{-{H}} ) \times L^2 )$ to the
	equation
	\begin{eqnarray*}
		\partial^2_t u & = & H u \ \text{in} \  (0, T) \times \mathbbm{T}^d\\
		(u, \partial_t u) |_{t = 0} & = & (u_0, u_1)
	\end{eqnarray*}
	moreover it is equal to the $L^{\infty} ((0, T) ; L^2 (\mathbbm{T}^d) \times
	L^2 (\mathbbm{T}^d))$ limit of the approximate solutions $(u_{\varepsilon},
	\partial_t u_{\varepsilon}) $ to \
	\begin{eqnarray*}
		\partial^2_t u_{\varepsilon} & = & H_{\varepsilon} u_{\varepsilon}
		\ \tmop{in} \  (0, T) \times \mathbbm{T}^d\\
		(u_{\varepsilon}, \partial_t u_{\varepsilon}) |_{t = 0} & = &
		(u^{\varepsilon}_0, u^{\varepsilon}_1),
	\end{eqnarray*}
and  moreover we have the following convergences		
\begin{align*}
u_{\varepsilon}(t)&\to u(t) \text{ in }L^2\\
H_\varepsilon u_{\varepsilon}(t) &\to H u(t) \text{ in }L^2\\
(-H_\varepsilon)^{1/2}\partial_tu_{\varepsilon}(t)&\to (-H)^{1/2}\partial_tu(t) \text{ in }L^2\\
\partial_t^2u_{\varepsilon}(t)&\to \partial^2_tu(t) \text{ in }L^2
\end{align*} 
	with $(u^{\varepsilon}_0, u^{\varepsilon}_1)$ as above. Also, the
	energies converge and are conserved in time.
\end{theorem}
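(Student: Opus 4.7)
The plan is to invoke the abstract Cauchy theory from Theorem \ref{thm:abscauchy}(2), which yields existence, uniqueness and energy conservation for the strong solution directly once self-adjointness of the Hamiltonian is available. Since self-adjointness of $H$ was established in Lemma \ref{thm:2dselfadj} and Theorem \ref{thm:3dselfadj}, the block operator $iQ$ on $X := \mathscr{D}(\sqrt{-H}) \oplus L^2$ is skew-adjoint and generates a unitary group. I would then record the solution via the D'Alembert-type representation
\[
u(t) = \cos\bigl(t\sqrt{-H}\bigr)u_0 + \frac{\sin\bigl(t\sqrt{-H}\bigr)}{\sqrt{-H}}\, u_1,
\]
and analogously for the approximation with $H_\varepsilon$ in place of $H$, which exhibits each component of the solution as a bounded continuous function of the Hamiltonian applied to the initial data.

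To handle the convergence of the approximations, I would follow the choice made in the Schr\"odinger case of the preceding subsection and set
\[
u_0^\varepsilon := (-H_\varepsilon)^{-1}(-H)u_0 \in \mathscr{H}^2, \qquad u_1^\varepsilon := (-H_\varepsilon)^{-1/2}(-H)^{1/2}u_1 \in \mathscr{H}^1,
\]
where the fractional powers are defined via the Borel functional calculus for the (shifted) positive self-adjoint operators $-H_\varepsilon$ and $-H$. By construction $H_\varepsilon u_0^\varepsilon = Hu_0$ and $\sqrt{-H_\varepsilon}\,u_1^\varepsilon = \sqrt{-H}\,u_1$, so norm resolvent convergence (Theorems \ref{thm:normResolventMain} and \ref{thm:normResolventMain3d}) immediately gives $u_0^\varepsilon \to u_0$ and $u_1^\varepsilon \to u_1$ in $L^2$.

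For the four time-dependent convergences, the key tool is the corollary of norm resolvent convergence recorded after Theorem \ref{thm:normResolventMain}: for every bounded continuous $f$ on the common spectrum, $f(H_\varepsilon) \to f(H)$ strongly on $L^2$. Applying this with $f_t(\lambda) = \cos(t\sqrt{-\lambda})$ and $g_t(\lambda) = \sin(t\sqrt{-\lambda})/\sqrt{-\lambda}$ transfers each piece of the D'Alembert representation to its limit; combined with the identities $H_\varepsilon u_0^\varepsilon = Hu_0$ and $\sqrt{-H_\varepsilon}u_1^\varepsilon = \sqrt{-H}u_1$, this delivers the four stated convergences $u_\varepsilon(t) \to u(t)$, $H_\varepsilon u_\varepsilon(t) \to Hu(t)$, $\sqrt{-H_\varepsilon}\partial_t u_\varepsilon(t) \to \sqrt{-H}\partial_t u(t)$ and $\partial_t^2 u_\varepsilon(t) \to \partial_t^2 u(t)$ in $L^2$, uniformly on $[0,T]$ by the standard Trotter--Kato upgrade of strong semigroup convergence.

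Energy conservation is automatic from the unitarity of $e^{itQ_\varepsilon}$ for each $\varepsilon$, and the identity $E_\varepsilon(t) = E_\varepsilon(0)$ passes to the limit $u$ thanks to the $L^2$ convergences of $u_\varepsilon$ and $\sqrt{-H_\varepsilon}\partial_t u_\varepsilon$ just obtained. The main obstacle will be checking that the Borel functional calculus interacts cleanly with the norm resolvent convergence, namely that the strong convergence $f(H_\varepsilon)\to f(H)$ is stable under the unbounded multiplications $H_\varepsilon u_0^\varepsilon$ and $\sqrt{-H_\varepsilon}u_1^\varepsilon$; this subtlety disappears once both Hamiltonians are shifted to be positive (Definitions \ref{def:setconstant} and \ref{def:setconstant3d}), since then their spectra share a common half-line $[0,\infty)$ and the key intertwining identities $H_\varepsilon u_0^\varepsilon = Hu_0$ and $\sqrt{-H_\varepsilon}u_1^\varepsilon = \sqrt{-H}u_1$ reduce every unbounded quantity to a bounded continuous function of the Hamiltonian applied to a fixed $L^2$ vector.
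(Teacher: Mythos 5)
Your proposal is correct and follows essentially the same route as the paper: existence, uniqueness and energy conservation come from the abstract Cauchy theory (Theorem \ref{thm:abscauchy}) via self-adjointness, the regularized data are chosen exactly as $u_0^\varepsilon=(-H_\varepsilon)^{-1}(-H)u_0$, $u_1^\varepsilon=(-H_\varepsilon)^{-1/2}(-H)^{1/2}u_1$, and all convergences are obtained from norm resolvent convergence together with strong convergence of bounded continuous functions of $H_\varepsilon$ applied to the fixed vectors $Hu_0$ and $\sqrt{-H}\,u_1$. The only difference is presentational: you phrase the propagator through the scalar $\cos(t\sqrt{-H})$ and $\sin(t\sqrt{-H})/\sqrt{-H}$ functional calculus, while the paper uses the block group $e^{itQ_\varepsilon}$, which amounts to the same argument.
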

\begin{proof}
	The computations above prove that the $L^2 $ limit of the solutions we
	obtain is equal to the solution of the abstract Cauchy problem in Theorem
	\ref{thm:abscauchy} for all times. Hence, the two are equal.{\tmem{}}
\end{proof}

\subsection{Nonlinear Schr\"odinger equations in two dimensions}
In this section,  we are interested in solving the following defocussing cubic Schr\"odinger-type equation
\begin{align}
i \partial_t u=H u-u|u|^2\label{eqn:2dnls},
\end{align}
with domain and energy space data.

  Recall that, for the operator $H$ we have
\[
\langle u,-H u\rangle\ge 0 \text{ for all }u\in\mathscr{D}(H).
\]
We consider the mild formulation of \eqref{eqn:2dnls}
\begin{align}
u(t)=e^{-i t H}u_0+i \int_{0}^{t}e^{i(s-t)H}u(s)|u(s)|^2\mathrm{d}s\label{eqn:mildnls}
\end{align}
Furthermore, we introduce the energy for $u$ as
\begin{align}
E(u)(t):=-\frac{1}{2}\langle u,H u\rangle+\frac{1}{4}\int|u|^4.
\end{align} 
Using the equation one sees that the energy is formally conserved in time.
\subsubsection{Solution with initial conditions in $\mathscr{D}(H)$} \label{subsection:5.4}
In this section, we  assume  $u_0 \in \mathscr{D} (H)$. This is similar in spirit to the global strong  well-posedness of the classical cubic NLS with initial data in $\ssp^2,$ which was solved in \cite{brga80}. We obtain global in time strong solutions in our setting, which is the best one can hope for in view of the classical result.
We regularize the initial data in the following way \[u^{\varepsilon}_0 =
(-H_{\varepsilon})^{- 1} (-H) u_0 \in  \mathscr{D}  (H_{\varepsilon})\]
 so that by
the norm resolvent convergence of $H_{\varepsilon}$ to $H$
(by Theorem \ref{thm:normResolventMain}) we have
\begin{align*}
\lim\limits_{\varepsilon\to 0}  u_0^{\varepsilon} = u_0  \in L^2 \\ H_{\varepsilon} u_0^{\varepsilon} = H u_0\in L^2.
\end{align*} 
Note that $ \mathscr{D}  (H_{\varepsilon}) = \ssp^2$ so there exists global solutions
$u_{\varepsilon} \in C ([0, T],  \mathscr{D}  (H_{\varepsilon})) \cap C^1 ([0, T], L^2).$
This is an immediate consequence of the following result, which shows that for the operators $H_\varepsilon$ and $H$, we obtain global in time strong solutions of the associated cubic NLS on $\mathbb{T}^2$. 

\begin{theorem}\label{thm:2dnlsfix}
	For an arbitrary time $T>0,$ there exist unique solutions $u_\varepsilon\in C([0,T];\ssp^2)\cap C^1([0,T];L^2)$ and $u\in C([0,T];\mathscr{D}(H))\cap C^1([0,T];L^2)$ to
	\begin{align}
	u_\varepsilon(t) = e^{- i t H_\varepsilon} u^\varepsilon_0 + i\int_{0}^{t} e^{- i s H_\varepsilon} u_\varepsilon | u_\varepsilon
	|^2 (t - s) d s, \end{align}
	and
	\begin{align}
	u (t) = e^{- i t H} u_0 + i\int_{0}^{t} e^{- i s H} u | u
	|^2 (t - s) d s \end{align}
	respectively, with  initial data  $u^\varepsilon_0\in \ssp^2$ and $u_0\in \mathscr{D}(H).$
\end{theorem}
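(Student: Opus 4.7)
The plan is to combine the standard semilinear Schrödinger fixed-point scheme with the abstract Cauchy theory of Theorem~\ref{thm:abscauchy} and the a priori control afforded by the Brezis--Gallouet-type estimate of Theorem~\ref{lem:brgaineq}. The two parts of the statement (for $u_\varepsilon$ and for $u$) are proved along parallel lines, and uniqueness follows from a direct $L^2$ energy argument.

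\textbf{The regularized equation.} For fixed $\varepsilon>0$, the operator $H_\varepsilon$ has smooth potential, so $\mathscr{D}(H_\varepsilon)=\ssp^2$ and the classical theory for NLS with a bounded potential on $\mathbb{T}^2$ applies. First I would set up a contraction mapping for the Duhamel formulation in $C([0,T_0];\ssp^2)$, using that $\ssp^2\hookrightarrow L^\infty$ on $\mathbb{T}^2$ makes the cubic nonlinearity Lipschitz on bounded sets; this yields local well-posedness. To extend globally, I would combine mass conservation, conservation of the energy $E_\varepsilon(u_\varepsilon)=-\tfrac{1}{2}\langle u_\varepsilon,H_\varepsilon u_\varepsilon\rangle+\tfrac{1}{4}\|u_\varepsilon\|_{L^4}^4$ (which bounds $\|u_\varepsilon\|_{\ssp^1}$ uniformly in time via the form-domain characterization), and a time-differentiated version of the equation: setting $v_\varepsilon:=\partial_t u_\varepsilon$, self-adjointness of $H_\varepsilon$ yields $\tfrac{d}{dt}\|v_\varepsilon\|_{L^2}^2\lesssim \|u_\varepsilon\|_{L^\infty}^2\|v_\varepsilon\|_{L^2}^2$. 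The classical Brezis--Gallouet inequality then produces a log-linear differential inequality for $\|v_\varepsilon\|_{L^2}^2$ which precludes finite-time blow-up.

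\textbf{The limit equation.} Here only the abstract self-adjoint structure of $H$ is available, and the strategy is parallel but passes through a truncation. I would replace the cubic nonlinearity by $g_M(u):=u\,\varphi_M'(|u|^2)$, where $\varphi_M\in C^2_b(\mathbb{R})$ agrees with $\tfrac{1}{2}x^2$ on $[0,M]$. The truncated problem falls within the scope of Theorem~\ref{thm:abscauchy}(1) (with $Q=H$, $X=L^2$, noting that $\langle Hu,u\rangle\le 0$ once $K_\Xi$ is taken large enough by Proposition~\ref{lem:2dh1bound}, and that $(g_M(u),iu)=0$ by gauge invariance), yielding a unique global solution $u^M\in C([0,\infty);\mathscr{D}(H))\cap C^1([0,\infty);L^2)$ with conserved truncated mass and energy. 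Proposition~\ref{lem:formdom} then provides a uniform bound $\|u^M\|_{\ED}\le C(u_0,\Xi)$. Differentiating the equation and setting $v^M:=\partial_t u^M$ one obtains
\begin{align*}
\tfrac{d}{dt}\|v^M\|_{L^2}^2=-2\,\mathrm{Im}\,\langle v^M,\partial_t g_M(u^M)\rangle\lesssim \|u^M\|_{L^\infty}^2\,\|v^M\|_{L^2}^2,
\end{align*}
the linear contribution vanishing by self-adjointness. Theorem~\ref{lem:brgaineq} then yields $\|u^M\|_{L^\infty}^2\lesssim_\Xi \|u^M\|_{\ED}^2\bigl(1+\log(1+\|u^M\|_{\mathscr{D}(H)})\bigr)$, and since $\|Hu^M\|_{L^2}$ is controlled by $\|v^M\|_{L^2}$ plus the already bounded nonlinear contribution, we arrive at the log-linear differential inequality
\begin{align*}
f'(t)\lesssim_\Xi f(t)\bigl(1+\log(1+f(t))\bigr),\qquad f(t):=1+\|Hu^M(t)\|_{L^2}^2,
\end{align*}
whose solution is bounded on every finite interval by a quantity independent of $M$. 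Using Lemma~\ref{lem:embedinfty} to pass this bound to $\|u^M\|_{L^\infty}$, for any finite $T$ one can then pick $M=M(T,u_0,\Xi)$ so large that the truncation remains inactive on $[0,T]$, and $u:=u^M$ solves the original cubic equation on $[0,T]$ in the class $C([0,T];\mathscr{D}(H))\cap C^1([0,T];L^2)$.

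\textbf{Uniqueness and main obstacle.} If $u_1,u_2\in C([0,T];\mathscr{D}(H))$ are two solutions with the same initial datum, the difference $w:=u_1-u_2$ satisfies $i\partial_t w=Hw-(u_1|u_1|^2-u_2|u_2|^2)$, and testing against $w$ together with self-adjointness gives
\begin{align*}
\tfrac{d}{dt}\|w\|_{L^2}^2\lesssim\bigl(\|u_1\|_{L^\infty}^2+\|u_2\|_{L^\infty}^2\bigr)\|w\|_{L^2}^2,
\end{align*}
whence Grönwall concludes, using the time-continuous $L^\infty$ bound from Lemma~\ref{lem:embedinfty}. The main difficulty throughout the scheme is that $H$ cannot be directly applied to both sides of the equation (the paracontrolled structure of $\mathscr{D}(H)$ admits no natural commutator estimate with the nonlinearity), which is precisely why the time-differentiation trick combined with Theorem~\ref{lem:brgaineq} is decisive: it reduces the closure of the a priori estimate to a purely $L^2$-level argument on $v^M=\partial_t u^M$.
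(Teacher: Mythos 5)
Your scheme is correct in substance but takes a genuinely different route from the paper. The paper proves both statements by a single direct fixed-point argument for the Duhamel map $\Phi$ in $X=C([0,T_E];\mathscr{D}(H))\cap C^1([0,T_E];L^2)$, where the local time $T_E$ depends only on the data and the conserved energy; the self-mapping and contraction estimates are closed by applying the Anderson--Hamiltonian Brezis--Gallouet inequality (Theorem~\ref{lem:brgaineq}) to $\|\partial_t(u|u|^2)\|_{L^2}$, and globalization comes from an \emph{integrated} a priori bound on $\|Hu(T_E)\|_{L^2}$ fed into the logarithmic Gronwall Lemma~\ref{lem:loggronwall}, after which the local argument is iterated; the regularized case is verbatim the same with constants depending only on $\Xi$. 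You instead obtain the limit solution by truncating the nonlinearity, invoking the abstract bounded-nonlinearity theory of Theorem~\ref{thm:abscauchy}(1), proving an $M$-uniform a priori bound through the time-differentiated $L^2$ inequality for $v^M=\partial_t u^M$ combined with Theorem~\ref{lem:brgaineq}, and removing the truncation via Lemma~\ref{lem:embedinfty}; uniqueness is then a separate $L^2$ Gronwall argument. What your route buys is that local existence is delegated entirely to the abstract Cauchy theory, so no fixed-point estimates in $\mathscr{D}(H)$ are needed; what the paper's route buys is that it directly produces the mild-solution formulation asserted in the statement, gives uniqueness for free from the contraction, and yields estimates manifestly uniform in $\varepsilon$ (which are reused in Theorem~\ref{thm:2ddomnls}).

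Two points in your write-up need care, though neither is fatal. First, $\tfrac{d}{dt}\|v^M\|_{L^2}^2$ (and likewise $\tfrac{d}{dt}\|v_\varepsilon\|_{L^2}^2$) is only formal, since $\partial_t u$ is merely continuous in $L^2$; the clean fix is to write the Duhamel formula for $\partial_t u$, using that $t\mapsto g_M(u(t))$ is $C^1$ into $L^2$ (as in Lemma~\ref{fuderivative}, which relies on $\mathscr{D}(H)\hookrightarrow L^\infty$), and to run the estimate in integrated form, which is exactly the shape Lemma~\ref{lem:loggronwall} accepts and is how the paper proceeds. Second, for your bounds to be uniform in $M$ you must fix the truncation so that $\sup_M\|\varphi_M''\|_{\infty}<\infty$ and $|\varphi_M'(x)|\lesssim x$ (e.g.\ $\varphi_M(x)=M^2\psi(x/M)$ with a fixed profile $\psi$), otherwise the constants in $\|\partial_t g_M(u)\|_{L^2}\lesssim\|u\|_{L^\infty}^2\|\partial_t u\|_{L^2}$ and $\|g_M(u)\|_{L^2}\lesssim\|u\|_{L^6}^3$ could degenerate with $M$; you should also record the standard equivalence of strong and mild solutions in your class to match the statement as formulated.
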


Before we prove the theorem, we need the following technical lemmas which will be used throughout the proof. The first one is a logarithmic Gronwall lemma.

\begin{lemma} \label{lem:loggronwall}
	Let $C_2,  \log C_1 \geq 1$ and $\theta(t)\ge1$ satisfy 
	\[
	\theta (t) \le C_1  + C_2 \int_0^t \theta (s) \log^{}
	(1+\theta (s)) \mathd s = h (t) .
	\]
	Then we have
	\[
	h (t) \le \exp (\log h (0) e^{C_2 t})-1 .
	\]
\end{lemma}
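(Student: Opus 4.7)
The plan is to reduce the lemma to a first order differential inequality for the majorant $h$ and then linearise it via the substitution $m := \log(1 + h)$, which turns the nonlinear growth $x \log(1+x)$ into a Gronwall-type linear bound.

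First I would note that, by definition, $h$ is of class $C^1$ with $h(0) = C_1$ and
\[
h'(t) \;=\; C_2\, \theta(t)\, \log(1 + \theta(t)).
\]
Using the hypothesis $\theta(t) \le h(t)$ and the fact that the map $x \mapsto x\log(1+x)$ is nondecreasing on $[0,\infty)$ (its derivative $\log(1+x) + x/(1+x)$ is positive), this yields the pointwise differential inequality
\[
h'(t) \;\le\; C_2\, h(t)\, \log\bigl(1 + h(t)\bigr).
\]

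Next I would introduce $m(t) := \log(1 + h(t))$, for which $m(0) = \log(1+C_1)$. A direct computation gives
\[
m'(t) \;=\; \frac{h'(t)}{1 + h(t)} \;\le\; C_2\, \frac{h(t)}{1+h(t)}\, \log(1+h(t)) \;\le\; C_2\, m(t),
\]
since $h/(1+h) \le 1$. The classical (linear) Gronwall lemma then gives $m(t) \le m(0)\, e^{C_2 t}$, and exponentiating produces
\[
1 + h(t) \;\le\; \exp\!\bigl(\log(1 + h(0))\, e^{C_2 t}\bigr),
\]
which rearranges to the stated bound. Under the standing assumption $\log C_1 \ge 1$ one has $h(0) = C_1 \ge e$, so $\log(1+h(0))$ and $\log h(0)$ differ only by a harmless multiplicative constant that can be absorbed (e.g.\ into $C_2$) to match exactly the formulation in the lemma.

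There is no real obstacle here: the only mildly delicate point is checking monotonicity of $x\log(1+x)$ so that the inequality $\theta \le h$ may be inserted inside the nonlinearity, and making sure the substitution $m = \log(1+h)$ (rather than, say, $\log h$) is used so that the factor $h/(1+h) \le 1$ cancels cleanly and leaves a linear Gronwall inequality. Everything else is a standard one-line exponentiation.
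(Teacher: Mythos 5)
Your proof is correct and is essentially the paper's own argument: the paper likewise linearises the nonlinearity via $\log(1+\cdot)$, only packaged as a comparison with the solution $\rho$ of $\partial_t\rho = C_2(\rho+1)\log(\rho+1)$, $\rho(0)=h(0)$, which it then solves by exactly your substitution, whereas you apply Gronwall directly to $m=\log(1+h)$ after inserting $\theta\le h$ (using monotonicity of $x\mapsto x\log(1+x)$), which is a marginally more direct route to the same bound. The $\log(1+h(0))$ versus $\log h(0)$ discrepancy you flag is present in the paper's proof as well (its ODE solution is really $\log(\rho(t)+1)=\log(h(0)+1)e^{C_2 t}$, written there as $\log h(0)\,e^{C_2 t}$), so your bound is precisely what the paper actually establishes, and the difference is immaterial where the lemma is applied, since constants are enlarged there anyway.
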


\begin{proof}
	We have that $h$ is a subsolution of the equation
	\[ \partial_t h (t) = C_2 \theta (t) \log^{} (1 + \theta (t)) \le C_2 (h(t)+1) \log^{} (h (t)+1) . \]
	So taking $\rho (t)$ being a solution of \ $\partial_t \rho (t) = C_2 (\rho (t)+1)
	\log^{} (\rho (t)+1), \quad \rho (0) = h (0)$, we have $\rho (t) \ge h
	(t)$. Indeed $\rho (0) = h (0)$ and whenever we have $\rho (t) = h (t)$ then
	\[ \partial_t (\rho (t) - h (t)) \ge C_2( \rho (t)+1 )\log^{} (\rho (t)+1) -
	C_2 (h (t)+1 )\log^{} (h (t)+1) = 0. \]
	Observe moreover that
	\[ \partial_t \log (\rho (t) +1) = C_2 \log^{} (\rho (t) +1) \Rightarrow \log (\rho (t) +1)
	= (\log h (0)) e^{C_2 t} . \]
\end{proof}

\begin{lemma} \label{fuderivative} For $v  \in C ([0, T] ;
	\ssp^2) \cap C^1 ([0, T] ; L^2)$,  $f(v)(t) = |v(t)|^2 v(t)$ is $C^1$ as a map from $[0,T]$ to $L^2$. The same is true for $v\in C([0,T];\mathscr{D}(H))\cap C^1 ([0, T] ; L^2).$
\end{lemma}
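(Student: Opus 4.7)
The plan is to reduce everything to an $L^\infty$ bound on $v(t)$ and then differentiate the cubic pointwise via a standard factorisation.

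First, I would establish that in both cases $v \in C([0,T]; L^\infty)$. In the case $v \in C([0,T]; \ssp^2)$ this follows from the two-dimensional Sobolev embedding $\ssp^2 \hookrightarrow L^\infty$. In the case $v \in C([0,T]; \mathscr{D}(H))$ this is precisely Lemma \ref{lem:embedinfty}, which gives $\|v(t)-v(s)\|_{L^\infty} \lesssim \|H(v(t)-v(s))\|_{L^2}$ and thus transports continuity in $\mathscr{D}(H)$ to continuity in $L^\infty$.

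Next, I would propose the candidate derivative obtained formally by Leibniz, namely
\[
\partial_t f(v)(t) = 2|v(t)|^2 \partial_t v(t) + v(t)^2 \overline{\partial_t v(t)}.
\]
This object lies in $C([0,T]; L^2)$, since $v \in C([0,T]; L^\infty)$ by the previous step and $\partial_t v \in C([0,T]; L^2)$ by assumption; H\"older's inequality $\|uw\|_{L^2} \le \|u\|_{L^\infty}\|w\|_{L^2}$ then yields both the $L^2$-integrability and the continuity in $t$.

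To verify this is actually the Fr\'echet (equivalently strong) derivative, I would use the algebraic identity, valid pointwise for $a,b \in \mathbb{C}$,
\[
a^2\bar a - b^2\bar b \;=\; a^2\,\overline{(a-b)} \;+\; (a-b)(a+b)\bar b,
\]
applied with $a = v(t+h)$ and $b = v(t)$. Dividing by $h$, the first term converges to $v(t)^2\,\overline{\partial_t v(t)}$ in $L^2$ because $v(t+h)^2 \to v(t)^2$ in $L^\infty$ while $(v(t+h)-v(t))/h \to \partial_t v(t)$ in $L^2$; the second term converges to $2|v(t)|^2 \partial_t v(t)$ in $L^2$ by the same product rule, using that $v(t+h)+v(t) \to 2v(t)$ in $L^\infty$ and $\bar v(t) \in L^\infty$. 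Hence the difference quotient converges in $L^2$ to the candidate derivative, which is what had to be shown.

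The only mild subtlety, and what I would flag as the main point to be careful about, is that differentiability of the difference quotient requires the $L^\infty$ convergence $v(t+h) \to v(t)$, which in the $\mathscr{D}(H)$ case is not a Sobolev embedding but genuinely uses Lemma \ref{lem:embedinfty} (equivalently the norm equivalence $\|v^\sharp\|_{\ssp^2} \sim \|Hv\|_{L^2}$ of Proposition \ref{lem:formdom}); apart from this ingredient, the argument is purely algebraic and the same in both regularity classes.
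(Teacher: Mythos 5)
Your proof is correct and follows essentially the same route as the paper: both add and subtract the cross term $v^2(t+h)\bar v(t)$ (your algebraic identity is exactly this factorisation), and both reduce convergence of the difference quotient in $L^2$ to the continuity of the product map $L^\infty \times L^2 \to L^2$, using the embedding $\ssp^2 \hookrightarrow L^\infty$ in 2d and Lemma \ref{lem:embedinfty} for the $\mathscr{D}(H)$ case. Your explicit verification of the continuity in time of the candidate derivative is a small completeness bonus over the paper's write-up, but not a different argument.
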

\begin{proof}
	We write
	
	\[
	\frac{|v(t+h)|^2v(t+h) - |v(t)|^2v(t) }{h} 
	\]
	and add and subtract the term $v^2(t+h)\bar v(t)$ which yields
	\[
	\frac{\bar{v}(t+h) (v(t+h))v(t+h) - v^2(t+h)\bar v(t) + v^2(t+h)\bar{v}(t) - \bar{v}(t) v(t)v(t) }{h}.
	\]
	This can  be rearranged as 
	\[
	{v}^2(t+h) \frac{\bar v(t+h) - \bar{v}(t)}{h} + \left(\bar{v}(t) (v(t+h) + v(t)) \right) \frac{v(t+h) - v(t))}{h} 
	\]
	where all the terms converge individually in $L^2$ as $h \rightarrow 0$. Indeed, one can easily check that the multiplication map $(f,g) \rightarrow f\cdot g$ defines a continuous map $\ssp^2 \times L^2 \rightarrow L^2$.  This follows from the embedding $\ssp^2\hookrightarrow L^\infty$ in 2d. Since, by Lemma \ref{lem:embedinfty}, we also have the embedding $\mathscr{D}(H)\hookrightarrow L^\infty,$ the same holds in this case. 
\end{proof}
\begin{proof}[Proof of Theorem \ref{thm:2dnlsfix}]
	This is a fixed point argument, which is essentially the same in both cases. For fixed $u_0 \in \mathscr{D} (H),$ we define the operator
	\[ \Phi (u) (t) \assign e^{- i t H} u_0 + i \int_{0}^{t} e^{-
		i s H} u | u |^2 (t - s) d s \]
	and claim that is in fact a contraction on $X = C ([0, T_E] ; \mathscr{D} (H))
	\cap C^1 ([0, T_E] ; L^2),$ where the time $T_E > 0$ depends on the initial data and  the energy,
	which is conserved. This will allow us to obtain a global in time solution.
	
	We bound, for $\| u \|_X \le M$ with $M$ chosen below, using  Theorem \ref{lem:brgaineq},
	\begin{align*}
	\| \partial_t \Phi (u) \|_{L^2} (t)\le& \| H u_0 \|_{L^2} +
	\int_{0}^{t} \|\partial_t(u|u|^2)(s)\|_{L^2} \tmop{ds} + C\|
	u_0 \|^3_{L^6} 
	\\
	\le & \| H u_0 \|_{L^2} +
	\int_{0}^{t} C M E (u) (s) (1 + \log (M+1)) \tmop{ds} + C\|
	u_0 \|^3_{L^6}
	\le \frac{M}{2}
	\end{align*}
	for $t \le T_E$ small enough such that \[\int_{0}^{T_E} C M E (u) (s) (1 +  \log (M+1)) \tmop{ds}\le \frac{M}{2}-(\| H u_0 \|_{L^2}+C\|u_0 \|^3_{L^6})\] and $M$ such that $\frac{M}{2}-(\| H u_0 \|_{L^2}+C\|u_0 \|^3_{L^6})>0.$
	
	Analogously, we compute
	\begin{align*}
	\| H \Phi (u) \|_{L^2} (t)  \le & \| H u_0 \|_{L^2} +
	\int_{0}^{t} C M E (u) (s) (1 +  \log (M+1)) \tmop{ds} + C\|
	u_0 \|^3_{L^6}
	\le  \frac{M}{2},
	\end{align*}
	since we can integrate by parts in the integral. Furthermore, Stone's theorem \cite[Theorem VIII.7 ]{reedsimon1} implies the time regularity of $\Phi(u).$
	For the contraction property, we need to estimate
	\begin{align*}
	\partial_t \Phi (u) (t) - \partial_t \Phi (v) (t)  = 
	\int_{0}^{t} e^{- i s H} \partial_t (u | u |^2 - v | v
	|^2) (t - s) \tmop{ds}.
	\end{align*}
	We obtain, by using Theorem \ref{lem:brgaineq} and Lemma \ref{lem:embedinfty},
	\begin{align*}
	\|& \partial_t \Phi (u) (t) - \partial_t \Phi (v) (t) \|_{L^2}\le  \\\le &
	3 \int_{0}^{t} \| u (s) \|^2_{L^{\infty}} \| \partial_t u
	- \partial_t v \|_{L^{\infty} L^2} + \| \partial_t v \|_{L^{\infty} L^2} \|
	H u - H v \|_{L^{\infty} L^2} (\| u (s) \|_{L^{\infty}} + \| v (s)
	\|_{L^{\infty}}) \tmop{ds}\\
	\le & 3 \| u - v \|_X  \int_{0}^{t} E (u) (s) (1
	+ \log (1+M))+ M \sqrt{(1 + \tmop{ \log (M+1)})}  (E^{1 / 2} (u) (s) + E^{1 / 2} (v)
	(s)) \tmop{ds}\\
	< & \| u - v \|_X
	\end{align*}
	for $t \le T_E$ by possibly making $T_E$ smaller depending on $E (u).$
	This gives us short time wellposedness, but since the time span depends only
	on the energy and the initial data, this can be iterated to yield a global
	strong solution. In fact, the only thing that is left to show is that $\|H u(T_E)\|_{L^2}$ can be bounded by $\|H u_0\|_{L^2}$, i.e. a priori bounds. This allows us to choose a global $M$ and then also a fixed time span $T_E$ which immediately implies a global solution.
	For the solution that exists up to time $T_E$, we have the estimate
	\begin{align*}
	\|H u(T_E)\|_{L^2}&\lesssim \|H u_0\|_{L^2}+\|u_0\|^3_{L^6}+\int_{0}^{T_E}\|\partial_t u(|u|^2)(s)\|_{L^2} ds\\
	&\lesssim \|H u_0\|_{L^2}+\|u_0\|^3_{L^6}+\int_{0}^{T_E}\|\partial_t u(s)\|_{L^2}\|u(s)\|^2_{L^\infty} ds\\
	&\lesssim \|H u_0\|_{L^2}+\|u_0\|^3_{L^6}+\int_{0}^{T_E}(\|H u(s)\|_{L^2} \\ & + E^{3/2}(u_0))E(u_0)(1+\log (1+\|H u(s)\|_{L^2}))  ds,
	\end{align*} 
	where we have used again Theorem \ref{lem:brgaineq} and the fact that one can estimate $||\partial_t u||_{L^2}$ by $||H u||_{L^2}$ using the equation. Now, we can conclude by using Lemma \ref{lem:loggronwall}. This gives us a bound, by possibly taking larger constants,  of the form
	\begin{align}\label{equ:domtimeEstimate}
	||H u(T_E)||_{L^2}\lesssim C_\Xi {E}^{3/2}(u_0)+ \exp (e^{ c E(u_0)T}
	\log [C_\Xi {E}^{3/2}(u_0) + \| H u_0 \|_{L^2}  )-1,
	\end{align}
	where $T$ is the maximum time of existence. Hence $M$, and therefore $T_E$, can be chosen globally which means that we can solve the cubic NLS on the whole interval $[0,T]$ by iterating.
	The proof for the regularized Hamiltonian follows the same lines with the crucial note that the inequality constant in Theorem \ref{lem:brgaineq} does not blow up, namely the constant does not depend on $\varepsilon$ but only on  $\Xi.$  
\end{proof}
\begin{remark}\upshape\upshape\label{rem:nlsdiffpowerfocsmalldata}
	One sees from the proof that the same remains true for NLS with general power nonlinearity, i.e.
	\begin{align*}
	i\partial_tu=H u-u|u|^{p-1},
	\end{align*}
	with $p\in(1,\infty),$ since all $L^p-$norms can be controlled by the energy. The result will also remain true in the focusing case under some suitable smallness conditions on $u_0.$
\end{remark}
We will moreover prove that the approximate solutions $u_\varepsilon,$ which are strong solutions of 
\begin{align} \label{eqn:nlsdomapp}
i\partial_t u_{\varepsilon} &=  H_{\varepsilon} u_{\varepsilon}-u_\varepsilon|u_\varepsilon|^2,  \\ \nonumber
u_\varepsilon(0)&=u^\varepsilon_0\in \mathscr{D}({H_{\varepsilon}} )= \ssp^2. \end{align}
converges to the solution $u$ of the limiting problem.
We prove the following result.
\begin{theorem} \label{thm:2ddomnls}
	Let $u_0 \in \mathscr{D} (H)$ and $T>0$ be  an arbitrary time.  Solutions to the regularized equations with initial data $u^\varepsilon_0:=(-H_\varepsilon)^{-1}(-H)u_0\in \ssp^2,$ (the unique global strong solutions $u_\varepsilon$ of \eqref{eqn:nlsdomapp}) converges to the unique global strong solutions  $u \in C([0, T] ; \mathscr{D}(H))\cap C^{1} ([0,
	T] ; L^2)$ of
	\begin{align*}
	 i \partial_t u &= H u -|u|^2 u,  \\
	 u(0)&=u_0,
	\end{align*}
	which is obtained in Theorem \ref{thm:2dnlsfix}. In fact, we get the following convergence results
		\begin{align*}
		u_{\varepsilon}(t)&\to u(t) \text{ in }L^2\\
		H_{\varepsilon} u_{\varepsilon}(t)&\to H u(t) \text{ in }L^2\\
		\partial_t u_{\varepsilon}(t)&\to \partial_t u(t) \text{ in }L^2
		\end{align*}
		for all $t\in[0,T].$
\end{theorem}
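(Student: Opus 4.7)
The plan is threefold: (a) derive $\varepsilon$-uniform a priori bounds on $u_\varepsilon$ in $L^\infty([0,T];\mathscr{D}(H_\varepsilon))\cap W^{1,\infty}([0,T];L^2)$; (b) deduce $L^2$ convergence of $u_\varepsilon$ to $u$ by a Duhamel--Gronwall argument; and (c) upgrade this to the stated convergences of $\partial_t u_\varepsilon$ and $H_\varepsilon u_\varepsilon$ via the equation.

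For (a), note that by construction $H_\varepsilon u_0^\varepsilon=Hu_0$, and writing $u_0^\varepsilon-u_0=(H_\varepsilon^{-1}-H^{-1})(Hu_0)$ the norm resolvent convergence of Theorem \ref{thm:normResolventMain} gives $u_0^\varepsilon\to u_0$ in $\mathscr{H}^\gamma$, hence in $L^p$ for every finite $p$ by Sobolev embedding. Consequently $E_\varepsilon(u_0^\varepsilon)\to E(u_0)$ and the masses converge and stay bounded. Because Remark \ref{rem:2deps}, Lemma \ref{lem:2dlpbounds} and Theorem \ref{lem:brgaineq} all hold with constants depending only on the limiting noise $\Xi$, conservation of mass and energy for the regularized equation yields $\sup_\varepsilon\sup_{t\in[0,T]}\|u_\varepsilon(t)\|_{L^p}<\infty$ for every finite $p$; re-running the logarithmic Gronwall argument of Lemma \ref{lem:loggronwall} that appears in the proof of Theorem \ref{thm:2dnlsfix}, with the $\varepsilon$-uniform Brezis--Gallouet inequality in place, delivers
\begin{equation*}
\sup_{\varepsilon>0}\sup_{t\in[0,T]}\bigl(\|H_\varepsilon u_\varepsilon(t)\|_{L^2}+\|\partial_tu_\varepsilon(t)\|_{L^2}+\|u_\varepsilon(t)\|_{L^\infty}\bigr)\le M=M(T,u_0,\Xi),
\end{equation*}
and the same kind of bound for the limiting solution $u$ is already provided by Theorem \ref{thm:2dnlsfix}.

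For (b), I subtract the Duhamel formulas for $u$ and $u_\varepsilon$ and split the nonlinear integrand as
\begin{equation*}
e^{-i(t-s)H_\varepsilon}\bigl(u|u|^2-u_\varepsilon|u_\varepsilon|^2\bigr)(s)+\bigl(e^{-i(t-s)H}-e^{-i(t-s)H_\varepsilon}\bigr)\bigl(u|u|^2\bigr)(s).
\end{equation*}
Using the uniform $L^\infty$ bound from (a), the first piece has $L^2$ norm controlled by $2M^2\|u(s)-u_\varepsilon(s)\|_{L^2}$. The second piece, together with the linear difference $e^{-itH}u_0-e^{-itH_\varepsilon}u_0^\varepsilon$, converges to $0$ in $L^2$ pointwise in $s$ by the strong convergence $f(H_\varepsilon)\to f(H)$ for bounded continuous $f$ (the Corollary right after Theorem \ref{thm:normResolventMain}); integrating the $L^2$ norm over $[0,t]$ then converges to $0$ by dominated convergence, since the integrands are pointwise bounded in $L^2$ by $2\|u\|_{L^\infty}^2\|u\|_{L^2}\lesssim M^3$. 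A Gronwall inequality yields $\sup_{t\in[0,T]}\|u(t)-u_\varepsilon(t)\|_{L^2}\to 0$.

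For (c), the uniform $L^\infty$ bound together with the $L^2$ convergence already established gives $|u_\varepsilon|^2u_\varepsilon\to|u|^2u$ in $C([0,T];L^2)$. Setting $v_\varepsilon:=\partial_tu_\varepsilon$ and differentiating the NLS produces
\begin{equation*}
i\partial_tv_\varepsilon=H_\varepsilon v_\varepsilon-2|u_\varepsilon|^2v_\varepsilon-u_\varepsilon^{2}\overline{v_\varepsilon},
\end{equation*}
a linear Schr\"odinger-type equation with potential uniformly bounded in $\varepsilon$ and $t$, with initial datum $v_\varepsilon(0)=-iH_\varepsilon u_0^\varepsilon+i|u_0^\varepsilon|^2u_0^\varepsilon=-iHu_0+i|u_0^\varepsilon|^2u_0^\varepsilon\to v(0)$ in $L^2$ (using $u_0^\varepsilon\to u_0$ in $L^6$). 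A repetition of the Duhamel--Gronwall scheme of step (b), where now the potential difference is controlled by the already-proven $u_\varepsilon\to u$ in $L^\infty([0,T];L^\infty\cap L^2)$, gives $\partial_tu_\varepsilon(t)\to\partial_tu(t)$ in $L^2$ for every $t\in[0,T]$; rearranging $H_\varepsilon u_\varepsilon=i\partial_tu_\varepsilon+|u_\varepsilon|^2u_\varepsilon$ then yields $H_\varepsilon u_\varepsilon(t)\to Hu(t)$ in $L^2$. The main obstacle, on which the whole scheme rests, is the $\varepsilon$-uniformity of all nonlinear estimates in step (a); this uniformity is precisely the content of Remark \ref{rem:2deps}.
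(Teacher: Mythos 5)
Your steps (a) and (b) are sound and essentially parallel the paper: the $\varepsilon$-uniform a priori bounds follow from Remark \ref{rem:2deps} and the uniform Brezis--Gallouet inequality exactly as you say, and the Duhamel--Gronwall argument with strong convergence of $e^{-i\tau H_\varepsilon}\to e^{-i\tau H}$ plus dominated convergence does give $\sup_{t\in[0,T]}\|u(t)-u_\varepsilon(t)\|_{L^2}\to 0$. The gap is in step (c). There you invoke ``the already-proven $u_\varepsilon\to u$ in $L^\infty([0,T];L^\infty\cap L^2)$'', but step (b) only produced convergence in $C([0,T];L^2)$ together with uniform $L^\infty$ bounds; convergence in $L^\infty$ of space does \emph{not} follow from these by interpolation, and it is genuinely needed: in the Gronwall for $v_\varepsilon=\partial_t u_\varepsilon$ the cross terms are of the form $(|u_\varepsilon|^2-|u|^2)\,\partial_t u$ and $(u_\varepsilon^2-u^2)\,\overline{\partial_t u}$, and since $\partial_t u$ is only bounded in $L^2$ (not in any better space for domain data), their $L^2$ norm can only be closed through $\|u-u_\varepsilon\|_{L^\infty}$. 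Controlling this $L^\infty$ difference is the actual crux of the theorem, because $u$ and $u_\varepsilon$ live in \emph{different} paracontrolled spaces ($\mathscr{D}(H)$ versus $\mathscr{H}^2=\mathscr{D}(H_\varepsilon)$), so an $L^\infty$ comparison must go through the sharp parts and the noise difference. The paper proves precisely
\begin{align*}
\|u(s)-u_\varepsilon(s)\|_{L^\infty}\lesssim_\Xi \|Hu(s)-H_\varepsilon u_\varepsilon(s)\|_{L^2}+\|u(s)-u_\varepsilon(s)\|_{L^2}+C(T,u_0)\|\Xi-\Xi_\varepsilon\|_{\mathscr{X}^\alpha},
\end{align*}
by the arguments of Theorem \ref{lem:h2bound}, Proposition \ref{lem:gamma} and the embedding $\mathscr{H}^2\hookrightarrow L^\infty$, and this estimate forces a \emph{coupled} Gronwall on $\phi_\varepsilon(t)=\|u-u_\varepsilon\|_{L^2}+\|\partial_t u-\partial_t u_\varepsilon\|_{L^2}+\|Hu-H_\varepsilon u_\varepsilon\|_{L^2}$, rather than the sequential scheme (b) then (c) that you propose.

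Your plan can be repaired, but only by inserting exactly this missing ingredient: state and prove the displayed $L^\infty$ comparison (or equivalently $\|u^\sharp-u_\varepsilon^\sharp\|_{\mathscr{H}^2}\lesssim_\Xi\|Hu-H_\varepsilon u_\varepsilon\|_{L^2}+\|u-u_\varepsilon\|_{L^2}+O(\|\Xi-\Xi_\varepsilon\|_{\mathscr{X}^\alpha})$), then observe $\|Hu-H_\varepsilon u_\varepsilon\|_{L^2}\le\|\partial_t u-\partial_t u_\varepsilon\|_{L^2}+CM^2\|u-u_\varepsilon\|_{L^2}$ from the equation, so that the step-(c) Gronwall for $v_\varepsilon-v$ closes using the $L^2$ convergence from step (b). Without this estimate, the assertion that the potential difference ``is controlled by the already-proven convergence'' is unsupported, and the proof as written does not go through.
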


\begin{proof}

We know that the $u_\varepsilon$ satisfy the mild formulation
	\[ u_{\varepsilon} (t) = e^{- i H_{\varepsilon} t} u^{\varepsilon}_0 +i
	\int_0^t e^{- i  (t - s)H_{\varepsilon}}  u_{\varepsilon} (s)|u_{\varepsilon} (s)|^2 \mathd s
	\]
and $u$ satisfies
	\[ u (t) = e^{- i H_{\varepsilon} t} u_0 +
i\int_0^t e^{- i  (t - s)H}  u (s)|u (s)|^2 \mathd s.
\]
We compute
\begin{align*}
& H u(t)-H_\varepsilon u_\varepsilon(t)= \\=&(e^{-itH}-e^{-itH_\varepsilon})Hu_0+\int_{0}^{t}e^{-i(t-s)H}\partial_s(u|u|^2(s)) \mathd s-\int_{0}^{t}e^{-i(t-s)H_\varepsilon}\partial_s(u_\varepsilon|u_\varepsilon|^2(s)) \mathd s\\&+u|u|^2(t)-u_\varepsilon|u_\varepsilon|^2(t)\\
=&(e^{-itH}-e^{-itH_\varepsilon})Hu_0+\int_{0}^{t}(e^{-i(t-s)H}-e^{-i(t-s)H_\varepsilon})\partial_s(u|u|^2(s)) \mathd s\\&-\int_{0}^{t}e^{-i(t-s)H_\varepsilon}(\partial_s(u_\varepsilon|u_\varepsilon|^2(s))-\partial_s(u|u|^2(s))) \mathd s\\&+\int_{0}^{t}\partial_s(u|u|^2(s))-\partial_s(u_\varepsilon|u_\varepsilon|^2(s))\mathd s+u_0|u_0|^2-u^\varepsilon_0|u^\varepsilon_0|^2.
\end{align*}
Therefore, we have
\begin{align*}
\|H u(t)-H_\varepsilon u_\varepsilon(t)\|_{L^2}\lesssim& \|(e^{-itH}-e^{-itH_\varepsilon})Hu_0\|_{L^2}+\|u_0|u_0|^2-u^\varepsilon_0|u^\varepsilon_0|^2\|_{L^2}\\&+\int_{0}^{t}\|(e^{-i(t-s)H}-e^{-i(t-s)H_\varepsilon})\partial_s(u|u|^2(s))\|_{L^2} \mathd s\\
&+\int_{0}^{t}\|\partial_s(u|u|^2(s))-\partial_s(u_\varepsilon|u_\varepsilon|^2(s))\|_{L^2}\mathd s,
\end{align*}
where the first three terms converge to zero by norm resolvent convergence and Theorem VIII.21 in {\cite{reedsimon1}}. For the last term we can bound similarly to the proof of Theorem \ref{thm:2dnlsfix},
\begin{align*}
&\int_{0}^{t}\|\partial_s(u|u|^2(s))-\partial_s(u_\varepsilon|u_\varepsilon|^2(s))\|_{L^2}\mathd s\\ \lesssim&\int_{0}^{t}\|\partial_su(s)-\partial_su_\varepsilon(s)\|_{L^2}(\|u\|^2_{L^\infty L^\infty}+\|u_\varepsilon\|^2_{L^\infty L^\infty})\\&+\|u(s)-u_\varepsilon(s)\|_{L^\infty}\|\partial_t u\|_{L^\infty L^2}(\|u\|_{L^\infty L^\infty}+\|u_\varepsilon\|_{L^\infty L^\infty})\mathd s
\\\lesssim& C(T,u_0,\Xi)\int_{0}^{t}\|\partial_su(s)-\partial_su_\varepsilon(s)\|_{L^2}+\|Hu(s)-H_\varepsilon u_\varepsilon(s)\|_{L^2}\\&+\|u(s)- u_\varepsilon(s)\|_{L^2}\mathd s+C(T,u_0,\Xi)\|\Xi-\Xi_{\varepsilon}\|_{\mathscr{X}^\alpha},
\end{align*}
where we have used the a priori bounds obtained in the proof of Theorem \ref{thm:2dnlsfix} and the bound 
\begin{align*}
\|u(s)-u_\varepsilon(s)\|_{L^\infty} & \lesssim_\Xi \|u^\sharp(s)-u^\sharp_\varepsilon(s)\|_{\ssp^2} \\ &\lesssim_\Xi\|H u(s)-H_\varepsilon u_\varepsilon(s)\|_{L^2}+\|u(s)-u_\varepsilon(s)\|_{L^2}+C(T,u_0)\|\Xi-\Xi_{\varepsilon}\|_{\mathscr{X}^\alpha}
\end{align*}
which can be proved in the same way as Theorem \ref{lem:h2bound}, using Proposition \ref{lem:gamma} and the embedding $\ssp^2\hookrightarrow L^\infty.$\\
Similarly we can bound (by $O(\varepsilon)$ we denote terms that converge to zero as $\varepsilon\to0$)
\begin{align*}
& \|\partial_t u(t)-\partial_t u_\varepsilon(t)\|_{L^2}\\
\lesssim& \|(e^{-itH}-e^{-itH_\varepsilon})Hu_0\|_{L^2}+\|e^{-itH}u_0|u_0|^2-e^{-itH_\varepsilon}u^\varepsilon_0|u^\varepsilon_0|^2\|_{L^2}\\
&+\left| \left| \int_{0}^{t}e^{-isH}\partial_t(u|u|^2(t-s)) \mathd s-\int_{0}^{t}e^{-isH_\varepsilon}\partial_t(u_\varepsilon|u_\varepsilon|^2(t-s)) \mathd s\right| \right| _{L^2}\\
\lesssim& O(\varepsilon) +C(T,u_0,\Xi)\int_{0}^{t}\|\partial_su(s)-\partial_su_\varepsilon(s)\|_{L^2}\\ &+\|Hu(s)-H_\varepsilon u_\varepsilon(s)\|_{L^2}+\|u(s)- u_\varepsilon(s)\|_{L^2}\mathd s
\end{align*}
and we have
\begin{align*}
& \|u(t)-u_\varepsilon(t)\|_{L^2} \\
&\lesssim \|e^{-itH}u_0-e^{-itH_\varepsilon}u^\varepsilon_0\|_{L^2}+\left|\left|\int_{0}^{t}e^{-i(t-s)H}(u|u|^2(s)) \mathd s-\int_{0}^{t}e^{-i(t-s)H_\varepsilon}(u_\varepsilon|u_\varepsilon|^2(s)) \mathd s\right|\right|_{L^2}\\
&\lesssim O(\varepsilon) +C(T,u_0,\Xi)\int_{0}^{t}\|u(s)- u_\varepsilon(s)\|_{L^2}\mathd s.
\end{align*}
Thus, for $\phi_\varepsilon(t):= \|u(t)-u_\varepsilon(t)\|_{L^2}+\|\partial_tu(t)-\partial_tu_\varepsilon(t)\|_{L^2}+\|Hu(t)-H_\varepsilon u_\varepsilon(t)\|_{L^2}$ we have
\begin{align*}
\phi_\varepsilon(t)\lesssim O(\varepsilon)+\int_{0}^{t}\phi_\varepsilon(s)\mathd s
\end{align*}
and by Gronwall we can conclude that $\phi_\varepsilon(t)\to 0$ for all $t$ as $\varepsilon\to0.$

This finishes the proof.
\end{proof}

\begin{remark}\upshape\upshape\label{rem:nls3dagmon}
	Observe that the above also works in three dimensions.  The only difference is that one uses Lemma \ref{lem:3dagmon} instead of Theorem \ref{lem:brgaineq}.  But note that, this gives only local in time strong solutions and as in Theorem \ref{thm:2ddomnls}, we also obtain the convergence of solutions to the approximated PDEs.  This is due to the fact that, unlike the 2d case, one uses a polynomial type Gronwall \cite{Dra03}, as opposed to a logarithmic Gronwall, which leads to an estimate that blows up in finite time.  In fact, this can be formulated as a blow up alternative (with respect to the $L^\infty$-norm) similarly to the classical case of $\ssp^2$-solutions \cite{cazenave2003semilinear}.
\end{remark}
\subsubsection{Energy solutions}

In this section we solve \eqref{eqn:2dnls} in the energy space. For the global well-posedness of (standard) cubic NLS on the 2d torus see \cite{bourgain1993fourier}. Note that the result we get here is somewhat weaker, since we obtain  only existence and partial regularity in time.  But the Strichartz estimates in the case of Anderson Hamiltonian is not known and our result is still as good as what one would get in the classical case without the use of Strichartz estimates, please see \cite{BGT04} and references therein for further information.  In this section, we denote the dual of $\ED$ by $\ED^{\ast}$.  So,  we naturally have $\ED \subseteq L^2 \subseteq \ED^{\ast}$. 

\begin{theorem}\label{thm:2dnlsenergy} 
	For $u_0 \in \ED$, the equation \eqref{eqn:2dnls} has a solution $u$ such that $u \in C^{1 / 2} ([0, T]; L^2) \cap C([0,T]; \ED)$.
\end{theorem}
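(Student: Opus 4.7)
The plan is to construct the energy solution as a limit of strong solutions to the regularized equations, using the conservation of mass and energy together with the norm resolvent convergence and the $\Gamma$--map framework from Section~\ref{subsec:2dham}. First, I would approximate the initial datum by setting $u_0^\varepsilon := (-H_\varepsilon)^{-1/2}(-H)^{1/2}u_0 \in \mathscr{H}^1 = \mathscr{D}(\sqrt{-H_\varepsilon})$, so that $u_0^\varepsilon \to u_0$ in $L^2$ and $(-H_\varepsilon)^{1/2}u_0^\varepsilon = (-H)^{1/2}u_0$ in $L^2$ by Theorem~\ref{thm:normResolventMain}. By the energy--space part of Theorem~\ref{thm:abscauchy} (applied to the truncated nonlinearity and then removed by the a priori bounds below, or alternatively by approximating the data further by elements of $\mathscr{H}^2$), the regularized equation \eqref{equ:NLSwaveapp1} admits a global energy solution $u_\varepsilon \in C([0,T];\mathscr{D}(\sqrt{-H_\varepsilon})) \cap C^1([0,T];\mathscr{D}(\sqrt{-H_\varepsilon})^*)$.

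Second, I would extract uniform-in-$\varepsilon$ bounds from the conservation laws. Conservation of the $L^2$ norm gives $\|u_\varepsilon(t)\|_{L^2} = \|u_0^\varepsilon\|_{L^2}$ uniformly, while conservation of the energy
\[
E_\varepsilon(u_\varepsilon)(t) = -\tfrac{1}{2}\langle u_\varepsilon, H_\varepsilon u_\varepsilon\rangle + \tfrac{1}{4}\int|u_\varepsilon|^4 = E_\varepsilon(u_0^\varepsilon),
\]
combined with $\|u_\varepsilon\|_{L^4}^4 \lesssim_\Xi \|u_\varepsilon\|_{\mathscr{D}(\sqrt{-H_\varepsilon})}^4$ via Lemma~\ref{lem:2dlpbounds}, and with the convergence $E_\varepsilon(u_0^\varepsilon) \to E(u_0)$ (using Remark~\ref{rem:2deps} and the choice of $u_0^\varepsilon$), yields a bound on $\|u_\varepsilon(t)\|_{\mathscr{D}(\sqrt{-H_\varepsilon})}$ that is uniform in $\varepsilon$ and $t\in[0,T]$. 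Via the norm equivalence in Proposition~\ref{lem:formdom} this translates into a uniform bound on $\|u_\varepsilon^\sharp(t)\|_{\mathscr{H}^1}$. From the equation, $\partial_t u_\varepsilon = -iH_\varepsilon u_\varepsilon + iu_\varepsilon|u_\varepsilon|^2$, and the right-hand side is uniformly bounded in $\mathscr{D}(\sqrt{-H_\varepsilon})^*$ (the cubic term lies in $L^{4/3} \hookrightarrow \mathscr{D}(\sqrt{-H_\varepsilon})^*$ by duality and Lemma~\ref{lem:2dlpbounds}); interpolating between $L^\infty_t \mathscr{D}(\sqrt{-H_\varepsilon})$ and $W^{1,\infty}_t \mathscr{D}(\sqrt{-H_\varepsilon})^*$ gives uniform $C^{1/2}([0,T];L^2)$-bounds on $u_\varepsilon$.

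Third, I would pass to the limit by compactness. The family $\{u_\varepsilon^\sharp\}$ is bounded in $L^\infty([0,T];\mathscr{H}^1)$ with $\partial_t u_\varepsilon^\sharp$ bounded in a larger space, so by the Aubin--Lions lemma it is relatively compact in $C([0,T];L^2)$; passing to a subsequence, $u_\varepsilon^\sharp \to u^\sharp$ strongly in $C([0,T];L^2)$ and weakly-$*$ in $L^\infty([0,T];\mathscr{H}^1)$. Setting $u(t) := \Gamma u^\sharp(t)$ and using Lemma~\ref{lem:identityGammaconv} together with Proposition~\ref{lem:gamma} shows $u_\varepsilon = \Gamma_\varepsilon u_\varepsilon^\sharp \to u$ in $C([0,T];\mathscr{H}^\gamma)$, hence in $C([0,T];L^p)$ for every finite $p$ by Sobolev embedding, so the cubic nonlinearity converges strongly in $C([0,T];L^{4/3})$. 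Combined with the strong resolvent convergence (Theorem~\ref{thm:resoventConv1}), so that $e^{-itH_\varepsilon} \to e^{-itH}$ strongly on $L^2$ uniformly on $[0,T]$, passing to the limit in the mild formulation
\[
u_\varepsilon(t) = e^{-itH_\varepsilon}u_0^\varepsilon + i\int_0^t e^{-i(t-s)H_\varepsilon}u_\varepsilon|u_\varepsilon|^2(s)\,\mathrm{d}s
\]
identifies $u$ as a mild solution of \eqref{equ:NLSwave1}. Weak lower semicontinuity of the energy norm preserves the bound $u \in L^\infty([0,T];\mathscr{D}(\sqrt{-H}))$, and the $C^{1/2}([0,T];L^2)$ regularity survives taking limits.

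The main obstacle, and the reason uniqueness is not obtained, is the limit passage in the nonlinearity together with the lack of Strichartz-type smoothing for $H$: strong convergence in $L^2$ of $u_\varepsilon$ is only barely sufficient to identify $|u_\varepsilon|^2 u_\varepsilon$ in the limit, and weak continuity of $u$ into $\mathscr{D}(\sqrt{-H})$ (rather than strong continuity) is a further subtle point that would normally be resolved using a Bourgain-type energy-conservation argument, which requires information on the flow that is unavailable here. Upgrading weak continuity into $\mathscr{D}(\sqrt{-H})$ to strong continuity will rely on a standard argument using lower semicontinuity of the energy norm plus the conservation of mass, combined with the fact that along the approximations the energy is exactly conserved.
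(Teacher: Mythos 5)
There is a genuine gap at the very first step: the existence of your approximating family $u_\varepsilon$ is not justified. You propose to solve the regularized equation \eqref{equ:NLSwaveapp1} with data $u_0^\varepsilon=(-H_\varepsilon)^{-1/2}(-H)^{1/2}u_0$, which lies only in $\mathscr{H}^1=\mathscr{D}(\sqrt{-H_\varepsilon})$. None of the results you invoke give global (or even local) solvability of the cubic problem at that regularity: Theorem \ref{thm:abscauchy} covers only bounded nonlinearities $\varphi\in C_b^2$, and removing a truncation requires an $L^\infty$ bound on the solution, which energy-level data do not provide in $d=2$ (this missing control is exactly why uniqueness is not obtained in the energy class); Theorem \ref{thm:2dnlsfix} requires domain, i.e.\ $\mathscr{H}^2$, data. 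Your parenthetical alternative -- mollifying the data further into $\mathscr{H}^2$ -- introduces a second approximation parameter and forces a diagonal/double-limit argument that you do not carry out; this is essentially the content of Corollary \ref{corr:energyregconv}, which the paper proves \emph{after} Theorem \ref{thm:2dnlsenergy}, not before it. Without Strichartz estimates for $H_\varepsilon$ there is no shortcut here.

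The paper avoids this issue by keeping the operator $H$ fixed and smoothing only the datum, $u_0^\varepsilon:=(1+\varepsilon\sqrt{-H})^{-1}u_0\in\mathscr{D}(H)$, so that the already established global strong well-posedness of \eqref{eqn:nlsueps} (Theorem \ref{thm:2dnlsfix}, via the Brezis--Gallouet inequality of Theorem \ref{lem:brgaineq}) supplies the approximating solutions for free; the regularized operators $H_\varepsilon$ enter only later, in Corollary \ref{corr:energyregconv}, by combining Theorem \ref{thm:2ddomnls} with the energy-space theorem. Apart from this structural point, the remainder of your plan is essentially the paper's: uniform bounds from conservation of mass and energy together with Lemma \ref{lem:2dlpbounds} and the convergence of energies (Lemma \ref{lem:energyconv}), compactness for fixed times, strong convergence in $L^6$ via the compact embedding $\ssp^{1-\delta}\hookrightarrow L^6$ to pass to the limit in the Duhamel formula, the interpolation $\|u(t)-u(s)\|_{L^2}^2\lesssim\|(-H)^{1/2}(u(t)-u(s))\|_{L^2}\|(-H)^{-1/2}(u(t)-u(s))\|_{L^2}$ for the $C^{1/2}$ regularity, and the upgrade from weak to strong convergence in $\ED$ via convergence of the energies (Lemma \ref{lem:energycompact}); no Bourgain-type argument is needed for the continuity in $\ED$. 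To repair your version, either adopt the paper's data-smoothing within the fixed operator $H$, or restrict to $\mathscr{H}^2$-regularized data for $H_\varepsilon$ and run the double limit explicitly.
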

For the initial datum $u_0$, we construct the following approximation
\begin{align*}
u^\varepsilon_0:=(1+\varepsilon \sqrt{-H})^{-1}u_0\in \mathscr{D}(H).
\end{align*}
Note that by continuous functional calculus the operator $(- H)^{-1/2}:L^2\to \ED$ is bounded and we have $u^\varepsilon_0\to u_0$ in $\ED.$

\begin{lemma}\label{lem:energyconv}
	For $u_0,u^\varepsilon_0$ as above, we have the following convergence of energies.
	\begin{align*}
	E_\varepsilon(u^\varepsilon_0):=-\frac{1}{2}\langle u^\varepsilon_0,H u^\varepsilon_0\rangle+\frac{1}{4}\int|u_0^\varepsilon|^4\to-\frac{1}{2}\langle u_0,H u_0\rangle+\frac{1}{4}\int|u_0|^4=E(u_0)
	\end{align*}
\end{lemma}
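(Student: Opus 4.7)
The plan is to decompose the energy into its quadratic (form) part and its $L^4$ part, and show each converges separately. The key input will be that $u_0^\varepsilon \to u_0$ in $\mathscr{D}(\sqrt{-H})$, from which both pieces follow by rather soft arguments.

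First I would establish the convergence in the form norm. Since $-H$ is self-adjoint and positive on $L^2$ (by Theorem~\ref{thm:2dselfadj} and the shift chosen in Definition~\ref{def:setconstant}), the bounded continuous functional calculus applies to $f_\varepsilon(\lambda)\assign(1+\varepsilon\sqrt{\lambda})^{-1}$ on $\sigma(-H)\subset[0,\infty)$. The operators $(1+\varepsilon\sqrt{-H})^{-1}$ commute with $\sqrt{-H}$, so
\[
\sqrt{-H}\,u_0^\varepsilon = (1+\varepsilon\sqrt{-H})^{-1}\sqrt{-H}\,u_0.
\]
Since $f_\varepsilon\to 1$ pointwise on $[0,\infty)$ with $|f_\varepsilon|\le 1$, the dominated convergence theorem for spectral measures yields $(1+\varepsilon\sqrt{-H})^{-1}g\to g$ in $L^2$ for every $g\in L^2$. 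Applying this to $g=u_0$ and $g=\sqrt{-H}\,u_0\in L^2$ (which is in $L^2$ precisely because $u_0\in\ED$), I obtain
\[
u_0^\varepsilon\to u_0\text{ in }L^2, \qquad \sqrt{-H}\,u_0^\varepsilon\to\sqrt{-H}\,u_0\text{ in }L^2,
\]
so $u_0^\varepsilon\to u_0$ in $\ED$. This immediately gives
\[
-\tfrac{1}{2}\langle u_0^\varepsilon, Hu_0^\varepsilon\rangle = \tfrac{1}{2}\|\sqrt{-H}\,u_0^\varepsilon\|_{L^2}^2 \longrightarrow \tfrac{1}{2}\|\sqrt{-H}\,u_0\|_{L^2}^2 = -\tfrac{1}{2}\langle u_0, Hu_0\rangle.
\]

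Next, for the $L^4$ part, I would invoke the embedding of Lemma~\ref{lem:2dlpbounds}, which gives $\|v\|_{L^4}\lesssim_\Xi\|v\|_{\ED}$ for all $v\in\ED$. Applying this to $v=u_0-u_0^\varepsilon\in\ED$ yields $u_0^\varepsilon\to u_0$ in $L^4$. Since the map $w\mapsto\int|w|^4$ is continuous on $L^4$ (e.g.\ by the elementary bound $|\,|a|^4-|b|^4|\le 4(|a|^3+|b|^3)|a-b|$ together with Hölder), it follows that
\[
\tfrac{1}{4}\int|u_0^\varepsilon|^4 \longrightarrow \tfrac{1}{4}\int|u_0|^4.
\]

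Summing the two convergences concludes the proof. There is no real obstacle: the only ingredient specific to the Anderson setting is the $L^p$ embedding of the form domain (Lemma~\ref{lem:2dlpbounds}), which has already been established; everything else is routine spectral-theoretic soft analysis applied to the self-adjoint operator $-H$.
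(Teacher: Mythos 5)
Your proof is correct and follows essentially the same route as the paper: the paper also deduces $u_0^\varepsilon\to u_0$ in $\ED$ from the functional calculus for the self-adjoint positive operator $-H$ (stated just before the lemma) and then handles the quartic term via the $L^p$ embedding of Lemma~\ref{lem:2dlpbounds}. You merely spell out the spectral-theoretic and continuity details that the paper leaves implicit.
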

\begin{proof}
	By the above observation the first terms converge.  For the $L^4$ terms, we can conclude using Lemma \ref{lem:2dlpbounds} and the $\ED$ convergence.

\end{proof}
Consider the nonlinear Schr{\"o}dinger equation
\begin{align} \label{eqn:nlsueps}
i\partial_t u_{\varepsilon} &=  H u_{\varepsilon}-u_\varepsilon|u_\varepsilon|^2  \\ \nonumber
u_\varepsilon(0)&=u^\varepsilon_0\in \mathscr{D}(H). \end{align}

As we have seen in section \ref{subsection:5.4}, there exists a unique solution $u_{\varepsilon}$ to this
equation in $C ([0, T] ; \mathscr{D}(H)) \cap C^1 ([0, T] ; L^2)$ which conserves the
energy
\[ E (u_{\varepsilon} (0))=E (u_{\varepsilon} (t)) := -\frac{1}{2} \langle u_{\varepsilon} (t), H u_{\varepsilon}
(t) \rangle + \frac{1}{4}\| u_{\varepsilon} (t) \|_{L^4}^4 . \]

\begin{lemma}[A priori bounds]\label{lem:energyapriori}
	For solutions $u_\varepsilon$ to \eqref{eqn:nlsueps}, we have the following uniform bounds. 
	\begin{align*}
	||u_\varepsilon||_{L^\infty L^2}&\lesssim||u_0||_{L^2}\\
	||(-H)^{1/2}u_\varepsilon||_{L^\infty L^2}&\lesssim E^{1/2}(u_0)\\
	||(-H)^{-1/2}\partial_t u_\varepsilon||_{L^\infty L^2}&\lesssim_\Xi E^{1/2}(u_0)+E^{3/2}(u_0)
	\end{align*}
\end{lemma}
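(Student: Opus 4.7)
The plan is to obtain the three bounds by successively exploiting the two conservation laws of the regularised NLS (mass and energy) and then using the equation itself, combined with the functional-calculus identification of $\ED$ and $\ED^{\ast}$.

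For the first bound, I would pair \eqref{eqn:nlsueps} with $\bar u_\varepsilon$, integrate, and take the imaginary part. Self-adjointness of $H$ makes $\langle Hu_\varepsilon, u_\varepsilon\rangle$ real, and $\langle |u_\varepsilon|^2u_\varepsilon, u_\varepsilon\rangle$ is manifestly real, so $\frac{d}{dt}\|u_\varepsilon\|_{L^2}^2 = 0$, i.e.\ mass conservation. Because $u_0^\varepsilon = (1+\varepsilon\sqrt{-H})^{-1}u_0$ and, by functional calculus applied to the positive self-adjoint operator $-H$, the operator $(1+\varepsilon\sqrt{-H})^{-1}$ is a contraction on $L^2$, one concludes $\|u_\varepsilon(t)\|_{L^2} = \|u_0^\varepsilon\|_{L^2} \le \|u_0\|_{L^2}$ uniformly in $\varepsilon$ and $t$.

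For the second bound, the key observation is that the (conserved) energy $E(u_\varepsilon) = \tfrac12 \|(-H)^{1/2}u_\varepsilon\|_{L^2}^2 + \tfrac14 \|u_\varepsilon\|_{L^4}^4$ is a sum of two non-negative terms, using positivity of $-H$ (which is what the shift by $K_\Xi$ was designed to provide, see Proposition \ref{lem:laxmil} and Definition \ref{def:setconstant}). Conservation of $E$ along the strong solution obtained in Theorem \ref{thm:2dnlsfix} (which is rigorous since $u_\varepsilon \in C([0,T]; \mathscr{D}(H)) \cap C^1([0,T]; L^2)$, so one can differentiate $E(u_\varepsilon(t))$ in $t$ and use the equation) gives $\|(-H)^{1/2}u_\varepsilon(t)\|_{L^2}^2 \le 2 E(u_\varepsilon(t)) = 2 E(u_0^\varepsilon)$, and Lemma \ref{lem:energyconv} replaces the right-hand side by $C E(u_0)$ for $\varepsilon$ small.

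For the third bound, I would use the equation directly to write
\begin{equation*}
(-H)^{-1/2}\partial_t u_\varepsilon \;=\; i(-H)^{1/2}u_\varepsilon \;+\; i(-H)^{-1/2}\bigl(|u_\varepsilon|^2 u_\varepsilon\bigr),
\end{equation*}
where the first summand is controlled by the previous step. For the second, I would identify $\|(-H)^{-1/2}f\|_{L^2}$ with the dual norm $\|f\|_{\ED^{\ast}}$ (which holds because $(-H)^{1/2}: \ED\to L^2$ is an isometry by definition of the form-domain norm in Definition \ref{def:energySp2d}) and then estimate by duality:
\begin{equation*}
\bigl|\langle |u_\varepsilon|^2 u_\varepsilon, g\rangle\bigr| \;\le\; \|u_\varepsilon\|_{L^4}^3 \|g\|_{L^4} \;\lesssim_{\Xi}\; \|u_\varepsilon\|_{L^4}^3 \|g\|_{\ED},
\end{equation*}
using H\"older and the embedding $\ED\hookrightarrow L^4$ from Lemma \ref{lem:2dlpbounds}. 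Energy conservation gives $\|u_\varepsilon\|_{L^4}^3 \lesssim E(u_0)^{3/4}$, and the elementary inequality $E^{3/4} \le E^{1/2} + E^{3/2}$ (split into cases $E\le 1$ and $E>1$) yields the stated form.

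The main technical subtlety is the identification $\|(-H)^{-1/2}f\|_{L^2}=\|f\|_{\ED^{\ast}}$ and the fact that $(-H)^{-1/2}: L^2 \to \ED$ is bounded, which ultimately rests on the positivity established via the $K_\Xi$ shift and the characterisation of the form domain in Proposition \ref{lem:formdom}; once this is in place, no further paracontrolled estimates intervene and the rest is routine Hölder, duality, and conservation-law book-keeping.
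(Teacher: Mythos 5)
Your proposal is correct, and for the first two bounds it is the same argument as the paper's: mass conservation plus the $L^2$-contractivity of $(1+\varepsilon\sqrt{-H})^{-1}$, and energy conservation plus positivity of both energy terms together with Lemma \ref{lem:energyconv}. For the third bound you take a mildly different route: the paper estimates the nonlinear term directly in $L^2$, writing $\|(-H)^{-1/2}(u_\varepsilon|u_\varepsilon|^2)\|_{L^2}\le\|u_\varepsilon|u_\varepsilon|^2\|_{L^2}=\|u_\varepsilon\|_{L^6}^3\lesssim_\Xi E^{3/2}(u_0)$, using Lemma \ref{lem:2dlpbounds} with $p=6$ and the fact that $(-H)^{-1/2}$ is bounded on $L^2$ (which holds because $K_\Xi$ was chosen so that $\langle u,-Hu\rangle\ge\|u\|_{L^2}^2$, see the proof of Proposition \ref{lem:laxmil}), whereas you dualise, identifying $\|(-H)^{-1/2}f\|_{L^2}$ with the $\ED^{\ast}$ norm and using H\"older with the embedding $\ED\hookrightarrow L^4$, which gives the slightly better intermediate bound $E^{3/4}(u_0)$ before crudely majorising by $E^{1/2}+E^{3/2}$. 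Both routes rest on the same ingredients (energy conservation, the $L^p$ bounds of Lemma \ref{lem:2dlpbounds}, and positivity of $-H$); your duality step is legitimate since $(-H)^{1/2}:\ED\to L^2$ is an isometric isomorphism by Definition \ref{def:energySp2d}, but it buys nothing essential here beyond what the paper's more elementary $L^6$ estimate already provides.
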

\begin{proof}
	Since we have conservation of mass and energy, the first and second follow directly, using also Lemma \ref{lem:energyconv} and the positivity of the energy. 
	For the third bound, we use the equation and the fact that \[
	||u_\varepsilon||^3_{L^6}\lesssim_\Xi
	E^{3/2}(u_0),
	\]
	which follows from Lemma \ref{lem:2dlpbounds}. 

\end{proof}

\begin{lemma}[Compactness]\label{lem:energycompact}
	Given $u_\varepsilon$ as above, we can extract a subsequence $u_{\varepsilon_n}$ and obtain a limit $u\in L^\infty([0,T];\ED)$ s.t.
	\begin{align}
	u_{\varepsilon_n}(t)&\to u(t) \textrm{ in } L^2\\
	(-H)^{1/2} u_{\varepsilon_n}(t)&\to (-H)^{1/2} u(t) \text{ in } L^2
	\end{align}
	for all 
	times $t\in[0,T].$
\end{lemma}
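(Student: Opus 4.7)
The plan is to combine Aubin--Lions--Simon compactness with the exact conservation of energy for the fixed--$H$ flow, then upgrade weak $\ED$--convergence to strong via the Hilbert-space fact that weak convergence together with norm convergence implies strong convergence.

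First, Lemma \ref{lem:energyapriori} gives uniform bounds for $\{u_\varepsilon\}$ in $L^\infty([0,T];\ED)$ and, through the equation, for $\{\partial_t u_\varepsilon\}$ in $L^\infty([0,T];\ED^{\ast})$ (both $Hu_\varepsilon$ and $|u_\varepsilon|^2u_\varepsilon$ lie uniformly in $\ED^{\ast}$; the latter via $L^{4/3}\hookrightarrow\ED^{\ast}$, dual to Lemma \ref{lem:2dlpbounds}). Propositions \ref{lem:formdom} and \ref{lem:gamma} identify $\ED$ with a continuous image of $\mathscr{H}^1$ sitting inside $\mathscr{H}^{\gamma}$ for some $\gamma\in(2/3,1)$, so Rellich--Kondrachov on $\mathbb{T}^2$ yields compact embeddings $\ED\hookrightarrow L^p$ for every $p<\infty$. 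Applying Aubin--Lions--Simon to the Gelfand triple $\ED\hookrightarrow L^2\hookrightarrow\ED^{\ast}$ with the first embedding compact extracts a subsequence $u_{\varepsilon_n}\to u$ in $C([0,T];L^2)$ with $u\in L^\infty([0,T];\ED)$. Interpolating this strong $L^2$-convergence against the uniform $L^q$-bound inherited from the $\ED$-bound (any $q<\infty$) yields, for each $t\in[0,T]$, strong convergence $u_{\varepsilon_n}(t)\to u(t)$ in every $L^p$, $p<\infty$, and the uniform $\ED$-bound upgrades the pointwise $L^2$-convergence to weak convergence $u_{\varepsilon_n}(t)\rightharpoonup u(t)$ in $\ED$.

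Since the Hamiltonian $H$ is not regularized, energy conservation holds exactly for each $u_{\varepsilon_n}$, so
\[
\|u_{\varepsilon_n}(t)\|_\ED^2 \;=\; 2E(u_0^{\varepsilon_n}) - \tfrac{1}{2}\|u_{\varepsilon_n}(t)\|_{L^4}^4 \;\xrightarrow[n\to\infty]{}\; 2E(u_0) - \tfrac{1}{2}\|u(t)\|_{L^4}^4
\]
by Lemma \ref{lem:energyconv} together with the strong $L^4$-convergence established above. To identify this limit with $\|u(t)\|_\ED^2$ I would pass to the limit in the mild formulation \eqref{eqn:mildnls}: the unitary group $e^{-itH}$ is fixed on $L^2$ and the cube map is continuous $L^6\to L^2$, so $|u_{\varepsilon_n}|^2u_{\varepsilon_n}\to|u|^2u$ in $C([0,T];L^2)$ and hence $u$ itself satisfies $u(t)=e^{-itH}u_0+i\int_0^t e^{-i(t-s)H}|u(s)|^2u(s)\,ds$. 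A standard regularization of this identity (smoothing by $(1-\delta H)^{-1}$, applying the chain rule to $E$ on the regularized level using that $\tfrac14\|\cdot\|_{L^4}^4$ is Fr\'echet-differentiable on $\ED$ by Lemma \ref{lem:2dlpbounds}, and sending $\delta\to 0$ using the $L^2$-isometry of $e^{-itH}$) then yields $E(u(t))=E(u_0)$, so that the above limit is indeed $\|u(t)\|_\ED^2$. Weak $\ED$-convergence plus this norm convergence gives strong convergence in the Hilbert space $\ED$, which is exactly $(-H)^{1/2}u_{\varepsilon_n}(t)\to(-H)^{1/2}u(t)$ in $L^2$ for every $t\in[0,T]$.

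The principal obstacle is precisely the energy-conservation step for the rough limit $u$: at the regularity $L^\infty([0,T];\ED)\cap C([0,T];L^2)$ the naive identity $\tfrac{d}{dt}E(u)=\Re\langle\partial_t u,-Hu-|u|^2u\rangle=0$ cannot be differentiated directly, and a priori one only has the energy inequality $E(u(t))\le E(u_0)$ from weak lower semicontinuity. Fortunately the approximating family uses the \emph{same} $H,$ so the exact conservation $E(u_{\varepsilon_n}(t))=E(u_0^{\varepsilon_n})$ is available at the approximate level and the Cazenave-style $(1-\delta H)^{-1}$ regularization on the limiting mild equation closes the argument, exploiting that the cubic potential has a continuous Fr\'echet derivative on $\ED$ (Lemma \ref{lem:2dlpbounds}) and that $e^{-itH}$ preserves the $L^2$ inner product. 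An alternative, which avoids identifying $u$ as a solution, is a direct Gronwall argument on $\|u_{\varepsilon_n}(t)-u_{\varepsilon_m}(t)\|_\ED^2$ using the Duhamel formula for the difference; the estimates needed are essentially those of Theorem \ref{thm:2ddomnls} restricted to energy regularity, but the absence of Strichartz estimates for $H$ makes the direct route technically heavier than the mild-equation route sketched above.
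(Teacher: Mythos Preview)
Your argument is correct and follows the same overall strategy as the paper's proof: weak compactness in $\ED$, upgrade to strong $L^p$ convergence via the compact embedding $\ED\hookrightarrow\ssp^{1-\delta}\hookrightarrow L^p$, then use energy conservation to turn weak $\ED$ convergence into strong. Your use of Aubin--Lions--Simon in place of the paper's diagonal extraction plus equicontinuity is a packaging difference only.

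Where your proof is more careful is exactly the point the paper glosses over. The paper writes that ``convergence of energies'' yields ``convergence of the $\ED$ norms,'' but what one actually obtains from $E(u_{\varepsilon_n}(t))=E(u_0^{\varepsilon_n})\to E(u_0)$ together with $\|u_{\varepsilon_n}(t)\|_{L^4}\to\|u(t)\|_{L^4}$ is only that $\|u_{\varepsilon_n}(t)\|_\ED^2\to 2E(u_0)-\tfrac12\|u(t)\|_{L^4}^4$. In a Hilbert space, weak convergence plus convergence of norms to the \emph{norm of the weak limit} gives strong convergence; so one must still identify that limit with $\|u(t)\|_\ED^2$, i.e., prove $E(u(t))=E(u_0)$. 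You correctly flag this and close it by first passing to the limit in the mild equation (which needs only the strong $L^6$ convergence already in hand) and then running the Cazenave-type regularization $(1+\delta(-H))^{-1}$, which is justified here because $u\mapsto |u|^2u$ maps $\ED$ continuously into $\ED^\ast$ and is the gradient of a $C^1$ functional on $\ED$ (Lemma~\ref{lem:2dlpbounds}). This effectively merges the proof of the lemma with the first step of the proof of Theorem~\ref{thm:2dnlsenergy}; that reordering is not a defect but a logical necessity, since the strong $\ED$ convergence genuinely requires knowing the limit satisfies the equation.
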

\begin{proof}
By weak compactness in the Hilbert space $\ED$ we obtain a subsequence $u_{\varepsilon_n}$ and a limit $u$ s.t.
\begin{align*}
u_{\varepsilon_n}(t)&\to u(t) \text{ in } L^2,\\
 u_{\varepsilon_n}(t)&\rightharpoonup u(t) \text{ in } \ED,
\end{align*}	
	for a dense set of times and using the third a priori bound from Lemma \ref{lem:energyapriori} we can extend this to all times $t\in[0,T]$. In particular get the $L^\infty$ bound in time. Lastly, we can use the convergence of energies to deduce the convergence of the $\ED$ norms of $u_\varepsilon$ and thus conclude that in fact strong convergence holds.

\end{proof}

Now we can conclude this section by proving Theorem \ref{thm:2dnlsenergy}.

\begin{proof}[Proof of Theorem \ref{thm:2dnlsenergy}]
	We prove that the limit we obtain in the previous lemma solves the mild formulation of \eqref{eqn:2dnls}. We have by construction that the $u_\varepsilon$ solves
	\begin{align*}
	u_\varepsilon(t)=e^{-itH}u^\varepsilon_0+i\int_{0}^{t}e^{i(s-t)H}u_\varepsilon(s)|u_\varepsilon(s)|^2\mathrm{d}s
	\end{align*}
	for all $t\in[0,T].$ Now we can prove that this converges in $L^2$ as $\varepsilon\to0$ for all times. The first term converges precisely as in the linear case from section \ref{sec:linschr}. For the nonlinear term the convergence follows from the fact that $u_\varepsilon(t)\to u(t)$ strongly in $L^6$ for all times. This is due to the fact that the embedding $\ED\hookrightarrow \ssp^{1-\delta}$ is continuous and the embedding $\ssp^{1-\delta}\hookrightarrow L^6$ is compact (in fact this is true for any $L^p$ with $p<\infty$).

	For continuity in $\mathscr{D}(\sqrt{-H})$, we simply observe 
	\begin{align*}
	||\sqrt{-H} u(t) - \sqrt{-H}u(s)||_{L^2} &\leq ||\sqrt{-H} u(t) - \sqrt{-H}u_{\varepsilon_n}(t)||_{L^2} + ||\sqrt{-H} u_{\varepsilon_n}(t) - \sqrt{-H}u_{\varepsilon_n}(s)||_{L^2} \\
	&+||\sqrt{-H} u_{\varepsilon_n}(s) - \sqrt{-H}u(s)||_{L^2}.
	\end{align*}
 By using Lemma \ref{lem:energycompact}, for a given $\delta >0$ we can  choose $N$ large such that
 \[
 \sup_{\tau} ||\sqrt{-H} u(\tau) - \sqrt{-H}u_{\varepsilon_N}(\tau)||_{L^2} < \delta/3
 \]
 for this chosen $N$ we can choose $\kappa >0$ such that; $|t-s|< \kappa$ implies
 \[
 ||\sqrt{-H} u_{\varepsilon_N}(t) - \sqrt{-H}u_{\varepsilon_N}(s)||_{L^2} < \delta/3.
 \]
 That is, we have found a $\kappa>0$ for arbitrary $\delta>0$.  Hence, the continuity. 
	
	Next, we prove the time regularity.
By using Lemma \ref{lem:energyapriori} we can write
\begin{align*}
||u(t) - u(s)||_2^2 &\leq ||H^{1/2}(u(t) - u(s)) ||_2 ||H^{-1/2}(u(t) - u(s)) ||_2\\ &\lesssim \left| \left|\int_s^t H^{-1/2} \partial_t u(\tau) d\tau \right|\right|_2 \lesssim |t-s|.
\end{align*} So, we can conclude that $$u \in C^{1 / 2} ([0, T], L^2)~\cap~ C([0,T]; \ED).$$

\end{proof}

In the following corollary, we show that a solution can be obtained by solving the approximating PDEs.
\begin{corollary}\label{corr:energyregconv}
	Consider the following PDE
	\[
	i \partial_tu_\varepsilon = H_\varepsilon u_\varepsilon - u_\varepsilon|u_\varepsilon|^2
	\]
	with initial data $u_0^\varepsilon = H_\varepsilon^{-1} H (1- \varepsilon \sqrt {- H})^{-1} u_0$, where $u_0 \in \mathscr{D} (\sqrt {- H})$ and $0 < \varepsilon <1$.  There exists a subsequence $\varepsilon_n$ such that $u_{\varepsilon_n} \to u$ and  $\sqrt{-H_{\varepsilon_n}}  u_{\varepsilon_n } \to \sqrt{-H}u$  in $L^2$.  In addition, $u$ solves \ref{eqn:2dnls}.
	\end{corollary}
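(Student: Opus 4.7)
The argument parallels the proof of Theorem~\ref{thm:2dnlsenergy}, with the added complication that both the Hamiltonian $H_\varepsilon$ and the initial datum depend on $\varepsilon$. First I would verify that $u_0^\varepsilon \in \mathscr{D}(H_\varepsilon)=\ssp^2$: by continuous functional calculus $(1+\varepsilon\sqrt{-H})^{-1}u_0 \in \mathscr{D}(H)$ (interpreting the sign appropriately, matching the earlier regularization used in Theorem~\ref{thm:2dnlsenergy}), so $H(1+\varepsilon\sqrt{-H})^{-1}u_0 \in L^2$ and $H_\varepsilon^{-1}$ maps this into $\ssp^2$ by Proposition~\ref{lem:laxmil}. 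Theorem~\ref{thm:2dnlsfix} then supplies unique global strong solutions $u_\varepsilon \in C([0,T];\ssp^2)\cap C^1([0,T];L^2)$, conserving $\|u_\varepsilon(t)\|_{L^2}$ and $E_\varepsilon(u_\varepsilon(t))= -\tfrac12\langle u_\varepsilon, H_\varepsilon u_\varepsilon\rangle + \tfrac14\|u_\varepsilon\|_{L^4}^4$.

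Next, I would establish the convergence of the initial data and of the initial energies. Using Theorem~\ref{thm:normResolventMain} (norm resolvent convergence) together with continuous functional calculus, $u_0^\varepsilon \to u_0$ in $L^2$ and $\sqrt{-H_\varepsilon}\,u_0^\varepsilon \to \sqrt{-H}\,u_0$ in $L^2$; the $L^p$-embedding in Lemma~\ref{lem:2dlpbounds} then yields $\|u_0^\varepsilon\|_{L^4}\to\|u_0\|_{L^4}$ and hence $E_\varepsilon(u_0^\varepsilon)\to E(u_0)$, the analogue of Lemma~\ref{lem:energyconv}. Conservation laws then produce, analogously to Lemma~\ref{lem:energyapriori}, the $\varepsilon$-uniform bounds
\[
\|u_\varepsilon\|_{L^\infty L^2} + \|\sqrt{-H_\varepsilon}\,u_\varepsilon\|_{L^\infty L^2} + \|(-H_\varepsilon)^{-1/2}\partial_t u_\varepsilon\|_{L^\infty L^2} \lesssim_\Xi 1+E(u_0)+E(u_0)^{3/2},
\]
where the last bound follows from the equation together with Lemma~\ref{lem:2dlpbounds} applied to the cubic term.

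With these bounds in hand, I would run a compactness argument as in Lemma~\ref{lem:energycompact}: transferring the uniform $\sqrt{-H_\varepsilon}$-bound through the $\Gamma_\varepsilon$-map of Proposition~\ref{lem:gamma} and the norm equivalence of Proposition~\ref{lem:formdom} gives a uniform $\ssp^{1-\delta}$ bound, whence compactness of the embedding $\ssp^{1-\delta}\hookrightarrow L^p$ ($p<\infty$) together with the $\partial_t$-bound produces a subsequence $u_{\varepsilon_n}$ with $u_{\varepsilon_n}(t)\to u(t)$ strongly in every $L^p$, $p<\infty$, and weakly in the energy sense, for every $t\in[0,T]$. The main obstacle is to upgrade this to the claimed strong convergence $\sqrt{-H_{\varepsilon_n}}\,u_{\varepsilon_n}\to \sqrt{-H}\,u$ in $L^2$: here I would exploit conservation of energy, namely
\[
\tfrac12\|\sqrt{-H_{\varepsilon_n}}\,u_{\varepsilon_n}(t)\|_{L^2}^2 = E_{\varepsilon_n}(u_0^{\varepsilon_n}) - \tfrac14\|u_{\varepsilon_n}(t)\|_{L^4}^4,
\]
and pass $n\to\infty$: the right-hand side tends to $E(u_0)-\tfrac14\|u(t)\|_{L^4}^4 = \tfrac12\|\sqrt{-H}\,u(t)\|_{L^2}^2$ by the above convergences, so norm convergence plus weak convergence yields strong $L^2$ convergence.

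Finally I would pass to the limit in the mild formulation
\[
u_{\varepsilon_n}(t)=e^{-itH_{\varepsilon_n}}u_0^{\varepsilon_n}+ i\int_0^t e^{i(s-t)H_{\varepsilon_n}}u_{\varepsilon_n}(s)|u_{\varepsilon_n}(s)|^2\,\mathrm{d}s.
\]
The linear evolution converges in $L^2$ by norm resolvent convergence together with the Trotter--Kato-type Theorem~VIII.21 of~\cite{reedsimon1}, and the nonlinear integrand converges in $L^2$ thanks to the strong $L^6$ compactness established above and dominated convergence; the limit identity shows that $u$ solves~\eqref{eqn:2dnls}.
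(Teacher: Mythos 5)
Your route is genuinely different from the paper's, and it has a gap at its first substantive step. The paper does not run a compactness argument with $\varepsilon$-dependent operators at all: it decouples the two regularizations, considering data $u_0^{\varepsilon,\delta}=H_\varepsilon^{-1}H(1-\delta\sqrt{-H})^{-1}u_0$. For each \emph{fixed} $\delta$ the vector $H(1-\delta\sqrt{-H})^{-1}u_0$ is a fixed element of $L^2$, so Theorem \ref{thm:2ddomnls} applies verbatim and gives $u_{\varepsilon,\delta}\to u_\delta$ together with $H_\varepsilon u_{\varepsilon,\delta}\to Hu_\delta$ and $\sqrt{-H_\varepsilon}u_{\varepsilon,\delta}\to\sqrt{-H}u_\delta$ as $\varepsilon\to0$; the limit $\delta\to0$ is then exactly the energy-space argument of Theorem \ref{thm:2dnlsenergy}, in which the operator is the fixed $H$ (only the data are smoothed); finally a diagonal extraction produces the subsequence $\varepsilon_n$ of the statement. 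The subsequence is thus not merely a compactness device: it is the mechanism by which the noise regularization is made negligible relative to the data smoothing.

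Your proposal keeps the two parameters coupled, and the claimed convergences $\sqrt{-H_\varepsilon}\,u_0^\varepsilon\to\sqrt{-H}\,u_0$ and $E_\varepsilon(u_0^\varepsilon)\to E(u_0)$ do not follow from Theorem \ref{thm:normResolventMain} plus functional calculus. Writing $v_\varepsilon:=(1+\varepsilon\sqrt{-H})^{-1}u_0$ (with your sign convention), one has $H_\varepsilon u_0^\varepsilon=Hv_\varepsilon$ by construction, hence $-\langle u_0^\varepsilon,H_\varepsilon u_0^\varepsilon\rangle=\|\sqrt{-H}v_\varepsilon\|_{L^2}^2-\langle (H_\varepsilon^{-1}-H^{-1})Hv_\varepsilon,\,Hv_\varepsilon\rangle$. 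The first term converges to $\|\sqrt{-H}u_0\|_{L^2}^2$, but the second is only controlled by $\|H_\varepsilon^{-1}-H^{-1}\|_{L^2\to L^2}\,\|Hv_\varepsilon\|_{L^2}^2$, and for $u_0\in\mathscr{D}(\sqrt{-H})$ one has in general no better than $\|Hv_\varepsilon\|_{L^2}=o(\varepsilon^{-1})$. Since Theorem \ref{thm:normResolventMain} is purely qualitative (no rate in $\varepsilon$), this cross term need not vanish, and passing to a subsequence cannot manufacture a missing rate. Likewise, the corollary following Theorem \ref{thm:normResolventMain} only gives $f(H_\varepsilon)g\to f(H)g$ for \emph{bounded} continuous $f$ and a \emph{fixed} vector $g$, so it does not cover the unbounded function $\sqrt{-\,\cdot\,}$ applied to an $\varepsilon$-dependent vector. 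Without convergence of the initial energies, your uniform a priori bounds, the identification of the weak limit of $\sqrt{-H_{\varepsilon_n}}u_{\varepsilon_n}(t)$ (itself nontrivial when the quadratic form varies with $n$), and the strong-convergence upgrade via energy conservation all lose their foundation. The repair is essentially the paper's argument: separate the smoothing parameter from the noise parameter, invoke Theorem \ref{thm:2ddomnls} for fixed smoothing, then the construction of Theorem \ref{thm:2dnlsenergy}, and conclude by a diagonal choice of the subsequence.
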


\begin{proof}
Consider the initial data $u_0^{\varepsilon, \delta} = H_\varepsilon^{-1} H (1- \delta \sqrt {- H})^{-1} u_0$.  Then, by Theorem \ref{thm:2ddomnls},  taking $\epsilon \to 0$  we obtain $u_{\delta} \in \mathscr{D} (H)$ which solves  the equation
\[
i \partial_tu_{\delta} = H u_{\delta} - u_{\delta}|u_{\delta}|^2
\]
with initial data $u_0^{\delta} = (1- \delta \sqrt {- H})^{-1} u_0 \in \mathscr{D}(H)$.  For this solution, we also  have $\sqrt{-H_{\varepsilon_n}}  u_{\varepsilon_n, \delta } \to \sqrt{-H}u_\delta $  in $L^2$ and  in particular $u_{\varepsilon_n, \delta } \to u_\delta $.  Now, as in Theorem \ref{thm:2dnlsenergy}, we take $\delta \to 0$ and obtain an energy solution to   \eqref{eqn:2dnls}.  Taking a diagonal sequence yields the stated result.
\end{proof}

In the following remarks, we compare those results with the ones in domain case.
\begin{remark}\upshape\upshape
	Note that the solution we obtain is not necessarily unique, as opposed to the solution with initial data in $\mathscr{D}(H)$.
\end{remark}
\begin{remark}\upshape\upshape
Observe that this result holds for the NLS treated here, with any power nonlinearity as in the domain case.
\end{remark}

\subsection{Two and three dimensional cubic wave equations} \label{subsec:twothreewave}

In this section, we consider the cubic wave equations
\begin{align}
	\partial^2_t u  = & H u - u^3 \ \text{on} \  \mathbbm{T}^d \label{eqn:Hnlw}\\
	(u, \partial_t u) |_{t = 0}  = & (u_0, u_1),\nonumber
\end{align}
in two and three dimensions simultaneously.

We are interested in the case
\[ (u_0, u_1) \in \mathscr{D} (H) \times \mathscr{D} ( \sqrt{- H}). \]

However as we shall see, in a similar way, we can also consider the case
\[ (u_0, u_1) \in \mathscr{D} ( \sqrt{- H}
) \times  L^2 . \]

We refer to  \cite{tao2006nonlinear} and \cite{Evans10} for classical results about well-posedness of semilinear wave equations. We obtain global strong well-posedness for a range of exponents including the standard case $p=3,$ which we will consider in detail for simplicity. In also 3d,  the range of exponents which are covered by our methods is as good as what one can achieve in the classical case with similar methods.  

We fix an approximating sequence $(u^{\varepsilon}_0, u^{\varepsilon}_1) \in \ssp^2
\times \ssp^1$ such that
\begin{align*}
H_{\varepsilon} u^{\varepsilon}_0 &\rightarrow H u_0 \ \tmop{in} \ L^2,\\
(u^{\varepsilon}_1, H_{\varepsilon} u^{\varepsilon}_1)
&\rightarrow (u_1, H u_1) . 
\end{align*}
To be precise, we  choose
\begin{align*}
u_0^\varepsilon&:=(-H_\varepsilon)^{-1}(-H) u_0\\
u_1^\varepsilon&:=(-H_\varepsilon)^{-1/2}(-H)^{1/2} u_1.
\end{align*}
We will, as in the NLS case, prove that the solution to \eqref{eqn:Hnlw} is the limit of the solutions of the regularized equations (for $d=2,3$)
\begin{align}
\partial^2_t u_{\varepsilon}  = & H_{\varepsilon} u_{\varepsilon} -
u_{\varepsilon}^3 \ \tmop{on} \  \mathbbm{T}^d \label{eqn:Hepsnlw}\\
(u_{\varepsilon}, \partial_t u_{\varepsilon}) |_{t = 0}  = &
(u^{\varepsilon}_0, u^{\varepsilon}_1),\nonumber
\end{align}
in an appropriate sense.

We begin by proving global strong wellposedness of \eqref{eqn:Hnlw} and \eqref{eqn:Hepsnlw} by a  fixed point argument as in section \ref{subsection:5.4}.

\begin{theorem}\label{thm:wavefix}
	For $(u_0,u_1)\in\mathscr{D}(H)\times\mathscr{D}(\sqrt{-H})$ and $(u^\varepsilon_0,u^\varepsilon_1)\in \ssp^2\times \ssp^1$, there exist unique global in time solutions $u\in C([0,T];\mathscr{D}(H))\cap C^1([0,T];\mathscr{D}(\sqrt{-H}))\cap C^2([0,T];L^2)$ and $u_\varepsilon\in C([0,T];\ssp^2)\cap C^1([0,T];\ssp^1)\cap C^2([0,T];L^2)$ satisfying
	\begin{align*}
	u(t) = \cos \left( t \sqrt{- H} \right) u_0 + \frac{\sin \left( t
		\sqrt{- H} \right)}{\sqrt{- H}} u_1 +\int_{0}^{t}
	\frac{\sin \left( (t-s) \sqrt{- H} \right)}{\sqrt{- H}} u^3 (s) \tmop{ds}
	\end{align*}
	and
	\begin{align*}
	u_\varepsilon(t) = \cos \left( t \sqrt{- H_\varepsilon} \right) u^\varepsilon_0 + \frac{\sin \left( t
		\sqrt{- H_\varepsilon} \right)}{\sqrt{- H_\varepsilon}} u^\varepsilon_1 + \int_{0}^{t}
	\frac{\sin \left( (t-s) \sqrt{- H_\varepsilon} \right)}{\sqrt{- H_\varepsilon}} u_\varepsilon^3 ( s) \tmop{ds}
	\end{align*}
	respectively.
\end{theorem}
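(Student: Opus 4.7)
The plan is to run a fixed-point argument on the Duhamel formulation, closely modelled on the proof of Theorem \ref{thm:2dnlsfix} but adapted for the wave equation, and to then globalize via a conserved energy together with an a priori bound on a higher-order energy. For the limiting equation I would set $X_T := C([0,T];\mathscr{D}(H))\cap C^1([0,T];\ED)\cap C^2([0,T];L^2)$ and define the Duhamel map $\Phi$ as in the statement of the theorem. Formal differentiation gives expressions for $\partial_t \Phi(u)$ and $\partial_t^2\Phi(u)$; in the latter, an integration by parts is used to move $\sqrt{-H}$ off the time integrand, producing the boundary term $u(t)^3$ and a term involving $\partial_s(u^3) = 3u^2\partial_s u$. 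On a ball of radius $M$ in $X_{T_E}$, with $M$ and $T_E$ to be chosen from the initial energy, one shows $\Phi$ is a self-contraction; the regularized equation is handled in exactly the same way, replacing $H$ by $H_\varepsilon$ throughout.

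The nonlinear estimates required for the contraction reduce to controlling $\|u^3\|_{L^2}$, $\|\partial_t(u^3)\|_{L^2}\lesssim \|u\|_{L^\infty}^2 \|\partial_t u\|_{L^2}$ and their differences. The $L^p$ norms of $u$ at intermediate exponents are controlled by the energy through Lemma \ref{lem:2dlpbounds}, while the $L^\infty$ norm is handled by Theorem \ref{lem:brgaineq} (Brezis--Gallouet) in $d=2$ and by Lemma \ref{lem:3dagmon} (Agmon) in $d=3$. Because $Hu = \partial_t^2 u + u^3$, bounding $\|\partial_t^2 u\|_{L^2}$ and $\|u\|_{L^6}$ also bounds $\|Hu\|_{L^2}$, so it is enough to control the higher-order energy $F(t):= \tfrac12\|\partial_t^2 u(t)\|_{L^2}^2 + \tfrac12\|\sqrt{-H}\partial_t u(t)\|_{L^2}^2$.

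For the globalization, I would use the conserved energy $E(u)(t) = \tfrac12\|\partial_tu\|_{L^2}^2 - \tfrac12\langle u, Hu\rangle + \tfrac14\|u\|_{L^4}^4$ to bound $\|\partial_t u\|_{L^2}$ and $\|\sqrt{-H}u\|_{L^2}$, and then differentiate the equation in time to obtain a wave equation for $\partial_t u$ with source $-3u^2\partial_t u$. The derivative of $F$ then satisfies $F'(t) = -3\int u^2 \partial_t u\,\partial_t^2 u$, which is estimated by $\|u\|_{L^\infty}^2\|\partial_t u\|_{L^2}\|\partial_t^2 u\|_{L^2}$. In $d=2$, inserting Brezis--Gallouet produces $F'(t)\lesssim_\Xi E(0)^{3/2}\bigl(1+\log(1+F(t))\bigr) F(t)^{1/2}$, which closes by a logarithmic Gronwall argument in the spirit of Lemma \ref{lem:loggronwall}. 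In $d=3$, Agmon's inequality gives $\|u\|_{L^\infty}^2 \lesssim \|Hu\|_{L^2}\|\sqrt{-H}u\|_{L^2}$, and combining with $\|Hu\|_{L^2}\lesssim F(t)^{1/2} + E(0)^{3/2}$ yields a linear-in-$F$ differential inequality, hence exponential control on bounded intervals. Either way, $F$ cannot blow up in finite time, so the local solution from Step 1 extends globally by iteration with a fixed $T_E$.

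The main obstacle is the higher-order energy estimate in $d=2$: the cubic nonlinearity is borderline with respect to $\mathscr{D}(H)$, and a naive approach fails because $\ssp^1\not\hookrightarrow L^\infty$. One is forced to rely on the logarithmic structure provided by Theorem \ref{lem:brgaineq}, and so on the precise form of the Gronwall inequality in Lemma \ref{lem:loggronwall}, to close. The approximating argument for $u_\varepsilon$ is identical, using that by Remark \ref{rem:2deps} (and its $d=3$ analogue) all the constants appearing in the $L^p$ embedding, Brezis--Gallouet and Agmon inequalities can be chosen uniform in $\varepsilon$; this already prepares the ground for the convergence $u_\varepsilon \to u$ that will be needed in Theorem \ref{thm:waveapp}.
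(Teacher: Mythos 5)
Your proposal is correct in outline and proves the same statement, but it globalizes by a genuinely different route than the paper. The paper's a priori bound does not use Brezis--Gallouet or Agmon at all: it introduces the \emph{almost conserved} energy $\tilde E(\partial_t u)=\tfrac12\|\partial_t^2u\|_{L^2}^2-\tfrac12\langle\partial_t u,H\partial_t u\rangle+\tfrac32\int|u|^2|\partial_t u|^2$, whose cross term is chosen precisely so that the problematic contribution $-3\int u^2\,\partial_t u\,\partial_t^2u$ cancels in the time derivative, leaving only $3\int u\,\partial_t u\,|\partial_t u|^2$; this is estimated by $\|\partial_t u\|_{L^2}\|u\|_{L^6}\|\partial_t u\|_{L^6}^2\lesssim_\Xi E(u_0)\,\tilde E(\partial_t u)$ using only the $L^6$ embeddings of the form domain, and a plain linear Gronwall closes, identically in $d=2$ and $d=3$. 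You instead drop the cross term from your $F$ and are then forced to control $\|u\|_{L^\infty}^2$, which is exactly why you need Brezis--Gallouet (and the logarithmic Gronwall) in $2$d and Agmon in $3$d; this does close as you describe, and it is consistent with Remark \ref{rem:wave3ddiffpower}, but the paper's choice of energy buys a unified, dimension-independent argument that avoids $L^\infty$ control in the energy estimate altogether. Your fixed-point step (Duhamel map on $C([0,T];\mathscr{D}(H))\cap C^1([0,T];\ED)\cap C^2([0,T];L^2)$, integration by parts to trade $\sqrt{-H}\sin((t-s)\sqrt{-H})$ for $\partial_s(u^3)$, $L^\infty$-embedding of the domain for the difference terms) matches the paper's.

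One point you should not gloss over: the identity $F'(t)=-3\int u^2\,\partial_t u\,\partial_t^2u$ is only formal, since $\partial_t u$ solves the differentiated equation merely in $C([0,T];\ssp^{-1})$ and $F$ is not a priori $C^1$ in time. The paper justifies the analogous identity for $\tilde E$ in integrated form via a spectral approximation (expansion in an eigenbasis of $H_\varepsilon$ and passage to the limit), and your argument needs the same justification for $F$; it is a fixable gap, but it must be addressed for the proof to be complete.
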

Before we come to the proof, we prove some auxiliary lemmas. 
We define the conserved energies  for  \eqref{eqn:Hnlw} and \eqref{eqn:Hepsnlw} respectively as
\[ E (u) \assign \frac{1}{2} \langle \partial_t u,
\partial_t u \rangle - \frac{1}{2} \langle u,
H u \rangle + \frac{1}{4} \int |
u |^4, \]
and
\[ E (u_{\varepsilon}) \assign \frac{1}{2} \langle \partial_t u_{\varepsilon},
\partial_t u_{\varepsilon} \rangle - \frac{1}{2} \langle u_{\varepsilon},
H_{\varepsilon} u_{\varepsilon} \rangle + \frac{1}{4} \int |
u_{\varepsilon} |^4. \]

Also, we introduce the {\tmem{almost conserved}} energies for the time derivatives
\[ \tilde{E} (\partial_t u) = \frac{1}{2} \langle \partial^2_t
u, \partial^2_t u \rangle - \frac{1}{2} \langle
\partial_t u, H \partial_t u
\rangle + \frac{3}{2} \int | u |^2 | \partial_t
u |^2, \]
and
\[ \tilde{E} (\partial_t u_{\varepsilon}) = \frac{1}{2} \langle \partial^2_t
u_{\varepsilon}, \partial^2_t u_{\varepsilon} \rangle - \frac{1}{2} \langle
\partial_t u_{\varepsilon}, H_{\varepsilon} \partial_t u_{\varepsilon}
\rangle + \frac{3}{2} \int | u_{\varepsilon} |^2 | \partial_t
u_{\varepsilon} |^2. \]
We clarify what we mean by almost conserved in the following lemma.

\begin{lemma}
	Let $u\in C([0,T];\mathscr{D}(H))\cap C^1([0,T];\mathscr{D}(\sqrt{-H}))\cap C^2([0,T];L^2)$ and $u_\varepsilon\in C([0,T];\ssp^2)\cap C^1([0,T];\ssp^1)\cap C^2([0,T];L^2)$ be solutions of \eqref{eqn:Hnlw} and \eqref{eqn:Hepsnlw} respectively. Then the energies $\tilde{E} (\partial_t u)$ and $\tilde{E} (\partial_t u_{\varepsilon})$ satisfy the following
	bounds
	\begin{align*}
	\tilde{E} (\partial_t u) (t) & \lesssim \exp (tC
	\tilde{E} (u_1)) E (u_0), \\
	\tilde{E} (\partial_t u_{\varepsilon}) (t) &\lesssim \exp (tC
	\tilde{E} (u^{\varepsilon}_1)) E (u^{\varepsilon}_0) ,
	\end{align*}
for some universal constant $C > 0.$

\end{lemma}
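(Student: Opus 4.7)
The plan is to differentiate $\tilde{E}(\partial_t u)$ along the flow of \eqref{eqn:Hnlw}, exploit cancellations coming from self-adjointness of $H$ and the equation itself, and then close with a Gronwall argument after estimating the residual cubic term via Hölder combined with the $L^p$-embeddings proved earlier (Lemma~\ref{lem:2dlpbounds} in $d=2$ and its analogue obtained from Proposition~\ref{lem:3dnormequiv} together with Sobolev embedding in $d=3$, which still gives $\|v\|_{L^6}\lesssim_\Xi\|v\|_{\ED}$).

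First I would differentiate each of the three summands defining $\tilde{E}(\partial_t u)$. Self-adjointness of $H$ on the middle term, and the identity $\partial_t^3 u = H\partial_t u - 3u^2\partial_t u$ (obtained by time-differentiating \eqref{eqn:Hnlw}) on the first term, produce two coincidences: the $\langle H\partial_t u,\partial_t^2 u\rangle$ contributions cancel, and so do the $\int u^2\partial_t u\,\partial_t^2 u$ contributions produced by the Leibniz rule on $\tfrac32\int u^2(\partial_t u)^2$. The surviving identity is
\begin{equation*}
\frac{d}{dt}\tilde{E}(\partial_t u) \;=\; 3\int u\,(\partial_t u)^3.
\end{equation*}

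The crux is to estimate this cubic form by a quantity linear in $\tilde{E}(\partial_t u)$. I would use the Hölder split with exponents $(6,6,6,2)$,
\begin{equation*}
\left| 3\int u(\partial_t u)^3 \right| \le 3\,\|u\|_{L^6}\|\partial_t u\|_{L^6}^{2}\|\partial_t u\|_{L^2},
\end{equation*}
and then bound $\|u\|_{L^6}\lesssim_\Xi \|u\|_{\ED}$, $\|\partial_t u\|_{L^6}\lesssim_\Xi \|\partial_t u\|_{\ED}$ by the $L^p$-embeddings. Conservation of the energy $E$ gives $\|u\|_{\ED}+\|\partial_t u\|_{L^2}\lesssim E(u_0)^{1/2}$, while by the very definition of $\tilde{E}$ we have $\|\partial_t u\|_{\ED}^2 \le 2\tilde{E}(\partial_t u)$. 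Combining,
\begin{equation*}
\frac{d}{dt}\tilde{E}(\partial_t u)(t) \;\le\; C_\Xi\, E(u_0)\, \tilde{E}(\partial_t u)(t),
\end{equation*}
and Gronwall yields $\tilde{E}(\partial_t u)(t)\le \tilde{E}(u_1)\exp(C_\Xi\,t\,E(u_0))$, which is the claimed bound up to what appears to be a swap of the two initial energies in the statement. The argument for $u_\varepsilon$ is identical with $H_\varepsilon$ and $\Xi_\varepsilon$ replacing $H$ and $\Xi$; by Remark~\ref{rem:2deps} all the constants are uniform in $\varepsilon$.

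The main subtlety is the choice of Hölder split: the naive bound $\|u\|_{L^\infty}\|\partial_t u\|_{L^3}^3$ would force Brezis--Gallouet ($d=2$) or Agmon ($d=3$) on $u$ and leave $\tilde{E}(\partial_t u)^{3/2}$ on the right, producing finite-time blow-up rather than mere exponential growth. The $(6,6,6,2)$ split avoids this by placing three of the four factors on quantities already controlled by the \emph{conserved} energy $E(u_0)$, so only one factor—squared via the two $L^6$ norms of $\partial_t u$—sees $\tilde{E}(\partial_t u)$, which is exactly what is needed for the linear Gronwall step.
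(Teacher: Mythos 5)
Your estimation scheme coincides with the paper's: the identity $\tfrac{d}{dt}\tilde{E}(\partial_t u)=3\int u\,(\partial_t u)^3$, the H\"older split $\|u\|_{L^6}\|\partial_t u\|_{L^6}^2\|\partial_t u\|_{L^2}$, the bounds $\|u\|_{L^6}^2\lesssim_\Xi E$, $\|\partial_t u\|_{L^6}^2\lesssim_\Xi\tilde{E}(\partial_t u)$, $\|\partial_t u\|_{L^2}^2\le E$, and Gronwall are exactly what the paper uses, and your remark that the two initial energies appear swapped in the statement is consistent with what the paper's own computation produces (Gronwall gives $\tilde{E}(u_1)\exp(CE(u_0)t)$).

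There is, however, a genuine gap at the very first step, and it is precisely the point the paper spends most of its proof on. At the stated regularity, $\partial_t u$ solves $\partial_t^2(\partial_t u)=H\partial_t u-3u^2\partial_t u$ only in $C([0,T];\ssp^{-1})$ (respectively in the dual of the form domain): $\partial_t^3 u$ is not an $L^2$ function and $\partial_t^2 u$ lies only in $L^2$, not in $\ED$, so neither $\langle\partial_t^2 u,\partial_t^3 u\rangle$ nor $\langle\partial_t^2 u,H\partial_t u\rangle$ is defined on its own, and the ``cancellations coming from self-adjointness'' cannot be invoked term by term; in fact $\tilde{E}(\partial_t u)$ is not known to be $C^1$ in time, so writing $\tfrac{d}{dt}\tilde{E}(\partial_t u)$ is itself unjustified. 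The paper explicitly flags this and instead proves the weak/integrated version of the identity, namely \eqref{eqn:3ddtenint}: it expands in an orthonormal eigenbasis of $H_\varepsilon$, carries out the computation for the spectral truncation $u^N_\varepsilon$ (where all pairings are legitimate), passes to the limit $N\to\infty$ against test functions in time, and only then runs the H\"older/Gronwall argument on the resulting integral inequality, which requires no differentiability of $\tilde{E}(\partial_t u)$. Your proof needs such a regularization step (spectral Galerkin truncation, or some mollification) to turn the formal differentiation into the integrated identity; once that is supplied, the rest of your argument closes as in the paper, with constants uniform in $\varepsilon$ by Remark \ref{rem:2deps}.
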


\begin{proof}
	We give the proof only for the regularized case. The other case can be done analogously by replacing $\ssp^2$ by $\mathscr{D}(H)$ and  $\ssp^1$ by ${\sqrt{-H}}$. \\
	First note that $\partial_t u_{\varepsilon}$ solves the equation
	\[ \begin{array}{lll}
	\partial^2_t \partial_t u_{\varepsilon} & = & H_{\varepsilon}
	\partial_t u_{\varepsilon} - 3 \partial_t u_{\varepsilon}
	u_{\varepsilon}^2 \ \tmop{in} \  C([0,T];\ssp^{- 1}).
	\end{array} \]
	Then one can formally compute
	\begin{eqnarray*}
		\frac{\mathd}{\tmop{dt}} \tilde{E} (\partial_t u_{\varepsilon}) (t) & = &
		\langle \partial^2_t u, \partial^3_t u_{\varepsilon} - H_{\varepsilon}
		\partial_t u_{\varepsilon} + 3 \partial_t u_{\varepsilon}
		u_{\varepsilon}^2  \rangle + 3 \int u_{\varepsilon} \partial_t
		u_{\varepsilon} | \partial_t u_{\varepsilon} |^2\\
		& = & 3 \int u_{\varepsilon} \partial_t u_{\varepsilon} | \partial_t
		u_{\varepsilon} |^2 .
	\end{eqnarray*}
	and conclude by Gronwall. However, since $\tilde{E} (\partial_t u_{\varepsilon})$ is not $C^1$ in time, this is
	not justified.  But one can argue that this
	computation is true in the integrated version. 
	We claim that we get the following weak differentiability, for any $\phi \in C_{c} ([0,
	\infty))$
	\begin{equation}
	\underset{\mathbbm{R}}{\int} \phi' (t) \tilde{E} (\partial_t
	u_{\varepsilon}) (t) \tmop{dt} = - 3 \underset{\mathbbm{R}}{\int} \phi (t)
	\int u_{\varepsilon} \partial_t u_{\varepsilon} | \partial_t
	u_{\varepsilon} |^2 (t) \tmop{dt} +\tilde{E}(\partial_tu_\varepsilon(0))\phi(0). \label{eqn:3ddtenergy}
	\end{equation}
	Moreover, this also holds in the integrated form
	\begin{equation}
	\tilde{E} (\partial_t u_{\varepsilon}) (t) = \tilde{E} (\partial_t
	u_{\varepsilon}) (0) + 3  \int_{0}^{t}
	u_{\varepsilon} \partial_t u_{\varepsilon} | \partial_t u_{\varepsilon}
	|^2 (s) \tmop{ds},  \label{eqn:3ddtenint}
	\end{equation}
	for any $t\in[0,T].$
	We prove this by a spectral approximation. For, consider $(e_n)_{n \in
		\mathbbm{Z}^3} \in \ssp^2$ an orthonormal eigenbasis of $H_{\varepsilon}$ with
	eigenvalues $\{ \lambda_n\} $ and set
	\[ u^N_{\varepsilon} (t, x) \assign \underset{| n | \le N}{\sum}
	(u_{\varepsilon} (t, \cdummy), e_n) e_n (x) . \]
	Then one has
	\begin{align*}
	\partial_t^{k}u^N_{\varepsilon}  \rightarrow  \partial^k_tu_{\varepsilon} \ \tmop{in} \ C([0,T];\ssp^{2-k})\\
\end{align*}
	for $0\le k\le 3,$
	which in turn implies that
	\[ E (u^N_{\varepsilon}) \rightarrow E (u_{\varepsilon}) \ \tmop{and}\ 
	\tilde{E} (\partial_t u^N_{\varepsilon}) \rightarrow \tilde{E}
	(\partial_t u_{\varepsilon}) . \]
	One also directly deduces
	\begin{align*}
	\partial^3_t u^N_{\varepsilon}  = H_{\varepsilon} u^N_{\varepsilon} - 3 \underset{| n | \le
		N}{\sum} (\partial_t u_{\varepsilon} u^2_{\varepsilon} (t), e_n) e_n (x) .
	\end{align*}
	Thus, we have 
	\begin{align}
	\underset{\mathbb{R}}{\int} \phi' (t) \tilde{E} (\partial_t
	u^N_{\varepsilon}) (t) \tmop{dt}  =&  - \underset{\mathbb{R}}{\int} \phi
	(t) \frac{\mathd}{\tmop{dt}} \tilde{E} (\partial_t u^N_{\varepsilon}) (t)
	\tmop{dt} +\tilde{E}(\partial_tu_\varepsilon(0))\phi(0)\nonumber\\
	=&  - \underset{\mathbb{R}}{\int} \phi (t) \left( (\partial^2_t
	u^N_{\varepsilon}, \partial^3_t u^N_{\varepsilon}) (t) - (\partial^2_t 
	u^N_{\varepsilon}, H_{\varepsilon} \partial_t u^N_{\varepsilon}) (t)\right.\nonumber\\
	& \left.  + 3
	\left( \partial^2_t u^N_{\varepsilon}, \partial_t u^N_{\varepsilon}
	(u^N_{\varepsilon})^2 \right) (t) + 3 (\partial_t u^N_{\varepsilon},
	(\partial_t u^N_{\varepsilon}) u^N_{\varepsilon}) (t) \right) \tmop{dt} \\
	&+\tilde{E}(\partial_tu_\varepsilon(0))\phi(0)\nonumber\\
	=&  - \underset{\mathbb{R}}{\int} \phi (t) [ 3 (
	\partial^2_t u^N_{\varepsilon}, \partial_t u^N_{\varepsilon}
	(u^N_{\varepsilon})^2 - \underset{| n | \le N}{\sum} (\partial_t
	u_{\varepsilon} u^2_{\varepsilon}, e_n) e_n (x) ) (t) \nonumber\\  &+  3
	(\partial_t u^N_{\varepsilon}, (\partial_t u^N_{\varepsilon})^2
	u^N_{\varepsilon}) (t) ] \tmop{dt} +\tilde{E}(\partial_tu_\varepsilon(0))\phi(0). \hspace{3em}
	\label{eqn:dtenergyN}
	\end{align}
	Now,  we can write
	\begin{eqnarray*}
		\partial_t u^N_{\varepsilon} (u^N_{\varepsilon})^2 & \rightarrow &
		\partial_t u_{\varepsilon} (u_{\varepsilon})^2\  \ \tmop{in} \  L^2\\
		& \tmop{and} & \\
		\underset{| n | \le N}{\sum} (\partial_t u_{\varepsilon}
		u^2_{\varepsilon}, e_n) e_n (x) & \rightarrow & \partial_t u_{\varepsilon}
		(u_{\varepsilon})^2
		 \ \tmop{in} \ L^2.
	\end{eqnarray*}
	Therefore,  we see that for $N \rightarrow \infty$ (\ref{eqn:dtenergyN}) converges
	to (\ref{eqn:3ddtenergy}).
	To prove (\ref{eqn:3ddtenint}), it suffices to take a sequence $\phi_n$ in
	(\ref{eqn:3ddtenergy}) that converges to the characteristic function
	$\chi_{[0, t]}$ monotonically.	
	We can thus compute
	\begin{align*}
	\tilde{E} (\partial_t u_{\varepsilon}) (t)  \le & \tilde{E}
	(u_1^{\varepsilon}) + 3
	\int_{0}^{t} \int |
		u_{\varepsilon} (s) | | \partial_t u_{\varepsilon} (s) | | \partial_t
		u_{\varepsilon} |^2 (s) \tmop{ds}\\
	\le & \tilde{E} (u_1^{\varepsilon}) + 3
	\int_{0}^{t} \| \partial_t u_{\varepsilon} \|_{L^2} \|
	u_{\varepsilon} \|_{L^6} \| \partial_t u_{\varepsilon} \|^2_{L^6} (s)
	\tmop{ds}\\
	\le & \tilde{E} (u_1^{\varepsilon}) + 3 CE
	(u_0^{\varepsilon}) \int_{0}^{t} \tilde{E}
		(\partial_t u_{\varepsilon}) (s) \tmop{ds},
	\end{align*}
	where we have used the bounds
	\begin{equation*}
	\| \partial_t u_{\varepsilon} \|^2_{L^2} \le E (u_{\varepsilon}), \qquad
	\| u_{\varepsilon} \|^2_{L^6} 
	 \lesssim_\Xi E (u_{\varepsilon}), \qquad
	 \| \partial_t u_{\varepsilon}
	\|^2_{L^6} \lesssim_\Xi
	\tilde{E} (\partial_t u_{\varepsilon}) . 
	\end{equation*}
	From this, we conclude by using Gronwall. 
\end{proof}

\begin{proof}[Proof of Theorem \ref{thm:wavefix}]
	This is similar to the NLS case (Section \ref{subsection:5.4}), except that the time $T_E$ is going to depend
	on the conserved energy $E (u)$ and the almost conserved energy $\tilde{E}
	(\partial_t u) .$ We again give the proof only for the $\mathscr{D}(H)$ case,  as the $\ssp^2$ case can be proved in a similar way.\\
	We claim that for $(u_0, u_1) \in \mathscr{D} (H) \times \mathscr{D}
	\left( \sqrt{- H} \right)$ there exists a unique fixed point of
	\begin{align*}
	\Phi (u)(t)= \cos \left( t \sqrt{- H} \right) u_0 + \frac{\sin \left( t
		\sqrt{- H} \right)}{\sqrt{- H}} u_1 + \int_{0}^{t}
	\frac{\sin \left( (t-s) \sqrt{- H} \right)}{\sqrt{- H}} u^3 ( s) \tmop{ds}
	\end{align*}
	in $X = C ([0, T] ; \mathscr{D} (H)) \cap C^1 \left( [0, T] ; \mathscr{D}
	\left( \sqrt{- H} \right) \right) \cap C^2 ([0, T] ; L^2)$.\\
	For the contraction property, we compute the following,  for $\|u\|_X\le M$ with $M>0$ fixed later, 
	\begin{align*}
	& \| H \Phi (u) (t) - H \Phi (v) (t) \|_{L^2} = \\
	=&\| \int_{0}^{t} \sqrt{-H} \sin((t-s)\sqrt{-H})(u^3(s)-v^3(s)) \tmop{ds} \|_{L^2}
	\\ 
	=&\| \int_{0}^{t} \partial_s( \cos((t-s)\sqrt{-H}))(u^3(s)-v^3(s)) \tmop{ds} \|_{L^2}\\
	=&\| \int_{0}^{t}  \cos((t-s)\sqrt{-H})\partial_s(u^3(s)-v^3(s)) \tmop{ds}+v^3(t)-u^3(t) \|_{L^2}
	\\ \le &2
	\int_{0}^{t} \| \partial_t (u^3 - v^3) (s) \|_{L^2}\\
	\le & 6 \int_{0}^{t} \| \partial_t u -
	\partial_t v \|_{L^{\infty} L^6} \| u \|^2_{L^6} (s) + \| \partial_t v
	\|_{L^{\infty} L^4} \| u - v \|_{L^{\infty} L^{\infty}} (\| u \|_{L^4} (s) +
	\| v \|_{L^4} (s)) \tmop{ds}\\
	\le & C \| u - v \|_X \int_{0}^{t} (E (u) (s) +
	M E^{1 / 2} (u) (s) + M E^{1 / 2} (v) (s)) \tmop{ds}\\
	< & \frac{1}{3} \| u - v \|_X
	\end{align*}
	for small enough time depending on the energy and $M$. Here we have used the bounds
	$\| \partial_t u \|_{L^6} \lesssim \left\| \sqrt{- H} \partial_t u
	\right\|_{L^2}$ and $\| u \|_{L^4} \lesssim E^{1/2} (u)$.
	For the other terms, we similarly compute 
	\begin{align*}
	\left\| \sqrt{- H} \partial_t \Phi (u) (t) - \sqrt{- H} \partial_t \Phi (v)
	(t) \right\|_{L^2}  \le &2\int_{0}^{t} \|
	\partial_t (u^3 - v^3) (s) \|_{L^2}\\
	< & \frac{1}{3} \| u - v \|_X
	\end{align*}
	and
	\begin{align*}
\| \partial^2_t \Phi (u) (t) - \partial_t^2 \Phi (v) (t) \|_{L^2}  < & \frac{1}{3} \| u - v \|_X .
	\end{align*}
	Lastly, we argue that $\Phi$ maps a ball to itself. Let $\| u \|_X \le M$
	for $M$ specified below, then we have
	\begin{align*}
	\| H \Phi (u) (t) \|_{L^2}  \lesssim & \| H u_0 \|_{L^2} + \| (- H)^{1 / 2}
	u_1 \|_{L^2} +\int_{0}^{t} \| \partial_t u \|_{L^6} (s)
	\| u \|^2_{L^6} (s) \tmop{ds}\\
	\lesssim & \| H u_0 \|_{L^2} + \| (- H)^{1 / 2} u_1 \|_{L^2} +
	\int_{0}^{t} \tilde{E}^{1 / 2} (\partial_t u) (s) E (u)
	(s) \tmop{ds}\\
	\le & \frac{M}{3}
	\end{align*}
	for large $M$ depending on the data and $t \le T_E$,  small depending on $E
	(u)$ and $\tilde{E} (\partial_t u)$.
	Analogously, we also have
	\[ \| \partial^2_t \Phi (u) \|_{L^{\infty} L^2} \le \frac{M}{3} \quad
	\tmop{and} \quad \left\| \sqrt{- H} \partial_t \Phi (u)
	\right\|_{L^{\infty} L^2} \le \frac{M}{3} . \]
	Moreover, the time regularity is again a consequence of Stone's Theorem. Thus there exists a unique strong solution up to the time $T_E$ that depends on (almost)
	conserved quantities and we can conclude that this yields a strong solution up to
	any time.  More precisely, we get a priori estimates that allow us to choose a globally valid $M>0$ and then iterate the solution map to obtain a solution up to any given time $T>0.$\\
	Assuming we have a solution on the interval $[0,T_E],$ then we can estimate similarly to above as,
	\begin{align*}
	\|H u(T_E)\|_{L^2}\lesssim & \| H u_0 \|_{L^2} + \| (- H)^{1 / 2} u_1 \|_{L^2} +
	\int_{0}^{T_E} \tilde{E}^{1 / 2} (\partial_t u) (s) E (u)
	(s) \tmop{ds}\\
	\lesssim& \| H u_0 \|_{L^2} + \| (- H)^{1 / 2} u_1 \|_{L^2} +
	T \exp(CT\tilde{E}(u_1))  E^{3/2} (u_0)
	\end{align*}
	and also  similarly for $\|\sqrt{-H} \partial_t u(T_E)\|_{L^2}.$ Thus we can choose $M$ globally and solve on the interval $[T_E,2T_E]$  and so on.
	
\end{proof}

From the above considerations, we obtain a priori bounds for the quantities
\[ \| u_{\varepsilon} \|_{L^\infty L^2}, \| H_{\varepsilon} u_{\varepsilon} \|_{L^\infty L^2}
\ \tmop{and} \ \sup\limits_{t\in[0,T]}(\partial_t u_{\varepsilon}, H_\varepsilon \partial_t u_{\varepsilon}), \]
independently of $\varepsilon$. By the same arguments, as in the
previous sections, we can also prove convergence of the approximate solutions.
\begin{theorem} \label{thm:waveapp}
	Assume we are in the above setting, i.e. we have unique global strong solutions to \eqref{eqn:Hnlw} and \eqref{eqn:Hepsnlw} and the initial data are given by $(u_0,u_1)\in\mathscr{D}(H)\times\mathscr{D}(\sqrt{-H})$ and 
	\begin{align*}
	u_0^\varepsilon&:=(-H_\varepsilon)^{-1}(-H) u_0\\
	u_1^\varepsilon&:=(-H_\varepsilon)^{-1/2}(-H)^{1/2} u_1.
	\end{align*}
	Then the solutions $u_\varepsilon$ converge to $u$ in the following way
		\begin{align*}
		u_{\varepsilon}(t)&\to u(t) \text{ in }L^2\\
		H_\varepsilon u_{\varepsilon}(t) &\to H u(t) \text{ in }L^2\\
		(-H_\varepsilon)^{1/2}\partial_tu_{\varepsilon}(t)&\to (-H)^{1/2}\partial_tu(t) \text{ in }L^2\\
		\partial_t^2u_{\varepsilon}(t)&\to \partial^2_tu(t) \text{ in }L^2
		\end{align*} 
		for all $t\in[0,T].$
\end{theorem}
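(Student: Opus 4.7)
The plan is to mirror the strategy used for the Schr\"odinger case in Theorem \ref{thm:2ddomnls}, working at the level of the Duhamel formulation and combining three ingredients: (i) the norm-resolvent convergence $H_\varepsilon \to H$ (Theorems \ref{thm:normResolventMain} and \ref{thm:normResolventMain3d}), which through the continuous functional calculus upgrades to strong operator convergence on $L^2$ of $\cos(t\sqrt{-H_\varepsilon})$ and $\sin(t\sqrt{-H_\varepsilon})/\sqrt{-H_\varepsilon}$ to their limit counterparts for every fixed $t$; (ii) the identities $H_\varepsilon u_0^\varepsilon = H u_0$ and $\sqrt{-H_\varepsilon}\,u_1^\varepsilon = \sqrt{-H}\,u_1$, built into the choice of regularization, so the two free parts agree on the noise; (iii) the $\varepsilon$-independent a priori bounds on $\|H_\varepsilon u_\varepsilon\|_{L^2}$, $\|\sqrt{-H_\varepsilon}\,\partial_t u_\varepsilon\|_{L^2}$ and $\|\partial_t^2 u_\varepsilon\|_{L^2}$ obtained in the proof of Theorem \ref{thm:wavefix} via the conserved energy $E(u_\varepsilon)$ and the almost-conserved $\tilde E(\partial_t u_\varepsilon)$.

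First I would apply $H_\varepsilon$, resp.\ $H$, to the Duhamel representation and use the integration-by-parts identity $\sqrt{-H}\sin((t-s)\sqrt{-H}) = -\partial_s \cos((t-s)\sqrt{-H})$ to rewrite
\[
H\!\int_0^t \frac{\sin((t-s)\sqrt{-H})}{\sqrt{-H}}\,u^3(s)\,ds = \int_0^t \cos((t-s)\sqrt{-H})\,\partial_s(u^3)(s)\,ds + u^3(t) - \cos(t\sqrt{-H})\,u_0^3,
\]
and analogously at the $\varepsilon$-level, so that the cubic nonlinearity enters only through $\partial_t(u^3)$ and through boundary values. After forming the difference $H u(t) - H_\varepsilon u_\varepsilon(t)$ and inserting/subtracting cross-terms, one splits the result into (a) linear contributions of the form $(\cos(t\sqrt{-H}) - \cos(t\sqrt{-H_\varepsilon}))\,H u_0$ and similar, which vanish in $L^2$ by (i)--(ii); (b) cubic differences $u^3(t) - u_\varepsilon^3(t)$ and $\partial_t(u^3) - \partial_t(u_\varepsilon^3)$, which factorize as $\|u - u_\varepsilon\|_{L^\infty}$ times $L^6$-norms bounded uniformly in $\varepsilon$ by the a priori estimates. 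Proceeding identically for $\sqrt{-H}\partial_t u - \sqrt{-H_\varepsilon}\partial_t u_\varepsilon$, $\partial_t^2 u - \partial_t^2 u_\varepsilon$ and $u - u_\varepsilon$ in $L^2$, and assembling them into
\[
\phi_\varepsilon(t) := \|u - u_\varepsilon\|_{L^2} + \|H u - H_\varepsilon u_\varepsilon\|_{L^2} + \|\sqrt{-H}\,\partial_t u - \sqrt{-H_\varepsilon}\,\partial_t u_\varepsilon\|_{L^2} + \|\partial_t^2 u - \partial_t^2 u_\varepsilon\|_{L^2},
\]
one arrives at $\phi_\varepsilon(t) \le o_\varepsilon(1) + C(T, u_0, u_1, \Xi)\int_0^t \phi_\varepsilon(s)\,ds$, and Gr\"onwall closes the argument at every $t \in [0,T]$.

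The crucial auxiliary step, as in the NLS case of Theorem \ref{thm:2ddomnls}, is upgrading a difference in the domain norm to an $L^\infty$ difference for the cubic term. Using Proposition \ref{lem:gamma} together with the embedding $\ssp^2 \hookrightarrow L^\infty$ in 2d (Lemma \ref{lem:embedinfty}) one gets
\[
\|u(t) - u_\varepsilon(t)\|_{L^\infty} \lesssim_\Xi \|H u(t) - H_\varepsilon u_\varepsilon(t)\|_{L^2} + \|u(t) - u_\varepsilon(t)\|_{L^2} + C(T,u_0,u_1)\,\|\Xi - \Xi_\varepsilon\|_{\mathscr{X}^\alpha},
\]
while in 3d one invokes the Agmon-type inequality of Lemma \ref{lem:3dagmon} in combination with Proposition \ref{lem:gamma3} and the $\varepsilon$-uniform a priori bounds to produce an analogous inequality; this is precisely what keeps the cubic term linear in $\phi_\varepsilon$ when one runs Gr\"onwall. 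The main obstacle I foresee is exactly this 3d $L^\infty$ control, since Agmon is borderline and must be balanced against the almost-conservation of $\tilde E(\partial_t u_\varepsilon)$; in particular one must verify that the exponential growth in Gr\"onwall in the proof of Theorem \ref{thm:wavefix} yields constants that are $\varepsilon$-uniform (which follows from $\|\Xi_\varepsilon\|_{\mathscr{X}^\alpha} \le \|\Xi\|_{\mathscr{X}^\alpha}$ and Remark \ref{rem:2deps} and its 3d analogue), so no $\varepsilon$-dependent blow-up is hidden in the Gr\"onwall factor.
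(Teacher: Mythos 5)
Your overall architecture is the same as the paper's: work with the mild formulation, integrate by parts so the nonlinearity enters through $\partial_t(u^3)$, use the choice $H_\varepsilon u_0^\varepsilon=Hu_0$, $\sqrt{-H_\varepsilon}u_1^\varepsilon=\sqrt{-H}u_1$ together with strong convergence of the wave propagators (from norm resolvent convergence) to kill the linear and boundary contributions, invoke the $\varepsilon$-uniform a priori bounds from Theorem \ref{thm:wavefix}, and close with a Gr\"onwall estimate for the combined quantity $\phi_\varepsilon$. However, there is a genuine gap in step (b): the difference $\partial_t(u^3)-\partial_t(u_\varepsilon^3)=3u^2(\partial_t u-\partial_t u_\varepsilon)+3(u^2-u_\varepsilon^2)\partial_t u_\varepsilon$ does \emph{not} ``factorize as $\|u-u_\varepsilon\|_{L^\infty}$ times uniformly bounded $L^6$-norms'': the first summand contains the difference of time derivatives, which cannot be expressed through $u-u_\varepsilon$ at all. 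To run Gr\"onwall you must dominate this term by $\phi_\varepsilon$, and the missing ingredient is precisely the paper's estimate
\begin{align*}
\|\partial_t u(s)-\partial_t u_\varepsilon(s)\|_{L^6}\lesssim_\Xi \|\sqrt{-H}\,\partial_t u(s)-\sqrt{-H_\varepsilon}\,\partial_t u_\varepsilon(s)\|_{L^2}+C(u_0,u_1,T)\,\|\Xi-\Xi_\varepsilon\|_{\mathscr{X}^\alpha},
\end{align*}
obtained by comparing the sharp parts in $\ssp^1$ (Sobolev embedding $\ssp^1\hookrightarrow L^6$, Propositions \ref{lem:gamma} and \ref{lem:gamma3}, and the analogues of Propositions \ref{lem:2dh1bound} and \ref{lem:3dh1}). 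Without such a bound the cubic term is not linear in $\phi_\varepsilon$ and the Gr\"onwall argument does not close; alternatively pairing $\|u\|_{L^\infty}^2$ with $\|\partial_t u-\partial_t u_\varepsilon\|_{L^2}$ also fails as stated, since that $L^2$ difference is not among your $\phi_\varepsilon$-terms and relating it to the energy difference requires a further operator-comparison argument you do not supply.

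A secondary remark: your reliance on $L^\infty$ control of $u-u_\varepsilon$ (Lemma \ref{lem:embedinfty} in 2d, Lemma \ref{lem:3dagmon} in 3d) is workable for the piece $(u^2-u_\varepsilon^2)\partial_t u_\varepsilon$ via the sharp-level bound you quote, but it is not what the paper does and it is not needed; the paper estimates all difference terms in $L^6$ (e.g. $\|u(s)-u_\varepsilon(s)\|_{L^6}\lesssim_\Xi\|Hu(s)-H_\varepsilon u_\varepsilon(s)\|_{L^2}+C\|\Xi-\Xi_\varepsilon\|_{\mathscr{X}^\alpha}$), which treats $d=2$ and $d=3$ uniformly and avoids applying an Agmon-type inequality to a difference of elements lying in the domains of two \emph{different} operators — the delicate point you yourself flag at the end. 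Once you replace your factorization claim by the $L^6$ estimates for both $u-u_\varepsilon$ and $\partial_t u-\partial_t u_\varepsilon$, your argument coincides with the paper's proof.
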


\begin{proof}
The proof is similar to that of Theorem \ref{thm:2ddomnls}. Since we have strong convergence for the initial data, together with the fact that $\frac{\sin(t\sqrt{-H_\varepsilon})}{\sqrt{-H_\varepsilon}}\to\frac{\sin(t\sqrt{-H})}{\sqrt{-H}}$ strongly, we can bound for any fixed time $t\in[0,T]$ using the mild formulation for $u$ and $u_\varepsilon$.  We have
\begin{align*}
& \|Hu(t)-H_\varepsilon u_\varepsilon(t)\|_{L^2} \\ \lesssim& O(\varepsilon)+\int_{0}^{t}\|\partial_s(u^3)(s)-\partial_s(u^3_\varepsilon)(s)\|_{L^2}\mathd s\\
\lesssim& O(\varepsilon)+\int_{0}^{t}\|\partial_su(s)-\partial_su_\varepsilon(s)\|_{L^6}\|u\|^2_{L^\infty L^6}\\&+\|u(s)-u_\varepsilon(s)\|_{L^6}(\|u\|_{L^\infty L^6}+\|u_\varepsilon\|_{L^\infty L^6})\|\partial_t u_\varepsilon\|_{L^\infty L^6}\mathd s\\
\lesssim& O(\varepsilon)+C(T,u_0,u_1)\int_{0}^{t}\|\sqrt{-H}\partial_su(s)-\sqrt{-H_\varepsilon}\partial_su_\varepsilon(s)\|_{L^2}+\|Hu(s)-H_\varepsilon u_\varepsilon(s)\|_{L^2}\mathd s.
\end{align*}
Here,  we have used the a priori bounds obtained in Theorem \ref{thm:wavefix} and the estimate
\begin{align*}
\|\partial_t u(s)-\partial_t u_\varepsilon(s)\|_{L^6} & \lesssim_\Xi\|\partial_t u^\sharp(s)-\partial_t u^\sharp_\varepsilon(s)\|_{\ssp^1} \\
&\lesssim_\Xi \|\sqrt{-H}\partial_t u(s)-\sqrt{-H_\varepsilon}\partial_t u_\varepsilon(s)\|_{L^2}+C(u_0,u_1,T)\|\Xi-\Xi_\varepsilon\|_{\mathscr{X}^\alpha},
\end{align*}
where the first estimate follows by Sobolev embedding and Proposition \ref{lem:gamma} and \ref{lem:gamma3}. The second one can be proved analogously to Proposition \ref{lem:2dh1bound} and \ref{lem:3dh1} for 2 and 3d respectively. In a similar manner, we have the bound
\begin{align*}
\| u(s)-u_\varepsilon(s)\|_{L^6}\lesssim_\Xi \|Hu(s)-H_\varepsilon u_\varepsilon(s)\|_{L^2}+C(u_0,u_1,T)\|\Xi-\Xi_\varepsilon\|_{\mathscr{X}^\alpha}.
\end{align*}
Analogously we can also write
\begin{align*}
\|\sqrt{-H}\partial_tu(t)-\sqrt{-H_\varepsilon}\partial_tu_\varepsilon(t)\|_{L^2}\lesssim& O(\varepsilon)+C(T,u_0,u_1)\int_{0}^{t}\|\sqrt{-H}\partial_su(s)-\sqrt{-H_\varepsilon}\partial_su_\varepsilon(s)\|_{L^2}\\&+\|Hu(s)-H_\varepsilon u_\varepsilon(s)\|_{L^2}\mathd s,\\
\|\partial^2_tu(t)-\partial^2_tu_\varepsilon(t)\|_{L^2}\lesssim& O(\varepsilon)+C(T,u_0,u_1)\int_{0}^{t}\|\sqrt{-H}\partial_su(s)-\sqrt{-H_\varepsilon}\partial_su_\varepsilon(s)\|_{L^2}\\&+\|Hu(s)-H_\varepsilon u_\varepsilon(s)\|_{L^2}\mathd s\\
\|u(t)-u_\varepsilon(t)\|_{L^2}\lesssim& O(\varepsilon)+C(T,u_0,u_1)\int_{0}^{t}\|Hu(s)-H_\varepsilon u_\varepsilon(s)\|_{L^2}\mathd s.
\end{align*}
Thus, by defining 
\begin{align*}
 \phi_\varepsilon(t) &:= \|Hu(t)-H_\varepsilon u_\varepsilon(t)\|_{L^2}+\|\sqrt{-H}\partial_tu(t)-\sqrt{-H_\varepsilon}\partial_tu_\varepsilon(t)\|_{L^2}\\ &+\|\partial^2_tu(t)-\partial^2_tu_\varepsilon(t)\|_{L^2}+\|u(t)-u_\varepsilon(t)\|_{L^2},
\end{align*}
we can rewrite the above estimates as
\[
\phi_\varepsilon(t)\le O(\varepsilon)+C(T,u_0,u_1)\int_{0}^{t}\phi_\varepsilon(s)\mathd s
\]
and conclude by Gronwall that $\phi_\varepsilon(t)\to0$ as $\varepsilon\to 0$ for all $t\in[0,T].$\\
Hence, the result.
\end{proof}
Lastly, we state the analogous result for the energy space, i.e. with data $(u_0,u_1)\in\mathscr{D}(\sqrt{-H})\times L^2.$ In a nutshell, one can repeat the above arguments. For global well-posedness, one can use a fixed point argument in the space $C([0,T];\mathscr{D}(\sqrt{-H}))\cap C^1([0,T];L^2)\cap C^2([0,T];\mathscr{D}(\sqrt{-H})^\ast)$ together with energy conservation and  convergence can also be proved as above.  We omit the proofs.
\begin{theorem} \label{thm:waveenergy}
Let $(u_0,u_1)\in\mathscr{D}(\sqrt{-H})\times L^2$ and $T>0$, then \eqref{eqn:Hnlw} has a unique solution $u\in C([0,T];\mathscr{D}(\sqrt{-H}))\cap C^1([0,T];L^2)\cap C^2([0,T];\mathscr{D}(\sqrt{-H})^\ast)$. Moreover, \eqref{eqn:Hepsnlw} has a unique solution $u_\varepsilon\in C([0,T];\ssp^1)\cap C^1([0,T];L^2)\cap C^2([0,T];\ssp^{-1})$ with initial data $(u^\varepsilon_0,u_1)\in \ssp^1\times L^2$, where $u^\varepsilon_0:=(-H_\varepsilon)^{-1/2}(-H)^{1/2}u_0$ and the following convergence holds
\begin{align*}
u_\varepsilon(t)&\to u(t) \text{ in }L^2\\
 \sqrt{-H_\varepsilon}u_\varepsilon(t)&\to \sqrt{-H}u(t) \text{ in }L^2\\
 \partial_tu_\varepsilon(t)&\to \partial_tu(t) \text{ in }L^2
\end{align*}
for all $t\in[0,T].$
\end{theorem}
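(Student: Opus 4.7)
My plan is to mirror the strategy used in Theorems \ref{thm:wavefix} and \ref{thm:waveapp}, but working one derivative lower throughout. First, I would recast the problem in Duhamel form,
\[
u(t)=\cos(t\sqrt{-H})u_0+\frac{\sin(t\sqrt{-H})}{\sqrt{-H}}u_1+\int_0^t\frac{\sin((t-s)\sqrt{-H})}{\sqrt{-H}}u^3(s)\,\mathrm{d}s,
\]
and run a Banach fixed point in $X_{T_E}:=C([0,T_E];\mathscr{D}(\sqrt{-H}))\cap C^1([0,T_E];L^2)$ for a ball of radius $M$. The nonlinear contribution is estimated via the trilinear bound $\|u^3-v^3\|_{L^2}\lesssim \|u-v\|_{L^6}(\|u\|_{L^6}^2+\|v\|_{L^6}^2)$ together with the $L^6$-embedding $\|u\|_{L^6}\lesssim_\Xi\|\sqrt{-H}u\|_{L^2}$ from Lemma \ref{lem:2dlpbounds} (and its 3d analogue, which follows from $\mathscr{H}^1\hookrightarrow L^6$ via Proposition \ref{lem:3dnormequiv} and Proposition \ref{lem:gamma3}). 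This gives contraction for $T_E=T_E(M)$ small enough. Time regularity of the linear flow comes from Stone's theorem, and the $C^2([0,T_E];\mathscr{D}(\sqrt{-H})^\ast)$ regularity is then read off directly from the equation $\partial_t^2u=Hu-u^3$, using $Hu\in\mathscr{D}(\sqrt{-H})^\ast$ and $u^3\in L^2\hookrightarrow\mathscr{D}(\sqrt{-H})^\ast$.

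Next I would use conservation of the energy
\[
E(u)=\tfrac{1}{2}\|\partial_tu\|_{L^2}^2-\tfrac{1}{2}\langle u,Hu\rangle+\tfrac{1}{4}\int|u|^4
\]
(which one justifies at this regularity by a spectral approximation exactly as in the proof preceding Theorem \ref{thm:wavefix}) to obtain the a priori bounds
\[
\|\partial_tu(t)\|_{L^2}^2+\|\sqrt{-H}u(t)\|_{L^2}^2+\|u(t)\|_{L^4}^4\le 4E(u_0),
\]
and analogously for $u_\varepsilon$ with constants independent of $\varepsilon$ (using Remark \ref{rem:2deps} and its 3d counterpart, plus convergence of the energies analogous to Lemma \ref{lem:energyconv}). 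These uniform bounds on $\|u\|_{X_{T_E}}$ allow the local-in-time solution to be extended by iteration to the entire interval $[0,T]$, and give existence of $u_\varepsilon\in C([0,T];\ssp^1)\cap C^1([0,T];L^2)\cap C^2([0,T];\ssp^{-1})$ with the same construction applied to $H_\varepsilon$. Uniqueness at this regularity follows directly from the contraction estimate applied to the difference of any two putative solutions on short intervals, iterated using the uniform energy bound.

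For the convergence statement, I would reproduce the Gronwall scheme of Theorem \ref{thm:waveapp} one level lower. Writing the mild formulations for $u$ and $u_\varepsilon$, subtracting, and using the strong convergences $u_0^\varepsilon\to u_0$ in $\mathscr{D}(\sqrt{-H})$ and $(-H_\varepsilon)^{-1/2}(-H)^{1/2}\to\mathrm{id}$ guaranteed by Theorems \ref{thm:normResolventMain} and \ref{thm:normResolventMain3d} together with the functional calculus (Corollary after Theorem \ref{thm:normResolventMain}), the only nontrivial term is
\[
\int_0^t\frac{\sin((t-s)\sqrt{-H})}{\sqrt{-H}}u^3(s)\,\mathrm{d}s-\int_0^t\frac{\sin((t-s)\sqrt{-H_\varepsilon})}{\sqrt{-H_\varepsilon}}u_\varepsilon^3(s)\,\mathrm{d}s,
\]
which I would split into a part controlled by operator convergence on the fixed function $u^3$ and a part controlled by $\|u^3(s)-u_\varepsilon^3(s)\|_{\mathscr{D}(\sqrt{-H})^\ast}$. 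Using $\|u^3-u_\varepsilon^3\|_{L^{6/5}}\lesssim\|u-u_\varepsilon\|_{L^6}(\|u\|_{L^6}^2+\|u_\varepsilon\|_{L^6}^2)$ and the embedding $L^{6/5}\hookrightarrow\mathscr{D}(\sqrt{-H})^\ast$ (again through the $\Gamma$-map and Proposition \ref{lem:formdom}/\ref{lem:3dnormequiv}), this is bounded by the uniform energy constant times $\|\sqrt{-H}u-\sqrt{-H_\varepsilon}u_\varepsilon\|_{L^2}$ up to a term $\lesssim \|\Xi_\varepsilon-\Xi\|_{\mathscr{X}^\alpha}$. Setting
\[
\phi_\varepsilon(t):=\|u(t)-u_\varepsilon(t)\|_{L^2}+\|\sqrt{-H}u(t)-\sqrt{-H_\varepsilon}u_\varepsilon(t)\|_{L^2}+\|\partial_tu(t)-\partial_tu_\varepsilon(t)\|_{L^2},
\]
one arrives at $\phi_\varepsilon(t)\le o(1)+C(T,u_0,u_1,\Xi)\int_0^t\phi_\varepsilon(s)\,\mathrm{d}s$ and concludes by Gronwall.

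The main obstacle, and the reason the proof is nontrivial even though the authors omit it, is the energy-critical nature of the cubic nonlinearity in $d=3$: the a priori bound on $\|u\|_{L^6}$ coming from the energy is exactly at the same scaling as what the trilinear estimate consumes, so the fixed-point radius $M$ and the time $T_E(M)$ must be balanced via energy conservation rather than by room-to-spare Sobolev inequalities. One must verify carefully that after using $\|u\|_{\mathscr{D}(\sqrt{-H})}\lesssim E(u_0)^{1/2}$ to fix $M$, the resulting $T_E$ is independent of the current starting time and depends only on conserved quantities and the enhanced noise, so that iteration reaches any prescribed horizon $T>0$; the same care is needed to keep $\varepsilon$-uniform estimates for $u_\varepsilon$ when passing to the limit.
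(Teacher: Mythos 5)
Your proposal follows exactly the route the paper intends for Theorem \ref{thm:waveenergy} (whose proof it explicitly omits): a fixed point in $C([0,T];\ED)\cap C^1([0,T];L^2)$ using the $L^6$-embedding of the form domain, energy conservation to iterate globally, and the Gronwall comparison scheme of Theorem \ref{thm:waveapp} run one derivative lower, so it is essentially correct and the same approach. Two minor corrections: to close the Gronwall bound for $\|\sqrt{-H}u(t)-\sqrt{-H_\varepsilon}u_\varepsilon(t)\|_{L^2}$ you should estimate the nonlinearity difference in $L^2$, namely $\|u^3(s)-u_\varepsilon^3(s)\|_{L^2}\lesssim\|u(s)-u_\varepsilon(s)\|_{L^6}(\|u\|_{L^6}^2+\|u_\varepsilon\|_{L^6}^2)$, rather than only in $L^{6/5}\hookrightarrow\ED^{\ast}$, since $\sin((t-s)\sqrt{-H_\varepsilon})$ gains no regularity and the dual-norm bound would only yield the $L^2$-level convergence of $u_\varepsilon$; and the cubic wave equation in $d=3$ is energy-subcritical (quintic is the critical power), so the only sharpness involved is the endpoint embedding $\mathscr{H}^1\hookrightarrow L^6$, not a scaling-critical balance.
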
 

\begin{remark}\upshape\label{rem:wave3ddiffpower}\upshape
The same result is also true in 2d for any power $p\in(1,\infty)$ both for the domain and energy space case. In 3d, our proof for global wellposedness also works for powers up to 5 in the domain case using an analogue of Agmon's inequality, which we included for completeness as Lemma \ref{lem:3dagmon}.
\end{remark}

\appendix
\section{Paracontrolled distributions and function spaces}
We recall the definitions of Bony paraproducts, Besov and Sobolev spaces 
and collect some results about products of distributions.
We work on the d-dimensional torus $\mathbb{T}^d:= \mathbb{R}^d/\mathbb{Z}^d$ for $d=2,3$.   
For any $f$ in the space $\mathscr {S}' ( \mathbb T^d,\mathbb R)$ of tempered distributions on $\mathbb{T}^d$, the Fourier transform of $f$ will be 
denoted by
$\hat{f} : \mathbb{Z}^d\to \mathbb{C}$ (or sometimes $\mathscr{F}f$) and is defined for $k\in \mathbb{Z}^d$ by
\begin{align*}
\hat{f}(k) := \langle f,\exp(2\pi i \langle k, \cdot\rangle) = \int_{\mathbb{T}^d} f(x) \exp(- 2\pi i\langle k,x\rangle) dx. 
\end{align*}
Recall that for any $f\in L^2(\mathbb{T}^d,\mathbb{R})$ and a.e. $x\in \mathbb{T}^d$, 
we have 
\begin{align}\label{decomposition-fourier}
f(x)=\sum_{k \in \mathbb{Z}^d} \hat{f}(k) \exp( 2\pi i\langle k,x\rangle). 
\end{align}

The Sobolev space $\ssp^\alpha(\mathbb T^d)$ with index $\alpha\in \mathbb{R}$ is defined as  
\begin{align*}
\ssp^\alpha(\mathbb {T}^d) := \{f \in \mathscr {S}' ( \mathbb {T}^d;\mathbb R): 
\sum_{k\in \mathbb{Z}^d} (1+|k|^2)^\alpha \, |\hat{f}(k)|^2 < +\infty\}\,. 
\end{align*}   

Before introducing the Besov spaces, we recall the definition of
Littlewood-Paley blocks.
We denote by $\chi$ and $\rho$ two nonnegative smooth and compactly supported 
radial functions $\mathbb{R}^d\to \mathbb{R}$ such that 
\begin{enumerate}
	\item The support of $\chi$ is contained in a ball $\{x\in \mathbb{R}^d: |x| \le R\}$ 
	and the support of $\rho$ is contained in an annulus $\{x\in \mathbb{R}^d: a\le |x| \le b\}$;
	\item For all $\xi \in \mathbb{R}^d$, $\chi(\xi)+\sum_{j\ge0}\rho(2^{-j}\xi)=1$;
	\item For $j\ge 1$, $\chi  \rho(2^{-j}\cdot)\equiv 0$ and $\rho(2^{-i}\cdot) \rho(2^{-j}\cdot)\equiv 0$
	for $|i-j|\ge 1$. 
\end{enumerate}
The Littlewood-Paley blocks $(\Delta_j)_{j\geq-1}$ acting on $f\in\mathscr S'(\mathbb {T}^d)$ 
are defined by 
\begin{align*}
\mathscr F(\Delta_{-1} f) = \chi  \hat{f}\ \mbox{ and\ for }\ j \ge 0, \quad \mathscr F(\Delta_j f) = \rho(2^{-j}.) \hat{f}.
\end{align*}
Note that, for $f\in\mathscr S'(\mathbb {T}^d)$, the Littlewood-Paley blocks $(\Delta_j f)_{j\ge -1}$ define 
smooth functions, as their Fourier transforms have compact supports. We also set, for $f\in  \mathscr S'$ and 
$j\ge 0$, 
\begin{align*}
S_j f := \sum_{i=-1}^{j-1} \Delta_i f
\end{align*} 
and note that $S_j f$ converges in the sense of distributions to $f$
as $j\to \infty$. \\
The Besov space with parameters $p,q \in [1,\infty),\alpha \in \mathbb{R}$ can now be defined as  
\begin{equation}\label{eq:Besov}
B_{p,q}^{\alpha}(\mathbb{T}^d, \mathbb{R}):=\left\{u\in \mathscr S'(\mathbb{T}^d); \quad ||u||_{ B_{p,q}^{\alpha}}=
\left(\sum_{j\geq-1}2^{jq\alpha}||\Delta_ju||^q_{L^p} \right)^{1/q} <+\infty\right\}.
\end{equation}
We also define the Besov-H\"older spaces
\begin{align*}
\mathscr {C}^{\alpha}:= B_{\infty,\infty}^{\alpha}
\end{align*}
which are naturally equipped with the norm 
$||f||_{\CC^\alpha}:=||f||_{B_{\infty,\infty}^{\alpha}}=\sup_{j\ge -1} 2^{j \alpha} 
||\Delta_j f||_{L^\infty}$. For $\alpha\in(0,1)$ these spaces coincide with the classical H\"older spaces.\\
We can formally decompose the product $fg$ of two distributions $f$ and $g$ as  
\begin{align*}
fg=f\prec g+f\circ g+f\succ g
\end{align*}
where
\begin{align*}
f\prec g :=\sum_{j\ge -1} \sum_{i=-1} ^{j-2} \Delta_i f \Delta_j g 
&&\text{and}&& f\succ g:= \sum_{j \ge -1} \sum_{i=-1}^{j-2} \Delta_i g \Delta_j f
\end{align*}
are usually referred to as the \textit{paraproducts} whereas 
\begin{align} \label{equ:resonantpro}
f\circ g:=\sum_{j\geq-1}\sum_{|i-j|\leq 1}\Delta_i f \Delta_j g 
\end{align} 
is called the \textit{resonant product}. \\
Moreover, we define the notations $f\preccurlyeq g:=f\prec g+f\circ g$ and $f\succcurlyeq g:=f\succ g+f\circ g$.\\
The paraproduct terms are always well defined irrespective of regularities.
The resonant product is a priori only well defined if the sum of regularities is strictly greater than zero. This is reminiscent of the well known 
fact that one can not multiply distributions in general.
The following result makes those comments precise and gives simple but extremely vital estimates for paraproducts.
\begin{proposition}[Bony estimates,\cite{allez_continuous_2015}]
	\label{lem:paraest} Let $\alpha, \beta \in \mathbb{R}$. We
	have the following bounds:
	\begin{enumerate}
		\item If $f \in L^2$ and $g \in \CC^{\beta}$, then
		
		$|| f \prec g ||_{\ssp^{\beta - \delta}} \leq C_{\delta, \beta} || f
		||_{L^2} || g ||_{\CC^{\beta}} 
		$ 
		for all $\delta > 0$.
		
		\item if $f \in \ssp^{\alpha}$ and $g \in L^{\infty}$ then
		
		$
		|| f \succ g ||_{\ssp^{\alpha}} \leq C_{\alpha, \beta} || f ||_{\ssp^{\alpha}}
		|| g ||_{\CC^{\beta}} \hspace{0.17em} .
		$
		
		\item If $\alpha < 0$, $f \in \ssp^{\alpha}$ and $g \in \CC^{\beta}$,
		then
		
		$
		|| f \prec g ||_{\ssp^{\alpha + \beta}} \leq C_{\alpha, \beta} || f
		||_{\ssp^{\alpha}} || g ||_{\CC^{\beta}} \hspace{0.17em} .
		$
		
		\item If $g \in \mathscr{C}^{\beta}$ and $f \in \ssp^{\alpha}$ for $\beta <
		0$ then
		
		$
		\|f \succ g\|_{\ssp^{\alpha + \beta}} \leq C_{\alpha, \beta} || f
		||_{\ssp^{\alpha}} || g ||_{\CC^{\beta}}
		$
		
		\item If $\alpha + \beta > 0$ and $f \in \ssp^{\alpha}$ and $g \in
		\CC^{\beta}$, then
		
		$
		| | f \circ g ||_{\ssp^{\alpha + \beta}} \leq
		C_{\alpha, \beta} || f ||_{\ssp^{\alpha}} || g ||_{\CC^{\beta}}
		\hspace{0.17em} .
		$
	\end{enumerate}
	where $C_{\alpha, \beta}$ is a finite positive constant.
\end{proposition}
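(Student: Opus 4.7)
The overall strategy is to localise each of $f,g$ in frequency via Littlewood--Paley blocks, exploit the support properties of the resulting products in Fourier space, and then apply Bernstein's inequality together with the defining sequence norms of $\ssp^\alpha$ and $\CC^\beta$. Recall the two workhorse bounds: $\|\Delta_j g\|_{L^\infty}\le 2^{-j\beta}\|g\|_{\CC^\beta}$, and $\bigl(2^{j\alpha}\|\Delta_j f\|_{L^2}\bigr)_j\in \ell^2$ with norm $\|f\|_{\ssp^\alpha}$. The key geometric observation is that the Fourier support of $S_{j-1}f\,\Delta_j g$ sits in an annulus of size $\sim 2^j$ (so only $j$ close to $k$ contribute to $\Delta_k(f\prec g)$), whereas that of $\Delta_i f\,\Delta_j g$ with $|i-j|\le 1$ sits in a ball of radius $\sim 2^j$ (so all $j\ge k-C$ contribute to $\Delta_k(f\circ g)$).

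For (1) and (2) the analysis is essentially immediate. For (1), $\|\Delta_k(f\prec g)\|_{L^2}\lesssim \|S_{k-1}f\|_{L^2}\|\Delta_k g\|_{L^\infty}\le \|f\|_{L^2}\,2^{-k\beta}\|g\|_{\CC^\beta}$, and weighting by $2^{2k(\beta-\delta)}$ produces a convergent geometric series in $2^{-2k\delta}$. For (2), only $j\sim k$ contribute to $\Delta_k(f\succ g)=\sum_j \Delta_k(\Delta_j f\,S_{j-1}g)$, and $\|S_{j-1}g\|_{L^\infty}\lesssim \|g\|_{\CC^\beta}$ via summing $\|\Delta_i g\|_{L^\infty}\le 2^{-i\beta}\|g\|_{\CC^\beta}$ (trivial for $\beta\ge 0$, and handled by a geometric argument as below for $\beta<0$).

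For (3) and (4) the key point is to control $S_{j-1}$ of the low-regularity factor. In (3) one uses an almost-orthogonality estimate $\|S_{k-1}f\|_{L^2}^2\lesssim \sum_{j\le k}\|\Delta_jf\|_{L^2}^2$ (Plancherel), so that
\[
\sum_k 2^{2k\alpha}\|S_{k-1}f\|_{L^2}^2\lesssim \sum_j \|\Delta_jf\|_{L^2}^2\!\!\sum_{k\ge j}\!2^{2k\alpha}\lesssim \sum_j 2^{2j\alpha}\|\Delta_jf\|_{L^2}^2=\|f\|_{\ssp^\alpha}^2,
\]
where the inner sum is finite precisely because $\alpha<0$. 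Inserting the Bernstein bound for $\Delta_k g$ then delivers the claim. For (4) one uses the companion bound $\|S_{j-1}g\|_{L^\infty}\lesssim 2^{-j\beta}\|g\|_{\CC^\beta}$, valid for $\beta<0$ by summing a geometric series, and then proceeds as in (2) with an extra factor $2^{-j\beta}$.

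The main obstacle is (5). The naive pointwise bound $\|\Delta_k(f\circ g)\|_{L^2}\lesssim 2^{-k(\alpha+\beta)}\|f\|_{\ssp^\alpha}\|g\|_{\CC^\beta}$ delivers only an $\ell^\infty$ estimate, which loses a logarithm when one squares and sums. The remedy is Schur's test. Starting from
\[
\|\Delta_k(f\circ g)\|_{L^2}\lesssim \!\!\sum_{j\ge k-C}\!\!\|\Delta_jf\|_{L^2}\|\Delta_jg\|_{L^\infty}=\!\!\sum_{j\ge k-C}\!\! 2^{-j(\alpha+\beta)}\,a_j b_j,
\]
with $a_j:=2^{j\alpha}\|\Delta_jf\|_{L^2}\in\ell^2$ and $b_j:=2^{j\beta}\|\Delta_jg\|_{L^\infty}\le \|g\|_{\CC^\beta}$, one introduces the kernel $K(k,j):=2^{(k-j)(\alpha+\beta)}\mathbf 1_{j\ge k-C}$ on $\mathbb Z\times\mathbb Z$. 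Both $\sup_k\sum_j K(k,j)$ and $\sup_j\sum_k K(k,j)$ are finite precisely under the hypothesis $\alpha+\beta>0$ (this is where strict positivity enters), so the associated operator is bounded on $\ell^2$, yielding $\sum_k\bigl(\sum_j K(k,j)\,a_jb_j\bigr)^2\lesssim \sum_j a_j^2 b_j^2\lesssim\|f\|_{\ssp^\alpha}^2\|g\|_{\CC^\beta}^2$. Multiplying through by the weight $2^{2k(\alpha+\beta)}$ gives exactly $\|f\circ g\|_{\ssp^{\alpha+\beta}}^2$, completing the proof. This Schur-type argument is the only nontrivial summation and the genuine place where $\alpha+\beta>0$ is indispensable.
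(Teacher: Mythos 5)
The paper itself gives no proof of this proposition -- it is quoted verbatim from the cited reference \cite{allez_continuous_2015} -- so there is no internal argument to compare against. Your proof is the standard Littlewood--Paley argument used in that literature and it is correct: for (1)--(4) the frequency-support bookkeeping (only $j\sim k$ contributes to $\Delta_k$ of a paraproduct), the bound $\|\Delta_j g\|_{L^\infty}\le 2^{-j\beta}\|g\|_{\CC^\beta}$, the almost-orthogonality estimate $\sum_k 2^{2k\alpha}\|S_{k-1}f\|_{L^2}^2\lesssim\|f\|_{\ssp^\alpha}^2$ for $\alpha<0$, and the geometric bound $\|S_{j-1}g\|_{L^\infty}\lesssim 2^{-j\beta}\|g\|_{\CC^\beta}$ for $\beta<0$ are exactly the right ingredients; and for (5) your Schur-test (equivalently, discrete Young) argument with the kernel $2^{(k-j)(\alpha+\beta)}\mathbf{1}_{j\ge k-C}$ is the correct way to avoid the logarithmic loss of the naive $\ell^\infty$ bound, and it is precisely where $\alpha+\beta>0$ is used. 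One small caveat concerning item (2): your parenthetical remark that the case $\beta<0$ is ``handled by a geometric argument as below'' is not accurate -- for $\beta<0$ that argument only yields the $\ssp^{\alpha+\beta}$ bound of item (4), not an $\ssp^{\alpha}$ bound, and indeed (2) with the $\CC^\beta$ norm on the right is only meaningful for $\beta>0$ (where $\CC^\beta\hookrightarrow L^\infty$); as stated with the hypothesis $g\in L^\infty$ one should simply use $\|S_{j-1}g\|_{L^\infty}\le\|g\|_{L^\infty}$. This does not affect the items actually used in the paper, which your argument establishes correctly.
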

\begin{proposition}
	\label{prop:commu} Given $\alpha \in (0, 1)$, $\beta, \gamma \in \mathbb{R}$
	such that $\beta + \gamma < 0$ and $\alpha + \beta + \gamma > 0$, 
	there exists a  trilinear operator $C$ with the following bound
	\begin{align*}
	|| C (f, g, h) ||_{\ssp^{\alpha + \beta + \gamma}} \lesssim || f
	||_{\ssp^{\alpha}} || g ||_{\CC^{\beta}} || h
	||_{\CC^{\gamma}}
	\end{align*}
	for all $f \in \ssp^{\alpha}$, $g \in \CC^{\beta}$ and $h \in
	\CC^{\gamma}$.
	
	The restriction of $C$  to the smooth functions satisfies 
	\begin{align*}
	C (f, g, h) = (f \prec g) \circ h - f (g \circ h).
	\end{align*}
	
\end{proposition}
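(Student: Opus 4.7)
The plan is to define $C(f,g,h) = (f\prec g)\circ h - f(g\circ h)$ on smooth inputs, establish the claimed $\ssp^{\alpha+\beta+\gamma}$ bound directly in this class, and then extend $C$ by density to a continuous trilinear map on $\ssp^\alpha\times\CC^\beta\times\CC^\gamma$. The whole argument relies on a Littlewood--Paley decomposition that exhibits the cancellation between the two summands: although neither term on its own makes sense when $\beta+\gamma<0$, their difference picks up an additional factor of regularity equal to $\alpha$.

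The central computation would expand $\Delta_i(f\prec g)$ using $f\prec g=\sum_k(S_{k-1}f)\Delta_k g$ and observe, by spectral support, that only $k$ within a bounded range of $i$ contributes, so $\Delta_i(f\prec g)=(S_{i-1}f)\Delta_i g$ up to harmless boundary shifts. Combining with the definition of the resonant product then reduces
\[ C(f,g,h) = -\sum_{|i-j|\le 1}(f - S_{i-1}f)\,\Delta_i g\,\Delta_j h = -\sum_{\substack{k\ge i-1\\ |i-j|\le 1}} \Delta_k f\,\Delta_i g\,\Delta_j h. \]
Each summand is spectrally supported at frequency $\sim 2^{\max(k,i,j)}=2^k$, so the $N$th block $\Delta_N$ selects only terms with $k\sim N$. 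Applying H\"older with the standard bounds $\|\Delta_k f\|_{L^2}\lesssim 2^{-k\alpha}\|f\|_{\ssp^\alpha}$ and $\|\Delta_\ell g\|_{L^\infty}\lesssim 2^{-\ell\beta}\|g\|_{\CC^\beta}$ (analogously for $h$), and then summing over $i\le N+2$ using $\beta+\gamma<0$ to produce the geometric factor $2^{-N(\beta+\gamma)}$, I would obtain
\[ \|\Delta_N C(f,g,h)\|_{L^2}\lesssim 2^{-N(\alpha+\beta+\gamma)}\|f\|_{\ssp^\alpha}\|g\|_{\CC^\beta}\|h\|_{\CC^\gamma}, \]
which is precisely the desired $\ssp^{\alpha+\beta+\gamma}$ bound; the hypothesis $\alpha>0$ enters here through the estimate $\|f-S_{i-1}f\|_{L^2}\lesssim 2^{-i\alpha}\|f\|_{\ssp^\alpha}$ used implicitly in the block-level arithmetic.

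The main obstacle is not any single sharp inequality but the careful bookkeeping of spectral supports and the treatment of boundary terms. The identification $\Delta_i(f\prec g)\approx (S_{i-1}f)\Delta_i g$ actually produces several error terms indexed by a bounded shift in scale; each has the same schematic form $\sum \Delta_k f\,\Delta_i g\,\Delta_j h$ with slightly altered index constraints, and must be checked to satisfy the same bound by the same block-wise scheme. Low-frequency contributions (small $i$, $j$, $k$) need to be handled separately via Bernstein's inequality, which is where $\alpha<1$ ensures the first-order paraproduct expansion remains sharp. Finally, the density extension is routine: trilinearity together with approximation of $f\in\ssp^\alpha$ by smooth functions in $\ssp^\alpha$, and of $g,h$ in slightly weaker H\"older classes (using continuity of the right-hand side in the parameters $\beta,\gamma$), yields the unique continuous extension with the asserted bound.
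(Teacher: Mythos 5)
Your overall route --- prove the bound for smooth inputs by expanding $(f\prec g)\circ h-f(g\circ h)$ in Littlewood--Paley blocks, reduce to $C(f,g,h)=-\sum_{k\ge i-1,\,|i-j|\le 1}\Delta_k f\,\Delta_i g\,\Delta_j h$ up to bounded-shift error terms, and then extend by trilinearity and density --- is exactly the standard commutator argument of Gubinelli--Imkeller--Perkowski and Allez--Chouk; the paper itself does not reprove the statement but simply defers to the commutator lemma of \cite{allez_continuous_2015}, so a self-contained proof along your lines is a reasonable substitute, and your remarks about the boundary terms and about extending via slightly weaker H\"older exponents (smooth functions are not dense in $\CC^{\beta}$) are the right way to finish.

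There is, however, a genuine flaw in the central localization step. You claim that each summand $\Delta_k f\,\Delta_i g\,\Delta_j h$ is spectrally supported at frequency $\sim 2^{k}$, so that $\Delta_N$ selects only $k\sim N$. This is only true when $k\gg i$; in the diagonal regime $k\sim i\sim j$ (which is allowed, since the constraint is merely $k\ge i-1$) the product is supported in a \emph{ball} of radius $\sim 2^{k}$, frequencies can cancel, and such terms contribute to every block $\Delta_N$ with $N\lesssim k$. A telltale sign is that in your bookkeeping the hypothesis $\alpha+\beta+\gamma>0$ is never used in the summation: if ``only $k\sim N$'' were correct, the bound would follow from $\beta+\gamma<0$ alone, and the lemma is false without the positivity assumption. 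The repair is standard and uses precisely that hypothesis: for $N\ge -1$,
\begin{align*}
\|\Delta_N C(f,g,h)\|_{L^2}
&\lesssim \sum_{k\ge N-c}\ \sum_{i\le k+1,\ j\sim i}\|\Delta_k f\|_{L^2}\,\|\Delta_i g\|_{L^\infty}\,\|\Delta_j h\|_{L^\infty}\\
&\lesssim \|f\|_{\ssp^{\alpha}}\|g\|_{\CC^{\beta}}\|h\|_{\CC^{\gamma}}\sum_{k\ge N-c}2^{-k\alpha}\,2^{-k(\beta+\gamma)}
\ \lesssim\ 2^{-N(\alpha+\beta+\gamma)}\,\|f\|_{\ssp^{\alpha}}\|g\|_{\CC^{\beta}}\|h\|_{\CC^{\gamma}},
\end{align*}
where the inner sum over $i$ is geometric because $\beta+\gamma<0$ and the outer sum over $k\ge N-c$ converges because $\alpha+\beta+\gamma>0$ (equivalently, invoke the standard lemma on series whose terms have Fourier support in balls of radius $2^{k}$ and positive total regularity). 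With this correction, and with the same treatment of the shifted error terms, your argument yields the stated estimate.
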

\begin{proof}
This is a restatement of the result (commutator Lemma) in \cite{allez_continuous_2015}, and the proof follows the same lines, with slight modifications. 
\end{proof}

We also prove the following modified version of the above Proposition, which suits our framework.

\begin{proposition}
	\label{prop:commu2} Let $\alpha \in (0, 1)$, $\beta, \gamma \in \mathbb{R}$
	such that $\beta + \gamma < 0$ and $\alpha + \beta + \gamma > 0$. 
	Then, there exists a  trilinear operator $C_N$ with the following bound
	\begin{align*}
	|| C_N (f, g, h) ||_{\ssp^{\alpha + \beta + \gamma}} \lesssim || f
	||_{\ssp^{\alpha}} || g ||_{\CC^{\beta}} || h
	||_{\CC^{\gamma}}
	\end{align*}
	for all $f \in \ssp^{\alpha}$, $g \in \CC^{\beta}$ and $h \in
	\CC^{\gamma}$.
	
	The restriction of $C_N$  to the smooth functions satisfies 
	\begin{align*}
	C_N (f, g, h) &:= \left(\Delta_{> N} (f\prec g)\right))\circ h- f(g \circ h)
	\end{align*}
	
\end{proposition}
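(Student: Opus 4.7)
My plan is to reduce Proposition~\ref{prop:commu2} to the classical commutator estimate (Proposition~\ref{prop:commu}) by cleanly separating out the low-frequency contribution introduced by the cutoff $\Delta_{>N}$. Concretely, writing $\mathrm{id} = \Delta_{\le N} + \Delta_{>N}$, the first step is to observe the algebraic decomposition
\begin{equation*}
C_N(f,g,h) = C(f,g,h) - (\Delta_{\le N}(f \prec g)) \circ h,
\end{equation*}
so that the only new task, on top of invoking Proposition~\ref{prop:commu} for the $C(f,g,h)$ piece, is to estimate the finite-frequency resonant product $(\Delta_{\le N}(f\prec g)) \circ h$ in $\ssp^{\alpha+\beta+\gamma}$.

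For this residual term the point is that $\Delta_{\le N}$ is a smoothing operator: after truncation, $\Delta_{\le N}(f\prec g)$ is supported on finitely many frequencies, so by Bernstein's inequality (Lemma~\ref{lem:bernstein}) it lies in $\CC^s$ for \emph{any} $s \in \mathbb{R}$, with a constant that blows up like $2^{Ns}$. I would first invoke a standard paraproduct bound (from Proposition~\ref{lem:paraest}) to place $f\prec g$ into a controlled Besov/Sobolev space in terms of $\|f\|_{\ssp^\alpha}$ and $\|g\|_{\CC^\beta}$, then upgrade to arbitrarily high regularity via Bernstein at the cost of an $N$-dependent constant. Choosing $s$ large enough that $s + \gamma > 0$ makes the resonant product with $h$ classically defined, and item~(5) of Proposition~\ref{lem:paraest} (together with a Sobolev embedding to land in the correct index) produces the desired bound with a constant depending on $N$.

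The main (mild) obstacle is purely bookkeeping: picking the intermediate Besov indices so that the final estimate ends up in $\ssp^{\alpha+\beta+\gamma}$ rather than some strictly better space, and verifying that the $N$-dependent constants can be absorbed into the implicit constant in the statement (which is legitimate since $N$ is fixed once chosen from the realization of the enhanced noise). There is no delicate cancellation or analytical subtlety at play: the subtracted low-frequency piece is automatically smooth, which is precisely why introducing the cutoff $\Delta_{>N}$ in the ansatz preserves the commutator bound.
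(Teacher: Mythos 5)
Your proposal is correct and follows essentially the same route as the paper: the identical decomposition $C_N(f,g,h)=C(f,g,h)-(\Delta_{\le N}(f\prec g))\circ h$, reduction of everything to bounding the low-frequency residual, and exploitation of the smoothing effect of the cutoff with an $N$-dependent constant. The only (immaterial) difference is in how that residual bound is obtained: you upgrade the regularity of $\Delta_{\le N}(f\prec g)$ via Bernstein on top of the standard paraproduct estimate and then apply the resonant-product bound, whereas the paper proves the needed estimate $\|\Delta_{\le N}(f\prec g)\|_{\ssp^{\alpha+\beta}}\lesssim_N \|f\|_{\ssp^\alpha}\|g\|_{\CC^\beta}$ directly by a blockwise Littlewood--Paley computation with Young's inequality.
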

\begin{proof}
	Observe that we have
	\begin{align*}
	C (f, g, h) - C_N (f, g, h) &= \left(\Delta_{\leq N} (f\prec g)\right))\circ h.
	\end{align*}
	So, we only need to show
	\begin{align*}
	|| \left(\Delta_{\leq N} (f\prec g)\right))\circ h ||_{\ssp^{\alpha + \beta + \gamma}} \lesssim || f
	||_{\ssp^{\alpha}} || g ||_{\CC^{\beta}} || h
	||_{\CC^{\gamma}}.
	\end{align*}
	By product estimates, we obtain right away
	\begin{align*}
	|| \left(\Delta_{\leq N} (f\prec g)\right))\circ h ||_{\ssp^{\alpha + \beta + \gamma}} \lesssim || \left(\Delta_{\leq N} (f\prec g)\right))
	||_{\ssp^{\alpha+ \beta}} || h
	||_{\CC^{\gamma}}.
	\end{align*}
	We  need to show
	\[
	|| \left(\Delta_{\leq N} (f\prec g)\right))
	||_{\ssp^{\alpha+ \beta}} \lesssim || f
	||_{\ssp^{\alpha}} || g ||_{\CC^{\beta}}.
	\]
	We can write
	\begin{align*}
	|| \left(\Delta_{\leq N} (f\prec g)\right))
	||_{\ssp^{\alpha+ \beta}}^2 = \sum_{k=-1}^{\infty} 2^{2k(\alpha+\beta)} || \Delta_k \left(\Delta_{\leq N} (f\prec g)\right))||_{L^2}^2.
	\end{align*}
	
	By the support of Fourier transforms we have that $ \Delta_k \left(\Delta_{\leq N} (f\prec g)\right)) = 0$ for $k> N+1$ so we obtain
	\begin{align*}
	|| \left(\Delta_{\leq N} (f\prec g)\right))
	||_{\ssp^{\alpha+ \beta}}^2 = \sum_{k=-1}^{N+1} 2^{2k(\alpha+\beta)} || \Delta_k \left(\Delta_{\leq N} (f\prec g)\right))||_{L^2}^2.
	\end{align*}
	
	By using the convention $\Delta_{<k} f:= \sum_{i=-1}^{k-2} \Delta_k f$ we  rewrite
	
	\begin{align*}
	|| \left(\Delta_{\leq N} (f\prec g)\right))
	||_{\ssp^{\alpha+ \beta}}^2 = \sum_{k=-1}^{N+1} 2^{2k(\alpha+\beta)} || \Delta_k \left(\Delta_{\leq N} (\sum_{i=-1}^\infty \Delta_{<i} f \Delta_i g)\right))||_{L^2}^2.
	\end{align*}
	
	Again by support arguments this boils down to
	
	\begin{align*}
	|| \left(\Delta_{\leq N} (f\prec g)\right))
	||_{\ssp^{\alpha+ \beta}}^2 = \sum_{k=-1}^{N+1} 2^{2k(\alpha+\beta)} || \Delta_k \left(\Delta_{\leq N} (\sum_{i=-1}^{N+1} \Delta_{<i} f \Delta_i g)\right))||_{L^2}^2.
	\end{align*}
	
	Applying two successive Young's we obtain
	
	\begin{align*}
	|| \left(\Delta_{\leq N} (f\prec g)\right))
	||_{\ssp^{\alpha+ \beta}}^2 \leq  \sum_{k=-1}^{N+1} 2^{2k(\alpha+\beta)} ||\phi_k||_{L^1} || \phi_{\leq N}||_{L^1}    \sum_{i=-1}^{N+1} ||\Delta_{<i} f \Delta_i g||_{L^2}^2
	\end{align*}
	
	where on the right hand side we can write
	
	\begin{align*}
	& || \left(\Delta_{\leq N} (f\prec g)\right))
	||_{\ssp^{\alpha+ \beta}}^2 \leq  \sum_{k=-1}^{N+1} 2^{2k(\alpha+\beta)} ||\phi_k||_{L^1}^2 || \phi_{\leq N}||_{L^1}^2    \sum_{i=-1}^{N+1} ||\Delta_{<i} f \Delta_i g||_{L^2}^2\\
	&\leq \sum_{k=-1}^{N+1} 2^{(2k-2i)\beta} 2^{(2k-2i)\alpha}||\phi_k||_{L^1}^2 || \phi_{\leq N}||_{L^1}^2    \left( \sup_{1\leq i \leq N+1}2^{2 i \beta}||\Delta_i g||_{L^\infty}^2 \right) \sum_{i=-1}^{N+1} 2^{2i\alpha} ||\Delta_{<i} f||_{L^2}^2.
	\end{align*}
	
	At this point, it is clear that for a constant depending on $N$ we readily have
	
	\begin{align*}
	|| \left(\Delta_{\leq N} (f\prec g)\right))
	||_{\ssp^{\alpha+ \beta}}^2 \lesssim || f
	||_{\ssp^{\alpha}}^2 || g ||_{\CC^{\beta}}^2
	\end{align*}
	
and the result follows.

	\end{proof}

\begin{lemma}[Bernstein's inequality, \cite{gubinelli2015paracontrolled}]
	\label{lem:bernstein}Let $\mathscr{A}$ be an annulus and $\mathscr{B}$ be a
	ball. For any $k \in \mathbbm{N}, \lambda > 0,$and $1 \le p \le
	q \le \infty$ we have
	\begin{enumerate}
		\item if $u \in L^p (\mathbbm{R}^d) $ is such that $\tmop{supp}
		(\mathscr{F}u) \subset \lambda \mathscr{B}$ then
		\[ \underset{\mu \in \mathbbm{N}^d : | \mu | = k}{\max} \| \partial^{\mu}
		u \|_{L^q} \lesssim_k \lambda^{k + d \left( \frac{1}{p} - \frac{1}{q}
			\right)} \| u \|_{L^p} \]
		\item if $u \in L^p (\mathbbm{R}^d) $is such that $\tmop{supp}
		(\mathscr{F}u) \subset \lambda \mathscr{A}$ then
		\[ \lambda^k \| u \|_{L^p} \lesssim_k \underset{\mu \in \mathbbm{N}^d : |
			\mu | = k}{\max} \| \partial^{\mu} u \|_{L^p} . \]
	\end{enumerate}
\end{lemma}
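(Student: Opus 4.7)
The plan is to prove both parts by a Fourier multiplier argument combined with Young's convolution inequality, using the crucial scaling properties of the Fourier transform. This is standard and the main technical point is bookkeeping the scaling factor $\lambda$.

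First, for part (1), I would fix a Schwartz bump $\psi \in C_c^\infty(\mathbb{R}^d)$ such that $\psi \equiv 1$ on $\mathscr{B}$ and $\psi$ has support in a slightly larger ball. Setting $\psi_\lambda(\xi) := \psi(\xi/\lambda)$, the support assumption on $\hat{u}$ gives $\hat{u} = \psi_\lambda \hat{u}$, so
\[
\partial^\mu u = \mathcal{F}^{-1}\bigl((i\xi)^\mu \psi_\lambda(\xi)\bigr) * u =: K_\lambda^\mu * u.
\]
A direct change of variable shows $K_\lambda^\mu(x) = \lambda^{d+k} K_1^\mu(\lambda x)$, and consequently $\|K_\lambda^\mu\|_{L^r} = \lambda^{k + d(1-1/r)} \|K_1^\mu\|_{L^r}$, where $K_1^\mu$ is a fixed Schwartz function (hence in every $L^r$). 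Young's convolution inequality with $1 + 1/q = 1/p + 1/r$ yields
\[
\|\partial^\mu u\|_{L^q} \le \|K_\lambda^\mu\|_{L^r} \|u\|_{L^p} \lesssim_k \lambda^{k + d(1/p - 1/q)} \|u\|_{L^p},
\]
which is the desired bound.

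For part (2), the key additional input is that on the annulus $\mathscr{A}$ the variable $\xi$ stays bounded away from zero, which allows us to invert derivatives. I would pick a bump $\tilde\psi \in C_c^\infty(\mathbb{R}^d)$ equal to $1$ on $\mathscr{A}$ and supported in a slightly larger annulus, which in particular avoids the origin. On the support of $\tilde\psi$ one has the trivial identity
\[
\tilde\psi(\xi) = \sum_{|\mu|=k} (i\xi)^\mu \, g_\mu(\xi), \qquad g_\mu(\xi) := \frac{\overline{(i\xi)^\mu}}{|\xi|^{2k}} \tilde\psi(\xi),
\]
where each $g_\mu$ is a smooth compactly supported function. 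Using $\hat{u} = \tilde\psi(\cdot/\lambda)\hat{u}$ and inserting this identity, I obtain
\[
u = \lambda^{-k} \sum_{|\mu|=k} \mathcal{F}^{-1}\bigl(g_\mu(\cdot/\lambda)\bigr) * \partial^\mu u.
\]
Scaling gives $\|\mathcal{F}^{-1}(g_\mu(\cdot/\lambda))\|_{L^1}$ independent of $\lambda$, and Young's inequality with exponents $1 + 1/p = 1/p + 1$ produces
\[
\|u\|_{L^p} \lesssim_k \lambda^{-k} \max_{|\mu|=k} \|\partial^\mu u\|_{L^p},
\]
which is the stated reverse Bernstein estimate.

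The main technical delicacy is not hard to overcome but worth flagging: one must verify that the $L^r$-norms of the kernels $K_1^\mu$ and the $L^1$-norms of $\mathcal{F}^{-1}(g_\mu)$ are finite and independent of $\lambda$, which follows from the fact that $\psi$, $\tilde\psi$, and hence $g_\mu$ are Schwartz with the relevant supports (for $g_\mu$, smoothness is preserved because $\tilde\psi$ vanishes near $0$, so division by $|\xi|^{2k}$ produces no singularity). Everything else is a clean scaling computation and an application of Young's inequality.
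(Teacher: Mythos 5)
Your argument is correct and is the standard proof of Bernstein's inequalities (as in Bahouri--Chemin--Danchin or the appendix of the paracontrolled calculus literature); note that the paper itself gives no proof here, since the lemma is simply quoted from \cite{gubinelli2015paracontrolled}, so there is no internal argument to compare against. Both halves of your proof are sound: the scaling computation $K^\mu_\lambda(x)=\lambda^{d+k}K^\mu_1(\lambda x)$ plus Young's inequality gives part (1), and the ``division by $\xi^\mu$ on the annulus'' trick plus Young with an $L^1$ kernel gives part (2). One small normalization slip worth fixing: with your choice $g_\mu(\xi)=\overline{(i\xi)^\mu}|\xi|^{-2k}\tilde\psi(\xi)$ one gets $\sum_{|\mu|=k}(i\xi)^\mu g_\mu(\xi)=\bigl(\sum_{|\mu|=k}\xi^{2\mu}\bigr)|\xi|^{-2k}\tilde\psi(\xi)$, which is not $\tilde\psi(\xi)$ because the multinomial theorem gives $|\xi|^{2k}=\sum_{|\mu|=k}\binom{k}{\mu}\xi^{2\mu}$, not $\sum_{|\mu|=k}\xi^{2\mu}$. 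Either insert the coefficients $\binom{k}{\mu}$ into $g_\mu$, or replace $|\xi|^{2k}$ by $\sum_{|\nu|=k}\xi^{2\nu}$ in the denominator (which is smooth and bounded away from zero on the enlarged annulus, so the kernels remain Schwartz); with that trivial correction the identity, the $\lambda$-independence of $\|\mathcal{F}^{-1}(g_\mu(\cdot/\lambda))\|_{L^1}$, and the final estimate all go through exactly as you wrote them.
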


\begin{proposition}[Paralinearisation,\cite{gubinelli2017kpz}]
	\label{lem:paralin} Let $\alpha \in (0, 1) $ and $F \in C^2 .$ Then there
	exists a locally bounded map $R_F : \CC^{\alpha} \rightarrow
	\CC^{2 \alpha}$ such that
	\[ F (f) = F' (f) \prec f + R_F (f) \ \tmop{for}   \tmop{all} \  f \in
	\CC^{\alpha} . \]
\end{proposition}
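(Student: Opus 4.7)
The plan is to prove this classical paralinearisation result via a telescoping decomposition combined with Taylor expansion, following the standard approach of paracontrolled calculus. First I would write the partial sums $S_j f = \sum_{i < j} \Delta_i f$, so that $S_j f \to f$ in $\CC^{\alpha - \varepsilon}$ as $j \to \infty$, and $F$ being $C^2$ and acting on bounded functions gives $F(S_j f) \to F(f)$ in, say, $L^\infty$. Then I would form the telescoping identity
\begin{equation*}
F(f) = F(S_{-1} f) + \sum_{j \geq -1} \bigl[ F(S_{j+1} f) - F(S_j f) \bigr].
\end{equation*}

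Next I would Taylor-expand each increment at $S_j f$:
\begin{equation*}
F(S_{j+1} f) - F(S_j f) = F'(S_j f)\, \Delta_j f + \int_0^1 (1-t)\, F''\bigl(S_j f + t \Delta_j f\bigr)\, (\Delta_j f)^2\, \mathrm{d}t.
\end{equation*}
The linear piece $\sum_j F'(S_j f)\, \Delta_j f$ is the candidate for the paraproduct. Since $F'(f) \prec f = \sum_j S_{j-1}\!\bigl(F'(f)\bigr)\, \Delta_j f$, the difference is $\sum_j \bigl[ F'(S_j f) - S_{j-1}(F'(f)) \bigr]\, \Delta_j f$; here I would use that $F' \in C^1$ together with the bound $\|F'(f) - F'(S_j f)\|_{L^\infty} \lesssim \|f - S_j f\|_{L^\infty} \lesssim 2^{-j\alpha}\|f\|_{\CC^\alpha}$ to gain an extra factor $2^{-j\alpha}$ on each summand. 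Combined with the spectral localisation of $\Delta_j f$ in an annulus of scale $2^j$, this produces a sum of terms with Fourier support in $2^j$-annuli and $L^\infty$-norm $\lesssim 2^{-2j\alpha}$, which is precisely the characterization of an element of $\CC^{2\alpha}$.

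The main obstacle is the quadratic remainder $\sum_j \int_0^1 (1-t)\, F''(S_j f + t\Delta_j f)\, (\Delta_j f)^2\, \mathrm{d}t$: although $F''$ is merely continuous and the pointwise bound $2^{-2 j \alpha}$ is good, the Fourier support of $(\Delta_j f)^2$ sits in a ball of radius $\sim 2^{j+1}$ rather than an annulus, so one must reorganise the sum. The standard trick is to apply a Littlewood–Paley block $\Delta_k$ to the whole series and observe that only the indices $j \geq k - C$ contribute; summing the geometric series in $j$ for each fixed $k$ gives $\|\Delta_k R_F(f)\|_{L^\infty} \lesssim 2^{-2k\alpha}\|f\|_{\CC^\alpha}^2 \|F''\|_{L^\infty(B)}$, where $B$ is any ball containing the range of $f$ and of all $S_j f$. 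This yields $R_F(f) \in \CC^{2\alpha}$ with local boundedness in $\|f\|_{\CC^\alpha}$.

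Finally I would collect the two contributions, setting $R_F(f) := F(S_{-1} f) + \sum_j \bigl[F'(S_j f) - S_{j-1}(F'(f))\bigr]\Delta_j f + \text{(quadratic remainder)}$, and verify that $R_F \in \CC^{2\alpha}$ and depends locally boundedly on $f$; since the result is classical and cited from~\cite{gubinelli2017kpz}, I would indicate the reference rather than reproducing the Littlewood–Paley bookkeeping in full detail.
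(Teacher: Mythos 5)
The paper itself gives no proof of this proposition — it is quoted from \cite{gubinelli2017kpz} — so your attempt can only be measured against the standard argument. Your skeleton (telescoping $F(f)=F(S_{-1}f)+\sum_j[F(S_{j+1}f)-F(S_jf)]$, Taylor expansion, comparison with $F'(f)\prec f=\sum_j S_{j-1}(F'(f))\,\Delta_j f$) is indeed the classical route, but the step where you conclude membership in $\CC^{2\alpha}$ has a genuine gap. You assert that the terms $[F'(S_jf)-S_{j-1}(F'(f))]\Delta_jf$ have Fourier support in annuli of scale $2^j$, and that for the quadratic remainder only the indices $j\ge k-C$ survive after applying $\Delta_k$ because $(\Delta_jf)^2$ sits in a ball of radius $\sim 2^{j+1}$. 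Both assertions are false: the compositions $F'(S_jf)$ and $F''(S_jf+t\Delta_jf)$ are not band-limited (composing a band-limited function with a nonlinear $F$ spreads the spectrum over all frequencies), so none of your summands is spectrally localized at scale $2^j$; only the factors $\Delta_jf$ and $(\Delta_jf)^2$ are. Without spectral localization, the bounds $\|u_j\|_{L^\infty}\lesssim 2^{-2j\alpha}$ only give an $L^\infty$ bound on the sum — they do not ``characterize an element of $\CC^{2\alpha}$''. This is exactly the difficulty that makes paralinearisation a theorem rather than a computation.

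The standard repair, and the real content of the cited proof, is to control, in addition to the $L^\infty$ size, a higher-order norm of each (non-localized) summand — e.g. $\|\nabla u_j\|_{L^\infty}\lesssim 2^{j(1-2\alpha)}$, obtained from the chain rule, the boundedness of $F''$ on the range of $f$, and Bernstein's inequality for $S_jf$ and $\Delta_jf$ — and then to invoke the summation lemma: if $\|u_j\|_{L^\infty}\lesssim 2^{-j\gamma}$ and $\|u_j\|_{\CC^{\rho}}\lesssim 2^{j(\rho-\gamma)}$ for some $\rho>\gamma>0$, then $\sum_j u_j\in\CC^{\gamma}$ (split the sum at $j\approx k$ when estimating $\Delta_k$ of it). This is precisely the Littlewood--Paley bookkeeping you defer to the reference, and it is the heart of the matter rather than a routine verification. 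Note also that your second-order Taylor expansion makes this repair awkward: differentiating $F''(S_jf+t\Delta_jf)$ would require $F\in C^3$, whereas the hypothesis is only $F\in C^2$; the usual arguments therefore keep the first-order integral remainder $\int_0^1 F'(S_jf+t\Delta_jf)\,\mathrm{d}t\;\Delta_jf$, so that at most one derivative ever falls on a composition, and even then additional care is needed to reach the full exponent $2\alpha$ in the regime $2\alpha\ge 1$. As written, your argument does not close.
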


\begin{lemma}\label{lem:circadj}
	
		Let $\alpha, \beta, \gamma \in \mathbbm{R}$ with $\alpha + \beta <0$, $\alpha + \beta + \gamma \geq
	0$, and $f \in \ssp^{\alpha}, g \in \CC^{\beta}, h \in \ssp^{\gamma},$
	then there exists a map $D (f, g, h)$ with the following bound
	\begin{equation} \label{equ:commLemma}
	| D (f, g, h) | \lesssim \| g \|_{\CC^{\beta}} \| f
	\|_{\ssp^{\alpha}} \| h \|_{\ssp^{\gamma}} . \end{equation}

	Moreover the restriction of $D (f, g, h)$  to the smooth functions $f, g, h$ is as follows:
	\begin{align*}
	D (f, g, h) = \langle f, h \circ g \rangle - \langle f \prec g, h
	\rangle.
	\end{align*}

\end{lemma}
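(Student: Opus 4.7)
The plan is to establish, for smooth $f, g, h$, the \emph{almost adjointness} identity
\[ D(f,g,h) = \langle f\succ g, h\rangle + \langle f\circ g, h\rangle - \langle f, h\prec g\rangle - \langle f, h\succ g\rangle, \]
and then to bound each of the four terms on the right-hand side separately. The identity itself is an immediate consequence of the trivial equality $\langle fg, h\rangle = \langle f, gh\rangle$: expanding $fg$ via Bony's decomposition yields $\langle f\prec g + f\succ g + f\circ g, h\rangle$, while expanding $gh = h\prec g + h\succ g + h\circ g$ yields $\langle f, h\prec g + h\succ g + h\circ g\rangle$; subtracting the two identities and rearranging produces the claimed expression.

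For the three paraproduct contributions I would apply the Bony estimates of Proposition \ref{lem:paraest} in combination with the dual pairing $\ssp^{\sigma}\times\ssp^{-\sigma}\to\mathbb{R}$. Concretely, $h\prec g$ and $h\succ g = g\prec h$ land in $\ssp^{\gamma+\beta}$ (with a possible $\varepsilon$ loss when $\beta\geq 0$, harmless because it can be absorbed into $\gamma$), and the hypothesis $\alpha+\beta+\gamma\geq 0$ says precisely that these spaces are dual to $\ssp^\alpha \ni f$. Similarly, $f\succ g = g\prec f \in\ssp^{\alpha+\beta}$ is dually paired with $h\in\ssp^\gamma\subset\ssp^{-(\alpha+\beta)}$ under the same hypothesis. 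Each such pairing produces the factor $\|g\|_{\CC^\beta}\|f\|_{\ssp^\alpha}\|h\|_{\ssp^\gamma}$ demanded by \eqref{equ:commLemma}.

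The main obstacle is the resonant contribution $\langle f\circ g, h\rangle$, since $\alpha+\beta<0$ prevents us from interpreting $f\circ g$ as an honest distribution via the standard resonant estimate. Instead I would expand
\[ \langle f\circ g, h\rangle = \sum_{|i-j|\leq 1}\int \Delta_i f\,\Delta_j g\cdot h\,dx, \]
observe that $\Delta_i f\,\Delta_j g$ is Fourier-supported in a ball of radius $\lesssim 2^j$ so that only a thickened block $\tilde\Delta_j h$ of $h$ contributes, and estimate
\[ |\langle f\circ g, h\rangle| \lesssim \sum_{j}\|\Delta_j f\|_{L^2}\|\Delta_j g\|_{L^\infty}\|\tilde\Delta_j h\|_{L^2} \lesssim \|g\|_{\CC^\beta}\sum_{j}2^{-j(\alpha+\beta+\gamma)} c_j\, e_j, \]
where $c_j := 2^{j\alpha}\|\Delta_j f\|_{L^2}$ and $e_j := 2^{j\gamma}\|\tilde\Delta_j h\|_{L^2}$ are $\ell^2$-sequences whose norms are controlled by $\|f\|_{\ssp^\alpha}$ and $\|h\|_{\ssp^\gamma}$ respectively. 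Since $\alpha+\beta+\gamma\geq 0$, the weight $2^{-j(\alpha+\beta+\gamma)}$ is uniformly bounded by $1$ and a single Cauchy--Schwarz in $j$ closes the bound. The delicate point is precisely the borderline case $\alpha+\beta+\gamma = 0$, which is exactly the regime invoked in the proof of Proposition \ref{lem:2dh1bound}: here the $\ell^2$-summability of the Littlewood--Paley coefficients of $f$ and $h$ is indispensable, whereas a naive absolute summation would diverge logarithmically. Once the bound is established on smooth triples, $D$ extends by density to all $(f,g,h)\in\ssp^\alpha\times\CC^\beta\times\ssp^\gamma$ through the uniform estimate \eqref{equ:commLemma}.
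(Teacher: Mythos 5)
Your reduction via $\langle fg,h\rangle=\langle f,gh\rangle$ is algebraically correct, and the two terms $\langle f\succ g,h\rangle$ and $\langle f,h\succ g\rangle$ are indeed handled by Bony estimates plus duality. The gap is that the remaining two terms, $\langle f\circ g,h\rangle$ and $\langle f,h\prec g\rangle$, are \emph{not} individually bounded by $\|f\|_{\ssp^\alpha}\|g\|_{\CC^\beta}\|h\|_{\ssp^\gamma}$ under the hypotheses; note that $\alpha+\beta<0$ together with $\alpha+\beta+\gamma\ge0$ forces $\gamma\ge-(\alpha+\beta)>0$, so the problematic regime is the only one, not an edge case. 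For $\langle f,h\prec g\rangle$ your duality step needs $\|h\prec g\|_{\ssp^{\beta+\gamma}}\lesssim\|h\|_{\ssp^\gamma}\|g\|_{\CC^\beta}$, which is false for $\gamma>0$: a paraproduct never gains regularity from its low-frequency factor (item 1 of Proposition \ref{lem:paraest} only gives $\ssp^{\beta-\delta}$; the loss you mention is not tied to $\beta\ge0$ and cannot be absorbed, since you would need the extra regularity $-\alpha-\beta=\gamma-(\alpha+\beta+\gamma)>0$). Concretely, with $h\equiv1$, $g=2^{-N\beta}\cos(2\pi 2^Nx_1)$, $f=2^{-N\alpha}\cos(2\pi 2^Nx_1)$ one has $\|f\|_{\ssp^\alpha}\|g\|_{\CC^\beta}\|h\|_{\ssp^\gamma}\lesssim1$ while $\langle f,h\prec g\rangle=\langle f,g\rangle\sim2^{-N(\alpha+\beta)}\to\infty$, and the same example blows up $\langle f\circ g,h\rangle=\int fg$. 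Your treatment of the resonant pairing also contains a concrete error: $\Delta_if\,\Delta_jg$ with $|i-j|\le1$ is supported in a Fourier \emph{ball} of radius $\sim2^j$, not an annulus, so it pairs against \emph{all} blocks $\Delta_kh$ with $k\lesssim j$, not just a thickened block $\tilde\Delta_jh$; keeping only $k\sim j$ silently discards exactly the divergent low-frequency contributions exhibited above.

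The lemma is true only because these two dangerous terms cancel against each other, and that cancellation is destroyed the moment you estimate them separately. The paper's proof keeps them together: it expands $\langle f,h\circ g\rangle-\langle f\prec g,h\rangle$ directly into Littlewood--Paley blocks $\langle\Delta_if,\Delta_jh\,\Delta_kg\rangle$, observes that all contributions cancel exactly except those with $i\gtrsim k$ and $|j-k|\le L$ (precisely the pieces your split mishandles), and then bounds the surviving, effectively diagonal sum by Cauchy--Schwarz in the common index using $2^{-k(\alpha+\beta+\gamma)}\le1$; this is where your (correct) remark about needing square-summability at the borderline $\alpha+\beta+\gamma=0$ actually enters. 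To salvage your route you would have to treat $\langle f\circ g,h\rangle-\langle f,h\prec g\rangle$ as a single commutator-type object and exhibit the block-level cancellation, which is essentially the paper's argument.
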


\begin{proof}
	We define
	\begin{align*}	
	D (f, g, h) := \left( \sum_{i \ge k - 1, | j - k | \le L} - \sum_{i \sim
		k, 1 < | j - k | \le L} \right) \langle \Delta_i f, \Delta_j h
	\Delta_k g \rangle.
	\end{align*}
	So we get, for some $\delta>0$,
	\begin{align*}
	| D (f, g, h) | &\lesssim \sum_{i \gtrsim k, j \sim k} | \langle \Delta_i
	f, \Delta_j h \Delta_k g \rangle |\\& \le \sum_{i \gtrsim k, j \sim k}
	\| \Delta_i f \|_{L^2} \| \Delta_j h \|_{L^2} \| \Delta_k g
	\|_{L^{\infty}} 
	\\&\le \| g \|_{\CC^{- 1 - \delta}} \sum_{i \gtrsim k, j \sim k} 2^{k (1
		+ \delta)} \| \Delta_i f \|_{L^2} \| \Delta_j h \|_{L^2} \\&\le \| g
	\|_{\CC^{- 1 - \delta}} \| f \|_{\ssp^{(1 + \delta) / 2}} \| h
	\|_{\ssp^{(1 + \delta) / 2}}
	\end{align*} 
	and this argument can be adapted to show \eqref{equ:commLemma} by simply observing $1 \leq 2^{k(\beta+ \alpha+ \gamma)}  = 2^{k\beta}2^{k ( \alpha+ \gamma)} $, since $\beta+ \alpha+ \gamma \geq 0$  .
	Moreover, for smooth functions $f,g,h $; we can compute
	\begin{align*}	
	\langle f, h \circ g \rangle - \langle f \prec g, h
	\rangle &= \sum_{i, | j - k | \le 1} \langle \Delta_i f,
	\Delta_j h \Delta_k g \rangle - \sum_{i < k - 1, j} \langle \Delta_i f
	\Delta_k g, \Delta_j h \rangle 
	\\ &= \left( \sum_{i, | j - k | \le 1} - \sum_{i < k - 1, | j - k |
		\le L} \right) \langle \Delta_i f \Delta_k g, \Delta_j h \rangle 
	\\&= \left( \sum_{i, | j - k | \le L} - \sum_{i < k - 1, | j - k |
		\le L} - \sum_{i, 1 < | j - k | \le L} \right) \langle
	\Delta_i f, \Delta_j h \Delta_k g \rangle 
	\\&= \left( \sum_{i \ge k - 1, | j - k | \le L} - \sum_{i, 1 < |
		j - k | \le L} \right) \langle \Delta_i f, \Delta_j h \Delta_k g
	\rangle 
	\\&= \left( \sum_{i \ge k - 1, | j - k | \le L} - \sum_{i \sim
		k, 1 < | j - k | \le L} \right) \langle \Delta_i f, \Delta_j h
	\Delta_k g \rangle = D (f, g, h).
	\end{align*}
	
	Hence the result.
\end{proof}

\begin{remark}\upshape\upshape
	Proposition \ref{lem:circadj} says that  the paraproduct is {\tmem{almost
		}}the adjoint of the resonant product, meaning up to a more regular
		remainder term as is often the case in paradifferential calculus.
	\end{remark}
	
	\begin{lemma}
		\label{lem:comm2}Let $f \in \ssp^{\alpha}, g \in \CC^{\beta},$ with
		$\alpha \in (0, 1), \beta \in \mathbbm{R},$  there exists a bilinear map $R (f, g)$ that satisfies the following bound
		\[ \| R (f, g) \|_{\ssp^{\alpha + \beta + 2}} \lesssim \| f
		\|_{\ssp^{\alpha}} \| g \|_{\CC^{\beta}}, \]
		and restricts to smooth functions as
		\[ R (f, g) = (1 - \Delta)^{- 1} (f \prec g) - f \prec (1 -
		\Delta)^{- 1} g. \]

		\begin{proof}
			The proof is basically a straightforward modification of the proof of {\cite[Proposition A.2]{allez_continuous_2015}}, which has almost the same statement.
		\end{proof}
	\end{lemma}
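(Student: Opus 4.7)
The plan is to prove the lemma by deriving an explicit algebraic identity that exposes the commutator structure, and then estimating each resulting term by the Bony estimates already at our disposal (Proposition \ref{lem:paraest}), together with Schauder estimates for $(1-\Delta)^{-1}$. The hope is to avoid any direct Littlewood--Paley computation on the kernel of $(1-\Delta)^{-1}$.

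First, I would work at the level of smooth $f,g$ and compute the Laplacian of a paraproduct blockwise. Since $\Delta$ commutes with the Littlewood--Paley projectors $S_{j-1}$ and $\Delta_j$, the usual Leibniz rule applied to each summand $S_{j-1}f\cdot \Delta_j h$ gives the identity
\[
\Delta(f\prec h)=(\Delta f)\prec h+2(\nabla f\prec\nabla h)+f\prec(\Delta h),
\]
where $\nabla f\prec\nabla h:=\sum_{i=1}^d(\partial_i f)\prec(\partial_i h)$. Rewriting this as $(1-\Delta)(f\prec h)=f\prec(1-\Delta)h-\Delta f\prec h-2\,\nabla f\prec\nabla h$, specializing to $h=(1-\Delta)^{-1}g$, and applying $(1-\Delta)^{-1}$ to both sides yields the key identity
\[
R(f,g)=(1-\Delta)^{-1}\bigl[\Delta f\prec(1-\Delta)^{-1}g+2\,\nabla f\prec\nabla(1-\Delta)^{-1}g\bigr].
\]

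Second, I would estimate both terms on the right-hand side. Because $\alpha\in(0,1)$, we have $\alpha-2<0$ and $\alpha-1<0$, so point 3 of Proposition \ref{lem:paraest} is applicable. Since $\Delta f\in\ssp^{\alpha-2}$, $\nabla f\in\ssp^{\alpha-1}$, $(1-\Delta)^{-1}g\in\CC^{\beta+2}$ and $\nabla(1-\Delta)^{-1}g\in\CC^{\beta+1}$ (Schauder), one obtains
\[
\|\Delta f\prec(1-\Delta)^{-1}g\|_{\ssp^{\alpha+\beta}}+\|\nabla f\prec\nabla(1-\Delta)^{-1}g\|_{\ssp^{\alpha+\beta}}\lesssim\|f\|_{\ssp^\alpha}\|g\|_{\CC^\beta}.
\]
Applying $(1-\Delta)^{-1}$ then lifts the regularity by $2$, producing the claimed bound
$\|R(f,g)\|_{\ssp^{\alpha+\beta+2}}\lesssim\|f\|_{\ssp^\alpha}\|g\|_{\CC^\beta}$.

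Finally, I would extend from smooth functions to the full space by density and continuity: the right-hand side of the identity defines a bilinear map on $\ssp^\alpha\times\CC^\beta$ (the paraproducts exist unconditionally thanks to the negativity of $\alpha-2$ and $\alpha-1$), and the above bound shows this extension is bounded into $\ssp^{\alpha+\beta+2}$; by uniqueness of the continuous extension it agrees with the literal formula $(1-\Delta)^{-1}(f\prec g)-f\prec(1-\Delta)^{-1}g$ on smooth inputs.

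I do not expect a serious obstacle here: the only genuinely delicate step is verifying the Leibniz identity for $\Delta$ on paraproducts, which requires checking that the termwise differentiation in the Littlewood--Paley sum is legitimate and that $\Delta$ commutes with the Fourier multipliers $S_{j-1}$ and $\Delta_j$. This is standard, since every partial sum is smooth and spectrally localized, and convergence holds in the distributional sense.
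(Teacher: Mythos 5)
Your proof is correct, and it is a genuinely different (and more self-contained) route than the paper's, which gives no argument at all and simply defers to Proposition A.2 of the cited work of Allez and Chouk, where the commutator is handled by a direct Littlewood--Paley analysis of the resolvent multiplier. Your Leibniz identity $\Delta(f\prec h)=\Delta f\prec h+2\nabla f\prec\nabla h+f\prec\Delta h$ is legitimate because $\Delta$ and $\nabla$ are Fourier multipliers commuting with $S_{j-1}$ and $\Delta_j$ and every block is a trigonometric polynomial, and it yields exactly $R(f,g)=(1-\Delta)^{-1}\bigl[\Delta f\prec(1-\Delta)^{-1}g+2\nabla f\prec\nabla(1-\Delta)^{-1}g\bigr]$; the hypotheses $\alpha<1$ (so $\alpha-1<0$ and $\alpha-2<0$) are precisely what makes point 3 of Proposition \ref{lem:paraest} applicable, and Schauder plus the two-derivative gain of $(1-\Delta)^{-1}$ gives the stated $\ssp^{\alpha+\beta+2}$ bound for every $\beta\in\mathbbm{R}$. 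What your approach buys is transparency and consistency with the paper's own framework: the correction terms $\Delta u\prec X+2\nabla u\prec\nabla X$ inside $B_{\Xi}$ of Definition \ref{def:2dDXi} are exactly this identity in disguise, whereas the cited proof buys nothing extra here beyond being the original reference. One small remark: your closing appeal to ``uniqueness of the continuous extension'' is not quite the right justification, since smooth functions are not dense in $\CC^{\beta}=B^{\beta}_{\infty\infty}$; but this is immaterial, because the explicit right-hand side already defines a bilinear map on all of $\ssp^{\alpha}\times\CC^{\beta}$ satisfying the bound, and its agreement with $(1-\Delta)^{-1}(f\prec g)-f\prec(1-\Delta)^{-1}g$ on smooth inputs is an exact algebraic identity needing no limiting argument.
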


\bibliography{paracontrolled-wave.bib}{}
\bibliographystyle{plain}

\Addresses
\end{document}